\newcommand{\xref}[1]{{\rm \ref{#1}}}
\newcommand{\comp}{\mathbin{\scriptstyle{\circ}}}
\newcommand\mi{%
\setbox0=\hbox{-}%
\vcenter{%
\hrule width\wd0 height \the\fontdimen8\textfont3%
}%
}
\newcommand{\bi}{\boldsymbol{i}}
\newcommand{\bzeta}{\boldsymbol{\zeta}}
\newcommand{\mumu}{{\boldsymbol{\mu}}}
\newcommand{\Aut}{\operatorname{Aut}}
\newcommand{\Sing}{\operatorname{Sing}}
\newcommand{\Supp}{\operatorname{Supp}}
\newcommand{\St}{\operatorname{St}}
\newcommand{\Pic}{\operatorname{Pic}}
\newcommand{\Cl}{\operatorname{Cl}}
\newcommand{\Tr}{\operatorname{Tr}}
\newcommand{\Fix}{\operatorname{Fix}}
\newcommand{\Proj}{\operatorname{Proj}}
\newcommand{\Bs}{\operatorname{Bs}}
\newcommand{\p}{\operatorname{p}_{\mathrm{a}}}
\newcommand{\g}{\operatorname{g}}
\newcommand{\rk}{\operatorname{rk}}
\newcommand{\qq}{\mathbin{\sim_{\scriptscriptstyle{\mathbb{Q}}} } }
\newcommand{\GL}{\operatorname{GL}}
\newcommand{\PGL}{\operatorname{PGL}}
\newcommand{\SL}{\operatorname{SL}}
\newcommand{\SO}{\operatorname{SO}}
\newcommand{\SU}{\operatorname{SU}}
\newcommand{\Lef}{\operatorname{Lef}}
\newcommand{\N}{\operatorname{N}}
\newcommand{\C}{\operatorname{C}}
\newcommand{\Exc}{\operatorname{Exc}}
\newcommand{\type}[1]{$\mathrm{#1}$}
\newcommand{\z}{\operatorname{z}}
\newcommand{\Sy}{\operatorname{S}}
\newcommand{\Syl}{\operatorname{Syl}}
\newcommand{\CC}{{\mathbb C}}
\newcommand{\RR}{{\mathbb R}}
\newcommand{\ZZ}{{\mathbb Z}}
\newcommand{\QQ}{{\mathbb Q}}
\newcommand{\PP}{{\mathbb P}}
\newcommand{\Aff}{\mathbb{A}}
\newcommand{\FF}{{\mathbb F}}
\newcommand{\bF}{\mathbf{F}}
\newcommand{\Points}{\Xi}
\newcommand{\Lines}{\mathbf{L}}
\newcommand{\DLines}{\mathcal{L}}
\newcommand{\DCurves}{\mathcal{R}}
\newcommand{\Disc}{\updelta}
\newcommand{\Alt}{{\mathfrak A}}
\newcommand{\Sym}{{\mathfrak S}}
\newcommand{\Dih}{{\mathfrak D}}
\newcommand{\OOO}{{\mathscr{O}}}
\newcommand{\NNN}{{\mathscr{N}}}
\newcommand{\PPP}{\mathscr{P}}
\newcommand{\LLL}{\mathscr{L}}
\def\acts{\curvearrowright}
\newcounter{NN}
\renewcommand{\theNN}{{\rm\arabic{NN}${}^o$}}
\newcommand{\nr}{\refstepcounter{NN}\theNN}
\renewcommand\labelenumi{\rm (\roman{enumi})}
\renewcommand\theenumi{\rm (\roman{enumi})}
\theoremstyle{plain}
\newtheorem{theorem}[subsection]{Theorem}
\newtheorem{lemma}[subsection]{Lemma}
\newtheorem{proposition}[subsection]{Proposition}
\newtheorem{proposition-definition}[subsection]{Proposition-Definition}
\newtheorem{stheorem}[equation]{Theorem}
\newtheorem{stheorem-definition}[equation]{Theorem-Definition}
\newtheorem{scorollary}[equation]{Corollary}
\newtheorem*{claim*}{Claim}
\newtheorem{sclaim}[equation]{Claim}
\newtheorem{slemma}[equation]{Lemma}
\newtheorem{sproposition}[equation]{Proposition}
\theoremstyle{definition}
\newtheorem*{definition*}{Definition}
\newtheorem{example-remark}[subsection]{Remark-Example}
\newtheorem{subexample-remark}[equation]{Remark-Example}
\newtheorem{notation}[subsection]{Notation}
\newtheorem*{notation*}{Notation}
\newtheorem{snotation}[equation]{Notation}
\newtheorem{sremark}[equation]{Remark}
\begin{document}

\title{Icosahedron in birational geometry}

\address{
Steklov Mathematical Institute of Russian Academy of Sciences, Moscow, Russian Federation
}
\email{prokhoro@mi-ras.ru}

\author{Yuri~Prokhorov}
\thanks{
This work was supported by the Russian Science Foundation under grant no. 23-11-00033, 
\url{https://rscf.ru/project/23-11-00033/}}

\begin{abstract}
 We study quotients of projective and affine spaces by various actions of the icosahedral group.
Basically we concentrate on the rationality questions.
\end{abstract}

\maketitle

\section{Introduction}
The (regular) icosahedron is one of the five Platonic solids that has 
$20$ triangular faces, $30$ edges, and $12$ vertices (see Fig.~\ref{fig:ico}).
The group of orientation preserving symmetries of the icosahedron is isomorphic to
the alternating group $\Alt_5$ and the full
group of symmetries
is isomorphic to direct product $\Alt_5\times \{\pm 1\}$. 

The study of the icosahedral group $\Alt_5$ has a long history; this group plays a role in various branches of mathematics.
Here we just mention that it is closely connected with many other remarkable objects: the golden ratio, the quaternions
\cite{Conway-Derek:book}, the quintic equation \cite{Klein1956}, a convex regular $4$-polytope called the $600$-cell \cite{Coxeter:polytopes}, and a manifold called the Poincar\'e homology 
$3$-sphere 
\cite{Kirby-Scharlemann:Poincare-homology-3-sphere}.

In this paper we discuss only one aspect of the rich geometry related to icosahedron: 
the geometry of the certain quotients by icosahedral group and groups related to it.
In particular, we explicitly describe the quotients of so-called
$\Alt_5$-minimal rational surfaces by $\Alt_5$.
Recall the following fact that 
can be extracted from the classification {\cite{Dolgachev-Iskovskikh}}, see also {\cite[Theorem~B.10]{Cheltsov:2ineq}}.
\begin{theorem}
\label{thm:Cr2}
Suppose that $\Alt_5$ effectively act on a rational surface $Y$.
Then there exists an $\Alt_5$-equivariant birational map $Y \dashrightarrow S$, where 
$S$ is either the projective plane $\PP^2$, the quadric $\PP^1\times \PP^1$, or the quintic del Pezzo surface $S_5\subset \PP^5$.
\end{theorem}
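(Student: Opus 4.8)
The plan is to run the $\Alt_5$-equivariant minimal model program and to match its output against the Dolgachev--Iskovskikh classification. First I would replace $Y$ by a smooth projective surface carrying a regular $\Alt_5$-action (equivariant resolution of both the singularities and the action), and then run the $G$-minimal model program for $G=\Alt_5$: repeatedly contract $\Alt_5$-orbits of pairwise disjoint $(-1)$-curves until no such orbit remains. The end product is a $G$-Mori fibre space $S$, and by the Dolgachev--Iskovskikh dichotomy exactly one of two things happens: either $\rk\Pic(S)^{\Alt_5}=1$, in which case $-K_S$ is ample and $S$ is a del Pezzo surface, or $\rk\Pic(S)^{\Alt_5}=2$ and there is an $\Alt_5$-equivariant conic bundle $\pi\colon S\to\PP^1$. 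It then remains to show that every such $S$ admits an $\Alt_5$-equivariant birational map to $\PP^2$, to $\PP^1\times\PP^1$, or to $S_5$.

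In the del Pezzo case I would run through the degrees $d=K_S^2\in\{1,\dots,9\}$ and keep only those for which $\Aut(S)$ can contain $\Alt_5$ with $\rk\Pic(S)^{\Alt_5}=1$, using the tables of automorphism groups of del Pezzo surfaces together with the representation theory of $\Alt_5$. The degrees $d=6,7$ are excluded because their automorphism groups are solvable; $d=1$ is excluded because a degree-one del Pezzo surface carries an $\Alt_5$-fixed point (the base point of $|-K_S|$), which would force a faithful $2$-dimensional representation of $\Alt_5$, of which there are none; and $d=2,4$ fall by a short invariant-theoretic argument (no smooth $\Alt_5$-invariant branch quartic, respectively a forced degeneration of the defining pencil of quadrics). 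This leaves $d=9$, which gives $\PP^2$, and $d=5$, which gives $S_5$, directly, together with $d=3$, namely the Clebsch cubic, on which $\Sym_5\supset\Alt_5$ acts. Note that $\PP^1\times\PP^1$ does \emph{not} appear in this list: since $\Alt_5$ is simple it cannot interchange the two rulings, so $\rk\Pic^{\Alt_5}=2$ and it belongs to the conic bundle case. For the surviving cubic I would produce an explicit $\Alt_5$-equivariant Sarkisov link to one of the three targets by blowing up an $\Alt_5$-orbit of points of minimal length, exploiting that such orbits have sizes governed by the subgroup structure of $\Alt_5$ (the generic length $60$ and the short lengths $12,20,30$, together with the special orbits supported on the lines and on the conic invariant under $\Alt_5\subset\SO_3$).

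In the conic bundle case $\pi\colon S\to\PP^1$ I would analyse the induced homomorphism $\Alt_5\to\Aut(\PP^1)=\PGL_2$, which by simplicity is either trivial or the icosahedral embedding. If it is trivial, then $\Alt_5$ acts faithfully along the fibres, and contracting fibre components in $\Alt_5$-orbits reduces $S$ to a Hirzebruch surface $\FF_{n}$ with a fibrewise $\Alt_5$-action, which is $\Alt_5$-equivariantly birational to $\PP^1\times\PP^1$ (and to $\PP^2$ when $n$ is odd). If it is the icosahedral embedding, then $\Alt_5$ permutes the degenerate fibres, which lie over a union of $\Alt_5$-orbits on $\PP^1$ of the lengths above; the number $8-K_S^2$ of degenerate fibres together with $G$-minimality forces a short list of configurations, each of which I would simplify by $\Alt_5$-equivariant elementary transformations centred at orbits of the singular points until reaching the smooth case, and hence $\PP^1\times\PP^1$.

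The step I expect to be the main obstacle is precisely the construction and control of these equivariant birational links in the cubic and non-product conic bundle cases: one has to locate $\Alt_5$-orbits of exactly the right length to serve as admissible centres, check that each blow-up or contraction again yields a del Pezzo surface or a conic bundle (so that the Mori fibre space structure is preserved up to the allowed links), and verify that the process terminates on one of the three listed surfaces. Establishing that no configuration escapes this reduction---equivalently, that $\PP^2$, $\PP^1\times\PP^1$ and $S_5$ genuinely exhaust all $\Alt_5$-birational classes of rational surfaces---is the delicate combinatorial bookkeeping at the heart of the Dolgachev--Iskovskikh argument.
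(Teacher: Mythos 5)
The paper itself gives no proof of this theorem: it is quoted from the Dolgachev--Iskovskikh classification (and from Cheltsov's Theorem~B.10), so your outline has to be measured against that classification argument. Its skeleton --- equivariant regularization, the $\Alt_5$-MMP, and the dichotomy between $G$-del Pezzo surfaces with $\rk\Pic^{G}=1$ and $G$-conic bundles with $\rk\Pic^{G}=2$ --- is exactly right, and your del Pezzo analysis lands on the correct list. The genuine gaps are in the conic bundle half, which is where the actual content of the theorem sits, and your proposed mechanism there would not work as stated.

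Concretely: in the case where $\Alt_5$ acts faithfully on the base, ``elementary transformations centred at orbits of the singular points'' of the degenerate fibres is not an available operation --- blowing up a node turns the two components of that fibre into $(-2)$-curves, and contracting them leaves the category of smooth conic bundles. The argument that is actually needed runs differently. First, $G$-minimality forces the two components of every degenerate fibre to be interchanged by the stabilizer of the corresponding base point; since the stabilizers for the icosahedral action on $\PP^1$ are $\mumu_5$, $\mumu_3$, $\mumu_2$ and $\{1\}$ over the orbits of length $12$, $20$, $30$, $60$, and a group of odd or trivial order cannot interchange two components, all degenerate fibres would have to sit over the $30$-orbit. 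Second, this residual case is impossible: if the involution $\sigma$ stabilizing such a base point $q$ swaps the components of the fibre over $q$, then $\Fix(\sigma,\PP^1)$ consists of two points of the $30$-orbit, so every curve fixed pointwise by $\sigma$ would have to lie in the two fibres above them, where $\sigma$ fixes only the nodes; but the node cannot be an isolated fixed point, because $d\sigma$ there interchanges the two tangent lines of the fibre and hence is not $-\mathrm{id}$. So a minimal conic bundle with faithful base action has no degenerate fibres at all, i.e.\ it is a Hirzebruch surface $\FF_n$ (necessarily with $n$ even), and you still owe the explicit equivariant elementary transformations taking $\FF_n$ to $\PP^1\times\PP^1$ (e.g.\ elm at the $30$-orbit on one invariant section together with the $12$- and $20$-orbits on the other, which lowers $n$ by $2$). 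None of these steps appears in your text.

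Two further claims are wrong, though self-correcting. In the fibrewise case, no $\FF_n$ with $n\ge 1$ carries a fibrewise $\Alt_5$-action at all: the subgroup of $\Aut(\FF_n)$ acting trivially on the base is the solvable group $H^0(\PP^1,\OOO(n))\rtimes\CC^*$, so your ``$\FF_n$ with fibrewise action, birational to $\PP^2$ when $n$ is odd'' is vacuous, and the relatively minimal model in that case is forced to be $\PP^1\times\PP^1$ directly. And the Clebsch cubic does not have $\rk\Pic^{\Alt_5}=1$: both one-dimensional $\Sym_5$-representations $W_1$ and $W_1'$ occur in $\Pic(S_{\mathrm{C}})\otimes\QQ$ (the second spanned by the difference of the two sixers), and both restrict to the trivial $\Alt_5$-representation, so the rank is $2$; the cubic is therefore never a $G$-minimal del Pezzo surface, and the MMP simply contracts one of the two $\Alt_5$-invariant sixers to $\PP^2$ --- the Sarkisov link via blow-ups of orbits that you propose to construct for it is not needed.
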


In Sections \ref{sect:P2}, \ref{Sect:P1P1}, and~\ref{sect:DP5} we study the quotients of
$\PP^2$, $\PP^1\times \PP^1$ and $S_5$
by $\Alt_5$ and by some related groups.
Using these computations we give short conceptual proofs of the rationality of the quotients of 
the three-dimensional projective space by two different actions of $\Alt_5$ (see Theorems~\ref{th-SL25} and~\ref{thm:C4/A5}).
These facts were known earlier, see \cite{Kolpakov-Prokhorov-1992}, \cite{Kervaire-Vust}, \cite{Maeda-T-1989}.
In Section
\ref{sect:V6} we prove the rationality of 
the quotient of the 
projectivization of the faithful irreducible six-dimensional representation of~$\tilde \Alt_5$.
Finally, in Section \ref{sect:V5}, we prove the following result.

\begin{theorem}
\label{thm:main}
Let $V$ be any representation of the binary icosahedral group $\tilde \Alt_5$.
Then the variety $V/\tilde \Alt_5$ is rational.
\end{theorem}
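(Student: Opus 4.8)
The plan is to reduce the statement for an arbitrary $V$ to the case of a single irreducible summand, and then to identify each resulting quotient with a projective quotient already computed in the preceding sections. First I would decompose $V=\bigoplus_i W_i$ into irreducibles and apply the no-name lemma. Since $\tilde\Alt_5\cong\SL(2,\FF_5)$ is perfect with $\tilde\Alt_5/\{\pm1\}\cong\Alt_5$ simple, its center $\{\pm1\}$ is the unique minimal nontrivial normal subgroup; hence a summand is $\tilde\Alt_5$-faithful exactly when $-1$ acts by $-\mathrm{id}$, i.e. when it is one of the even-dimensional (spin) irreducibles of dimension $2,4,6$, while the trivial and the odd-dimensional irreducibles factor through $\Alt_5$. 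If $V$ is faithful it contains a spin summand $U$, and the no-name lemma gives $V/\tilde\Alt_5\bir (U/\tilde\Alt_5)\times\Aff^{\dim V-\dim U}$, reducing us to $U/\tilde\Alt_5$. If $V$ is a nontrivial non-faithful representation then it is a representation of the simple group $\Alt_5$, every nontrivial summand is already $\Alt_5$-faithful, and the same lemma reduces $V/\tilde\Alt_5=V/\Alt_5$ to $W/\Alt_5$ for a single nontrivial irreducible $W$; and if $V$ is trivial the quotient is an affine space. Thus it suffices to prove that $W/\tilde\Alt_5$ is rational for each nontrivial irreducible $W$.

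Next I would pass from the affine to the projective quotient. For an irreducible $W$ the scalar $-1$ acts trivially on $\PP(W)$, so $\PP(W)/\tilde\Alt_5=\PP(W)/\Alt_5$; moreover $W\setminus\{0\}\to\PP(W)$ descends, over the open locus where $\Alt_5$ acts freely, to the complement of the zero section of a line bundle, and a line bundle is Zariski-locally trivial, hence birationally trivial. This yields $W/\tilde\Alt_5\bir(\PP(W)/\Alt_5)\times\Aff^1$, so it is enough to show that $\PP(W)/\Alt_5$ is rational for every nontrivial irreducible $W$. I would then match each case to the earlier computations: $\dim W=2$ gives $\PP^1/\Alt_5\cong\PP^1$; $\dim W=3$ gives $\PP^2/\Alt_5$, rational by Section~\ref{sect:P2}; the two $4$-dimensional irreducibles give the two actions of $\Alt_5$ on $\PP^3$, rational by Theorems~\ref{th-SL25} and~\ref{thm:C4/A5}; and the $6$-dimensional spin representation gives $\PP^5/\tilde\Alt_5$, rational by Section~\ref{sect:V6}.

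The one case not covered by the previous sections, and the main obstacle, is the $5$-dimensional irreducible $W_5$, namely the rationality of $\PP^4/\Alt_5=\PP(W_5)/\Alt_5$. Here I would exploit the identification $W_5\cong\operatorname{Sym}^2(W_3)\ominus\CC$ of $W_5$ with the space of trace-free symmetric $3\times 3$ matrices, on which $\Alt_5\subset\SO(3)$ acts by conjugation. The coefficients of the characteristic polynomial then define an $\Alt_5$-invariant fibration of $\PP^4$ whose general fibre is an $\SO(3)$-orbit (birational to $\SO(3)\cong\PGL_2$), and I would attempt either to trivialize this fibration birationally after passing to the quotient, or to relate $\PP^4/\Alt_5$ to one of the already-rational surface quotients such as $(\PP^1\times\PP^1)/\Alt_5$ of Section~\ref{Sect:P1P1} or $S_5/\Alt_5$ of Section~\ref{sect:DP5}. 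The difficulty is precisely that $W_5$ carries no proper faithful subrepresentation to peel off with the no-name lemma, and no evident $\Alt_5$-equivariant projection to a lower-dimensional model already understood, so this quotient requires a genuinely new construction — presumably an explicit invariant fibration combined with an application of the minimal model program to the resulting fourfold quotient — which I expect to be the technical heart of Section~\ref{sect:V5}.
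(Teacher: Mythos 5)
Your reduction steps --- the no-name decomposition into a faithful irreducible summand plus a complement, and the passage from $W/\tilde \Alt_5$ to $\PP(W)/\Alt_5\times\Aff^1$ --- are correct and essentially the same as the paper's, which runs the first step as an induction on $\dim V$ and obtains the $\Aff^1$ factor by blowing up the origin and using the exceptional divisor as an invariant section of the resulting line bundle. Your treatment of the irreducible cases of dimension $2$, $3$, $4$ and $6$ is also correct, with the right citations. The genuine gap is exactly where you yourself place it: for $V_5$ you offer only a program (``I would attempt either to trivialize\dots or to relate\dots''), not a proof, so the argument is incomplete at its crucial step. Incidentally, in your matrix model the generic $\SO_3$-orbit is $\SO_3/(\mumu_2\times\mumu_2)$ --- the stabilizer of a generic trace-free symmetric matrix is the group of diagonal sign matrices of determinant one --- not something identifiable with $\SO_3\simeq\PGL_2$ itself, and nothing you propose produces a section or a birational trivialization of that fibration after passing to the $\Alt_5$-quotient.

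The idea you are missing is that you should not aim at rationality of $\PP(V_5)/\Alt_5$ at all. By your own reduction the target is $\PP(V_5)/\Alt_5\times\Aff^1$, a \emph{stable} statement, and when you ``strengthen'' this to rationality of $\PP(V_5)/\Alt_5$ you throw away precisely the slack that makes the problem tractable; indeed, the closing remark of Section~\ref{sect:V5} indicates that rationality of $\PP(V_5)/\Alt_5$ itself is the one case the paper's methods do not settle, so your plan aims at a possibly harder (perhaps open) statement. What the paper proves instead (Proposition~\ref{prop:V5}) is that $\PP(V_5)/\Alt_5\times\PP^1$ is rational, as follows. There is an $\Alt_5$-equivariant Sarkisov link in which $\sigma\colon\widetilde{\PP(V_3\oplus V_3')}\to\PP(V_3\oplus V_3')$ is the blowup of the quintic del Pezzo surface $S\subset\PP(V_3\oplus V_3')$ (Remark~\ref{rem:S5-grA5}) and $\varphi\colon\widetilde{\PP(V_3\oplus V_3')}\to\PP(V_5)$ is a generically $\PP^1$-bundle. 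The canonical class of $\widetilde{\PP(V_3\oplus V_3')}$ is divisible by $2$ and the $\Alt_5$-action is free in codimension one, so the canonical class of the quotient is divisible by $2$ modulo torsion; this yields a birational section of the rational curve fibration $\widetilde{\PP(V_3\oplus V_3')}/\Alt_5\to\PP(V_5)/\Alt_5$, which is therefore birationally trivial, giving $\PP(V_5)/\Alt_5\times\PP^1\bir\widetilde{\PP(V_3\oplus V_3')}/\Alt_5$. Finally the left-hand side of the link is rational because the projection $\PP(V_3\oplus V_3')\dashrightarrow\PP(V_3)$ gives $\PP(V_3\oplus V_3')/\Alt_5\bir\PP(V_3)/\Alt_5\times\PP(V_3')\simeq\PP(1,3,5)\times\PP^2$. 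This construction is the step your proposal needs and does not supply.
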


The rationality question of quotients by finite linear groups
is interesting in relation to famous Emmy Noether's problem \cite{Noether_1918}.
The problem still is not solved in its generality (see surveys \cite{colliotthelene-Sansuc-2005}, \cite{P:Inv10}).
Another motivation for rationality problem is provided by \textit{moduli spaces}. 
Birationally, they are often representable as quotients of certain
simple varieties 
by actions of linear algebraic groups (see e.g. \cite{Shepherd-Barron-S5}).

Note that many results of the paper were known earlier. Our aim was to systematize them 
and provide uniform proofs.

\subsection*{Acknowledgements.} The author would like to thank Alexander Kuznetsov and Constantin Shramov for useful discussions.
Some computations were performed by using Macaulay2 \cite{M2}.

\tdplotsetmaincoords{60}{100}
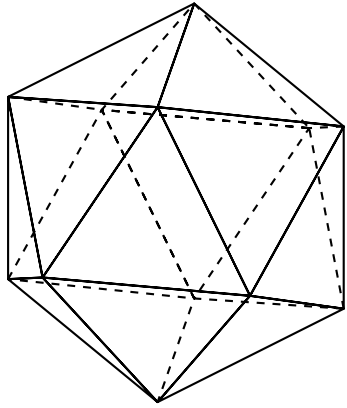
\begin{figure}[H]
\begin{tikzpicture}[tdplot_main_coords,scale=0.7,line join=round]
\pgfmathsetmacro\a{2}
\pgfmathsetmacro{\phi}{\a*(1+sqrt(5))/2}
\path 
coordinate(A) at (0,\phi,\a)
coordinate(B) at (0,\phi,-\a)
coordinate(C) at (0,-\phi,\a)
coordinate(D) at (0,-\phi,-\a)
coordinate(E) at (\a,0,\phi)
coordinate(F) at (\a,0,-\phi)
coordinate(G) at (-\a,0,\phi)
coordinate(H) at (-\a,0,-\phi)
coordinate(I) at (\phi,\a,0)
coordinate(J) at (\phi,-\a,0)
coordinate(K) at (-\phi,\a,0)
coordinate(L) at (-\phi,-\a,0); 
\draw[dashed, thick] (B) -- (H) -- (F) 
(D) -- (L) -- (H) --cycle 
(K) -- (L) -- (H) --cycle
(K) -- (L) -- (G) --cycle
(C) -- (L) (B)--(K) (A)--(K)
;

\draw[thick]
(A) -- (I) -- (B) --cycle 
(F) -- (I) -- (B) --cycle 
(F) -- (I) -- (J) --cycle
(F) -- (D) -- (J) --cycle
(C) -- (D) -- (J) --cycle
(C) -- (E) -- (J) --cycle
(I) -- (E) -- (J) --cycle
(I) -- (E) -- (A) --cycle
(G) -- (E) -- (A) --cycle
(G) -- (E) -- (C) --cycle
;
\end{tikzpicture}
\label{fig:ico}
\caption{Icosahedron}
\end{figure}

\section{Preliminaries}
\label{sect:pre}
\begin{notation}

We work over the field $\CC$ of complex numbers.
The following standard notation will be used throughout the paper.
\begin{itemize}

\item 
$\mumu_n$ denotes the multiplicative cyclic group of order $n$;
\item 
$\Sym_n$ (resp. $\Alt_n$) denotes the symmetric (resp. alternating) group of degree $n$; 
\item 
$\Dih_n$ denotes the dihedral group of order $2n$;
\item 
$\N_G(H)$ denotes the normalizer of a subgroup $H$ in $G$;

\item 
$\PP(w_1,\dots,w_n)$ is the weighted projective space with weights $w_1,\dots,w_n$ \cite{Dolgachev:WPS};

\item 
$\upsilon_d$ denotes the $d$-uple Veronese embedding $\upsilon_d: \PP^k \hookrightarrow \PP^{N}$, $N=\binom{k+d}{d}-1$.

\item 
If a group $G$ acts on a variety $X$, then $\Fix(G,X)$ denotes the set 
of $G$-fixed points and $\Fix(G,X)^{(k)}$ denotes the union of all components of $\Fix(G,X)$
of dimension $k$. By $\St_G(x)$ we denote the stabilizer subgroup of $x\in X$.

\end{itemize}

The icosahedral group $\Alt_5$ has a unique non-trivial extension $\tilde \Alt_5$ by the cyclic group of order $2$
which is called \textit{binary icosahedral group}; we denote it by $\tilde \Alt_5$. It can be constructed as
the preimage of $\Alt_5\subset \SO_3(\RR)$ under the covering homomorphism $\SU_2(\CC)\to \SO_3(\RR)$.
There is a well-known isomorphism $\tilde{\Alt}_5\simeq\SL_2(\bF_5)$, where $\bF_q$ the finite field of $q$ elements.

\end{notation}

When we say that $V$ is a \textit{representation} of a group $G$, we usually mean 
that $V$ is a $\CC[G]$-module and the homomorphism $G\to \GL(V)$ is supposed to be fixed.
The irreducible representations of $\Alt_5$ (resp. $\Sym_5$) will be denoted by $V_1$,
$V_3$, $V_3'$, $V_4'$, $V_5$ (resp. $W_1$, $W_1'$, $W_4$, $W_4'$, $W_5$, $W_5'$, $W_6$), where the subscript is the degree.
Similarly, the irreducible faithful representations of $\tilde \Alt_5$
will be denoted by $V_2$,
$V_2'$, $V_4$, $V_6$.
For the convenience of the reader, the character tables are reproduced in the appendix.

\subsection{Conic bundles}
General facts about conic bundles can be found in \cite{P:rat-cb:e}.

For a standard conic bundle $\varphi: Y\to S$ by $\Delta_{\varphi}\subset S$ we denote the discriminant divisor.

\begin{stheorem}[{\cite{Sarkisov:82e}}]
\label{thm:Sar}
Let $f: X\to S$ be a rational curve fibration and let $\Lambda\subset S$ be 
a closed subset such that $f$ is smooth over $S\setminus\Lambda$.
Then there exists 
the following commutative diagram
\[
\xymatrix@R=1.2em{
X\ar[d]_{f}& \tilde X \ar@{-->}[l]_<(+.3){\gamma} \ar[d]^{\tilde f}
\\
S& \tilde S\ar[l]_{\beta}
}
\]
where $\tilde f$ is a standard conic bundle,
$\gamma$ is a birational map, and $\beta$ is a birational morphism
that is an isomorphism over $S\setminus\Lambda$. For the discriminant divisor 
$\Delta_{\tilde{f}}\subset \tilde S$ we have $\beta(\Delta_{\tilde{f}})\subset 
\Lambda$.
\end{stheorem}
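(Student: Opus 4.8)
The plan is to follow Sarkisov's strategy in three movements: pass to a smooth model, construct \emph{some} conic bundle birational to $f$, and then \emph{standardize} it by birational surgery performed entirely over $\Lambda$.

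First I would apply resolution of singularities to reduce to the case where $X$ and $S$ are smooth, $f$ is a morphism of relative dimension $1$, and all the centres of the resolving blow-ups lie over $\Lambda$; this is possible because $f$ is already a smooth morphism over $S\setminus\Lambda$, so nothing needs to be touched there. Keeping all subsequent modifications over $\Lambda$ as well has a pleasant consequence: the final $\tilde f$ will be a smooth morphism over the preimage of $S\setminus\Lambda$, so $\Delta_{\tilde f}$ cannot meet that preimage and hence $\beta(\Delta_{\tilde f})\subset\Lambda$ automatically. The same remark makes both $\gamma$ and $\beta$ isomorphisms over $S\setminus\Lambda$.

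Next I would produce an initial conic bundle. The generic fibre $X_\eta$ is a smooth rational curve over $K=k(S)$, hence a smooth conic, and the degree-$2$ class $-K_{X_\eta}$ embeds it as a plane conic in $\PP(H^0(X_\eta,-K_{X_\eta})^\vee)\cong\PP^2_K$. To spread this out I would use the sheaf $\mathcal E=f_*\omega_{X/S}^{-1}$, which has generic rank $3$; after replacing $S$ by a modification over $\Lambda$ on which the reflexive hull of $\mathcal E$ becomes locally free, the relative anticanonical map exhibits the closure of $X_\eta$ as a divisor of relative degree $2$ in the $\PP^2$-bundle $\PP(\mathcal E)$, that is, a flat family of plane conics $X'\to S'$ birational to $f$ over $S'\to S$.

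It remains to standardize $X'\to S'$, i.e.\ to arrange that the total space and base are smooth, $\tilde f$ is flat with reduced conic fibres and relative Picard number $\rho(\tilde X/\tilde S)=1$, and $\Delta_{\tilde f}$ is a simple normal crossing divisor. Relative Picard number $1$ is obtained by running a relative MMP over the base, which contracts the superfluous components of the fibres. The normal crossing condition I would force by induction on the singularities of $\Delta$: blow up the base along the non-normal-crossing locus of the discriminant, which lies over $\Lambda$, pull back the family and resolve, then re-minimalize relatively. The main obstacle is precisely this last loop. A base blow-up can turn a reduced degenerate fibre into a non-reduced one (a double line) and can make the total space singular exactly over the singular points of $\Delta$; repairing this requires interleaving the base blow-ups with elementary transformations of the conic bundle, and one must exhibit a complexity invariant of the pair (discriminant together with the worst fibre type) that strictly decreases under the combined operation, thereby guaranteeing termination. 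This bookkeeping of degenerate fibres and the proof of termination form the technical heart of the argument.
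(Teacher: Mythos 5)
The paper itself contains no proof of this statement: it is quoted directly from Sarkisov \cite{Sarkisov:82e} (see also \cite{P:rat-cb:e}), so your attempt can only be compared with the published argument, not with anything in this text. Measured against that, your outline is structurally faithful to Sarkisov's strategy: resolve $f$ keeping all centres over $\Lambda$, realize the generic fibre as a conic via $\omega_{X/S}^{-1}$ and spread it out inside the $\PP^2$-bundle $\PP(f_*\omega_{X/S}^{-1})$ after a flattening modification of the base over $\Lambda$, then standardize by base blow-ups interleaved with elementary transformations and relative minimalization. In particular, your observation that performing every modification over $\Lambda$ automatically yields a smooth morphism over the preimage of $S\setminus\Lambda$, hence $\beta(\Delta_{\tilde f})\subset\Lambda$ and $\beta$ an isomorphism over $S\setminus\Lambda$, is exactly the right mechanism; it is the only point where the statement as formulated here goes beyond the bare assertion that every rational curve fibration is birational to a standard conic bundle.

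As a proof, however, your text has a genuine hole precisely where you flag it: you never exhibit the complexity invariant that makes the loop ``blow up the non-normal-crossing locus of the discriminant, resolve, elementary transform, re-minimalize'' terminate, and this is not a routine afterthought but the bulk of Sarkisov's paper --- the case analysis of degenerate fibres and of how the discriminant and the fibre types transform under base blow-ups and elementary transformations is where the theorem is actually proved. Two subsidiary points also need verification rather than assumption: first, that the steps of the relative minimalization contract only components of degenerate fibres, so that flatness and the conic structure survive and $\uprho(\tilde X/\tilde S)=1$ is reached without leaving the category of conic bundles; second, that once $\uprho(\tilde X/\tilde S)=1$ holds, the degenerate fibres are reduced pairs of lines or double lines over the singular points of $\Delta_{\tilde f}$, with $\Delta_{\tilde f}$ acquiring at worst ordinary double points --- you fold this package into the definition of ``standard'' but do not check that your surgeries achieve it. So the plan is the correct one, but the technical heart of the argument is named rather than carried out.
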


\begin{stheorem}[{see e.g. \cite[Theorem~1]{Iskovskikh:Duke} or \cite[Proposition~5.6]{P:rat-cb:e}}]
\label{thm:rat}
Let $\varphi: Y\to Z$ be a standard conic bundle over a rational surface. Assume that on $Z$ there exists a base point free 
pencil of rational curves $\LLL$ such that $\LLL\cdot \Delta_{\varphi}\le 3$. Then $X$ is rational.
\end{stheorem}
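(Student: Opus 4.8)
\emph{Approach.} The plan is to slice $Y$ by the pencil $\LLL$ and reduce the rationality of the threefold to the rationality of a conic bundle surface over the function field $K=\CC(t)$ of $\PP^1$. Being base point free, $\LLL$ defines a morphism $\pi\colon Z\to\PP^1$ whose general fibre is a smooth rational curve. Set $f=\pi\comp\varphi\colon Y\to\PP^1$ and let $S:=Y_\eta$ be its generic fibre, a smooth projective surface over $K=\CC(t)$; the morphism $\varphi$ restricts to a conic bundle structure $S\to C_\eta$ on $S$, where $C_\eta$ is the generic member of the pencil. Since $\CC(Y)=K(S)$, it suffices to prove that $S$ is $K$-rational: then $K(S)=K(x,y)=\CC(t,x,y)$ is purely transcendental over $\CC$ and $Y$ is rational.

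\emph{The two invariants of $S$.} Next I would record the data that govern the $K$-rationality of $S$. By Tsen's theorem $K=\CC(t)$ is a $C_1$-field, so $\Br(K)=0$; in particular every conic over $K$ is isomorphic to $\PP^1_K$. Applying this to the genus-zero curve $C_\eta$ gives $C_\eta\cong\PP^1_K$, and applying it to the fibre of $S\to\PP^1_K$ over any $K$-point with smooth fibre produces a $K$-point of $S$, so $S(K)\ne\emptyset$. On the other hand, over $\overline K$ the degenerate fibres of $S\to\PP^1_K$ are the points of $C_\eta\cap\Delta_{\varphi}$, whose number equals $\LLL\cdot\Delta_{\varphi}\le 3$; hence $K_S^2=8-\LLL\cdot\Delta_{\varphi}\ge 5$ and $S$ is geometrically rational.

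\emph{The crux.} The step I expect to be the main obstacle is to deduce $K$-rationality of $S$ from the two facts $S(K)\ne\emptyset$ and $K_S^2\ge 5$. Here I would invoke the Enriques--Manin--Iskovskikh classification of minimal geometrically rational surfaces over a field: running the relative minimal model programme over $K$, the surface $S$ is $K$-birational either to a minimal del Pezzo surface or to a minimal conic bundle. The bound $K_S^2\ge 5$ forces the del Pezzo degree to be at least $5$ and excludes the minimal conic bundles of small discriminant, and a del Pezzo surface of degree $\ge 5$ carrying a rational point is rational over its ground field. The delicate part is the Galois-equivariant bookkeeping of the $(-1)$-curves during these contractions --- controlling which exceptional curves and their conjugates may be blown down over $K$ --- which is exactly why the hypothesis is the sharp numerical bound $\LLL\cdot\Delta_{\varphi}\le 3$. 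This is the content of the cited theorems of Iskovskikh; granting it, $S$ is $K$-rational and therefore $Y$ is rational.
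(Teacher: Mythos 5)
The paper does not actually prove this statement---it is quoted with references to Iskovskikh's Duke paper and to \cite[Proposition~5.6]{P:rat-cb:e}---and your argument is precisely the standard proof given in those sources: pass to the generic fibre $S$ of $Y\to\PP^1$ over $K=\CC(t)$, use Tsen's theorem to identify the base curve with $\PP^1_K$ and to produce a $K$-point of $S$, compute $K_S^2=8-\LLL\cdot\Delta_{\varphi}\ge 5$, and conclude by Iskovskikh's rationality criterion for geometrically rational surfaces over a perfect field ($K$-point plus $K^2\ge 5$ implies $K$-rational), which is the legitimate external input here rather than a circular citation. Your proof is correct; the only steps left implicit are routine genericity facts in characteristic $0$ (the generic member of $\LLL$ avoids $\Sing \Delta_{\varphi}$ and meets $\Delta_{\varphi}$ transversally, so $S$ is smooth and its degenerate fibres are reduced pairs of lines, which is exactly what the formula $K_S^2=8-\delta$ requires).
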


\begin{sproposition}
\label{prop:cb}
Let $\varphi: Y\to Z$ be a standard conic bundle over a surface. 
Suppose that a component $\Delta_1\subset \Delta_{\varphi}$ is a smooth rational curve.
Then $\Delta_1\cdot (\Delta_{\varphi}-\Delta_1)$ is even and not zero. 
\end{sproposition}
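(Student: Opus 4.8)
The plan is to use the \emph{discriminant double cover} attached to a standard conic bundle. Recall (see \cite{P:rat-cb:e}) that for a standard conic bundle $\varphi\colon Y\to Z$ the discriminant $\Delta_{\varphi}$ has at worst nodes, that over a smooth point of $\Delta_{\varphi}$ the fibre is a union of two distinct lines, and that over a node it is a double line. Labelling the two lines of the degenerate fibres defines a double cover $\pi\colon \tilde\Delta\to\Delta_{\varphi}$ that is \'etale over the smooth locus $\Delta_{\varphi}\setminus\Sing(\Delta_{\varphi})$. The essential local computation is the behaviour of $\pi$ at a node. In the standard local normal form a conic bundle near a point lying over a node of the discriminant is given in $\PP^2_{[t_0:t_1:t_2]}\times\CC^2_{(x,y)}$ by
\[
t_0^2-x\,t_1^2-y\,t_2^2=0,\qquad \Delta_{\varphi}=\{xy=0\}.
\]
Restricting to the branch $\{y=0\}$ the fibre is $t_0^2-x\,t_1^2=(t_0-\sqrt{x}\,t_1)(t_0+\sqrt{x}\,t_1)=0$, so the two lines are interchanged by $\sqrt{x}\mapsto-\sqrt{x}$ and the cover is $\{w^2=x\}$, which is \emph{ramified} over the node $x=0$. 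Thus along each branch of $\Delta_{\varphi}$ through a node the double cover $\pi$ acquires a ramification point over that node.

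Next I would restrict $\pi$ to the component $\Delta_1$. Since $\Delta_1$ is a smooth rational curve it has no self-nodes, so every node of $\Delta_{\varphi}$ lying on $\Delta_1$ is a (transverse) intersection point of $\Delta_1$ with $\Delta_{\varphi}-\Delta_1$, as $\Delta_{\varphi}$ is nodal. By the local computation the induced double cover $\tilde\Delta_1\to\Delta_1\cong\PP^1$ is ramified precisely over the points of $\Delta_1\cap(\Delta_{\varphi}-\Delta_1)$, that is, over exactly $\Delta_1\cdot(\Delta_{\varphi}-\Delta_1)$ points. A double cover of $\PP^1$ corresponds to a line bundle $L$ together with a section of $L^{\otimes2}$ whose zero locus is the branch divisor; hence the branch divisor has even degree $2\deg L$ (equivalently, Riemann--Hurwitz produces $2\,\g(\tilde\Delta_1)+2$ branch points). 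This gives that $\Delta_1\cdot(\Delta_{\varphi}-\Delta_1)$ is \emph{even}.

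For non-vanishing I would argue by contradiction using relative minimality. If $\Delta_1\cdot(\Delta_{\varphi}-\Delta_1)=0$ then $\Delta_1$ meets no other component and, being smooth, is a connected component of $\Delta_{\varphi}$; the restricted cover $\tilde\Delta_1\to\PP^1$ is then unramified, hence trivial since $\PP^1$ is simply connected, so $\tilde\Delta_1\cong\PP^1\sqcup\PP^1$. Triviality of the cover means the two lines of the degenerate fibres can be distinguished globally over $\Delta_1$, and their closures give two divisors $D_1,D_2\subset Y$ with $D_1+D_2\sim\varphi^{*}\Delta_1$ but with $D_1$ restricting nontrivially (as a single line) to the fibres over $\Delta_1$. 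Then $D_1$ is not a pull-back from $Z$, so $\rho(Y/Z)\ge2$, contradicting the fact that a standard conic bundle satisfies $\rho(Y/Z)=1$. Hence $\Delta_1\cdot(\Delta_{\varphi}-\Delta_1)\ne0$.

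The main obstacle is the local analysis at the nodes: one must justify the standard normal form and verify that, along each branch through a node, the component-labelling cover genuinely ramifies rather than merely remaining \'etale. Once this local ramification picture is secured, both the parity statement (via double covers of $\PP^1$) and the non-vanishing (via $\rho(Y/Z)=1$) follow formally.
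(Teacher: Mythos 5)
Your proof is correct and takes essentially the same approach as the paper: the line-labelling double cover of $\Delta_1$, branched exactly at $\Delta_1\cap(\Delta_{\varphi}-\Delta_1)$, gives parity because a double cover of $\PP^1$ has an even number of branch points, and the splitting of that cover when the intersection is empty forces $\varphi^{-1}(\Delta_1)$ to be reducible, contradicting $\uprho(Y/Z)=1$ for a standard conic bundle. The only difference is that you justify the ramification at the nodes by an explicit local normal form, a point the paper's proof asserts without computation.
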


\begin{proof}
Let $\hat \Delta_1$ be the component Hilbert scheme that parametrizes  the components of  degenerate fibers over $\Delta_1$.
Then there is a natural double cover $\hat \Delta_1\to \Delta_1$ branched over the points $\Delta_1\cap \overline{(\Delta_{\varphi}\setminus \Delta_1)}$.
Since $\Delta_1$ is a smooth rational curve this number must be even. 
If $\Delta_1\cap \overline{(\Delta_{\varphi}\setminus \Delta_1)}=\varnothing$, then the double cover splits. 
This means that the inverse image $\varphi^{-1}(\Delta_1)$ is reducible.
On the other hand, the relative Picard number $\uprho(Y/Z)$ equals $1$ because $\varphi$ is a 
standard conic bundle (see \cite{P:rat-cb:e}).
\end{proof}

\begin{slemma}
\label{lemma:P2}
Let 
$f: X\to \PP^2$ be a rational curve fibration such that $f$ is smooth over $\PP^2\setminus D$,
where $D$ is the union of a smooth conic $C$ and three distinct lines $M_1,\, M_2,\, M_3$ touching $C$ \textup(see Fig.~\xref{fig:ell}\textup). Then $X$ is rational.
\end{slemma}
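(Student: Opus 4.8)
The plan is to reduce to a standard conic bundle and then exhibit a base point free pencil of rational curves meeting the discriminant in at most three points, so that Theorem~\ref{thm:rat} applies.

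First I would apply Theorem~\ref{thm:Sar} with $\Lambda=D$. This produces a standard conic bundle $\tilde f\colon \tilde X\to \tilde S$ together with a birational morphism $\beta\colon \tilde S\to \PP^2$ that is an isomorphism over $\PP^2\setminus D$ and satisfies $\beta(\Delta_{\tilde f})\subset D$; moreover $\tilde X$ is birational to $X$, so it suffices to prove that $\tilde X$ is rational. Consequently every component of $\Delta_{\tilde f}$ is either the strict transform of one of $C,M_1,M_2,M_3$ or a $\beta$-exceptional curve lying over one of the singular points of $D$, that is, the three tangency points $P_i=C\cap M_i$ and the three vertices $M_i\cap M_j$. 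The morphism $\beta$ must resolve each tangency $P_i$: blowing up $P_i$ once and then blowing up the resulting transverse triple point separates the strict transforms $\tilde C$ and $\tilde M_i$, producing exceptional curves $E_i$ (from the first blow-up) and $E_i'$ (from the second), with $\tilde C$ and $\tilde M_i$ now meeting only $E_i'$.

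Next I would take for $\LLL$ the strict transform on $\tilde S$ of the pencil of lines through the tangency point $P_1$. Since $\beta$ factors through the blow-up of $P_1$, this pencil is base point free, and its members are rational. A direct computation of intersection numbers on $\tilde S$ gives $\LLL\cdot\tilde C=1$, together with $\LLL\cdot\tilde M_1=0$ (the line $M_1$ passes through $P_1$, so $\tilde M_1$ is a fibre of the induced ruling), $\LLL\cdot\tilde M_2=\LLL\cdot\tilde M_3=1$, and $\LLL\cdot E_1=1$, while $\LLL$ is disjoint from $E_1'$ and from every exceptional curve over $P_2,P_3$ and over the vertices. Hence $\LLL\cdot\Delta_{\tilde f}\le 4$, with equality only if all four curves $\tilde C,\tilde M_2,\tilde M_3,E_1$ lie in $\Delta_{\tilde f}$.

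The heart of the argument, and the step I expect to be the main obstacle, is to rule out this last possibility and bring the bound down to $3$. Here I would invoke Proposition~\ref{prop:cb}: each smooth rational component of $\Delta_{\tilde f}$ meets the rest of the discriminant in an even and nonzero number of points. Applied to $\tilde C$, which after the resolution meets the remaining discriminant only along the curves $E_i'$, this parity constraint is expected to force $\tilde C\notin\Delta_{\tilde f}$ (so that its contribution of $1$ is dropped), while the analogous constraint on the lines $\tilde M_i$ pins down which exceptional curves survive in $\Delta_{\tilde f}$. The essential input is that the contact of $C$ with each $M_i$ is tangential, of intersection multiplicity $2$: this is exactly what forces the two separate blow-ups above, and hence the specific incidences of $E_i,E_i'$ with the strict transforms that make the parity count come out in our favour. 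Carrying this out requires a careful, case-free bookkeeping of the resolution of the tangencies, and this is where the real work lies. Once the bound $\LLL\cdot\Delta_{\tilde f}\le 3$ is established, Theorem~\ref{thm:rat} shows that $\tilde X$, and therefore $X$, is rational.
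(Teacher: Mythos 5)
Your reduction, your pencil, and the tools you invoke are exactly those of the paper's proof: Theorem~\ref{thm:Sar} with $\Lambda=D$, the strict transform of the pencil of lines through a tangency point $Q_1=C\cap M_1$, the count $\LLL\cdot\Delta_{\tilde f}\le 4$ with equality only if $\tilde C,\tilde M_2,\tilde M_3,E_1\subset\Delta_{\tilde f}$, and Proposition~\ref{prop:cb} to improve the bound to $3$. The gap is in the step you yourself flag as the main obstacle, and it is a real one: Proposition~\ref{prop:cb} applied to $\tilde C$ cannot force $\tilde C\not\subset\Delta_{\tilde f}$. In your resolution $\tilde C$ meets $\overline{\beta^{-1}(D)\setminus\tilde C}$ transversally at the three points $\tilde C\cap E_i'$, so the parity condition merely says that, if $\tilde C\subset\Delta_{\tilde f}$, then an even nonzero number of the $E_i'$ lie in $\Delta_{\tilde f}$ as well. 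That is perfectly consistent: for instance the configuration $\Delta_{\tilde f}=\tilde C+E_1'+E_2'+\tilde M_1+\tilde M_2$ (with the vertices $M_i\cap M_j$ not blown up) satisfies the conclusion of Proposition~\ref{prop:cb} for every one of its components. So no amount of parity bookkeeping on $\tilde C$ and the $\tilde M_i$ will ever rule out $\tilde C\subset\Delta_{\tilde f}$; the statement you are aiming at is not a consequence of the constraints and need not hold.

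What the parity argument does kill is $E_1$, and this is precisely the paper's proof (its curve $\tilde E$). In your own configuration $E_1$ meets $\overline{\beta^{-1}(D)\setminus E_1}$ transversally at the single point $E_1\cap E_1'$; since $\Delta_{\tilde f}\subset\beta^{-1}(D)$, if $E_1$ were a component of $\Delta_{\tilde f}$ we would get $E_1\cdot(\Delta_{\tilde f}-E_1)\le 1$, which cannot be even and nonzero. Hence $E_1\not\subset\Delta_{\tilde f}$; in particular your equality case (which assumes $E_1\subset\Delta_{\tilde f}$) is impossible, so $\LLL\cdot\Delta_{\tilde f}\le 3$ and Theorem~\ref{thm:rat} applies. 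The fix is thus one line and uses only curves you already introduced — you aimed Proposition~\ref{prop:cb} at the wrong component. A secondary caveat: Theorem~\ref{thm:Sar} is a black box, so $\beta$ does not \emph{have} to resolve the tangencies (if $f$ happens to be smooth over a larger open set, no blowups are needed at all); as in the paper, you should phrase this as ``we may assume'' that $\beta$ factors through the blowup of $Q_1$ (and of the resulting triple point), which is what makes $\tilde\LLL$ base point free and gives $E_1$, $E_1'$ the incidences your argument uses.
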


\begin{figure}[h]
\begin{tikzpicture}[xscale = .8, yscale = .8]
\draw[black, thick] {(0,0) ellipse (5em and 3.5em)} node[ xshift=35, yshift=10] {$C$};
\draw[black, thick] (-5,1.47) -- (5,1.47)node[above, xshift=-75] {$M_1$};
\draw[black, thick] (-4,2) -- (0.2,-3.2)node[above, yshift=20, xshift=25] {$M_3$};
\draw[black, thick] (4,2) -- (-0.2,-3.2)node[above, yshift=20, xshift=-25] {$M_2$};
\fill (0,1.47) circle (1.5pt)node[above, xshift=-5] {$Q_1$};
\fill (-1.84,-0.65) circle (1.5pt)node[left, xshift=-5] {$Q_2$};
\fill (1.84,-0.65) circle (1.5pt)node[right, xshift=5] {$Q_3$};
\end{tikzpicture} 
\caption{}
\label{fig:ell}
\end{figure}

\begin{proof}
Apply Theorem~\ref{thm:Sar} with $S=\PP^2$. 
Let $\LLL$ be the pencil of lines passing through $Q_1$ and let $\tilde \LLL$ be the proper transform of $\LLL$ 
on $\tilde S$. 
For a general member $L\in \LLL$ the intersection $L\cap D$ consists of $ Q_1$ and three more distinct points.
We may assume that 
there is a decomposition 
\[
\beta: \tilde S \overset{\beta_2}\longrightarrow 
S_1\overset{\beta_1}\longrightarrow S=\PP^2, 
\]
where
$\beta_1$ is the blowup of $ Q_1$. Then $\tilde 
\LLL$ is base point free. Let $\tilde E\subset \tilde S$ be the proper transform 
of the $\beta_1$-exceptional divisor.
Then $\tilde E$ meets the set
$\overline{\beta^{-1}(D)\setminus \tilde E}$
at one point, say $\tilde P_1$, and a general member $\tilde L\in \tilde \LLL$
does not pass through $\tilde P_1$. In this case, $\tilde E$ is not contained in the discriminant curve $\Delta_{\tilde{f}}$ of $\tilde f$
(see Proposition~\ref{prop:cb}).
Therefore, $\Delta_{\tilde{f}}\cdot \tilde L\le 3$ and $X$ is rational by Theorem~\ref{thm:rat}.
\end{proof}

\begin{slemma}
\label{lemma:P(3,4,5)}
Let $S$ be the weighted projective plane $\PP(3,4,5)$ and let $P_3:=(1,0,0)$, 
$P_4:=(0,1,0)$, $P_5:=(0,0,1)$ be its toric points.
Let
$f: X\to S$ be a rational curve fibration such that $f$ is smooth over 
$S\setminus (D\cup \Sing(S))$,
where $D$ is an element of the linear system $|\OOO_S(20)|$. 
Assume that $P_4,\, P_5\notin D$, and the pair $(S,D)$ is purely log terminal (plt) at $P_3$.
Then $X$ is rational.
\end{slemma}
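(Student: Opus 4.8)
The plan is to reduce to a standard conic bundle and then apply Theorem~\ref{thm:rat}, in the same spirit as the proof of Lemma~\ref{lemma:P2}. First I would invoke Theorem~\ref{thm:Sar} with $\Lambda=D\cup\Sing(S)$ to produce a standard conic bundle $\tilde f\colon \tilde X\to \tilde S$ together with a birational morphism $\beta\colon \tilde S\to S$ that is an isomorphism over $S\setminus\Lambda$ and satisfies $\beta(\Delta_{\tilde f})\subset\Lambda$. Since $P_3\in D$ while $P_4,P_5\in\Sing(S)$, we have $\Lambda=D\cup\{P_4,P_5\}$. It then suffices to exhibit a base point free pencil of rational curves $\tilde\LLL$ on $\tilde S$ with $\tilde\LLL\cdot\Delta_{\tilde f}\le 3$.

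For this I would take the pencil $\LLL=\langle x_4^2,\,x_3x_5\rangle\subset|\OOO_S(8)|$, whose base locus is exactly $\{P_3,P_5\}$. A general member $C\colon \lambda x_4^2+\mu x_3x_5=0$ is an irreducible rational curve passing through $P_3$ and $P_5$ and avoiding $P_4$ (in the chart $\{x_3\neq 0\}$ it is the parabola $x_5=-\tfrac{\lambda}{\mu}x_4^2$). Since $C\cdot D=\tfrac{8\cdot 20}{60}=\tfrac{8}{3}$, and since in the orbifold chart at $P_3$ both $C$ and $D$ are smooth and tangent to the same direction, one computes $(C\cdot D)_{P_3}=\tfrac13\cdot 2=\tfrac23$; hence $C$ meets $D$ in exactly two further points, both lying in $S\setminus\Sing(S)$, where $\beta$ is an isomorphism. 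Let $\tilde\LLL$ be the proper transform of $\LLL$; after possibly enlarging the sequence of blow-ups defining $\beta$ and re-running the relative MMP over the base (to keep $\tilde f$ standard), I may assume $\tilde\LLL$ is base point free.

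It remains to bound $\tilde\LLL\cdot\Delta_{\tilde f}$. The two transverse intersection points contribute $\tilde\LLL\cdot\tilde D=2$, where $\tilde D$ is the proper transform of $D$. Over $P_4$ the general member $\tilde C$ of $\tilde\LLL$ meets nothing, so the contribution is $0$. Over $P_5$ the fibre $\beta^{-1}(P_5)$ is a tree of smooth rational curves disjoint from $\tilde D$, because $P_5\notin D$; a component of $\Delta_{\tilde f}$ lying in $\beta^{-1}(P_5)$ and extremal in this sub-configuration (a leaf of the corresponding tree) would meet the rest of $\Delta_{\tilde f}$ in at most one point, hence in a number of points that is odd or zero, contradicting Proposition~\ref{prop:cb}. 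Thus no component of $\beta^{-1}(P_5)$ lies in $\Delta_{\tilde f}$ and the contribution over $P_5$ is $0$. Finally, over $P_3$ I would use the plt hypothesis: it guarantees that $D$ is smooth with a single branch at $P_3$, so in the resolution the proper transforms $\tilde C$ and $\tilde D$ separate and $\tilde C$ meets the exceptional tree over $P_3$ in a single point, contributing at most $1$. Summing, $\tilde\LLL\cdot\Delta_{\tilde f}\le 2+0+0+1=3$, and Theorem~\ref{thm:rat} yields the rationality of $X$.

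The hard part will be the local analysis at $P_3$: one must resolve simultaneously the quotient singularity $\tfrac13(1,2)$, the base point of $\LLL$, and the order-two tangency of $C$ and $D$, and then verify that the plt condition forces $\tilde C$ and $\tilde D$ to become disjoint near the exceptional locus while $\tilde C$ touches it (and hence $\Delta_{\tilde f}$) at most once. The parity argument at $P_5$ via Proposition~\ref{prop:cb} must also be made precise, but this is routine once one knows that $\beta^{-1}(P_5)$ is a tree of rational curves meeting $\tilde D$ in the empty set.
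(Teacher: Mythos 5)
Your proposal is correct and follows essentially the same route as the paper: apply Theorem~\ref{thm:Sar}, use the pencil $\LLL=|\OOO_S(8)|$ with base points $P_3,P_5$, invoke the parity statement of Proposition~\ref{prop:cb} to exclude discriminant components in the exceptional trees over the singular points, exploit the plt hypothesis at $P_3$, and conclude with Theorem~\ref{thm:rat}. The only difference is bookkeeping: the paper shows $\Delta_{\tilde f}\subset\Supp\big(\lfloor\beta^*D\rfloor\big)$ and gets $\Delta_{\tilde f}\cdot\tilde\LLL\le D\cdot\LLL=\tfrac{20\cdot 8}{60}=\tfrac83<3$ directly from the projection formula (no local analysis of the tangency at $P_3$ is needed), whereas you assemble the bound $2+0+0+1=3$ from explicit local contributions, which also suffices for Theorem~\ref{thm:rat}.
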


\begin{proof}
Apply Theorem~\ref{thm:Sar}.
Then $\beta(\Delta_{\tilde{f}})\subset D\cup \Sing(S)$.
Note however that $\beta^{-1}(P_i)$, $i=3,4,5$ is a tree of smooth rational 
curves,
$P_4,\, P_5\notin D$, and the pair $(S,D)$ is plt at $P_3$ by 
\eqref{eq:Psi:00a}.
The latter implies that $\beta^{-1}(P_3)$ meets the proper transform of $D$ 
transversally at one point.
Hence $\Delta_{\tilde{f}}\subset \beta^{-1}\big(D\big)$ and none of the 
components of $\beta^{-1}\big(\Sing(S)\big)$ 
nor $\beta$-exceptional divisors over smooth points of $D$ 
are contained in $\Delta_{\tilde{f}}$ (see Proposition~\ref{prop:cb}).
In particular, $\Delta_{\tilde{f}}\subset \Supp(\lfloor f^*D\rfloor)$.

Consider the pencil of rational curves $\LLL=|\OOO_S(8)|$. Any member $L\in 
\LLL$ is given by
$\lambda_0 z_3z_5+\lambda_1 z_4^2=0$. The pencil $\LLL$ has two base points at 
$P_3$ and $P_5$. We may assume that the proper transform $\tilde \LLL$ of $\LLL$ 
on $\tilde S$ is base point free. 
Since $\Delta_{\tilde{f}}\subset \Supp(\lfloor f^*D\rfloor)$, we have 
\[
\textstyle
\Delta_{\tilde{f}}\cdot \tilde \LLL\le f^*D\cdot \tilde \LLL\le D\cdot \LLL= 
\frac {20\cdot 8 }{3\cdot 4\cdot 5}=\frac 83<3.
\]
Then $X$ is rational by Theorem~\ref{thm:rat}.
\end{proof}

\begin{slemma}
\label{lemma:P(1,3,5)}
Let $S:=\PP(1,3,5)$ and let
$f: X\to S$ be a rational curve fibration such that $f$ is smooth over 
$S\setminus (D\cup \Sing(S))$,
where $D$ is an element of the linear system $|\OOO_S(15)|$ that does not pass 
through singular points of $S$. Then $X$ is rational.
\end{slemma}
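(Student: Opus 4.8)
The plan is to mimic the proof of Lemma~\ref{lemma:P(3,4,5)} with a pencil adapted to $S=\PP(1,3,5)$. First I would apply Theorem~\ref{thm:Sar} with this $S$ and $\Lambda=D\cup\Sing(S)$; this yields a standard conic bundle $\tilde f\colon \tilde X\to\tilde S$ together with a birational morphism $\beta\colon\tilde S\to S$, an isomorphism over $S\setminus\Lambda$, such that $\beta(\Delta_{\tilde f})\subset D\cup\Sing(S)$. By Theorem~\ref{thm:rat} it then suffices to exhibit a base point free pencil of rational curves $\tilde\LLL$ on $\tilde S$ with $\tilde\LLL\cdot\Delta_{\tilde f}\le 3$.

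Here $S$ is smooth at $P_1=(1,0,0)$ and has cyclic quotient singularities of types $\frac{1}{3}(1,2)$ at $P_3=(0,1,0)$ and $\frac{1}{5}(1,3)$ at $P_5=(0,0,1)$, so each $\beta^{-1}(P_i)$ is a chain of smooth rational curves. For the pencil I take $\LLL=|\OOO_S(3)|$, spanned by the two monomials $x^3$ and $y$; a general member $\lambda_0 x^3+\lambda_1 y=0$ is a rational curve, and the only base point of $\LLL$ is $P_5$. As in Lemma~\ref{lemma:P(3,4,5)} we may assume that $\beta$ factors through the blow-up resolving this base point, so that the proper transform $\tilde\LLL$ of $\LLL$ is base point free. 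The key numerical input is
\[
D\cdot\LLL=\frac{15\cdot 3}{1\cdot 3\cdot 5}=3.
\]

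To bound $\Delta_{\tilde f}\cdot\tilde\LLL$ I would show that the only $\beta$-exceptional curves meeting a general member $\tilde L\in\tilde\LLL$, namely those lying over the base point $P_5$, are not components of $\Delta_{\tilde f}$. Since $P_5\notin D$, such a curve meets the rest of the total transform only in its neighbours inside the chain $\beta^{-1}(P_5)$; applying Proposition~\ref{prop:cb} inductively from an endpoint of the chain inward shows that none of them lies in $\Delta_{\tilde f}$ (an endpoint would meet the rest of the discriminant in at most one point, contradicting that this number is even and nonzero, after which the adjacent curve becomes an effective endpoint, and so on). Every other $\beta$-exceptional curve lies over a point outside the base locus of $\LLL$ and is therefore disjoint from a general $\tilde L$. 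Consequently
\[
\Delta_{\tilde f}\cdot\tilde\LLL\le \tilde D\cdot\tilde\LLL\le D\cdot\LLL=3,
\]
where $\tilde D$ is the proper transform of $D$, and hence $X$ is rational by Theorem~\ref{thm:rat}.

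The main obstacle is exactly this discriminant bookkeeping: one must rule out the exceptional curves over $P_5$ from $\Delta_{\tilde f}$, since these do meet the transformed pencil and could otherwise spoil the bound. This is where the hypothesis that $D$ avoids the singular points of $S$ is used, in combination with the parity statement of Proposition~\ref{prop:cb}; by contrast, verifying that $|\OOO_S(3)|$ is a pencil with single base point $P_5$, that its general member is rational, and that $\beta$ can be arranged to resolve the base point, is routine.
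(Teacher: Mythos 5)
Your proposal is correct and takes essentially the same route as the paper: the paper's own proof just says ``similar to the proof of Lemma~\ref{lemma:P(3,4,5)}'' with $\LLL=|\OOO_S(3)|$, base point $(0,0,1)\notin D$, and $D\cdot\LLL=3$, which is exactly the argument you spell out (Theorem~\ref{thm:Sar}, exclusion of the exceptional curves over $P_5$ via the parity statement of Proposition~\ref{prop:cb}, then Theorem~\ref{thm:rat}). One cosmetic correction: Theorem~\ref{thm:Sar} does not guarantee that $\beta$ is minimal over the singular points, so $\beta^{-1}(P_i)$ is in general only a tree of smooth rational curves (as stated in the proof of Lemma~\ref{lemma:P(3,4,5)}) rather than a chain, but your inductive pruning argument applies verbatim to trees by always removing a leaf.
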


\begin{proof}
Similar to the proof of Lemma~\ref {lemma:P(3,4,5)}. Here we take 
$\LLL=|\OOO_S(3)|$. Then any member $L\in \LLL$ is given by
$\lambda_0 z_3+\lambda_1 z_1^3=0$ and the base locus of $\LLL$ is the single 
point $(0,0,1)$, which does not lie on $D$,
and $D\cdot \LLL=3$. 
\end{proof}

\section{The quotient $\PP(V_3)/\Alt_5$}
\label{sect:P2}

\begin{proposition}
\label{prop:V3}
Let $V$ be an irreducible three-dimensional representation of $\Alt_5$. 
Then the quotient $\PP(V)/\Alt_5$ is isomorphic to the weighted projective plane 
$\PP(1,3,5)$. The branch curve $D\subset \PP(1,3,5)$ is irreducible, has degree $15$, and does not pass through 
singular points of $\PP(1,3,5)$. The singularities of $D$ are as follows: 
two cuspidal points of type \type{A_{4}}, a cuspidal point of type \type{A_{2}} \textup(simple cusp\textup),
and an ordinary double point.
\end{proposition}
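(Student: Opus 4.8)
The plan is to compute the quotient $\PP(V)/\Alt_5$ explicitly via invariant theory. Since $V=V_3$ (or $V_3'$) is a three-dimensional irreducible representation of $\Alt_5$, the quotient is determined by the ring of invariants $\CC[x,y,z]^{\Alt_5}$. The group $\Alt_5\subset \SO_3$ acts on $\PP^2$ with the classical geometry of the icosahedron: there are orbits of sizes $6$, $10$, and $15$ corresponding to the vertices, faces, and edges of the icosahedron (the $12$ vertices giving $6$ points in $\PP^2$, the $20$ faces giving $10$ points, and the $30$ edges giving $15$ points). Correspondingly, there are classical invariant forms---a sextic $F_6$ (vanishing on the $6$ vertex-points, i.e. the $6$ axes of $5$-fold symmetry), a decic $F_{10}$ (the $10$ face-points, axes of $3$-fold symmetry), and a form of degree $15$ coming from the $15$ edge-points. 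These are the fundamental invariants, and the first step is to recall from the classical theory (Klein) that $\CC[x,y,z]^{\Alt_5}$ is generated by invariants of degrees $2$, $6$, $10$, $15$, where the degree-$2$ generator is the $\SO_3$-invariant quadratic form $Q$.

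Once these generators and their single relation are written down, the structure of the quotient follows. The three algebraically independent even-degree invariants $Q, F_6, F_{10}$ give a map to $\PP(2,6,10)\cong\PP(1,3,5)$, and the degree-$15$ invariant $F_{15}$ satisfies a relation of the form $F_{15}^2 = (\text{polynomial in } Q, F_6, F_{10})$, so $F_{15}$ is integral over the subring generated by the even invariants but lies in the fraction field. The second step is to argue that, after passing to $\PP^2$, the projective quotient is exactly $\Proj$ of this graded ring and identify it with $\PP(1,3,5)$: the Veronese-type rescaling $(2,6,10)\mapsto(1,3,5)$ identifies the even-invariant subring as the homogeneous coordinate ring of $\PP(1,3,5)$, and one checks the generator $F_{15}$ does not produce a separate coordinate because its square lies in the subring. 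The two singular points of $\PP(1,3,5)$ (of types $\frac13$ and $\frac15$) arise precisely from the stabilizers $\mumu_3$ and $\mumu_5$ at the face- and vertex-orbits.

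The third step is to identify the branch curve $D$ and its singularities. The branch divisor of the quotient map $\PP^2\to\PP^2/\Alt_5$ is the image of the locus where the stabilizer is nontrivial, which for $\Alt_5$ acting on $\PP^2$ is the ramification along the $15$ reflection-type pseudo-reflection loci; concretely $D$ is the image of $\{F_{15}=0\}$, and its degree $15$ in $\PP(1,3,5)$ follows by tracking weights through the quotient map (the form $F_{30}=F_{15}^2$ cuts out $D$ with the correct multiplicity, giving $\OOO_S(15)$ after the degree divides by the ramification index $2$). To read off the singularities one analyzes the images of the special orbits: the $6$-point and $10$-point orbits, having cyclic stabilizers $\mumu_5$ and $\mumu_3$, map to the toric singular points and lie off $D$, whereas the branch curve passes through the images of orbits lying on the edges. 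The singular points of $D$ ($A_4$, $A_4$, $A_2$, and a node) come from analyzing how the $15$ edge-lines meet and how the stabilizer structure forces local analytic types; the $A_4$ cusps correspond to points where the branch curve meets a $\mumu_5$-type axis transversally in the source, etc.

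The main obstacle will be the precise determination of the local singularity types of $D$. Writing down the fundamental invariants and the relation is classical bookkeeping, and identifying the quotient as $\PP(1,3,5)$ is a standard graded-ring computation. But pinning down that the cuspidal singularities are of type $A_4$ (rather than some other $A_k$ or $D_k$) and that there is exactly one node requires either a careful local computation of the invariant forms near the special orbits or an explicit computation of the discriminant in coordinates---this is where a computer-algebra verification (Macaulay2) is natural. The cleanest route is to write the branch curve as the discriminant of the relation expressing $F_{15}^2$ in terms of $Q, F_6, F_{10}$, restrict to affine charts around each special point, and compute the Puiseux expansion or the Milnor number to certify the analytic type. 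I expect the delicate part is ensuring the multiplicities match so that the total $\delta$-invariant and the degree of $D$ are mutually consistent via the genus formula on $\PP(1,3,5)$.
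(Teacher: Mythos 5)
You follow the same route as the paper: fundamental invariants of degrees $2,6,10,15$, the single relation $z_{15}^2=\Phi(z_2,z_6,z_{10})$, identification of the quotient with $\PP(2,6,10)\simeq\PP(1,3,5)$, and a chart-by-chart analysis of $D=\{\Phi=0\}$ checked against the genus formula. But the geometric bookkeeping on which your third step relies is wrong. The orbits of lengths $6$ and $10$ (the axis points of the $5$-fold and $3$-fold rotations) have \emph{dihedral} stabilizers $\Dih_5$ and $\Sym_3$, not cyclic ones; each such point is fixed by the involutions in its stabilizer and hence lies on their reflection lines, i.e.\ on the union of the $15$ lines. So these orbits map \emph{into} $D$, not off it, and in fact they are precisely its singular points: on the tangent space at such a point the stabilizer acts as the complexified reflection group $I_2(5)$ (resp.\ $I_2(3)$; resp.\ $I_2(2)$ at the $15$ edge-axis points, where two of the lines cross), whose discriminant --- the image of the reflection lines --- is $v^2=u^5$ (resp.\ $v^2=u^3$, $v^2=u^2$). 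Thus the images of these three orbits are an \type{A_4} cusp, a simple cusp, and a node. The orbits that map to the two toric singular points of $\PP(1,3,5)$ are instead the orbits of lengths $20$ and $12$ lying \emph{on the invariant conic} $\{z_2=0\}$; those do have cyclic stabilizers $\mumu_3$ and $\mumu_5$, and $D$ avoids the toric points because the $30$ edge points on the conic are disjoint from them. With your identification you would search for the cusps at the toric points (through which $D$ does not even pass) and would have no mechanism producing the node.

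Second, the list you set out to certify (two \type{A_4}, one \type{A_2}, one node) is impossible, and your own proposed consistency check detects this: $D$ is the image of a single line (the $15$ lines form one orbit), hence rational, and adjunction in the smooth locus of $\PP(1,3,5)$ gives $\p(D)=4$, so the total $\delta$-invariant of $\Sing(D)$ is at most $4$, whereas your list has $\delta=2+2+1+1=6$. The orbit analysis above --- and the paper's own explicit computation in the chart $\{z_2\neq0\}$, which finds an \type{A_4} at the origin, a node at $(1,4)$, and a simple cusp at $(32/27,1024/81)$ --- yields exactly \emph{one} \type{A_4}, one \type{A_2}, and one node, with $\delta=2+1+1=4$. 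So the proposition as printed is inconsistent with its own proof (``two cuspidal points of type \type{A_{4}}'' should be one), and a faithful execution of your plan would expose this rather than confirm it. A smaller gap in your second step: ``$F_{15}^2$ lies in the even subring'' is not by itself a reason that $F_{15}$ contributes no coordinate, since a relation $w^2=f$ ordinarily defines a double cover. What saves the argument is parity: $15$ is odd while $2,6,10$ are even, so $(z_2,z_6,z_{10},\pm z_{15})$ are the same point of $\PP(2,6,10,15)$; equivalently, as the paper argues, $-\mathrm{id}$ acts trivially on $\PP^2$, whence $\PP^2/\Alt_5=\PP^2/(\Alt_5\times\{\pm\mathrm{id}\})$, and the latter is $\Proj\CC[z_2,z_6,z_{10}]$ by Chevalley--Shephard--Todd applied to the reflection group \type{H_3}.
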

\begin{proof}[Outline of the proof]
The group $\Alt_5$ has two irreducible three-dimensional representations $V_3$ and $V_3'$
(see Table~\ref{tab:repSL25}).
They are differ in an outer automorphism of $\Alt_5$, hence studying the quotients,
we can restrict ourselves with one of $V_3$ and $V_3'$, say $V_3$.
Denote $S:=\PP(V_3)/\Alt_5$.
Consider the group $G:=\Alt_5\times\mumu_2$ and let $\mumu_2$ act on $V_3$ by scalar multiplication.
Thus $V_3$ can be regarded as a $G$-module.
The group
$G:=\Alt_5\times\mumu_2$ is known as the Coxeter group \type{H_3} \cite{Bourbaki:Lie:4-6}.
It is isomorphic to the symmetry group of the icosahedron in $\RR^3$,
while the rotation symmetry group is just $\Alt_5\subset G$.
Complexifying the embedding $G\hookrightarrow \GL_3(\RR)$ we obtain a complex 
irreducible representation and we may assume that it is $V_3$.
In this presentation, $G$ is generated by reflections and by the Chevalley–Shephard–Todd theorem
the ring of invariants is generated by three algebraically independent polynomials.
Moreover, these polynomials have degrees $2$, $6$, and $10$ \cite{Bourbaki:Lie:4-6}.
Denote them by $z_2(x_0,x_1,x_2)$, $z_6(x_0,x_1,x_2)$, $z_{10}(x_0,x_1,x_2)$,
where $x_0,x_1,x_2$ are coordinates in $V_3$. Then $\CC[x_0,x_1,x_2]^{G}= \CC[z_2, z_6, z_{10}]$
and so
\[
\PP(V_3)/G\simeq \Proj \CC[z_2, z_6, z_{10}]\simeq \PP(2,6,10)\simeq \PP(1,3,5).
\]

Here is a geometric description of the invariants $z_2$, $z_6$, $z_{10}$.
Since $V_3$
is defined
over $\RR$,
on $V_3$ there exists an invariant non-degenerate quadratic form
$z_2(x_0,x_1,x_2)$.
It defines an invariant smooth conic $C\subset \PP^2=\PP(V)$. Each subgroup
$H\subset \Alt_5$
of order $2$ has two fixed points $P$ and $P'$ on $C\simeq \PP^1$, and the line
passing through $P$ and $P'$
is $H$-invariant. Now, the $15$ order $2$ subgroups $H_i\in \Alt_5$ define
$15$ distinct lines
$L_1,\dots, L_{15}\subset \PP^2$ so that the union $L_1\cup \cdots\cup L_{15}$
is
$\Alt_5$-invariant. Therefore, there exists an invariant polynomial
$z_{15}(x_0,x_1,x_2)$ of degree $15$.
Similarly, considering subgroups in $\Alt_5$ of order $3$ and $5$ we obtain
invariants $z_{10} (x_0,x_1,x_2)$ and $z_{6} (x_0,x_1,x_2)$ of degrees $10$ and
$6$, respectively. Note that the invariants $z_{6}$, $z_{10}$, and $z_{15}$
can be interpreted in terms of the 
icosahedron: they correspond to pars of opposite vertices, faces, and edges, respectively (see~Fig.~\ref{fig:ico}).

Since $z_6$ and $z_{10}$ define different finite sets of points on $C$, we see
that $z_6\notin \CC[z_2]$ and $z_{10}\notin \CC[z_2, z_6]$.
Since $\CC[x_0,x_1,x_2]^{G}$ is generated by three polynomials of degrees $2$, $6$, $10$ (see \cite{Bourbaki:Lie:4-6}), we obtain
that 
$z_2$, $z_6$,
and $z_{10}$
are algebraically independent and $\CC[x_0,x_1,x_2]^{G}=\CC[z_2,z_6,z_{10}]$.

On the other hand, a polynomial of odd degree cannot be invariant
with
respect to $\{\pm E\}$.
Therefore, $z_{15}\notin \CC[x_0,x_1,x_2]^{G}$
and so $\CC[x_0,x_1,x_2]^{\Alt_5}=\CC[z_2, z_6, z_{10}, z_{15}]$
(see \cite[Ch 17, \S 266]{Burnside:1897}). Moreover, $z_{15}^2$ is an invariant
of
$G$, hence in $\CC[x_0,x_1,x_2]^{\Alt_5}$ there is a
relation of the form
\begin{equation}
\label{eq:hyp}
z_{15}^2=\Phi (z_2, z_6, z_{10}),
\end{equation}
where $\Phi$ is a quasi-homogeneous polynomial of weighted degree $30$.
Therefore,
$S$ is the weighted hypersurface given by \eqref{eq:hyp} in $\PP(2,6,10, 15)$.

The branch divisor $D\subset S$ of the quotient morphism
$\pi: \PP^2\to S$ is the image of the union of the reflection lines
$L_i\subset \PP^2$, i.e. the lines of fixed points with respect to the
involutions in $\Alt_5$.
In other words, $D =\pi(\cup_{i=1}^{15} L_i)$, i.e. it is the image of the 
curve in $\PP^2$ given by
$z_{15}=0$. Note that the projection $S\to \PP(2,6,10)\simeq
\PP(1,3,5)$ is an isomorphism, so $S\simeq \PP(1,3,5)$, where the curve $D$ is given by
$\Phi=0$.

Now we present 
explicit formulas for $z_2$, $z_6$, $z_{10}$, $z_{15}$ following \cite[\S 125]{Miller-Blichfeldt-Dickson}
and \cite[Part~II, Ch.~IV, \S~3]{Klein1956}.
First, we may assume that
\[
z_2=x_{0}^{2}+x_{1}x_{2}.
\]
Next, we can take
\[
z_6=8\,x_{0}^{4}x_{1}x_{2}-x_{0}x_{1}^{5}-2\,x_{0}^{2}x_{1}^{2}x_{2}^{2}+x_{1}^
{3}x_{2}^{3}-x_{0}x_{2}^{5}.
\]
Note that this $z_6$ is irreducible (but $z_6- z_2^3$ is represented as a 
product of linear forms as above). 
Now put
\[
z_{10}=-\frac 1{25}\left(256z_2^5-\overline{\mathrm{H}}(z_6,z_2)-480 z_2^2z_6\right),
\]
where $\overline{\mathrm{H}}(z_6,z_2)$ is the \textit{bordered Hessian} of $z_6$ and 
$z_2$ \cite{Miller-Blichfeldt-Dickson}. Finally, 
\[
z_{15}=\frac 1{10} \mathrm{J}(z_6,z_2,z_{10}),
\]
where $\mathrm{J}(z_6,z_2,z_{10})$ is the Jacobian of $z_6$, $z_2$, and 
$z_{10}$. The explicit form of $\Phi$ in the relation \eqref{eq:hyp} has the following form
(see \cite[Part~II, Ch.~IV, \S~3 (16)]{Klein1956}):
\begin{equation}
\label{eq:Phi}
\Phi=-1728z_6^5+z_{10}^3+720z_2z_6^3z_{10}-80z_2^2z_6z_{10}^2+64z_2^3(5z_6^2-z_2z_{10})^2.
\end{equation}
It is easy to see from \eqref{eq:Phi} that $D$ does not pass through singular points of $\PP(1,3,5)$, 
the line $z_2=0$ meets $D$ transversally at one point and this point is smooth on $D$.
Consider the affine chart $U=\{z_2\neq 0\}\simeq \Aff^2$. Put $y_3:=z_6/z_2^3$ 
and $y_5:=z_{10}/z_2^5$.
The equation of $D$ takes the form
\[
64\left(5 y_3^{2}-y_5\right)^{2}-1 728 y_3^{5}+720 y_3^{3}y_5-80 y_3y_5^{2}+y_5^{3}=0.
\]
Then it is easy to check that $D$ is singular at the origin $(0,0)$ and this 
point is cuspidal of 
type \type{A_{4}}. 
Other singularities of $D$ are an
ordinary double point at $( 1, 4)$ and a simple cusp at $(32/27 , 1024/81)$.
Since $D$ lies in the smooth locus of $\PP(1,3,5)$, its arithmetic genus can be computed by the usual
adjunction formula: $\p(D)=4$. This shows that $D$ has no other singularities.
\end{proof}

\begin{sremark}
Any curve in the pencil $C_{\lambda}:=\{z_6=\lambda z_2^3\}\subset \PP(V_3)$ is
$\Alt_5$-invariant. For $\lambda\neq 1,\, \infty$
such a curve
is reduced, irreducible and has six ordinary double singularities at the points 
of the $\Alt_5$-orbit of $(1,0,0)$. These points are in general position, so
by blowing them up we obtain a special cubic surface, called Clebsch cubic (see below).
The normalization $C'_{\lambda}$ of $C_{\lambda}$ is a smooth curve of genus $4$
admitting an effective action of $\Alt_5$ (see~\ref{subs:Bring}). 

A certain (non-homogeneous) invariant of degree $6$ defines a hypersurface in $\PP^3$ 
having $65$ isolated singularities \cite{Barth:icosahedron}. This is the maximum number of singularities that
a sextic hypersurface in $\PP^3$ can have. Similarly, there is an invariant of degree $10$ 
defining the hypersurface with $345$ isolated singularities \cite{Barth:icosahedron}. 
\end{sremark}

\subsection{Clebsch diagonal cubic}
The \textit{Clebsch diagonal cubic surface} is the smooth surface 
$S_{\mathrm{C}}\subset \PP^3\subset \PP^4$ given by 
the following equations:
\[
\sum_{i=1}^5 x_i= \sum_{i=1}^5 x_i^3=0. 
\]
The automorphism group of $S_{\mathrm{C}}$ is the symmetric group $\Sym_5$, acting by permutations of the coordinates. 
Up to isomorphism, $S_{\mathrm{C}}$ is the only 
cubic surface with this automorphism group. 
Another realization of the Clebsch cubic is the blowup $\sigma: S_{\mathrm{C}}\to \PP^2$ at six points corresponding to 
the long diagonals of the icosahedron (see Fig.~\ref{fig:ico}). 
However, this birational morphism is not $\Sym_5$-equivariant.
In fact, on $S_{\mathrm{C}}$ there are two such morphisms 
$\sigma: S_{\mathrm{C}}\to \PP^2$ and $\sigma': S_{\mathrm{C}}\to \PP^2$ switched by the elements of $\Sym_5\setminus \Alt_5$, both of them are $\Alt_5$-equivariant,
and  each of them  corresponds to an invariant double-six \cite[Ch.~9]{Dolgachev-ClassicalAlgGeom}.

\subsection{Bring’s curve}
\label{subs:Bring}
It follows from the Hurwitz formula that the largest possible automorphism group of a genus $4$ curve 
is the symmetric group $\Sym_5$. Moreover, up to isomorphism there exists 
exactly one curve $C_{\mathrm{B}}$ to achieve such an automorphism group \cite{Wiman:S5}. 
It is called \textit{Bring's curve} \cite{Klein1956}, \cite{Edge:Bring}.
Bring's curve $C_{\mathrm{B}}$ is not hyperelliptic and its canonical model is given by 
the following equations in $\PP^3\subset \PP^4$:
\[
\sum_{i=1}^5 x_i= \sum_{i=1}^5 x_i^2= \sum_{i=1}^5 x_i^3=0. 
\]
Clearly, $C_{\mathrm{B}}$ lies on the smooth quadric $Q=\big\{\sum x_i= \sum x_i^2=0\big\}\simeq \PP^1\times \PP^1$
as a divisor of bidegree $(3,3)$, hence $C_{\mathrm{B}}$ is trigonal and admits two $\Alt_5$-equivariant triple covers $C_{\mathrm{B}}\to \PP^1$ that are branched over $\Alt_5$-orbits of length 
$12$. On the other hand, $C_{\mathrm{B}}$ lies on the Clebsch cubic surface $S_{\mathrm{C}}$
and the image of $C_{\mathrm{B}}$ under the $\Alt_5$-equivariant contraction $\sigma: S_{\mathrm{C}}\to \PP^2$
is a member of the pencil $C_{\lambda}:=\{z_6=\lambda z_2^3\}$.
It is an interesting fact is that
the Clebsch diagonal cubic $S_{\mathrm{C}}$ and the Bring's curve $C_{\mathrm{B}}$ 
naturally appear in the theory of modular forms
(see \cite{Hircebruh:UMN}). 

\subsection{Application: rationality of $\PP(V_4)/\Alt_5$}
As an application of Proposition \ref{prop:V3}
we give a complete proof of the following.
\begin{stheorem}[\cite{Kolpakov-Prokhorov-1992}]
\label{th-SL25}
The variety $\PP(V_4)/\Alt_5$ is rational.
\end{stheorem}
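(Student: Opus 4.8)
The plan is to exhibit $\PP(V_4)/\Alt_5$ as a rational curve fibration over the surface $\PP(V_3)/\Alt_5 \simeq \PP(1,3,5)$ and then to quote Lemma~\ref{lemma:P(1,3,5)}. The conceptual input is the identification $V_4 \simeq \operatorname{Sym}^3 V_2$ of the faithful four-dimensional representation with the third symmetric power of a two-dimensional one (here $-E\in\tilde\Alt_5$ acts on $\operatorname{Sym}^3 V_2$ by $-\operatorname{id}$, hence trivially on the projectivization, so $\Alt_5$ acts on $\PP(V_4)$). Under this identification $\PP(V_4)=\PP(\operatorname{Sym}^3 V_2)$ is the space of binary cubic forms on $V_2$, and the locus of perfect cubes is the $\Alt_5$-invariant twisted cubic $\Gamma:=\upsilon_3(\PP(V_2))\subset\PP(V_4)$.

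First I would introduce the $\Alt_5$-equivariant covariant sending a binary cubic to its Hessian quadratic form. This is a rational map
\[
\Psi\colon \PP(V_4)\dashrightarrow \PP(\operatorname{Sym}^2 V_2)=\PP(V_3),
\]
given by quadrics and undefined exactly along $\Gamma$, where the Hessian vanishes identically. Geometrically $\Psi$ sends a general point to the unique secant line of $\Gamma$ through it: the fibre over $[q]\in\PP(V_3)$ is the pencil of cubics with prescribed Hessian points $\{u,v\}=\{q=0\}$, i.e. the secant line of $\Gamma$ joining $u^3$ and $v^3$. Since the secant lines of a twisted cubic are parametrized by $\operatorname{Sym}^2\PP(V_2)=\PP(V_3)$ and sweep out $\PP(V_4)$ with a general point on a unique secant, the blow-up $Y:=\operatorname{Bl}_\Gamma \PP(V_4)$ carries an $\Alt_5$-equivariant $\PP^1$-bundle structure $\pi\colon Y\to\PP(V_3)$ whose fibres are the strict transforms of the secant lines. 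In particular $Y\to\PP(V_4)$ is an equivariant birational morphism, so $Y/\Alt_5 \bir \PP(V_4)/\Alt_5$.

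Next I would descend to the quotient. By Proposition~\ref{prop:V3} we have $\PP(V_3)/\Alt_5\simeq\PP(1,3,5)$, the branch divisor is a curve $D\in|\OOO(15)|$, and $D$ avoids $\Sing \PP(1,3,5)$; moreover the isolated fixed points of the elements of order $3$ and $5$ map exactly to $\Sing\PP(1,3,5)$, so that the quotient morphism $\PP(V_3)\to\PP(1,3,5)$ is étale over $\PP(1,3,5)\setminus(D\cup\Sing)$ and the $\Alt_5$-action is free there. Taking the quotient of $\pi$ yields a morphism
\[
f\colon Y/\Alt_5 \longrightarrow \PP(V_3)/\Alt_5=\PP(1,3,5)
\]
which over the free locus is again a $\PP^1$-bundle, hence a rational curve fibration that is smooth over $\PP(1,3,5)\setminus(D\cup\Sing)$. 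Since $Y/\Alt_5 \bir \PP(V_4)/\Alt_5$, Lemma~\ref{lemma:P(1,3,5)} applies verbatim and yields the rationality of $\PP(V_4)/\Alt_5$.

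The main work, and the main obstacle, is the geometric content of the second paragraph: verifying that $\operatorname{Bl}_\Gamma \PP(V_4)\to\PP(V_3)$ is genuinely a $\PP^1$-bundle (equivalently, that the Hessian/secant map has reduced $\PP^1$-fibres away from a locus of codimension $\ge 2$), and confirming that the degeneracy locus of $f$ is produced solely by the ramification of $\PP(V_3)\to\PP(1,3,5)$, hence contained in $D\cup\Sing$. Once the equivariant secant-line description and the étaleness statement extracted from Proposition~\ref{prop:V3} are in place, the remaining steps are formal.
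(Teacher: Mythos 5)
Your proposal is correct and follows essentially the same route as the paper: blow up the $\Alt_5$-invariant twisted cubic $\upsilon_3(\PP^1)\subset\PP(V_4)$, obtain the equivariant $\PP^1$-bundle over $\PP(V_3)$ (the paper describes it as the map given by the quadrics through the curve, which is exactly your Hessian covariant), pass to quotients using Proposition~\ref{prop:V3}, and conclude with Lemma~\ref{lemma:P(1,3,5)}. The only cosmetic difference is that you state smoothness of the descended fibration over $\PP(1,3,5)\setminus(D\cup\Sing)$, which is precisely the hypothesis of Lemma~\ref{lemma:P(1,3,5)}, while the paper asserts smoothness over $\PP(1,3,5)\setminus D$; both suffice.
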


\begin{proof}
Since $V_4=\Sy^3 (V_2)$ (see Table~\ref{tab:repSL25}), there exists
an invariant twisted cubic curve $C=\upsilon_3(\PP^1)\subset \PP(V_4))$.\footnote{In fact there are two such
curves, see the proof of Theorem~\ref{th-tildeS5}. }
Let $\sigma \colon Y\to \PP(V_4)$ be the blowup of $C$.
Since $C$ is an intersection of quadrics, the linear system
$|2\sigma^*H-E|$ is base point free and defines a morphism $\varphi: Y\to \PP^2$,
where $H$ is the hyperplane class and $E$ is the exceptional divisor.
In other words, $\varphi$ is
given by the birational transform of the linear system of quadrics
passing through $C$ and the fibers of $\varphi$ are proper transforms of
$2$-secant lines of $C$. 
Note that $Y$ is a
Fano threefold and $\varphi$ is a Mori extremal contraction.
The action $\Alt_5$ on $\PP^2$ is
induced by a faithful representation of $\Alt_5$. Thus we may assume that
$\PP^2=\PP(V_3)$.
We obtain the following
$\Alt_5$-equivariant diagram:
\[
\vcenter{
\xymatrix{
&Y\ar[dl]_{\scriptstyle{\sigma}}\ar[dr]^{\scriptstyle{\varphi}}&
\\
\PP(V_4)\ar@{-->}[rr]&&\PP(V_3)
}}
\]
where $\varphi$ is a $\PP^1$-bundle.
Thus the quotient $\PP(V_4)/\Alt_5$ is birational to $Y/\Alt_5$ and there is a rational curve fibration
$\bar\varphi\colon Y/\Alt_5\to \PP(V_3)/\Alt_5\simeq \PP(1,3,5)$ (see Proposition~\ref{prop:V3}).
The fibration $\bar\varphi$ is smooth over $\PP(1,3,5)\setminus D$, where $D$ is given by $\Phi=0$ (see \eqref{eq:Phi}).
Then by Lemma~\ref{lemma:P(1,3,5)} the variety $Y/\Alt_5$ is rational.
\end{proof}

\section{The quotient $\PP(V_4')/\Alt_5$}
\label{sect:V4prime}
In this section we describe the geometry of the quotient
$\PP(V_4')/\Alt_5$, where $V_4'$ is the irreducible four-dimensional representation of $\Alt_5$.
As a byproduct we give a short conceptual proof of rationality of $\PP(V_4')/\Alt_5$
(Theorem~\ref{thm:C4/A5}).
The earlier proofs can be found in \cite{Kervaire-Vust} and \cite{Maeda-T-1989}.

\begin{theorem}
\label{thm:V4prime}
The variety $\PP(V_4')/\Alt_5$ is a Fano threefold with
$\QQ$-factorial
canonical singularities and $\uprho(X)=1$.
Moreover, the following assertions hold.
\begin{enumerate}
\item
\label{thm:V4prime1}
$(-K_X)^3=16/15$;
\item
\label{thm:V4prime2}
$\Cl(X)\simeq \ZZ$;
\item
\label{thm:V4prime3}
$-K_X\sim 4 A_X$, where $A_X$ is a primitive element of $\Cl(X)$;
\item
\label{thm:V4prime4}
the anticanonical linear system $|-K_X|$
is the pencil given by $\mu z_4+\lambda z_2^2=0$ and for a general member $S\in |-K_X|$ the pair $(X,S)$ is canonical; 
\item
\label{thm:V4Sing-G}
the set of Gorenstein singular
points of $X$ consists of three rational curves $\ell_2$, $\ell_2'$, and $\ell_3$
so that the singularities at the general point of $\ell_2$ and $\ell_2'$ are of type \type{A_1} and
the singularities at the general point of $\ell_3$ are of type \type{A_2};

\item
\label{thm:V4Sing-nG} 
the set of non-Gorenstein
points of $X$ consists of a point $P_3$ of type $\frac13(1,1,-1)$ and
two points $P_5$ and $P_5'$ of type $\frac15(1,2,-2)$, in particular, any non-Gorenstein singularity of $X$ is terminal.
\end{enumerate}
\end{theorem}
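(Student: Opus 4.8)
The plan is to realize $V_4'$ as the standard representation of $\Sym_5$ (the $4$-dimensional summand $\{(x_1,\dots,x_5):\sum x_i=0\}$ of the permutation representation) restricted to $\Alt_5$, and then to compute the two relevant invariant rings. Since $\Sym_5$ acts on $V_4'$ as the reflection group of type $\mathrm{A}_4$, the Chevalley--Shephard--Todd theorem gives $\CC[V_4']^{\Sym_5}=\CC[z_2,z_3,z_4,z_5]$ with $\deg z_i=i$, where the $z_i$ are restrictions of the elementary symmetric functions. Adjoining the (unique up to scalar) anti-invariant $z_{10}=\prod_{i<j}(x_i-x_j)$ of degree $10$ yields $\CC[V_4']^{\Alt_5}=\CC[z_2,z_3,z_4,z_5,z_{10}]$ with a single relation $z_{10}^2=F(z_2,z_3,z_4,z_5)$, where $F$ is the discriminant, quasi-homogeneous of weighted degree $20$. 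Hence
\[
X=\PP(V_4')/\Alt_5\;\cong\;\{z_{10}^2=F\}\subset\PP(2,3,4,5,10),
\]
a double cover of $\PP(2,3,4,5)$ branched over the discriminant hypersurface $B=\{F=0\}$.

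For the global invariants I would argue conceptually. The $\Alt_5$-action on $\PP^3=\PP(V_4')$ has no fixed divisors (every nontrivial fixed locus has codimension $\ge 2$), so $\pi\colon\PP^3\to X$ is \'etale in codimension one and $K_{\PP^3}=\pi^*K_X$. Since $-K_{\PP^3}=\OOO_{\PP^3}(4)=\pi^*\OOO_X(4)$, we get $-K_X=\OOO_X(4)$, which is ample; thus $X$ is Fano, and $(-K_X)^3=\frac1{60}(-K_{\PP^3})^3=\frac{64}{60}=\frac{16}{15}$, giving~(i). Writing $A_X=\OOO_X(1)$ we have $-K_X\sim 4A_X$; that $A_X$ generates $\Cl(X)\cong\ZZ$ (whence (ii), (iii), $\QQ$-factoriality and $\uprho(X)=1$) follows from a Grothendieck--Lefschetz type statement for this weighted hypersurface together with the perfectness of $\Alt_5$ (no nontrivial characters, so every reflexive rank-one sheaf descends). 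For (iv) I compute the weighted-degree-$4$ graded piece of $\CC[V_4']^{\Alt_5}$: it is spanned by $z_2^2$ and $z_4$, so $|-K_X|=|\OOO_X(4)|$ is exactly the pencil $\mu z_4+\lambda z_2^2=0$, and canonicity of $(X,S)$ for general $S$ reduces to a local check at the singular strata found next.

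The singularities (v)--(vi) I would read off from the fixed loci of elements of $\Alt_5$ on $\PP^3$ and the induced transverse cyclic actions, cross-checking against the hypersurface model. An involution such as $(12)(34)$ has eigenvalues $\{1,1,-1,-1\}$ on $V_4'$, hence fixes two lines $L_+=\{x_1=x_2,\,x_3=x_4\}$ and $L_-=\{x_1=-x_2,\,x_3=-x_4,\,x_5=0\}$ with transverse action $\frac12(1,1)$; these form two distinct $\Alt_5$-orbits (an ``equality'' family and a ``negation'' family) descending to two curves $\ell_2,\ell_2'$ of \type{A_1}-points. A $3$-cycle has eigenvalues $\{1,1,\omega,\omega^2\}$, so its $(+1)$-eigenline $\{x_1=x_2=x_3\}$ has transverse action $\frac13(1,-1)$ and descends to one curve $\ell_3$ of \type{A_2}-points, while its two isolated eigenlines (one orbit of size $20$, transverse action $\frac13(1,1,-1)$) descend to the point $P_3$. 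A $5$-cycle has eigenvalues $\{\zeta,\zeta^2,\zeta^3,\zeta^4\}$ and no fixed curve; its four eigenlines split into two orbits of size $12$ with action $\frac15(1,2,3)\cong\frac15(1,2,-2)$, giving $P_5,P_5'$. One checks that $\frac13(1,1,-1)$ and $\frac15(1,2,-2)$ are non-Gorenstein but terminal, while $\type{A_1}$ and $\type{A_2}$ are Du Val (Gorenstein canonical). Finally I match these with the three sources of singularities of $\{z_{10}^2=F\}$: the ambient $\mumu_2$-plane $\{z_3=z_5=0\}$ meets $X$ in $\ell_2'$, the ambient $\mumu_5$-line $\{z_2=z_3=z_4=0\}$ meets $X$ in $P_5,P_5'$, and the ambient $\mumu_3$-point lies on $X$ as $P_3$; the remaining curves $\ell_2$ (two double roots) and $\ell_3$ (a triple root) arise because $X$ fails to be quasi-smooth precisely over the singular locus $\Sing B$ of the discriminant.

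The hard part will be the singularity bookkeeping. Three things require care: proving the orbit counts so that each fixed-locus family descends to exactly the stated single curve or point (no over- or under-counting); normalizing the transverse cyclic weights correctly and confirming the Gorenstein/non-Gorenstein dichotomy and terminality of the two special points; and verifying \emph{completeness}, i.e.\ that no higher-stabilizer point contributes an unaccounted or non-canonical singularity, together with reconciling the group-theoretic list with the hypersurface model (in particular that $\ell_2'$ is irreducible and that $\ell_2,\ell_3$ are exactly the preimages of $\Sing B$). I also expect the rigorous proof of $\Cl(X)\cong\ZZ$ for this non-quasi-smooth weighted hypersurface, and of the canonicity of the general anticanonical member, to demand the most technical attention.
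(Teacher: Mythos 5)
Your hypersurface model $X\simeq\{z_{10}^2=\Psi(z_2,\dots,z_5)\}\subset\PP(2,3,4,5,10)$, the computation $(-K_X)^3=16/15$ from the fact that $\pi\colon\PP^3\to X$ is \'etale in codimension one, the adjunction computation $-K_X\sim 4A_X$, and the identification of $|-K_X|$ with the pencil $\mu z_4+\lambda z_2^2=0$ coincide with the paper's arguments. For (v)--(vi) you take a genuinely different route: the paper works downstairs with the explicit polynomial $\Psi$ (computed by Macaulay2), locating one curve inside $X\cap\Sing(\PP)$, the other two inside the non-quasi-smooth locus $\{z_{10}=\partial\Psi/\partial z_i=0\}$, and the non-Gorenstein points inside $\Bs|-K_X|\cap\Sing(\PP)$, whereas you work upstairs with fixed loci on $\PP^3$. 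Your bookkeeping there is correct: since $g\cdot\Fix(\sigma)=\Fix(g\sigma g^{-1})$ respects the eigenvalue decomposition, the fifteen ``$+$'' lines and fifteen ``$-$'' lines of the involutions form two separate orbits; the normalizer $\Sym_3$ of a $3$-cycle swaps its two isolated eigen-points, so those $20$ points form one orbit with transverse weights $\frac13(1,1,-1)$; the $\Dih_5$-normalizer of a $5$-cycle pairs its four eigen-points into two orbits of length $12$ with weights $\frac15(1,2,3)\simeq\frac15(1,2,-2)$; and every point with nontrivial stabilizer lies on one of these fixed loci, which settles the completeness issue you flag. This is a legitimate, arguably more conceptual, alternative to the paper's equation-based computation.

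The genuine gap is (ii), and secondarily the canonicity claim in (iv). A ``Grothendieck--Lefschetz type statement'' is not available here: such theorems for (weighted) hypersurfaces require quasi-smoothness or genericity of the member in its linear system, and $X$ is neither --- it is singular along two curves lying over the singular locus of the discriminant, as you yourself note. Your parenthetical descent remark at best shows that $\Cl(X)$ modulo torsion embeds into $\Cl(\PP^3)^{\Alt_5}\simeq\ZZ$; it says nothing about torsion, which is the real content of (ii). The missing idea (the paper's separate Claim) is a covering argument: an $n$-torsion class determines an irreducible cyclic cover $X'\to X$, \'etale in codimension one outside $\Sing(X)$; normalizing the dominant component of $\PP^3\times_X X'$ gives a finite cover of $\PP^3$ \'etale in codimension one, hence \'etale by purity of the branch locus, hence trivial since $\PP^3$ is simply connected, so $X'$ would split into $n$ copies of $X$ --- a contradiction. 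Rank one, $\QQ$-factoriality and $\uprho(X)=1$ then follow from $\Cl(X)\otimes\QQ=(\Cl(\PP^3)\otimes\QQ)^{\Alt_5}$ together with the fact that $X$ has quotient singularities. Likewise, do not leave canonicity of $(X,S)$ as an unspecified ``local check'': the clean argument is to pull back, observe that $\pi^*S$ (cut out by the same equation $\mu z_4+\lambda z_2^2=0$ in $\PP^3$) has only Du Val singularities, so $(\PP^3,\pi^*S)$ is plt; plt descends through the quotient \'etale in codimension one by \cite[Proposition~5.20]{KM:book}; and since $K_X+S\sim 0$, all discrepancies of the pair are integers, so plt upgrades to canonical.
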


\begin{proof}
Let $X:=\PP(V_4')/\Alt_5$ and let
$\pi\colon \PP^3\to X$ be the quotient map.

\begin{sclaim}
\label{claim:V4prime}
$\PP(V_4')/\Alt_5$ is isomorphic to a hypersurface of degree $20$ in 
$\PP(2,3,4,5,10)$.
\end{sclaim}

\begin{proof}
We can regard $V_4'$ as a subrepresentation of
the five-dimensional permutation representation $V_{\mathrm{perm}}$ so that $V_4'\subset V_{\mathrm{perm}}$ is
defined by the linear equation $\sum x_i=0$, where $x_1,\dots, x_5$ are coordinates in $V_{\mathrm{perm}}=\CC^5$.
The ring of invariants 
$\CC[V_{\mathrm{perm}}]=\CC[x_1,\dots,x_5]^{\Alt_5}$ is generated by 
the elementary symmetric polynomials 
$z_1$,\dots, $z_5$ and the Vandermonde polynomial 
\begin{equation}
\label{eq:disc5}
z_{10}=\prod_{i>j} (x_i-x_j).
\end{equation}
There is a unique relation
\begin{equation}
\label{eq-eq} 
z_{10}^2=\Disc (z_1,\dots,z_5),
\end{equation}
where $\Disc$ is the discriminant, a
polynomial of weighted degree $20$ with respect to weights 
$\deg z_k=k$. Since $V_4'\subset V_{\mathrm{perm}}$, the functions $z_2$,\dots, $z_5$, and $z_{10}$
determine an embedding of $X$ 
into the weighted projective space $\PP(2,3,4,5,10)$
and the image is given by 
\begin{equation}
\label{eq-eq-1}
z_{10}^2=\Psi(z_2,\dots,z_5),\text{\quad where 
$\Psi(z_2,\dots,z_5):=\Disc(0,z_2,\dots,z_5)$}.\qedhere
\end{equation}
\end{proof}
Using Macaulay2 \cite{M2} we obtain the explicit form of $\Psi$:
\begin{equation}
\label{eq:Psi}
\begin{array}{lcl}
\Psi&=& 3125z_{5}^{4}-3750z_{2}z_{3}z_{5}^{3}
\\
&+&(2000z_{2}z_{4}^{2}+108z_{2}^{5}+
825z_{2}^{2}z_{3}^{2}-900z_{2}^{3}z_{4}+2250z_{3}^{2}z_{4})z_{5}^{2}
\\
&+&(16z_{2}^{3}z_{3}^{3}-
72z_{2}^{4}z_{3}z_{4}+108z_{3}^{5}-1600z_{3}z_{4}^{3}-630z_{2}z_{
3}^{3}z_{4}+560z_{2}^{2}z_{3}z_{4}^{2})z_{5}
\\ 
&+&16z_{2}^{4}z_{4}^{3}-4z_{2}^{3}z_{3}^{2}z_{4}^{2}
-27z_{3}^{4}z_{4}^{2}+144z_{2}z_{3}^{2}z_{4}^{3}-128z_{2}^{2}z_{4}^{4} 
+256z_{4}^{5}.
\end{array}\end{equation}
Note that $X$ has only quotient singularities. 
In particular, the singularities of $X$ are log terminal and $\QQ$-factorial \cite[Proposition~5.20, Lemma~5.16]{KM:book}.
The eigenspaces of any non-identity element of $\Alt_5$ have dimension $\le 2$.
Hence the morphism $\pi\colon \PP^3\to X$ is \'etale in codimension one
and so
\begin{equation}
\label{eq:KP3}
\pi^*K_X=K_{\PP^3}.
\end{equation} 
Therefore, $-K_X$ is ample and 
$(-K_X)^3=\frac1{60}(-K_{\PP^3})^3= 16/15$. 
This proves \ref{thm:V4prime1}.

\begin{sclaim}
\label{claim:torsion:free}
The group $\Cl(X)$ is torsion free.
\end{sclaim}

\begin{proof}
Assume that $\Cl(X)$ contains an element of order $n>1$.
It defines a cyclic degree $n$ cover $X'\to X$ that is \'etale outside $\Sing(X)$. 
Then we have the following commutative diagram of finite morphisms 
\[
\xymatrix@R=1em{
X''\ar[r]^{\pi'}\ar[d]_{\phi'}& X'\ar[d]_{\phi}
\\
\PP^3\ar[r]^{\pi} & X
} 
\]
where $X''$ is the normalization of the dominant component of $\PP^3\times_{X} X'$
and $\pi'$ is the quotient morphism by $\Alt_5$.
The morphism $\phi'$ is \'etale in codimension one, hence it splits, i.e. $X''$ is a disjoint union on $n$ copies of $\PP^3$. 
But then we have $X'= \sqcup_{i=1}^n X$, a contradiction.
\end{proof}

\noindent
\textit{Proof of Theorem~\xref{thm:V4prime} \textup(continued\textup).}
Now the assertion \ref{thm:V4prime2} follows from this claim and 
the fact $\Cl(X)\otimes \QQ=\Cl(\PP^3)^{\Alt_5}\otimes \QQ$.
Below, for short, we put $\PP:=\PP(2,3,4,5,10)$.
By Claim~\ref{claim:V4prime} we have an embedding $X\subset \PP$
as a hypersurface of degree $20$, hence by the adjunction formula 
we obtain
\begin{equation}
\label{eq-KX}
\OOO_{X}(K_X)\simeq\OOO_X(-2-3-4-5-10+20)\simeq\OOO_X(-4).
\end{equation}
Thus $K_X\sim -4A_X$, where $A_X=A_{\PP}|_X$ and
$A_{\PP}$ is the divisor corresponding to the reflexive sheaf $\OOO_{\PP}(1)$. 
On the other hand, by 
\eqref{eq:KP3}  the canonical class  $K_X$ is not divisible by a number greater than $4$.
This proves \ref{thm:V4prime3}. Note that there is a natural isomorphism 
\[
H^0\big(\PP,\, \OOO(n)\big) \simeq H^0\big(X,\, \OOO(nA)\big).
\]
Hence $\dim |-K_X|=1$ and a general member $S\in |-K_X|$
is given by $\mu z_4+\lambda z_2^2=0$. 
Then the divisor $\pi^*S$ is given in $\PP^3$ by the same equation 
$\mu z_4+\lambda z_2^2=0$, where $z_4$ and $z_2$ are regarded as polynomials in $x_1,\dots,x_5$.
Then it is easy to see that the singularities of $\pi^*S$ are Du Val.
In particular, the pair $(\PP^3,\pi^*S)$ is plt.
By \cite[Proposition~5.20]{KM:book} so is the pair $(X,S)$.
On the other hand, $K_X+S\sim 0$.
Hence, all 
the log discrepancies $a(\ \cdot\,,X,S)$ are integral and the pair
$(X,S)$ is canonical. 
This proves \ref{thm:V4prime4}.

One component of the locus of Gorenstein singularities of $X$ is contained in $X\cap \Sing(\PP)$, so we may assume that 
$\ell_2=X\cap \{z_3=z_5=0\}$. Other components are contained in $X\cap \{z_{10}=\partial \Psi/\partial z_i=0 \mid i=2,3,4,5\}$.
Using \eqref{eq:Psi} one can compute that there are two such components $\ell_2'$ and $\ell_3$.
This proves \ref{thm:V4Sing-G}.

If $P\in X$ is a non-Gorenstein point, then $P$ lies in $\Bs |-K_X|=\{z_2=z_4=0\}\cap X$.
On the other hand, $P$ lies also in 
the singular locus of $\PP$ which is given by $\{z_3=z_5=0\}\cup \{z_2=z_3=z_4=0\}\cup \{z_2=z_4=z_5=z_{10}=0\}$. 
Taking the equation \eqref{eq:Psi} into account, we get two possibilities:
$P=\{z_2=z_4=z_5=z_{10}=0\}$ and $P\in \{z_2=z_3=z_4= z_{10}^{2}-3\,125\,z_{5}^{4} =0\}$. 
This proves \ref{thm:V4Sing-nG}
\end{proof}

\begin{theorem}[{\cite{Kervaire-Vust}}, {\cite{Maeda-T-1989}}]
\label{thm:C4/A5}
The variety $\PP(V_4')/\Alt_5$ is rational.
\end{theorem}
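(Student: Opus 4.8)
The plan is to imitate the proof of Theorem~\ref{th-SL25}: produce an $\Alt_5$-equivariant rational curve fibration over $\PP(V_3)=\PP^2$, pass to the quotient to obtain a rational curve fibration over $\PP(V_3)/\Alt_5\simeq\PP(1,3,5)$ (Proposition~\ref{prop:V3}), and then conclude with Lemma~\ref{lemma:P(1,3,5)}. The base $\PP(1,3,5)$ is forced on us: its singular points have orders $3$ and $5$, matching the orders of the only nontrivial cyclic stabilizers in $\Alt_5$, and exactly matching the non-Gorenstein points $P_3$ of type $\frac13(1,1,-1)$ and $P_5,P_5'$ of type $\frac15(1,2,-2)$ found in Theorem~\ref{thm:V4prime}~\ref{thm:V4Sing-nG}.

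First I would construct the fibration upstairs. In Theorem~\ref{th-SL25} the key input was the invariant twisted cubic coming from $V_4=\Sy^3V_2$; here the analogue is representation-theoretic. A short character computation gives $V_3\subset\Sy^3V_4'$ with multiplicity one, and $\wedge^2V_4'\simeq V_3\oplus V_3'$. The first inclusion produces an $\Alt_5$-equivariant net of cubics, hence a dominant equivariant map $\phi\colon\PP(V_4')\dashrightarrow\PP(V_3)$. The second realises $\PP(\wedge^2V_4')=\PP(V_3\oplus V_3')$ as the Plücker space of lines in $\PP^3=\PP(V_4')$, in which $\PP(V_3)$ and $\PP(V_3')$ are the two invariant planes (each meeting the Plücker quadric in the invariant conic of Proposition~\ref{prop:V3}). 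I would use this picture to identify $\phi$ with the projection from $\PP(V_3')$ of an $\Alt_5$-invariant congruence of lines of order one, so that the generic fibre of $\phi$ is a line, in particular a rational curve. Blowing up the (invariant) base locus then yields a genuine $\Alt_5$-equivariant rational curve fibration $Y\to\PP(V_3)$ with $Y$ birational to $\PP(V_4')$.

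Passing to quotients gives a rational curve fibration $\bar\phi\colon Y/\Alt_5\dashrightarrow\PP(V_3)/\Alt_5\simeq\PP(1,3,5)$ with $Y/\Alt_5$ birational to $X=\PP(V_4')/\Alt_5$. Away from the branch curve the quotient map $\PP(V_3)\to\PP(1,3,5)$ is \'etale, so $\bar\phi$ is smooth over $\PP(1,3,5)\setminus D$, where $D$ is the branch curve of Proposition~\ref{prop:V3}: it lies in $|\OOO(15)|$ and avoids the singular points of $\PP(1,3,5)$. These are precisely the hypotheses of Lemma~\ref{lemma:P(1,3,5)}, which then gives the rationality of $Y/\Alt_5$, and hence of $X$. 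Note that $D$ is the same reflection curve $\{z_{15}=0\}$ as in the proof of Theorem~\ref{th-SL25}, since the degeneration locus of $\bar\phi$ is governed by the involutions in $\Alt_5$ and not by the particular representation being projectivised.

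The hard part is the first step: showing that the equivariant map $\phi$ really is a rational curve fibration with rational generic fibre. Unlike the twisted-cubic situation of Theorem~\ref{th-SL25}, there is no invariant rational curve of low degree in $\PP(V_4')$ — as $V_4'$ has no two-dimensional subrepresentation, there is no invariant line, conic, or twisted cubic — so the fibre cannot simply be read off. Moreover the naive coordinate projection $X\dashrightarrow\PP(3,4,5)$, $(z_2:\dots:z_{10})\mapsto(z_3:z_4:z_5)$, fails: over a general point its fibre is the double cover $z_{10}^2=\Psi(z_2,z_3,z_4,z_5)$ of the $z_2$-line, and since $\Psi$ has degree $5$ in $z_2$ (see \eqref{eq:Psi}) this fibre has geometric genus $2$. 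Thus the whole weight of the argument lies in producing the correct invariant line (or conic) congruence and checking that its general member is rational and that it sweeps out $\PP(V_4')$ exactly once; the explicit invariants of Proposition~\ref{prop:V3} together with the identification $\wedge^2V_4'\simeq V_3\oplus V_3'$ are the tools I would use to pin this down.
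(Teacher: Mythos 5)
Your strategy founders at exactly the step you flag as ``the hard part'', and the obstruction is one you yourself record. You need an $\Alt_5$-invariant congruence of lines of order one in $\PP(V_4')$. By the classical classification of order-one congruences (Kummer, completed by Ran), such a congruence is the star of lines through a point, or the bisecants of a twisted cubic, or a family of transversals whose fundamental locus contains a line (the bidegree $(1,1)$, $(1,2)$, \dots\ cases). Invariance of the congruence forces invariance of its fundamental locus, and since $\Alt_5$ is simple it cannot permute the at most two fundamental line components nontrivially; so one would need an $\Alt_5$-invariant point, line, or twisted cubic in $\PP(V_4')$. None exists: an invariant point, line or plane would give a proper subrepresentation of the irreducible $V_4'$, while an invariant twisted cubic would identify $\PP(V_4')$ with $\PP(\Sy^3V_2)=\PP(V_4)$ as $\Alt_5$-varieties, which is impossible because $\PP(V_4')$ has the orbit $\{P_1,\dots,P_5\}$ of length $5$ and $\PP(V_4)$ has none (a fixed point of the binary tetrahedral group on $\PP(V_4)$ would require a one-dimensional subrepresentation of the restriction of $V_4$, but every character of the binary tetrahedral group has order dividing $3$ and so kills the center, which acts on $V_4$ by $-1$). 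Thus your last paragraph refutes your second: the congruence picture cannot be set up. Nothing else in the proposal controls the general fibre of the net of cubics $V_3\subset\Sy^3V_4'$; a priori it is a residual curve inside a complete intersection of two cubics and there is no reason for it to be rational.

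For comparison, the invariant order-one family of rational curves that \emph{does} exist consists of the twisted cubics through $P_1,\dots,P_5$, and its parameter surface is the quintic del Pezzo surface $S$, not $\PP(V_3)$: this is precisely the link the paper uses ($\sigma\colon Y\to\PP^3$ the blowup of the five points, a flop $Y\dashrightarrow Y^+$, and a $\PP^1$-bundle $\varphi\colon Y^+\to S$, see \cite[Proposition~4.6]{P:Inv10} and \cite[Remark~2.33]{Cheltsov-Kuznetsov-Shramov}). In particular your opening claim that the base $\PP(1,3,5)$ is ``forced'' is wrong: the natural base is $S/\Alt_5$ (which is $\PP(1,1,3)$ by Theorem~\ref{thm:DP5}, not $\PP(1,3,5)$), and the non-Gorenstein points in Theorem~\ref{thm:V4prime}\,\ref{thm:V4Sing-nG} reflect stabilizers of points, not any fibration structure. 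To salvage your route you would need an $\Alt_5$-equivariant birational map $S\dashrightarrow\PP(V_3)$ keeping all degenerations of the transformed fibration inside the reflection curve $D$ of Proposition~\ref{prop:V3}; none is available --- $S$ and $\PP(V_3)$ are distinct $\Alt_5$-minimal surfaces in the classification behind Theorem~\ref{thm:Cr2}, and $S$ with its $\Alt_5$-action is $\Alt_5$-birationally rigid (cf.~\cite{CheltsovShramov:book}) --- and any base modification would in any case create degenerate fibres outside $D$, violating the hypothesis of Lemma~\ref{lemma:P(1,3,5)}. The paper's proof avoids discriminant analysis altogether: the action on $Y$ is free in codimension one, so $\Cl(Y^+/\Alt_5)$ is torsion free and $K_{Y^+/\Alt_5}$ is divisible by $2$; then $-\frac12K_{Y^+/\Alt_5}$ gives a birational section of the Severi--Brauer fibration $Y^+/\Alt_5\to S/\Alt_5$, whence $Y^+/\Alt_5$ is birational to $\PP^1\times(S/\Alt_5)$, which is rational because $S/\Alt_5$ is a rational surface. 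That section trick --- divisibility of the canonical class of the quotient producing a birational section --- is the idea your proposal is missing, and it replaces Lemma~\ref{lemma:P(1,3,5)} entirely.
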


\begin{proof}
In the notation of the proof of Claim~\ref{claim:V4prime},
let $P_1,\dots,P_5\in \PP(V_4')$ be the points corresponding to 
the images of the permutation basis in $V_{\mathrm{perm}}$ under the $\Alt_5$-equivariant projection
$V_{\mathrm{perm}}\to V_4'$. 
It is known that there exists the following $\Alt_5$-equivariant Sarkisov link (see {\cite[Proposition~4.6]{P:Inv10}} or 
{\cite[Remark~2.33]{Cheltsov-Kuznetsov-Shramov}})
\begin{equation}
\vcenter{
\xymatrix{
&Y\ar[dl]_{\sigma}\ar[dr]^{\phi_0}\ar@{-->}[rr]^{\chi}&&Y^+\ar[dl]_{\phi_0^+}\ar
[dr]^{\varphi}
\\
\PP^3&&\mathcal Y_3&&S
}}
\end{equation} 
where $\sigma$ is the blowup of $P_1,\dots,P_5$, \ $\chi$ is a flop, $\phi_0$ and $\phi_0^+$ are small contractions to the 
Segre cubic $\mathcal{Y}_3$ (see \cite[\S~9.4.4]{Dolgachev-ClassicalAlgGeom}), $S$ is a smooth del Pezzo surface of degree $5$, 
and $\varphi$ is a $\PP^1$-bundle. Note that this diagram is $\Sym_5$-equivariant as well.
By taking quotients we obtain an extended diagram
\begin{equation*}
\vcenter{
\xymatrix{
&Y\ar[d]_{\bar \pi}\ar[dl]_{\sigma}\ar[dr]^<(+.3){\phi_0}\ar@{-->}[rr]^{\chi}&&Y^+\ar[d]^{\pi^+}\ar[dl]_<(+.3){
\phi_0 ^+}\ar
[dr]^{\varphi}
\\ 
\phantom{a}\PP^3\ar[d]_{\pi}&Y/\Alt_5\ar[dr]^{\bar\phi_0}\ar[dl]_{\bar\sigma}
\ar@{-->}@/^2em/[rr]^{\bar\chi}
&\mathcal{Y}_3\ar[d]&Y^+/\Alt_5\ar[dr]^{\bar\varphi}\ar[dl]_{\bar\phi_0^+} &S\ar[d]
\\
X=\PP^3/{\Alt_5}&&\mathcal{Y}_3/\Alt_5&&S/{\Alt_5}
}}
\end{equation*}
The quotient $S/\Sym_5$ is studied in detail in Sect.~\ref{sect:DP5}, but here we do not need 
that explicit description.

The action of $\Alt_5$ on $Y$ is free in codimension one, hence 
similar to Claim~\ref{claim:torsion:free}
one can show that the group $\Cl(Y/\Alt_5)$ is torsion free.
By the construction 
\[
K_Y\sim \sigma^*\pi^* K_X+2\sum E_i\sim \bar\pi^*\bar\sigma^*(-4A_X)+2\bar\pi^* F,
\]
where the $E_i$'s are $\sigma$-exceptional divisors and $F=\bar\pi(E_i)$ is 
the $\bar\sigma$-exceptional divisor.
Therefore, the canonical class $K_{Y/\Alt_5}$ is divisible by $2$ in the group $\Cl(Y/\Alt_5)$. 
Since the map $\bar\chi$ is an isomorphism in codimension 1,
the divisor $K_{Y^+/\Alt_5}$ is also divisible by $2$.
The general fiber of $\bar\varphi$ is a smooth rational curve, i.e. $Y^+/\Alt_5$ is a Severi-Brauer scheme over 
an open subset in $S/\Alt_5$. The divisor $-\frac 12 K_{K_{Y^+/\Alt_5}}$ provides its birational section,
hence the variety $Y^+/\Alt_5$ is birational to $\PP^1\times S/\Alt_5$,
hence it is rational.
\end{proof}

\section{Quotients $(\PP^1\times \PP^1)/\Alt_5$}
\label{Sect:P1P1}
\subsection{}
\label{subSect:P1P1}

In this section we describe quotients of $\PP^1\times \PP^1$ by the icosahedral group.
Note that all non-trivial actions of $\Alt_5$ on rational ruled surfaces are $\Alt_5$-birationally equivalent \cite[Theorem~B.10]{Cheltsov:2ineq}.
We restrict ourselves to actions on $\PP^1\times \PP^1$ that are non-trivial on each factor.
Then, up to outer automorphisms there are two choices:
\begin{enumerate}
\renewcommand\labelenumi{\rm (\alph{enumi})}
\renewcommand\theenumi{\rm (\alph{enumi})}
\item
\label{rem:Cr2:diag}
$\Alt_5 \acts \PP(V_2)\times \PP(V_2)$ and
\item
\label{rem:Cr2:tdiag}
$\Alt_5 \acts \PP(V_2)\times \PP(V_2')$.
\end{enumerate}
We call them \textit{diagonal} and \textit{twisted diagonal}, respectively.
If we fix any embedding $\Alt_5 \subset \Aut(\PP^1)=\PGL_2(\CC)$, then 
we may assume that the action $\Alt_5$ on $\PP^1\times \PP^1$ is given by
\[
s (P_1,P_2)= \begin{cases}
\Big(s(P_1), s(P_2)\Big) &\text{in the case \ref{rem:Cr2:diag},}
\\
\Big(s(P_1), \alpha(s)(P_2)\Big)&\text{in the case \ref{rem:Cr2:tdiag},}
\end{cases}
\]
where $\alpha$ is an outer automorphism of $\Alt_5$.

\begin{sremark}
\label{rem:autP1P1}
Recall that the group $\Aut(\PP^1\times \PP^1)$ can be presented as the semi-direct product
$\big(\Aut(\PP^1)\times \Aut(\PP^1)\big) \rtimes \langle \tau\rangle$, where
$\tau$ is the involution switching the factors.
It is easy to see that $\tau$ normalizes the image of $\Alt_5$ in $\Aut(\PP^1\times \PP^1)$ and so
the normalizer $G$ of $\Alt_5\subset \Aut(\PP^1\times \PP^1)$ is the group of order $120$.
On the other hand, we have a natural homomorphism $\varphi:G\to \Aut(\Alt_5)\simeq \Sym_5$ whose
restriction to 
$\Alt_5$ is injective. Hence either $\varphi$ is an isomorphism or its kernel is of 
order $2$ and in the latter case $G=\Alt_5\times \ker(\varphi)$.
Therefore, $G$ is isomorphic to $\Alt_5\times \Sym_2$ in the case \ref{rem:Cr2:diag} and to $\Sym_5$ in the case \ref{rem:Cr2:tdiag}.
\end {sremark}

\subsection{The diagonal action}

\begin{stheorem}
\label{thm:P1P1:diag}
Let $\Alt_5$ act on $\PP^1\times \PP^1$ diagonally and let $G$ be 
the normalizer of $\Alt_5$ in $\Aut(\PP^1\times \PP^1)$. Then
$G=\Alt_5\times 
\langle\tau\rangle$ and the following assertions hold. 
\begin{enumerate}
\item 
\label{thm:P1P1:diag:a}
The quotient $(\PP^1\times \PP^1)/G$ is the weighted projective 
plane $\PP(1,6,10)\simeq \PP(1,3,5)$.

\item 
\label{thm:P1P1:diag:b}
The quotient $(\PP^1\times \PP^1)/\Alt_5$ is the hypersurface given by the 
equation $z_{15}^2=\Phi(z_1^2, z_6, z_{10})$ in $\PP(1, 6,10,15)$, where
\begin{equation}
\label{eq:Phi-z2}
\Phi(z_1^2, z_6, z_{10})=-1728z_6^5+z_{10}^3+720 z_1^2z_6^3z_{10}-80 z_1^4 z_6z_{10}^2+64z_1^6 (5z_6^2- z_1^2z_{10})^2.
\end{equation}

\item 
\label{thm:P1P1:diag:c}
The morphism $\PP^1\times \PP^1\to (\PP^1\times \PP^1)/\Alt_5$ is \'etale in codimension one.

\item
\label{thm:P1P1:diag:d}
The natural double cover $(\PP^1\times \PP^1)/\Alt_5\to (\PP^1\times \PP^1)/G$
is the projection along $z_{15}$-axis and it is branched over the curve $D$ given by
$\Phi(z_1^2, z_6, z_{10})=0$. In particular, $D$ does not pass through singular 
points of $(\PP^1\times \PP^1)/G\simeq \PP(1,3,5)$.
\end{enumerate}
\end{stheorem}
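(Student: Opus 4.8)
The plan is to reduce every assertion to the already-computed quotient $\PP(V_3)/\Alt_5$ of Proposition~\ref{prop:V3}, using the factor-switching double cover $\PP^1\times\PP^1\to\Sy^2\PP^1=\PP^2$. First I would settle the group structure. Since the diagonal copy of $\Alt_5$ consists of the elements $(s,s)$, the involution $\tau$ that switches the factors satisfies $\tau(s,s)\tau^{-1}=(s,s)$, so it centralizes $\Alt_5$; combined with Remark~\ref{rem:autP1P1} (which gives $G\cong\Alt_5\times\Sym_2$ in the diagonal case) this forces $G=\Alt_5\times\langle\tau\rangle$. Writing $\PP^1=\PP(V_2)$, the quotient by $\tau$ is $\Sy^2\PP^1\cong\PP^2$, and because $\Sy^2 V_2\cong V_3$ as $\Alt_5$-modules the induced action is exactly the one on $\PP(V_3)$. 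Thus $\pi\colon\PP^1\times\PP^1\to\PP(V_3)$ is an $\Alt_5$-equivariant double cover whose branch locus is the image of the diagonal, i.e. the locus of non-reduced divisors, which is the unique invariant conic $C=\{z_2=0\}$ of Proposition~\ref{prop:V3}. Passing to the $\Alt_5$-quotient gives $(\PP^1\times\PP^1)/G=\PP(V_3)/\Alt_5\cong\PP(1,3,5)$, which yields \ref{thm:P1P1:diag:a}; the presentation $\PP(1,6,10)$ reappears below and is isomorphic to $\PP(1,3,5)$ by the standard reduction of weights.

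For \ref{thm:P1P1:diag:b} I would exhibit the invariant ring explicitly. The unique $\Alt_5$-invariant section of $\OOO(1,1)$ is $z_1:=u_0v_1-u_1v_0$: it spans the trivial summand $\wedge^2 V_2$ of $V_2\otimes V_2$, it is $\tau$-anti-invariant, and $z_1^2$ equals the pullback of the conic, i.e. $z_1^2=\pi^*z_2$ up to scalar. Since $\pi$ is the $\tau$-quotient, the $\Alt_5$-invariant ring of $\PP^1\times\PP^1$ splits into its $\tau$-even part $\pi^*\CC[\PP(V_3)]^{\Alt_5}$ and its $\tau$-odd part $z_1\cdot\pi^*\CC[\PP(V_3)]^{\Alt_5}$. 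Inserting $\CC[\PP(V_3)]^{\Alt_5}=\CC[z_2,z_6,z_{10},z_{15}]/(z_{15}^2-\Phi)$ from Proposition~\ref{prop:V3} and eliminating $z_2=z_1^2$ produces
\[
\CC[z_1,z_6,z_{10},z_{15}]\big/\bigl(z_{15}^2-\Phi(z_1^2,z_6,z_{10})\bigr),
\]
so $(\PP^1\times\PP^1)/\Alt_5$ is the degree-$30$ hypersurface \eqref{eq:Phi-z2} in $\PP(1,6,10,15)$, equivalently the double cover of $\PP(1,3,5)$ got by adjoining $\sqrt{z_2}$. Assertion \ref{thm:P1P1:diag:c} is then immediate: for $g\neq 1$ the fixed locus of $(g,g)$ is $\Fix(g)\times\Fix(g)$, a finite set, so no element of $\Alt_5$ fixes a divisor and the quotient morphism is \'etale in codimension one.

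For \ref{thm:P1P1:diag:d} the covering $(\PP^1\times\PP^1)/\Alt_5\to(\PP^1\times\PP^1)/G$ is the quotient by $\tau$, which on the invariant ring is $z_1\mapsto-z_1$. The decisive observation is that, because $z_{15}$ carries the odd weight $15$, rescaling by $\lambda=-1$ gives $(z_1:z_6:z_{10}:-z_{15})=(-z_1:z_6:z_{10}:z_{15})$ on the hypersurface; hence on $(\PP^1\times\PP^1)/\Alt_5$ the involution $z_1\mapsto-z_1$ coincides with $z_{15}\mapsto-z_{15}$. Therefore the $\tau$-quotient is exactly the projection along the $z_{15}$-axis onto $\PP(1,6,10)$, with branch curve $D=\{\Phi(z_1^2,z_6,z_{10})=0\}$, namely the image of the $15$ reflection curves $\Gamma_{\iota_i}=\pi^{-1}(L_i)$; and that $D$ avoids the singular points of $\PP(1,3,5)$ is precisely the corresponding statement already proved in Proposition~\ref{prop:V3}.

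I expect the main obstacle to be exactly this last reconciliation. Both the weighted-projective-space bookkeeping in \ref{thm:P1P1:diag:a} and \ref{thm:P1P1:diag:b} (tracking which generators are $\tau$-even or $\tau$-odd, and that the odd-weight coordinate $z_{15}$ is \emph{not} divisible by $z_1$, so that it is $\tau$-invariant) and the identification of the geometric $\tau$-quotient with the algebraic projection along $z_{15}$ hinge on the non-well-formedness phenomenon of $\PP(1,6,10,15)$. Getting the two natural models $\PP(1,6,10)$ and $\PP(2,6,10)$ of $(\PP^1\times\PP^1)/G$ to agree, and correctly reading off which curve is the branch locus, is where care is required; everything else is a direct transcription of Proposition~\ref{prop:V3}.
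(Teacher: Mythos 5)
Your treatment of \ref{thm:P1P1:diag:a}--\ref{thm:P1P1:diag:c} is correct and is essentially the paper's own argument: the paper also reduces everything to Proposition~\ref{prop:V3} via the decomposition $V_2\otimes V_2=V_3\oplus V_1$ and the relation $z_2=z_1^2$ on the Segre quadric; your ``quotient by $\tau$ first'' derivation of \ref{thm:P1P1:diag:a} is exactly the alternative the paper records in the remark following the theorem, and your $\tau$-even/odd splitting of the invariant ring in \ref{thm:P1P1:diag:b} is the paper's computation, differently packaged.

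The gap is in \ref{thm:P1P1:diag:d}, at precisely the spot you flagged as delicate. You argue: the quotient by $\bar\tau$ is the projection along the $z_{15}$-axis, \emph{hence} the branch curve is $\{z_{15}=0\}=\{\Phi(z_1^2,z_6,z_{10})=0\}$. The first half is right, but the second half does not follow and is false at the level of varieties: the same rescaling $\lambda=-1$ that you use to identify the involutions $z_1\mapsto-z_1$ and $z_{15}\mapsto-z_{15}$ also identifies the two points $(0:z_6:z_{10}:t)$ and $(0:z_6:z_{10}:-t)$ of $\PP(1,6,10,15)$, so the projection is one-to-one over the whole line $\{z_1=0\}$, i.e.\ the double cover is branched there as well. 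Your own proof of \ref{thm:P1P1:diag:a} shows this must happen: $\tau$ fixes the diagonal $\Theta=\{z_1=0\}$ pointwise, and since the $\Alt_5$-quotient is \'etale in codimension one (part \ref{thm:P1P1:diag:c}), the image of $\Theta$ is a curve of $\bar\tau$-fixed points in $(\PP^1\times\PP^1)/\Alt_5$, along which the quotient by $\bar\tau$ is necessarily branched. A canonical-class check confirms it: $K_X^2=\frac{1}{60}K_{\PP^1\times\PP^1}^2=\frac{2}{15}$, while for the double cover $f\colon X\to Y\simeq\PP(1,3,5)$ with branch divisor $B$ one has $K_X=f^*\bigl(K_Y+\frac12 B\bigr)$, which together with $K_Y^2=\frac{27}{5}$ and ampleness of $-K_X$ forces $B\sim\OOO_{\PP(1,3,5)}(16)$; so $B=D+\ell$ with $D\sim\OOO_{\PP(1,3,5)}(15)$ and $\ell=\{z_1=0\}\sim\OOO_{\PP(1,3,5)}(1)$, matching the sixteen ramification curves upstairs (the fifteen graphs of involutions \emph{plus} the diagonal, all of bidegree $(1,1)$). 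In short, the ring-theoretic (cone-level) branch locus is indeed $\{\Phi=0\}$, but on coarse spaces the non-well-formedness of $\PP(1,6,10,15)$ contributes the extra component $\{z_1=0\}$. To be fair, the statement you were given has the same defect, and the paper's proof of \ref{thm:P1P1:diag:d} is only the phrase ``follows from \ref{thm:P1P1:diag:a} and \ref{thm:P1P1:diag:b}''; also the final clause, that $D$ avoids $\Sing\bigl(\PP(1,3,5)\bigr)$, is unaffected. But a proof written at your level of detail has to account for the branch component $\ell$ rather than assert it away, and this matters downstream: in the application (Theorem~\ref{thm:imp}, diagonal case) the fibration fails to be smooth over $\ell$ as well, so Lemma~\ref{lemma:P(1,3,5)} does not apply verbatim.
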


\begin{proof}
We identify $\PP^1\times \PP^1$ with $\PP(V_2)\times \PP(V_2)$ and
consider the natural Segre embedding $\PP(V_2)\times \PP(V_2)\hookrightarrow \PP(V_2\otimes V_2)$
whose image is a smooth quadric.
Let $u_0,u_1$ (resp. $v_0,v_1$) be coordinates in the first (resp. the second) copy of $V_2$.
We may assume that the bilinear map $V_2\oplus V_2 \longrightarrow V_2\otimes V_2$ is given by
\[
(u_0,\, u_1;\, v_0,\, v_1) \longmapsto (u_0v_0,\, u_0v_1,\, u_1v_0,\, u_1v_1) 
\]
and then, in the corresponding coordinates $y_1,\dots, y_4$, the equation of $\PP^1\times \PP^1$ can be written as follows:
\[
y_1y_4-y_2y_3=0.
\]
The involution 
$\tau$ is induced by 
\[
\tau: V_2\oplus V_2 \longrightarrow V_2\oplus V_2,\qquad (u_0,\, u_1;\, v_0,\, v_1) \longmapsto (-v_0,\, -v_1;\, u_0,\, u_1).
\]
Note that
$V_2\otimes V_2=\Sy^2 (V_2) \oplus \wedge^2(V_2)=V_3\oplus V_1$ (see Table~\ref{tab:repSL25}).
We have $V_3=\{y_2=y_3\}$ and $V_1=\{y_1=y_4=y_2+y_3=0\}$.
Now let $x_0=y_1$, $x_1=y_2+y_3$, $x_2=y_4$, $z_1=y_2-y_3$, i.e. $x_0,x_1,x_2$ are coordinates in $V_3$ and $z_1$ is a coordinate in 
$V_1$. Then the involution $\tau$ acts on $V_3\oplus V_1$ via $(x_0,x_1,x_2,z_1) 
\mapsto (-x_0,-x_1,-x_2,z_1)$.
This action commutes with that of $\Alt_5$. Hence $\N_{\Aut(\PP^1\times \PP^1)}(\Alt_5)=\Alt_5\times 
\langle\tau\rangle$.
Further, we have
\[
\CC[V_2\otimes V_2]^{\Alt_5}=\CC[V_3]^{\Alt_5}\otimes 
\CC[V_1]=\CC[x_0,x_1,x_2]^{\Alt_5}[z_1]=\CC[z_1,z_2, z_6, z_{10}, 
z_{15}]/(z_{15}^2-\Phi),
\]
where $z_2, z_6, z_{10}, z_{15}$ are invariant polynomials in $x_0,x_1,x_2$
considered in Sect.~\ref{sect:P2} and $\Phi$ is given by~\eqref{eq:Phi}. Similarly, 
\[
\CC[V_2\otimes V_2]^{G}=\CC[V_3]^{G}\otimes 
\CC[V_1]=\CC[x_0,x_1,x_2]^{G}[z_1]=\CC[z_1,z_2, z_6, z_{10}].
\]
Thus $\PP(V_2\otimes V_2)/G\simeq \PP(1,2,6,10)$ and
$\PP(V_2\otimes V_2)/\Alt_5$ is the hypersurface given by $z_{15}^2=\Phi$ in 
$\PP(1,2,6,10,15)$.
Up to scaling we may assume that the equation of $\PP(V_2)\times 
\PP(V_2)\subset 
\PP(V_2\otimes V_2)$ is $z_2-z_1^2=0$.
Then as in the proof of Proposition~\ref{prop:V3} we have
$(\PP^1\times \PP^1)/G\simeq \PP(1,6,10)\simeq \PP(1,3,5)$ and
$(\PP^1\times \PP^1)/\Alt_5$ is given by 
$z_{15}^2=\Phi(z_1^2, z_6, z_{10})$ in $\PP(1,6,10,15)$.
This proves~\ref{thm:P1P1:diag:a} and ~\ref{thm:P1P1:diag:b}.
The assertion~\ref{thm:P1P1:diag:c} follows from the fact that 
the diagonal action of $\Alt_5$ on $\PP^1\times \PP^1$ is free in codimension one
and~\ref{thm:P1P1:diag:d} follows from~\ref{thm:P1P1:diag:a} and~\ref{thm:P1P1:diag:b}.
\end{proof}

\begin{sremark}
Note that $\PP^1\times \PP^1/\langle \tau\rangle\simeq \PP^2$. Hence for the computation of 
$\PP^1\times \PP^1/G$ we can use Proposition~\ref{prop:V3}.
\end{sremark}

\subsection{The twisted diagonal action}

\begin{stheorem}
\label{thm:P1P1:skdia}
Let $\Alt_5$ act on $\PP^1\times \PP^1$ twisted diagonally and let $G$ be 
the normalizer of $\Alt_5$ in $\Aut(\PP^1\times \PP^1)$. Then $G\simeq 
\Sym_5$ and the following assertions hold. 
\begin{enumerate}
\item 
\label{thm:P1P1:skdia1}
The quotient $(\PP^1\times \PP^1)/G$ is the weighted projective 
plane $\PP(3,4,5)$.
\item 
\label{thm:P1P1:skdia2}
The quotient $(\PP^1\times \PP^1)/\Alt_5$ is the hypersurface given by the 
equation
\begin{equation}
\label{eq:skew-diagonal}
z_{10}^2=\Upsilon(z_3,z_4,z_5)\quad \text{in $\PP(3,4,5,10)$,}
\end{equation} 
where $\Upsilon=\Disc(0,0,z_3,z_4,z_5)$ \textup(see~\eqref{eq:Psi:00}\textup).

\item 
\label{thm:P1P1:skdia3}
The morphism $\PP^1\times \PP^1\to (\PP^1\times \PP^1)/\Alt_5$ is \'etale in 
codimension one. 

\item
\label{thm:P1P1:skdia4}
The natural double cover $\pi: (\PP^1\times \PP^1)/\Alt_5\to (\PP^1\times \PP^1)/G$
is the projection along $z_{10}$-axis and it is branched over the curve 
$D\subset \PP(3,4,5)$ given by $\Upsilon(z_3,z_4,z_5)=0$ \textup(see~\eqref{eq:Psi:00}\textup).
\item
\label{thm:P1P1:skdia5}
The curve $D$ has two singularities: an ordinary double point and a simple cusp; these singularities lie in 
the smooth locus of $\PP(3,4,5)$. 
\item
\label{thm:P1P1:skdia6}
The singularities of $(\PP^1\times \PP^1)/\Alt_5$ are as follows: 
two Du Val points of types \type{A_1} and \type{A_2} at $\pi^{-1}(\Sing(D))$,
a cyclic quotient singularity of type $\frac13(1,1)$ at $\pi^{-1}((1,0,0))$,
two Du Val singularities of type \type{A_3} at $\pi^{-1}((0,1,0))$, and two 
singularities of type
$\frac15(3,4)$ at $\pi^{-1}((0,0,1))$. 
\end{enumerate}
\end{stheorem}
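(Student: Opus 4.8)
The plan is to mirror the diagonal case (Theorem~\ref{thm:P1P1:diag}), replacing the decomposition $V_2\otimes V_2=V_3\oplus V_1$ by the tensor product of the two \emph{distinct} binary representations. First I would identify $\PP^1\times\PP^1$ with $\PP(V_2)\times\PP(V_2')$ and consider the $\Sym_5$-equivariant Segre embedding $\PP(V_2)\times\PP(V_2')\hookrightarrow\PP(V_2\otimes V_2')$. Since the central element $-1\in\tilde\Alt_5$ acts as $-1$ on each of $V_2$ and $V_2'$, it acts trivially on $V_2\otimes V_2'$, so this four-dimensional module descends to a representation of $\Alt_5$; a character computation (Table~\ref{tab:repSL25}) identifies it with $V_4'$. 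The normalizer $G\simeq\Sym_5$ of Remark~\ref{rem:autP1P1} then acts on $V_2\otimes V_2'$ through the reflection representation among the two extensions $W_4,W_4'$ of $V_4'$, and I would realize it concretely as $W_4\subset V_{\mathrm{perm}}=\CC^5$ exactly as in Claim~\ref{claim:V4prime}, so that the entire invariant theory is inherited from $\Sym_5$ permuting the coordinates $x_1,\dots,x_5$.

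With this identification, parts~\ref{thm:P1P1:skdia1} and~\ref{thm:P1P1:skdia2} become a restriction of the computation in Theorem~\ref{thm:V4prime}. By Chevalley--Shephard--Todd, $\CC[W_4]^{\Sym_5}=\CC[z_2,z_3,z_4,z_5]$ with $\deg z_i=i$, while adjoining the Vandermonde invariant $z_{10}$ gives $\CC[W_4]^{\Alt_5}=\CC[z_2,z_3,z_4,z_5,z_{10}]/(z_{10}^2-\Psi)$ with $\Psi=\Disc(0,z_2,\dots,z_5)$ as in~\eqref{eq-eq} and~\eqref{eq:Psi}. The one genuinely new point is to locate the Segre quadric $Q=\PP(V_2)\times\PP(V_2')$ inside $\PP(W_4)$: it is a $G$-invariant smooth quadric, and since $V_4'$ is irreducible of real (orthogonal) type it carries, up to scale, a unique invariant quadratic form, namely $z_2$. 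Indeed $z_2=e_2$ restricts to $-\tfrac12\sum x_i^2$ on $\{\sum x_i=0\}$, which is non-degenerate, so $Q=\{z_2=0\}$. Setting $z_2=0$ in the two invariant rings then yields $(\PP^1\times\PP^1)/G\simeq\Proj\CC[z_3,z_4,z_5]=\PP(3,4,5)$ and $(\PP^1\times\PP^1)/\Alt_5=\{z_{10}^2=\Upsilon(z_3,z_4,z_5)\}\subset\PP(3,4,5,10)$ with $\Upsilon=\Psi|_{z_2=0}=\Disc(0,0,z_3,z_4,z_5)$. Part~\ref{thm:P1P1:skdia4} is then immediate, as projecting away $z_{10}$ is the double cover branched along $D=\{\Upsilon=0\}$. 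For part~\ref{thm:P1P1:skdia3} I would argue as in~\eqref{eq:KP3}: for $s\neq 1$ the fixed locus of $s$ on $\PP(V_2)\times\PP(V_2')$ is $\Fix(s,\PP^1)\times\Fix(\alpha(s),\PP^1)$, a set of four points, whence the action is free in codimension one.

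For the singularities I would use that $\Upsilon$ is the discriminant of the quintic $t^5-z_3t^2+z_4t-z_5$. The singular locus of a discriminant hypersurface is the closure of the locus of polynomials with a triple root, together with the locus of polynomials with two distinct double roots; the former contributes a cuspidal point and the latter an ordinary node of $D$, both lying in the smooth toric locus of $\PP(3,4,5)$, and an explicit local computation (as in Proposition~\ref{prop:V3}) confirms the analytic types, giving part~\ref{thm:P1P1:skdia5}. Finally, part~\ref{thm:P1P1:skdia6} is local analysis of the double cover $\pi$. Over $\Sing(D)$ the cover $w^2=g(x,y)$ with $g$ a node (resp.\ cusp) produces a Du Val point of type \type{A_1} (resp.\ \type{A_2}). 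Over the three vertices I would evaluate $\Upsilon$ to decide ramification: $\Upsilon(1,0,0)=0$ (the quintic $t^2(t^3-1)$ has a double root), so $\pi$ is ramified over the weight-$3$ vertex, giving a single cyclic quotient singularity $\tfrac13(1,1)$; whereas $\Upsilon(0,1,0)\neq0$ and $\Upsilon(0,0,1)\neq0$, so $\pi$ splits over the remaining two vertices, the weight-$4$ vertex being itself a Du Val point of type \type{A_3} (namely $\tfrac14(1,3)$) and the weight-$5$ vertex of type $\tfrac15(3,4)$, each producing two preimage singularities of the same type. I expect the main obstacle to be the verification that $V_2\otimes V_2'$ is precisely the reflection extension $W_4$ (not $W_4'$) together with the identification $Q=\{z_2=0\}$, since this is exactly what licenses transporting the invariant-theoretic computation of Theorem~\ref{thm:V4prime}; once it is in place, the remaining steps are routine local computations.
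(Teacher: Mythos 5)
Your parts (i)--(iv) follow the paper's own route almost verbatim: the Segre embedding, the identification $V_2\otimes V_2'\simeq V_4'\subset V_{\mathrm{perm}}$, the two invariant rings, and restriction to $\{z_2=0\}$. Your verification that the Segre quadric is $\{z_2=0\}$ (via uniqueness of the invariant quadric) just makes explicit what the paper asserts, and the $W_4$-versus-$W_4'$ worry is vacuous, since the two representations differ by the sign character and therefore give the same action on $\PP^4$; the paper likewise works with the permutation realization. Part (v) is a genuinely different and reasonable route (stratification of the discriminant instead of the paper's explicit computation in the chart $U_3$), with the caveat that the triple-root point and the two-double-root point are only \emph{candidate} singularities of $D$: one must still rule out singular points of $D$ coming from tangency of the slice $\{z_1=z_2=0\}$ with the smooth stratum of the discriminant hypersurface, which the paper's brute-force computation does automatically and your stratification argument leaves implicit.

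The genuine gap is in (vi), in the rule ``$\Upsilon\neq 0$ at a vertex, hence $\pi$ splits there and the two preimages have the same type as the vertex.'' This is false over singular points of the base. A double cover which is \'etale in codimension one near a $\frac1r(a,b)$-point is classified by a character $\mumu_r\to\mumu_2$ of the local fundamental group, and it splits iff that character is trivial; this is automatic only for $r$ odd (which is why your conclusion at the weight-$5$ vertex happens to be correct: $\mumu_5$ has no subgroup of index $2$). At the weight-$4$ vertex the character is nontrivial and the cover does \emph{not} split. Concretely, in $\Proj \CC[z_3,z_4,z_5,z_{10}]/(z_{10}^2-\Upsilon)$ the two points $(0,1,0,16)$ and $(0,1,0,-16)$ are identified by $\lambda=i$ in the weighted $\CC^*$-action (since $\lambda^4=1$ and $\lambda^{10}=-1$), so $\pi^{-1}((0,1,0))$ is a \emph{single} point; its local model is the slice $\CC^2_{z_3,z_5}$ modulo the residual stabilizer $\{\pm 1\}$, which acts with weights $((-1)^3,(-1)^5)=(-1,-1)$, i.e.\ an $A_1$-point. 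Geometrically: the point of $\PP^1\times\PP^1$ over this vertex is $(0:1:i:-1:-i)$ (the roots of $t(t^4+1)$), whose $\Sym_5$-stabilizer is generated by the \emph{odd} $4$-cycle $(2345)$; hence its $\Sym_5$-orbit is a single $\Alt_5$-orbit, and its $\Alt_5$-stabilizer $\langle(24)(35)\rangle$ acts by $-\mathrm{id}$. In fact no $A_3$-point can occur on $(\PP^1\times\PP^1)/\Alt_5$ at all: every stabilizer of the twisted diagonal $\Alt_5$-action injects into a point stabilizer on the first factor, hence is cyclic of order $1,2,3$ or $5$, as $\Alt_5$ has no elements of order $4$. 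So your argument for (vi) breaks down precisely where item (vi) of the statement itself is in error (the paper's own proof of (vi) is a one-line deduction from (ii) and (v), so it offers no help here): the correct assertion is that $\pi^{-1}((0,1,0))$ is one Du Val point of type $A_1$, not two points of type $A_3$.
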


\begin{proof}
We identify $\PP^1\times \PP^1$ with $\PP(V_2)\times \PP(V_2')$ and
consider the natural Segre embedding $\PP(V_1)\times \PP(V_2')\hookrightarrow \PP(V_2\otimes V_2')$
whose image is a smooth quadric. Note that $V_2\otimes V_2'\simeq V_4'$ (see Table~\ref{tab:repSL25}). As in
the proof of Theorem~\ref{thm:V4prime}, we can regard $V_4'$ as a subrepresentation of
the natural permutation representation $V_{\mathrm{perm}}$
so that $V_4'=\{\sum x_i=0\}$, where $x_1,\dots, x_5$ are coordinates in $V_{\mathrm{perm}}=\CC^5$.
Let $z_1,\dots, z_5$ be the elementary symmetric polynomials in $x_1,\dots, x_5$.
Then there are embeddings
\[
\iota: \PP(V_1)\times \PP(V_2') \xhookrightarrow{\hspace{0.7em} \iota_1\hspace{0.7em}} 
\PP(V_2\otimes V_2') \xhookrightarrow{\hspace{0.7em} \iota_2\hspace{0.7em}} 
\PP(V_{\mathrm{perm}}),
\]
where the image $\iota(\PP(V_1)\times \PP(V_2'))$ is given by $z_1=z_2=0$. 
Clearly, $\iota(\PP(V_1)\times \PP(V_2'))$ is invariant under the action of the symmetric group $\Sym_5$.
Therefore, $\N_{\Aut(\PP^1\times \PP^1)}(\Alt_5)=\Sym_5$
(see Remark~\ref{rem:autP1P1}).

Since
$\CC[V_{\mathrm{perm}}]^{\Sym_5}=\CC[z_1,\dots, z_5]$, we have an embedding
\[
\PP(V_1)\times \PP(V_2')/\Sym_5 \hookrightarrow
\PP(V_{\mathrm{perm}})/\Sym_5=\PP(1,2,3,4,5)
\]
whose image is given by $z_1=z_2=0$, where $z_1,\dots, z_5$ are regarded as quasihomogeneous coordinates 
in $\PP(1,2,3,4,5)$. Thus $(\PP^1\times \PP^1)/\Sym_5\simeq \PP(3,4,5)$.
This proves ~\ref{thm:P1P1:skdia1}.

Further, the ring of invariants
$\CC[x_1,\dots,x_5]^{\Alt_5}$ is generated by
$z_1$,\dots, $z_5$ and the discriminant $z_{10}$ (see \eqref{eq:disc5})
and there is a unique relation \eqref{eq-eq}.
Thus we have an embedding 
\[
(\PP^1\times \PP^1)/\Alt_5 \hookrightarrow \PP(1,2,3,4,5,10)
\]
so that the image is given by three equations $z_1=z_2=z_{10}^2-\Disc (z_1,\dots,z_5)=0$.
Hence $(\PP^1\times \PP^1)/\Alt_5$ is given in $\PP(3,4,5,10)$ by the equation
\eqref{eq:skew-diagonal}
\begin{equation*}
z_{10}^2=\Upsilon(z_3,z_4,z_5),\quad 
\text{where $\Upsilon=\Disc(0,0,z_3,z_4,z_5)=\Phi(0,z_3,z_4,z_5)$},
\end{equation*}
see Sect.~\ref{sect:V4prime}. From \eqref{eq:Psi} we obtain
\begin{equation}
\label{eq:Psi:00}
\Upsilon(z_3,z_4,z_5)=3\,125\,z_{5}^{4}+256\,z_{4}^{5} 
-27\,z_{3}^{4}z_{4}^{2}+108\,z_{3}^{5}z_ {5}- 
1\,600\,z_{3}z_{4}^{3}z_{5}+2\,250\,z_{3}^{2}z_{4}z_{5}^{2}.
\end{equation} 
This proves~\ref{thm:P1P1:skdia2}.

The assertion \ref{thm:P1P1:skdia3} follows from the fact that the action of $\Alt_5$ 
on $\PP(V_2)\times \PP(V_2')$ is free in codimension $1$ and 
\ref{thm:P1P1:skdia4} follows from \ref{thm:P1P1:skdia1} and \ref{thm:P1P1:skdia2}.

It is easy to see from \eqref{eq:Psi:00} that $D$ does not pass through 
points $(0:1:0)$ and $(0:0:1)$ and is smooth along the line $z_3=0$.
Next we compute the singularities of $D$ on the affine chart $U_3:=\{z_3\neq 0\}=\CC^2_{z_4,z_5}/\mumu_3(1,2)$.
The equation of $D$ in this chart has the form:
\begin{equation}
\label{eq:Psi:00a}
\Upsilon=
256\,z_{4}^{5}-1\,600\,z_{4}^{3}z_{5}+3\,125\,z_{5}^{4}+2\,250\,z_{4}z_{
5}^{2}-27\,z_{4}^{2}+108\,z_{5}.
\end{equation}
Near the origin, the pair $(\PP(3,4,5)\supset D)$ is locally isomorphic to 
$(\CC^2_{z_4,z_5}\supset\{z_5=0\})/\mumu_3(1,2)$, hence it is plt.
Finally, easy but lengthy computations show that in the chart $U_3$ 
the curve $D$ has two singular points: 
an ordinary double point at $\left(-\frac{3}{20}\sqrt[3]{25},\, \frac{9}{50}\sqrt[3]{5}\right)$
and a simple cusp at
$\left(\frac{3}{20} \sqrt[3]{100},\, \frac{3}{50} \sqrt[3]{10} \right)$.
This proves \ref{thm:P1P1:skdia5}.
Finally, \ref{thm:P1P1:skdia6} follows from \ref{thm:P1P1:skdia5} and \ref{thm:P1P1:skdia2}.
\end{proof}

\subsection{Rationality of the quotients of $\PP^3$ by imprimitive groups}
As application of the above results we prove the rationality of quotients of $\PP^3$ by some imprimitive groups.
A group $G\subset \GL(V)$ of linear transformations is said to be \textit{imprimitive} if the space $V$
admits
a decomposition 
\begin{equation}
\label{eq:def:imp}
V=\bigoplus_{i=1}^m V^{(i)}
\end{equation} 
such that $G$ permutes the $V^{(i)}$'s.
In other words, the representation $G\hookrightarrow \GL(V)$
is induced by a lower-dimensional representation of a proper subgroup $H\subset G$.

Is a consequence of the computations above, 
we prove the following result.
\begin{stheorem}
\label{thm:imp}
Let $G\subset \GL(V)$ be a finite imprimitive group, where $\dim V=4$.
Suppose that $G$ is not solvable. Then the variety $\PP(V)/G$ is rational.
\end{stheorem}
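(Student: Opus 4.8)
The plan is to classify finite imprimitive non-solvable subgroups $G\subset\GL(V)$ with $\dim V=4$ and reduce each case to the computations already carried out. By the definition of imprimitivity, $V=\bigoplus_{i=1}^m V^{(i)}$ with $G$ permuting the summands; the permutation action gives a homomorphism $G\to\Sym_m$ with $m\in\{2,4\}$ (the case $m=4$ forcing $\dim V^{(i)}=1$, and $m=2$ forcing $\dim V^{(i)}=2$). The first step is to show that non-solvability essentially forces the two-dimensional case $m=2$ with the factors being copies of a two-dimensional representation of a group closely related to $\tilde\Alt_5$: indeed, if $m=4$ then $G$ embeds in the solvable group of monomial matrices $(\CC^*)^4\rtimes\Sym_4$, contradicting non-solvability, so $m=2$. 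Then the image of $G$ in $\Sym_2$ is either trivial or $\langle\tau\rangle$, and the kernel $H$ acts on each $V^{(i)}$, so the key point is that the projection of $H$ to $\GL(V^{(i)})\cong\GL_2(\CC)$ has non-solvable image, whence (by the classification of finite subgroups of $\GL_2$, equivalently of $\PGL_2$) this image is the binary icosahedral group $\tilde\Alt_5$ acting via $V_2$ or $V_2'$.

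Granting this reduction, the second step is to pass to projective space and recognize the $G$-action as one of the cases already analyzed. Projectivizing, $\PP(V)=\PP^3$ contains the two $G$-invariant lines $\PP(V^{(1)})$ and $\PP(V^{(2)})$, and the pencil of quadrics spanned by them (or equivalently the Segre-type structure) yields a $G$-equivariant map relating $\PP(V)/G$ to a quotient of $\PP^1\times\PP^1$. Concretely, I would argue that $\PP(V)/G$ is birational to a quotient of the smooth quadric $\PP^1\times\PP^1=\PP(V^{(1)})\times\PP(V^{(2)})$ by the group acting as in Section~\ref{Sect:P1P1}: the action on the two factors is, up to the swap $\tau$ and up to the scalar center of $\tilde\Alt_5$ (which acts trivially on the projectivization), either diagonal or twisted diagonal. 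Thus after factoring out the center one is reduced to $\Alt_5$ or $\Sym_5$ acting on $\PP^1\times\PP^1$ diagonally or twisted-diagonally, exactly the situations of Theorems~\ref{thm:P1P1:diag} and~\ref{thm:P1P1:skdia}.

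The third step is to deduce rationality. Here the cleanest route is to exhibit a rational curve fibration over one of the quotient surfaces and invoke the plt-type lemmas from the preliminaries. In the twisted-diagonal case the base quotient is $\PP(3,4,5)$ with branch divisor $D\in|\OOO(20)|$ satisfying precisely the hypotheses of Lemma~\ref{lemma:P(3,4,5)} (the pair is plt at $P_3$, and $P_4,P_5\notin D$, as recorded in Theorem~\ref{thm:P1P1:skdia}), so that lemma gives rationality directly; in the diagonal case the base is $\PP(1,3,5)$ with $D\in|\OOO(15)|$ avoiding the singular points, which is the hypothesis of Lemma~\ref{lemma:P(1,3,5)}. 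The remaining task is to produce the rational curve fibration over $V/G$ whose discriminant lands in $D$: this is done exactly as in the proof of Theorem~\ref{th-SL25}, by blowing up an invariant curve (here the two invariant lines, or an invariant rational normal curve) in $\PP^3$ and taking the other extremal contraction to obtain a $\PP^1$-bundle whose quotient is smooth away from the branch locus of the relevant $\PP^1\times\PP^1$-quotient.

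I expect the main obstacle to be the first step, namely pinning down the possible groups $G$ precisely enough. The subtlety is that $H$ need only map \emph{into} $\GL_2(\CC)\times\GL_2(\CC)$ with non-solvable image, and one must control both the projection to each factor (ruling out one factor being trivial, since non-solvability must come from the product structure) and the possible scalar extensions and the presence or absence of the swap $\tau$ and of transformations mixing the two copies; only after this bookkeeping does the clean dichotomy \emph{diagonal versus twisted diagonal for $\Alt_5$ or $\Sym_5$} emerge. Once the group is identified, the geometric reduction and the appeal to Lemmas~\ref{lemma:P(3,4,5)} and~\ref{lemma:P(1,3,5)} are routine. A secondary point to check is that projectivizing genuinely kills the central $\mumu_2\subset\tilde\Alt_5$ and the possibly larger scalar subgroup of $G$, so that the induced action on $\PP^1\times\PP^1$ is faithfully one of the two tabulated actions rather than some larger extension.
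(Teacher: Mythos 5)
Your reduction to the case $m=2$, the Sarkisov link (blow up the two invariant skew lines, pass to the $\PP^1$-bundle over $\PP(V^{(1)})\times\PP(V^{(2)})$), and the appeal to Lemmas~\ref{lemma:P(1,3,5)} and~\ref{lemma:P(3,4,5)} in the diagonal and twisted-diagonal cases all agree with the paper. But your group-theoretic classification has a genuine gap: you conclude that, after projectivization, the kernel $H$ of $G\to\Sym_2$ acts on $\PP^1\times\PP^1$ as a diagonal or twisted-diagonal copy of $\Alt_5$, so that only Theorems~\ref{thm:P1P1:diag} and~\ref{thm:P1P1:skdia} are needed. That is not the full list. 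It is true that each projection $p_i(\bar H)\subset\PGL_2(\CC)$ must equal $\Alt_5$ (the two projections are conjugate via any element of $G$ swapping the summands, so both are non-solvable). However, a subgroup $\bar H\subset\Alt_5\times\Alt_5$ with both projections surjective need not be the graph of an isomorphism: by Goursat's lemma, since $\Alt_5$ is simple, $\bar H$ is either such a graph (diagonal or twisted diagonal) \emph{or the full product} $\Alt_5\times\Alt_5$. The product case really occurs: take $G=(\tilde\Alt_5\times\tilde\Alt_5)\rtimes\mumu_2\subset\GL_4(\CC)$ acting on $V_2\oplus V_2$ blockwise, with $\mumu_2$ swapping the blocks; this $G$ is irreducible, imprimitive and non-solvable, and its image in $\Aut(\PP^1\times\PP^1)$ has order $7200$, not $60$ or $120$, so neither of your two tabulated actions applies.

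The missing case needs its own argument, and the paper supplies one: when $\bar H=\Alt_5\times\Alt_5$, one has $(\PP^1\times\PP^1)/\bar H\simeq\PP^1/\Alt_5\times\PP^1/\Alt_5\simeq\PP^1\times\PP^1$, the residual involution swaps the factors, so the full quotient is $\PP^2$, and tracking the branch divisors shows that the induced rational curve fibration on $\PP^3/G$ is smooth outside the union of a conic and three tangent lines (Fig.~\ref{fig:ell}); rationality then follows from Lemma~\ref{lemma:P2} --- a lemma your proposal never invokes, and which was placed in the preliminaries precisely for this case. Two smaller points: you leave open the possibility that the image of $G$ in $\Sym_2$ is trivial (equivalently, that $V$ is reducible), which the paper excludes at the outset by reducing to an irreducible representation; and the ``bookkeeping'' you defer (scalars, faithfulness on the projectivization, identification of $\bar G$ via the normalizer computation of Remark~\ref{rem:autP1P1}) is indeed routine. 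The omission of the product case, however, is a gap in the classification itself, not bookkeeping, and your proof as written does not cover it.
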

Note that more general result is announced in \cite{P:Inv10}. The proof in that case follows the same ideas but is
much longer.

\begin{proof}
We may assume that the representation $G\hookrightarrow \GL(V)$ is irreducible.
Then the decomposition \eqref{eq:def:imp} induces a homomorphism $\nu: G\to \Sym_m$ 
whose image is transitive.
Let $N\subset G$ be the kernel of $\nu$.
If $m=4$, then $N$ is abelian and $G$ must be solvable. This contradicts our assumption.
Thus $m=2$ and $\dim V^{(i)}=2$. 
We put $V':=V^{(1)}$ and $V'':=V^{(2)}$.
Then $\PP(V')$ and $\PP(V'')$ are skew lines in $\PP(V)$
that are $N$-invariant and switched by $G/ N$.
There exists the following $G$-equivariant Sarkisov link 
\[
\vcenter{
\xymatrix{
&\widetilde {\PP(V)}\ar[dl]_{\sigma}\ar[dr]^{\varphi}&
\\
\PP(V)\ar@{-->}[rr]&& \PP(V')\times \PP(V'')
} }
\]
where $\sigma$ is the blowup of $\PP(V')\cup \PP(V'')$ and 
$\varphi$ is a $\PP^1$-bundle (see e.g.~\cite[Sect.~6]{P:Inv10}).

Let $\bar G$ and $\bar N$ be images in $\Aut(\PP^1\times\PP^1)$ of $G$ and $N$. respectively.
Then $\bar N\subset \Aut(\PP^1)\times \Aut(\PP^1)$. Let $p_i:\bar N \to \Aut(\PP^1)$ is the projection to the $i$-th factor
and let $\bar N_i:= p_i(\bar N)\subset \Aut(\PP^1)$. Then $\bar N\subset\bar N_1\times \bar N_2$.
Since $\bar N$ is not solvable, the only possibility is $\bar 
N_1\simeq \bar N_2\simeq \Alt_5$. 
Let $K_1:=\ker (p_2)\cap\bar N_1$ and $K_2:=\ker (p_1)\cap\bar N_2$. By Goursat's lemma (see e.g. \cite[Ch.~1, Ex.~5]{Lang:Algebra}) we have
$\bar N_1/K_1\simeq \bar N_2/K_2$ and $\bar N$ is isomorphic to the fiber product $\bar N_1\times_{\bar N_i/K_i} \bar N_2$. 

Consider the case $K_i=\bar N_i$, i.e. $\bar N=\bar N_1\times\bar N_2\simeq 
\Alt_5\times \Alt_5$.
Then $(\PP^1\times \PP^1)/\bar N = \PP^1/\bar N_1\times \PP^1/\bar N_2 \simeq \PP^1\times \PP^1$. Moreover, $\bar G/\bar N$ acts on this 
surface interchanging the factors. Therefore, 
\[
(\PP^1\times \PP^1)/ \bar G = (\PP^1\times \PP^1)/(\bar G/\bar N)\simeq \PP^2.
\]
The branch divisor of the quotient morphism
\[
\theta: \PP^1\times \PP^1 \to \PP^1/\bar N_1\times \PP^1/\bar N_2= \PP^1\times \PP^1
\]
consists of three curves $L_1,L_2,L_3\subset \PP^1\times \PP^1$ of bidegree $(1,0)$ and 
three curves $L_1',L_2',L_3'\subset \PP^1\times \PP^1$ of bidegree $(0,1)$.
The branch divisor of the double cover $\pi: \PP^1\times \PP^1 \to (\PP^1\times \PP^1)/(G/N)\simeq \PP^2$
is a conic $C\subset \PP^2$ which is the image of the diagonal $\Theta\subset \PP^1\times \PP^1$.
With appropriate numbering, $\Theta$ passes through intersection points $P_1:=L_1\cap L_1'$, $P_2:=L_2\cap L_2'$, $P_3:=L_3\cap L_3'$.
Thus the branch divisor of the quotient map $\pi \comp \theta:\PP^1\times \PP^1 \to (\PP^1\times \PP^1)/N= \PP^2$
consists of a conic $C$ and three lines $M_i:=\pi(L_i)=\pi( L_i')$ touching $C$ at $Q_i:=\pi (P_i)$
(see Fig.~\ref{fig:ell}). Then the rationality of $\PP^3/G$ follows from Lemma~\ref{lemma:P2}.

Now, assume that $K_i=\{1\}$, i.e. $\bar N\simeq\bar N_1\simeq \bar N_2\simeq 
\Alt_5$. 
Then we have two possibilities \ref{rem:Cr2:diag} and~\ref{rem:Cr2:tdiag} of 
Subsection~\ref{subSect:P1P1}.
If $\bar N=\Alt_5$ acts on $\PP^1\times \PP^1$ diagonally, then $\bar G\simeq 
\Alt_5\times \langle\tau\rangle$, and $(\PP^1\times \PP^1)/\bar G\simeq 
\PP(1,3,5)$
(see Theorem~\ref{thm:P1P1:diag}). The curve $D$ does not pass 
through singular points of $S$.
Then the rationality of $\PP^3/G$ follows from Lemma \ref{lemma:P(1,3,5)}.

Finally, assume that $\Alt_5$ act on $\PP^1\times \PP^1$ twisted diagonally. 
Then $\bar G\simeq
\Sym_5$, and $(\PP^1\times \PP^1)/\bar G\simeq \PP(3,4,5)$ (see Theorem~\ref{thm:P1P1:skdia}).
Then the rationality of $\PP^3/G$ follows from Lemma \ref{lemma:P(3,4,5)}.
\end{proof}

\subsection{}
As a byproduct of the results in Theorems~\ref{thm:P1P1:diag} and~\ref{thm:P1P1:skdia} we also prove the rationality of $\PP^3/\Sym_5$, where the action $\Sym_5\acts \PP^3$ is 
induced by a faithful representation  of a central extension  of~$\Sym_5$:

\begin{stheorem}[cf. {\cite[Theorem~1.4]{Plans:Noether}}]
\label{th-tildeS5}
Let $\tilde \Sym_5$ be a central extension of $\Sym_5$ and let $V$ be its four-dimensional representation. Then
$\PP(V)/\tilde \Sym_5$ is rational. 
\end{stheorem}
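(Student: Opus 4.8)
The plan is to reduce everything to the surface quotients already computed. Since the centre of $\tilde\Sym_5$ acts on $V$ by scalars, it acts trivially on $\PP(V)$; hence the action descends to $\Sym_5$ and $\PP(V)/\tilde\Sym_5=\PP(V)/\Sym_5$. I would then split the argument according to the restriction of $V$ to $\tilde\Alt_5$. Reading off Table~\ref{tab:repSL25}, a four-dimensional representation restricts to one of: the irreducible $V_4'$ (the non-faithful case, where the representation factors through $\Sym_5$), the reducible $V_2\oplus V_2'$ (the basic, self-associate spin representation), or the irreducible $V_4=\Sy^3(V_2)$ (a faithful spin representation). The first two cases are immediate. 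If $V|_{\Alt_5}=V_4'$, the Sarkisov link in the proof of Theorem~\ref{thm:C4/A5} is $\Sym_5$-equivariant, so $\PP(V_4')/\Sym_5$ is birational to $\PP^1\times(S/\Sym_5)$ and hence rational, $S/\Sym_5$ being rational by Section~\ref{sect:DP5}. If $V|_{\tilde\Alt_5}=V_2\oplus V_2'$, then $\PP(V)$ carries two skew $\Alt_5$-invariant lines $\PP(V_2)$, $\PP(V_2')$ swapped by the odd permutations; this is exactly the imprimitive twisted-diagonal situation, so rationality follows from Theorem~\ref{thm:imp} (equivalently from Theorem~\ref{thm:P1P1:skdia} together with Lemma~\ref{lemma:P(3,4,5)}).

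The substantial case is the primitive representation $V=V_4=\Sy^3(V_2)$, whose $\Alt_5$-quotient was shown rational in Theorem~\ref{th-SL25}. Here $\PP(V_4)$ contains two $\Alt_5$-invariant twisted cubics $C=\upsilon_3(\PP(V_2))$ and $C'=\upsilon_3(\PP(V_2'))$, and the odd permutations interchange $C\leftrightarrow C'$ (and $V_3\leftrightarrow V_3'$). The structural observation I would exploit is that the $2$-secant construction of Theorem~\ref{th-SL25} realises $\PP(V_4)$, $\Alt_5$-equivariantly, as the quotient by the point-swap of a $\PP^1$-bundle over $\PP(V_2)\times\PP(V_2)$: a chord of $C$ records the unordered pair it cuts out on $C\simeq\PP(V_2)$, i.e.\ a point of $\Sy^2\PP(V_2)=\PP(V_3)$. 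This is precisely the diagonal action of Theorem~\ref{thm:P1P1:diag}, whose normaliser is $\Alt_5\times\langle\tau\rangle$ and whose quotient is $(\PP^1\times\PP^1)/(\Alt_5\times\langle\tau\rangle)\simeq\PP(1,3,5)$, so the chord map is the $\Alt_5$-equivariant rational curve fibration $\varphi\colon\PP(V_4)\dashrightarrow\PP(V_3)$, while the companion cubic $C'$ yields $\varphi'\colon\PP(V_4)\dashrightarrow\PP(V_3')$. Taking both chord maps together identifies $\PP(V_4)$, now $\Sym_5$-equivariantly, with the divisor $Z=\{(\ell,\ell'):\ell\cap\ell'\neq\varnothing\}\subset\PP(V_3)\times\PP(V_3')$ of pairs of meeting chords, on which $\tau$ acts by interchanging the factors.

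Passing to quotients, $\PP(V_4)/\Sym_5$ is birational to $Z/\Sym_5$. Quotienting first by $\Alt_5$, the projection $\varphi$ exhibits $\PP(V_4)/\Alt_5$ as a rational curve fibration over $\PP(V_3)/\Alt_5\simeq\PP(1,3,5)$, recovering Theorem~\ref{th-SL25}. The residual involution $\bar\tau$ interchanges this fibration with its companion $\bar\varphi'$, and so preserves \emph{neither} ruling; this is the heart of the matter. The plan is to use the factor-swap symmetry of $Z$ to descend to a single $\Sym_5$-equivariant rational curve fibration over a rational surface --- a suitable quotient of $\PP(1,3,5)$ --- and then to conclude by the criterion of Theorem~\ref{thm:rat}, via Lemma~\ref{lemma:P(1,3,5)} or Lemma~\ref{lemma:P2}, once the discriminant is shown to meet a base-point-free pencil in at most three points. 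The main obstacle is exactly this last reduction: because $\tau$ swaps the two twisted cubics (and the finite subgroups of $\PGL_2(\CC)$ do not include $\Sym_5$, so no single ruling can be $\Sym_5$-invariant), one must track the induced action of $\tau$ on the base $\PP(1,3,5)$ and on the discriminant curve $D$ of Proposition~\ref{prop:V3}, and verify that the quotient fibration still satisfies the intersection bound required by Theorem~\ref{thm:rat}.
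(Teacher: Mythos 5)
Your case division --- $V$ restricting to $\tilde\Alt_5$ as $V_4'$, as $V_2\oplus V_2'$, or as $V_4$ --- is the same as the paper's, and your reducible case is handled exactly as in the paper (imprimitivity plus Theorem~\ref{thm:imp}). The problem is the primitive case $V|_{\tilde\Alt_5}\simeq V_4$, the only substantial one: there your text is a plan that halts precisely where a proof is required, and the plan cannot be carried out in the form you state it. As you yourself observe, the odd elements swap the two chord fibrations $\varphi\colon\PP(V_4)\dashrightarrow\PP(V_3)$ and $\varphi'\colon\PP(V_4)\dashrightarrow\PP(V_3')$; consequently the residual involution $\bar\tau$ does not act on the base $\PP(1,3,5)$ of either fibration at all, so there is no ``suitable quotient of $\PP(1,3,5)$'' and no induced rational curve fibration on $\PP(V_4)/\Sym_5$ to which Theorem~\ref{thm:rat} could be applied: one cannot quotient a fibration by an involution that moves the fibration itself. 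The missing idea is to change the fibration. The paper uses the $\tilde\Sym_5$-equivariant Sarkisov link centered at the \emph{union} $C_1\cup C_2$ of the two invariant twisted cubics (\cite[Proposition~2.9]{Cheltsov-Shramov:P3}; this needs $C_1\cap C_2=\varnothing$, which the paper checks: the intersection would be an invariant subset of $C_1\simeq\PP^1$ of size at most six, while the smallest $\Alt_5$-orbit on $\PP^1$ has length twelve). Blowing up $C_1\cup C_2$ and flopping yields a $\PP^1$-bundle whose base is naturally $C_1\times C_2\simeq\PP(V_2)\times\PP(V_2')$, the fibers being transforms of lines joining a point of $C_1$ to a point of $C_2$; on this base the whole of $\Sym_5$ acts faithfully, by the twisted diagonal action, so $(\PP^1\times\PP^1)/\Sym_5\simeq\PP(3,4,5)$ by Theorem~\ref{thm:P1P1:skdia} and rationality follows from Lemma~\ref{lemma:P(3,4,5)}. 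In short, the $\Sym_5$-equivariant fibration you are looking for exists, but its base is $C_1\times C_2$, not $\Sy^2C_1=\PP(V_3)$ or anything built from it.

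Your first case also has a gap. The $\Sym_5$-equivariance of the link in Theorem~\ref{thm:C4/A5} does not by itself give that $Y^+/\Sym_5\to S/\Sym_5$ is birationally trivial: in the $\Alt_5$-proof the birational section came from the $2$-divisibility of the canonical class of the quotient, which rests on the action being free in codimension one, and this fails for $\Sym_5$ --- a transposition acts on $V_4'$ (that is, on $W_4$ or $W_4'$) with a three-dimensional eigenspace, hence fixes a plane of $\PP^3$ pointwise, so the quotient map ramifies along divisors and the divisibility computation breaks down. Asserting without a substitute argument that the conic bundle over $S/\Sym_5$ is birationally trivial is exactly the point that needs proof. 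The case is in fact immediate by invariant theory, which is how the paper treats it: $\CC[V_4']^{\Sym_5}=\CC[z_2,z_3,z_4,z_5]$, hence $\PP(V_4')/\Sym_5\simeq\PP(2,3,4,5)$, a weighted projective space (see the proof of Claim~\ref{claim:V4prime}). Note that for this one must know the projective $\Sym_5$-action is induced by a linear one, i.e.\ that $\tilde\Sym_5$ contains a copy of $\Sym_5$; the paper establishes this via the invariant Bring curve, a point which your reduction to the image in $\PGL(V)$ passes over.
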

\begin{proof}
We may assume that  $V$ is irreducible and faithful.
Let $H:=[\tilde \Sym_5, \tilde \Sym_5]$ be the derived subgroup and let $\z(\tilde \Sym_5)$ be the center of $\tilde \Sym_5$.
Then $\z(\tilde \Sym_5)$ is cyclic and  $H/(\z(\tilde \Sym_5)\cap H)\simeq \Alt_5$.
Assume that $H\simeq \Alt_5$. Then as a $H$-module $V$ is isomorphic to $V_4'$.
Hence there exists  $H$-invariant irreducible  surfaces   $Q,\, S\subset \PP(V)$ of degree $2$ and $3$, respectively
(see the proof of Claim~\ref{claim:V4prime}). The intersection $\Gamma:=Q\cap S$ is a smooth
canonical curve of genus $4$. It is easy to see that $\Gamma$ is $\tilde \Sym_5$-invariant, so 
$\Sym_5=\tilde \Sym_5/ \z(\tilde \Sym_5)$ faithfully acts on $\Gamma$. Since $\Gamma$ is a canonical curve,
we have a natural identification  $V\simeq H^0(\Gamma, \OOO_\Gamma(K_\Gamma)^\vee$, hence 
$\tilde \Sym_5$ contains $\Sym_5$ as a subgroup. Then we may assume that $\tilde \Sym_5=\Sym_5$
and $V$ as a  $\Sym_5$-module is isomorphic to either $W_4$ or $W_4'$ (see Table~\ref{Tab:repS5}).
The $\Sym_5$-varieties $\PP(W_4)$ and $\PP(W_4')$ are isomorphic and $\PP(W_4)\simeq \PP(2,3,4,5)$ is rational
(see the proof of Claim~\ref{claim:V4prime}). This proves our theorem in the case $H\simeq \Alt_5$.

Thus we may assume that $H\simeq\tilde  \Alt_5$.
If the representation of $\tilde \Alt_5$ on $V$ is reducible, then $\tilde \Sym_5\subset \GL(V)$ is
imprimitive. Then $\PP(V)/\tilde \Sym_5$ is rational by Theorem~\ref{thm:imp}.
Thus we may assume that $V$ is irreducible as a $\tilde \Alt_5$-module,
so $\PP(V)$ as an $\tilde \Alt_5$-variety is isomorphic to $\PP(V_4)$. 
Since $V_4\simeq \Sy^3(V_2)$ (see Table~\ref{tab:repSL25}),
there exists
an invariant twisted cubic curve $C_1=\upsilon_3(\PP^1)\subset \PP(V_4)$.
Its image under the action by elements $\tilde \Sym_5\setminus\tilde \Alt_5$ is another 
twisted cubic curve $C_2\subset \PP^3$. Then the pair $C_1\cup C_2$ is $\tilde \Sym_5$-invariant.
In fact, the decomposition
$\Sy^2 (V_4^\vee)=V_3\oplus V_3'\oplus V_4'$ of the symmetric square defines $C_1$ and $C_2$:
up to permutations we may assume that the (quadratic) equations of $C_1$ sit in $V_3$ while the equations of $C_2$ sit in $V_3'$.
The intersection $C_1\cap C_2$ consists of at most $6$ points which cannot be permuted by $\tilde \Alt_5$.
This implies that $C_1\cap C_2=\varnothing$.
Now according to \cite[Proposition~2.9]{Cheltsov-Shramov:P3} there exists the following $\tilde \Sym_5$-equivariant 
Sarkisov link:
\[
\vcenter{
\xymatrix{
& \tilde \PP^3\ar[dl]_{\sigma}\ar@{-->}[rr]^{\chi}&&Y\ar[dr]^{\varphi}
\\
\PP^3 &&&&\PP^1\times \PP^1
} }
\]
where $\sigma$ is the 
blowup of $C_1\cup C_2$, $\chi$ is a flop, $\varphi$
is a $\PP^1$-bundle, and the induced action of $\Sym_5$ on $\PP^1\times\PP^1$ is faithful.
Then $(\PP^1\times \PP^1)/\Sym_5\simeq \PP(3,4,5)$ by Theorem~\ref{thm:P1P1:skdia} and $Y/\Sym_5$ is rational by Lemma~\ref{lemma:P(3,4,5)}.
\end{proof}

\section{Quintic del Pezzo surface}
\label{sect:DP5}
Throughout this section $S$ denotes the quintic del Pezzo surface.
Recall that its automorphism group is isomorphic to the symmetric group $\Sym_5$
(see e.g.~\cite[Theorem~8.5.8]{Dolgachev-ClassicalAlgGeom}). 

\begin{theorem}
\label{thm:DP5}
Let $S$ be the quintic del Pezzo surface. Then the following assertions hold.
\begin{enumerate}
\item \label{thm:DP5a}
$S/\Aut(S)\simeq\PP(1,2,3)$.
\item \label{thm:DP5b}
$S/\Alt_5\simeq \PP(1,1,3)$.\footnote{This fact was proved also in \cite{Trepalin:Quot-high}.}
\item\label{thm:DP5c}
The natural double cover $S/\Alt_5\to S/\Aut(S)$ is branched over an irreducible conic $\bar\DLines\subset S/\Aut(S)=\PP(1,2,3)$.
\end{enumerate}
\end{theorem}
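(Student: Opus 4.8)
The plan is to realise $S$ anticanonically and to extract both quotients from the invariant subrings of its anticanonical ring. Since $-K_S$ is very ample, $S\hookrightarrow\PP^5=\PP\big(H^0(S,-K_S)^\vee\big)$, and as a module over $\Aut(S)=\Sym_5$ one has $H^0(S,-K_S)\cong W_6$; indeed, writing $\CC^5=V_{\mathrm{perm}}$ one computes $\wedge^2\CC^5=W_4\oplus W_6$, and $S=\mathrm{Gr}(2,5)\cap\PP(W_6)$ is the $\Sym_5$-invariant quintic del Pezzo surface (cf.~\cite{Dolgachev-ClassicalAlgGeom}). Put $R=\bigoplus_{m\ge0}H^0\big(S,\OOO_S(-mK_S)\big)$. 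Because $\Sym_5$ is finite and $-K_S$ carries its canonical linearisation, $S/\Sym_5=\Proj R^{\Sym_5}$ and $S/\Alt_5=\Proj R^{\Alt_5}$, so the theorem reduces to the determination of these two graded rings. I note at the outset that $\Pic(S)\otimes\QQ=W_1\oplus W_4$ and that $W_4$ stays irreducible on restriction to $\Alt_5$; hence both quotients have Picard rank $1$, as befits a weighted projective plane.

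First I would write down the $\Sym_5$-equivariant minimal free resolution of $\OOO_S$. Numerically it is $1,5,5,1$ in degrees $0,2,3,5$; the five quadrics through $S$ span a copy of $W_5\subset\Sy^2 H^0(-K_S)$, the linear syzygies form $W_5'$ (the differential is nonzero because $\Hom_{\Sym_5}(W_6\otimes W_5',W_5)\neq0$), and Gorenstein self-duality together with $\det W_6=W_1'$ forces the socle term to be $W_1'$:
\[
0\longrightarrow\OOO(-5)\otimes W_1'\longrightarrow\OOO(-3)\otimes W_5'\longrightarrow\OOO(-2)\otimes W_5\longrightarrow\OOO\longrightarrow\OOO_S\longrightarrow0.
\]
This resolution makes the equivariant Hilbert series explicit: for $g\in\Sym_5$,
\[
\mathcal H_g(t):=\sum_{m\ge0}\operatorname{tr}\!\big(g\mid H^0(-mK_S)\big)\,t^m=\frac{1-\chi_{W_5}(g)\,t^2+\chi_{W_5'}(g)\,t^3-\chi_{W_1'}(g)\,t^5}{\det\big(1-t\,g\mid W_6\big)},
\]
where each denominator is read off from the eigenvalues of $g$ on $W_6$ (for instance $(1-t)(1-t^5)$ on a $5$-cycle, $1-t^6$ on an element of order $6$, and so on).

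The core of the proof is the ensuing Molien computation. Averaging $\mathcal H_g$ over the two groups — using $W_5,W_5'\!\downarrow_{\Alt_5}\!V_5$, $W_1'\!\downarrow_{\Alt_5}\!V_1$ and $W_6\!\downarrow_{\Alt_5}\!V_3\oplus V_3'$ — should collapse to
\[
\frac1{120}\sum_{g\in\Sym_5}\mathcal H_g(t)=\frac{1}{(1-t^2)(1-t^4)(1-t^6)},\qquad\frac1{60}\sum_{g\in\Alt_5}\mathcal H_g(t)=\frac{1}{(1-t^2)^2(1-t^6)}.
\]
In particular both invariant rings live in even degrees, with $\dim R^{\Sym_5}_m=1,0,1,0,2,\dots$ and $\dim R^{\Alt_5}_m=1,0,2,0,3,\dots$; these initial values I would confirm by a direct character count as a check. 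To conclude I would exhibit invariants realising the generator degrees: for $\Sym_5$ an invariant $\theta_2\in R_2$ and genuinely new invariants $\theta_4\in R_4$, $\theta_6\in R_6$; for $\Alt_5$ a second degree-$2$ invariant $u$ completing $\theta_2$ to a basis of $R_2^{\Alt_5}$, together with $\theta_6$. A check that the three invariants are algebraically independent in each case produces an inclusion of a polynomial ring with exactly the Hilbert series above, hence an equality. Thus $R^{\Sym_5}=\CC[\theta_2,\theta_4,\theta_6]$ and $R^{\Alt_5}=\CC[\theta_2,u,\theta_6]$, giving $S/\Sym_5\cong\PP(2,4,6)\cong\PP(1,2,3)$ and $S/\Alt_5\cong\PP(2,2,6)\cong\PP(1,1,3)$; this is \ref{thm:DP5a} and \ref{thm:DP5b}.

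For \ref{thm:DP5c} I would observe that the extra generator $u$ is precisely the $W_1'$-isotypic (sign) quadratic invariant, so the transposition $\tau\in\Sym_5\setminus\Alt_5$ acts by $u\mapsto-u$ while fixing $\theta_2$ and $\theta_6$; consequently $R^{\Sym_5}=\big(R^{\Alt_5}\big)^{\langle\tau\rangle}$ is obtained by replacing $u$ with $u^2$, and the natural map $S/\Alt_5\to S/\Sym_5$ is the double cover $\PP(2,2,6)\to\PP(2,4,6)$ determined by $u\mapsto u^2$. Its branch curve is the image of the fixed curve $\{u=0\}\cong\PP(2,6)\cong\PP^1$, i.e. the locus $\{u^2=0\}$ cut out by the weight-$4$ coordinate of $\PP(2,4,6)$; under $\PP(2,4,6)\cong\PP(1,2,3)$ this is the vanishing of the weight-$2$ coordinate, an irreducible curve $\bar\DLines\cong\PP(1,3)$ of degree $2$ — an irreducible conic. (Geometrically $\bar\DLines$ is the image of $\Fix(\tau,S)$, which for a transposition is one of the ten lines on $S=\overline{M}_{0,5}$.) The step I expect to be the main obstacle is the Molien identity itself: fixing the three representations in the equivariant resolution — in particular the Gorenstein twist that makes the socle $W_1'$ rather than $W_1$, a choice invisible in low degrees but decisive higher up — and then verifying that the class-by-class sum of rational functions genuinely simplifies to the two weighted-projective series for \emph{all} $m$, thereby excluding any spurious generator or relation in higher degree. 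The remaining ingredients (the Picard-rank count, the restriction of characters to $\Alt_5$, and the final independence check) are routine.
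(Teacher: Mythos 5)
Your route (equivariant resolution of $\OOO_S$ plus Molien summation for the anticanonical ring) is genuinely different from the paper's, which never computes invariants: the paper classifies fixed loci and non-regular orbits of all elements of $\Sym_5$, applies the Chevalley--Shephard--Todd theorem pointwise to show that $S/\Sym_5$ has exactly one $A_1$ and one $A_2$ point and $S/\Alt_5$ a single $\frac13(1,1)$ point, and then quotes the classification of rank-one log del Pezzo surfaces. Unfortunately your proof has a genuine gap: both Molien identities on which it rests are false. The quickest counterexample is in degree $9$: the divisor $\DCurves\sim -9K_S$ (the fifteen twisted cubics) is $\Alt_5$-invariant, and $\Alt_5$ has no nontrivial characters, so its defining section is an $\Alt_5$-invariant element of $R_9$; hence $\dim R_9^{\Alt_5}\ge 1$, while your series $\frac1{(1-t^2)^2(1-t^6)}$ has no odd terms at all. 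In fact your numerator $1-\chi_{W_5}(g)t^2+\chi_{W_5'}(g)t^3-\chi_{W_1'}(g)t^5$ is correct (it is the one attached to the canonical linearization of $\omega_S^{-1}$), but summing it class by class gives
\[
\frac1{120}\sum_{g\in\Sym_5}\mathcal H_g(t)=\frac{1+t^{11}}{(1-t^2)(1-t^4)(1-t^6)},
\qquad
\frac1{60}\sum_{g\in\Alt_5}\mathcal H_g(t)=\frac{1+t^{9}}{(1-t^2)^2(1-t^6)},
\]
the odd-degree classes being generated by the equations of $\DLines+\DCurves\sim-11K_S$ and of $\DCurves\sim-9K_S$, respectively. (No choice of linearization can make the series purely even: since every $g\neq e$ fixes at most a curve, $\dim R^G_m=\frac1{|G|}\dim R_m+O(m)\sim m^2/48$ in both parities.) Consequently your concluding step --- ``a polynomial subring with the same Hilbert series, hence equality'' --- collapses: $R^{\Sym_5}$ and $R^{\Alt_5}$ are \emph{not} polynomial rings, but free rank-two extensions $\CC[\theta_2,\theta_4,\theta_6]\oplus\theta_{11}\CC[\theta_2,\theta_4,\theta_6]$ and $\CC[\theta_2,u,\theta_6]\oplus\theta_9\CC[\theta_2,u,\theta_6]$ with $\theta_{11}^2$, $\theta_9^2$ polynomial in the even generators. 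The theorem can still be salvaged along your lines, because $\Proj R^G=\Proj (R^G)^{(2)}$ and the even Veronese subrings \emph{are} your polynomial rings; but then the object to be controlled is $\bigoplus_m H^0(S,-2mK_S)^G$, and the degree-$9$ and degree-$11$ invariants must be incorporated rather than ruled out. Your argument for (iii), read on the even subrings, survives this repair.

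There is also a separate error in your opening identification of $S$. With $\CC^5=V_{\mathrm{perm}}$ one indeed has $\wedge^2\CC^5=W_4\oplus\wedge^2W_4$, but $\operatorname{Gr}(2,5)\cap\PP(\wedge^2W_4)$ is not the quintic del Pezzo surface: for $\omega\in\wedge^2W_4$ the Pl\"ucker condition $\omega\wedge\omega=0$ is a \emph{single} quadratic equation, since $\omega\wedge\omega$ takes values in the line $\wedge^4W_4$, so this intersection is the four-dimensional quadric $\operatorname{Gr}(2,W_4)$. To realize $S$ equivariantly as a linear section of the Grassmannian you must let $\Sym_5$ act on $\CC^5$ through $W_5$ (or $W_5'$); then $\wedge^2W_5=W_4'\oplus W_6$, and the Pl\"ucker quadrics, which form a copy of $\wedge^4W_5\cong W_5$, restrict to the copy of $W_5\subset\Sy^2 W_6$ you need --- it is this model, not the permutation one, that justifies your claims $I_2(S)\cong W_5$ and the shape of the resolution.
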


The whole section will be devoted to the proof of Theorem~\ref{thm:DP5}.
In contrast with the cases of quotients of $\PP^2$ and $\PP^1\times \PP^1$
out proof here is indirect.
We need a lot of preliminary technical results on the action 
of $\Aut(S)$. Most of them can be deduced from the corresponding results 
presented in \cite[Sect.~6.2]{CheltsovShramov:book} and \cite[Sect.~3]{Dolgachev-Farb-Looijenga}.
We provide independent proofs.

\subsection{Standard facts and notation (see e.g.~\cite[\S~8.5]{Dolgachev-ClassicalAlgGeom}).}
The anticanonical linear system $|-K_S|$ on the quintic del Pezzo surface is very ample and defines
an embedding
$S\subset \PP^5$ so that the image is an intersection of quadrics. This
embedding will be fixed throughout this section.
There is an isomorphism $\Aut(S)\simeq \Sym_5$ which will also be fixed.  The
action $\Aut (S) \acts S$
is induced by projective transformations, which, in turn, is induced 
by the representation $H^0(S, \OOO_S(-K_S))\simeq W_6$ \cite{Shepherd-Barron-S5}.
The surface $S$ contains exactly $10$ lines.
Denote by $\Lines$ the set of lines on $S$ and by $\DLines$ we 
denote the divisor $\sum_{L\in \Lines} L$.
We have 
\begin{equation}
\label{eq:DLines}
\DLines\sim -2K_S.
\end{equation} 
There are at most two lines passing through any point of $S$.
Let $\Points$ be the set of intersection points $L_1\cap L_2$, where $L_1,\, 
L_2\in \Lines$, $L_1\neq L_2$. This set consists of $15$ elements.
There is an equivalence relation on $\Points$: we say that 
$P\approx P'$ for $P,\, P'\in \Points$ if the divisors $L_1+L_2$ and $L_1'+L_2'$
are linearly equivalent, where $L_1,L_2$ (resp. $L_1',L_2'$) are lines passing trough $P$ (resp. 
$P'$).
The incidence graph of the set of lines on $S$ is the famous Petersen
graph (see Fig.~\ref{Petersen:graph}).
\begin{figure}[H]
\begin{tikzpicture}[
xscale=0.4, 
yscale=0.4,
vertex_style/.style={draw, fill, circle, inner sep=1pt},
edge_style/.style={ black,drop shadow={opacity=0}}
]
\useasboundingbox (-5.05,-4.4) rectangle (5.1,5.25);

\begin{scope}[rotate=90]
\foreach \x/\y in {0/1,72/2,144/3,216/4,288/5}{
\node[vertex_style] (\y) at (canvas polar cs: radius=2.5cm,angle=\x){};
}
\foreach \x/\y in {0/6,72/7,144/8,216/9,288/10}{
\node[vertex_style] (\y) at (canvas polar cs: radius=5cm,angle=\x){};
}
\end{scope}
\foreach \x/\y in {1/6,2/7,3/8,4/9,5/10}{ \draw[edge_style] (\x) -- (\y);}

\foreach \x/\y in {1/3,2/4,3/5,4/1,5/2}{
\draw[edge_style] (\x) -- (\y);
}

\foreach \x/\y in {6/7,7/8,8/9,9/10,10/6}{
\draw[edge_style] (\x) -- (\y);
}
\end{tikzpicture}
\caption{Petersen graph}
\label{Petersen:graph}
\end{figure}
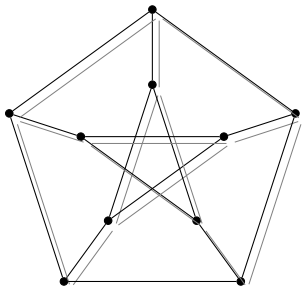
\begin{sremark}
\label{rem:S5-grA5}
As an $\Alt_5$-module, the space $H^0(S, \OOO_S(-K_S))$ has the following decomposition: 
\[
H^0(S, \OOO_S(-K_S))\simeq V_3\oplus V_3'.
\]
This defines an $\Sym_5$-equivariant morphism $S \to \PP(V_3)\times \PP(V_3')$.
The morphism is birational onto its image but not an isomorphism. 
It is described in \cite[Sect.~4]{Dolgachev-Farb-Looijenga}.
\end{sremark}

\subsection{Conic bundles}
\label{conic:bundles}
For any conic $C\subset S$, we have $\dim |C|=1$ and the pencil $|C|$ defines a 
morphism $\varphi: S\to \PP^1$, which is a conic bundle having exactly three 
degenerate fibers, each of them is a pair of lines. 
The linear system 
$|-K_S-C|$ defines a birational contraction $\psi: S\to \PP^2$ of four 
disjoint lines that are sections of $\varphi$. 
Thus any line on $S$ is either a component of degenerate fiber of $\varphi$ or 
an exceptional curve of $\psi$. 
For any point $P\in \Points$ there are exactly two other points $P_1,\, P_2\in \Points$
such that $P\approx P_1 \approx P_2$.
The morphisms $\varphi$ and $\psi$ are 
equivariant with respect to a subgroup $\Sym_4\subset \Sym_5$, so there 
are exactly five conic bundle structures on $S$, they correspond to 
equivalence classes $\Points/{\approx}$, and the group $\Sym_5$ 
acts on the set of conic bundles on $S$ transitively.
There are exactly $5$ (smooth) conics passing through any point $P\in 
S\setminus \DLines$.
For any two conics $C_1$ and $C_2$ on $S$ we have $C_1\cdot C_2\le 1$. Moreover, if $C_1\cdot 
C_2=0$, then $C_1\sim C_2$. In particular, this implies that there are no 
smooth conics passing through any point $P\in \Points$.

\begin{snotation}
For $P\in \Points$ we denote by $C_P$ the degenerate conic $L_1+L_2$, where $L_1$ and $L_2$ are
lines passing through 
$P$. The 
corresponding conic bundle given by $|C_P|$ we denote by $\varphi_P: S\to \PP^1$.
Recall that for any point $P\in \Points$ there exists exactly two 
lines passing through~$P$.
\end{snotation}

\subsection{Twisted cubic curves}
If $R\subset S\subset \PP^5$ is a twisted cubic curve, then 
by the adjunction formula $(K_S+R)\cdot R=-2$.
Since $-K_S\cdot R=\deg(R)=3$, we have $R^2=1$, $(-K_S)\cdot (-K_S-R)=2$,
and $(-K_S-R)^2=0$. Hence $|-K_S-R|$ is a pencil of conics.
Conversely, for any conic $C\subset S$ any smooth member $R\in |-K_S-C|$
is a twisted cubic curve.

\begin{slemma}
\label{lemma:cubic}
Let $P_1,\, P_2\in \Points$ are distinct points such that $P_1\approx P_2$.
Then there exists a unique smooth rational cubic curve $R$ 
passing through $P_1, P_2$. 
This curve does not pass through any point 
$P\in \Points\setminus \{P_1, P_2\}$.
\end{slemma}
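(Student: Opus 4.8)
The plan is to pass to an explicit planar model of $S$ adapted to the pair $(P_1,P_2)$ and to read off the cubic there. Since $P_1\approx P_2$, the degenerate conics $C_{P_1}$ and $C_{P_2}$ lie in one pencil; write $C$ for their common class and $\varphi\colon S\to\PP^1$ for the associated conic bundle. I would use the birational contraction $\psi\colon S\to\PP^2$ defined by $|-K_S-C|$ (recorded in Subsection~\ref{conic:bundles}), which contracts the four disjoint sections of $\varphi$ to four points $p_1,\dots,p_4\in\PP^2$ in general position. Thus $S$ is the blow-up of $\PP^2$ at $p_1,\dots,p_4$, with $h$ the pullback of a line, $e_i$ the exceptional curves, and $C=-K_S-h=2h-e_1-e_2-e_3-e_4$. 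In this model the three degenerate fibres of $\varphi$ are the pairs $(h-e_i-e_j)+(h-e_k-e_l)$; their nodes are exactly the three points of the $\approx$-class of $P_1$, and $\psi$ sends them to the three diagonal points $q_m=\overline{p_ip_j}\cap\overline{p_kp_l}$ of the quadrangle $p_1p_2p_3p_4$. Hence we may take $\psi(P_1)=q_1$, $\psi(P_2)=q_2$.

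For existence I would let $R$ be the proper transform of the line $\overline{q_1q_2}$. The key projective input is that the diagonal points of a quadrangle are not collinear and that the line joining two of them avoids all four vertices; since $S$ (hence the configuration $p_1,\dots,p_4$) is rigid, this may be checked once in the normal form $p_1=[1:0:0]$, $p_2=[0:1:0]$, $p_3=[0:0:1]$, $p_4=[1:1:1]$, where one finds $q_1=[1:1:0]$, $q_2=[1:0:1]$, $q_3=[0:1:1]$ and $\overline{q_1q_2}=\{x-y-z=0\}$ misses all four $p_i$ and $q_3$. Therefore $R$ has class $h$, is smooth rational with $-K_S\cdot R=3$ (a twisted cubic), and, as $q_1,q_2$ are not centres of $\psi$, it passes through $P_1=\psi^{-1}(q_1)$ and $P_2=\psi^{-1}(q_2)$.

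The "no other point of $\Points$" claim I would handle with the same model. Under $\psi$ the fifteen points of $\Points$ map to the four $p_i$ (each the image of three nodes lying on the corresponding $e_i$) and to the three diagonal points $q_1,q_2,q_3$. Since $R\cdot e_i=h\cdot e_i=0$ and $R$ is irreducible and distinct from each $e_i$, the curve $R$ is disjoint from every $e_i$, so it avoids the twelve nodes over the $p_i$; and $\overline{q_1q_2}$ misses $q_3$, so $R$ avoids the third node. Hence $R\cap\Points=\{P_1,P_2\}$.

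For uniqueness, by adjunction any smooth rational cubic $R'$ has ${R'}^2=1$ and $-K_S\cdot R'=3$, and a short enumeration in $\Pic(S)$ (using $3a-\sum b_i=3$, $a^2-\sum b_i^2=1$, whence $a\le 2$ by Cauchy--Schwarz) shows the only such classes are $h$ and the four classes $2h-e_i-e_j-e_k$. If $R'\neq R$ has class $h$, then $R\cdot R'=h^2=1$, contradicting $R\cdot R'\ge 2$ forced by the two common points $P_1\ne P_2$. The main obstacle is excluding class $2h-e_i-e_j-e_k$: such an $R'$ is the proper transform of a conic through $p_i,p_j,p_k$, which to meet $P_1,P_2$ must pass through $q_1$ and $q_2$; but every triple $\{i,j,k\}\subset\{1,2,3,4\}$ contains one of the pairs cutting out $q_1$ and one of those cutting out $q_2$, so three of the five points $p_i,p_j,p_k,q_1,q_2$ are always collinear and the conic degenerates, leaving no smooth cubic. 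I therefore expect the delicate points to be precisely these collinearity and general-position statements about the diagonal triangle, which I would verify in the explicit normal form above; granting them, $R$ is the unique smooth rational cubic through $P_1,P_2$.
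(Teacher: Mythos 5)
Your proof is correct, and its skeleton is the same as the paper's: both pass to the contraction $\psi\colon S\to\PP^2$ defined by $|-K_S-C_{P_1}|$ and take $R$ to be the transform of the line through $\psi(P_1)$ and $\psi(P_2)$. The differences lie in the two places where the paper is terse or silent, and there your treatment is the more complete one. For uniqueness the paper argues in one line: for a second cubic $R'$ it writes $-K_{\PP^2}\cdot\psi(R')=\psi^*(-K_{\PP^2})\cdot R'=3$ and concludes that $\psi(R')$ is a line through the two image points; but since $\psi^*(-K_{\PP^2})\cdot R'=-K_S\cdot R'+\sum_i e_i\cdot R'=3+\sum_i e_i\cdot R'$, that equality tacitly assumes $R'$ is disjoint from the exceptional curves, i.e.\ it silently discards the classes $2h-e_i-e_j-e_k$ --- exactly the case you exclude explicitly, via the enumeration of classes by adjunction and the observation that any conic through $p_i,p_j,p_k,q_1,q_2$ contains three collinear points and hence degenerates. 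Likewise, the final assertion of the lemma ($R$ meets no point of $\Points\setminus\{P_1,P_2\}$) is not addressed in the paper's proof at all, whereas your argument ($R\cdot e_i=0$ forces $R$ to avoid the twelve nodes lying on the exceptional curves, and $\overline{q_1q_2}$ misses $q_3$) settles it; this claim is genuinely needed later, e.g.\ in Corollary~\ref{cor:cubic2}. You also verify the classical general-position facts (the line through two diagonal points of the quadrangle misses the four vertices and the third diagonal point) that the paper needs implicitly even for $R$ to be a cubic. So: same route, but your version closes real gaps in the paper's one-paragraph proof.
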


\begin{proof}
Let $\psi: S\to \PP^2$ be the contraction defined by $|-K_S-C_{P_1}|=|-K_S-C_{P_2}|$
and let $R$ be the inverse image of the line passing through the points $\psi(P_1)$ 
and $\psi(P_2)$. Then $-K_S\cdot R=\psi^*(-K_{\PP^2})\cdot R=3$.
Thus $R$ is a smooth rational cubic curve 
passing through $P_1, P_2$. If there is another such a curve $R'$, then 
$-K_{\PP^2}\cdot \psi(R')=\psi^*(-K_{\PP^2})\cdot R'=3$, hence $\psi(R')$ is 
a line passing through the points $\psi(P_1)$ 
and $\psi(P_2)$. But then $\psi(R')=\psi(R)$ and so $R'=R$.
\end{proof}

\begin{snotation}
For $P\in \Points$,  let $\{P,\, P_1,\, P_2\}\subset \Points$ be the corresponding equivalence class
of $\approx$. Then by $R_P$ we denote the twisted cubic curve passing through $P_1$ 
and $P_2$ (see 
Lemma~\ref{lemma:cubic}). 
By the construction, the curve $R_P$ uniquely determines the conic bundle $\varphi_P$
and so $R_{P'}=R_{P''}$ for $P', P''\in \Points$ if and only if $P'=P''$.
This shows that there exists a natural bijection between the set of twisted cubic curves
of the form $R_P$ and $\Points$.
Denote 
\[
\DCurves:=\sum_{P\in \Points} R_P.
\]
Then we have 
\[
\DCurves\sim -9K_S.
\]
\end{snotation}

\begin{scorollary}
\label{cor:cubic1}
Let $R_P$ be a twisted cubic as above.
\begin{enumerate}
\item
\label{cor:cubic1:a}
For any line $L\subset S$ we have 
\begin{equation}
\label{eq:cor:cubic1:L}
R_P\cdot L=
\begin{cases}
1 &\text{if $L$ lies in the fibers of $\varphi_P$,} 
\\
0 &\text{if $L$ is a section of $\varphi_P$.} 
\end{cases} 
\end{equation} 
In particular, the intersections of components $\DCurves$ and $\DLines$
are transversal. Moreover, the intersections of $\DCurves$ and $\DLines$ 
are transversal outside $\Points$.
\item
\label{cor:cubic1:b}
For any conic $C\subset S$ we have 
\begin{equation}
\label{eq:cor:cubic1:C}
R_P\cdot C=
\begin{cases}
2 &\text{if $C+R_P\sim -K_S$,} 
\\
1 &\text{if $C+R_P\not\sim -K_S$.} 
\end{cases} 
\end{equation} 
\end{enumerate}
\end{scorollary}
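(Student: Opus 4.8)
The plan is to reduce both parts to the single linear equivalence $R_P\sim -K_S-C_P$ and then extract every intersection number by a one-line computation in $\Pic(S)$. First I would establish this equivalence directly from the construction in Lemma~\ref{lemma:cubic}: there $R_P$ is defined as the $\psi$-preimage of a line in $\PP^2$, where the contraction $\psi\colon S\to\PP^2$ is given by the linear system $|-K_S-C_{P_1}|=|-K_S-C_{P_2}|$; since $P\approx P_1\approx P_2$ forces $C_P\sim C_{P_1}\sim C_{P_2}$, this system is $|-K_S-C_P|$ and hence $R_P\in|-K_S-C_P|$. As a consistency check one has $(-K_S-C_P)^2=(-K_S)^2-2(-K_S)\cdot C_P+C_P^2=5-4+0=1$ and $-K_S\cdot(-K_S-C_P)=(-K_S)^2-(-K_S)\cdot C_P=5-2=3$, recovering $R_P^2=1$ and $\deg R_P=3$ from the Twisted cubic subsection; dually $-K_S-R_P\sim C_P$, so that $|-K_S-R_P|$ is exactly the conic bundle $|C_P|=\varphi_P$, as in the preceding Notation.

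For part~\ref{cor:cubic1:a} I would expand $R_P\cdot L=(-K_S-C_P)\cdot L=1-C_P\cdot L$, so everything is governed by $\varphi_P$. If $L$ lies in a fibre, then writing that fibre as $L+L'$ gives $C_P\cdot L=L^2+L\cdot L'=-1+1=0$ and hence $R_P\cdot L=1$; if $L$ is one of the four sections, then $C_P\cdot L=1$ and $R_P\cdot L=0$. As every line on $S$ is either a fibre component or a section (Conic bundles subsection), this is exhaustive. Since $R_P$ is smooth and $R_P\cdot L\le1$, any meeting of a component of $\DCurves$ with a component of $\DLines$ is a single reduced point, i.e.\ transversal. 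The only obstruction to transversality of $\DCurves$ and $\DLines$ as divisors occurs at $\Points$: by Lemma~\ref{lemma:cubic} the curve $R_P$ passes through $P_1$ and $P_2$, the nodes of the degenerate fibres $C_{P_1}$ and $C_{P_2}$, so there it meets two lines at one point. This yields both transversality assertions.

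For part~\ref{cor:cubic1:b} the same expansion gives $R_P\cdot C=(-K_S-C_P)\cdot C=2-C_P\cdot C$, so I only need $C_P\cdot C$. Replacing $C_P$ by a smooth member of its class and invoking the bound from the Conic bundles subsection (any two conics satisfy $C_1\cdot C_2\le1$, with equality to $0$ exactly when $C_1\sim C_2$), I obtain $C_P\cdot C=0$ if $C\sim C_P$ and $C_P\cdot C=1$ otherwise. Because $R_P\sim -K_S-C_P$, the condition $C\sim C_P$ is the same as $C+R_P\sim -K_S$, so the two cases match the statement and give $R_P\cdot C=2$ and $R_P\cdot C=1$ respectively.

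All the computations are routine; the only genuine step is pinning down $R_P\sim -K_S-C_P$ from the construction of $R_P$, and the one point that demands a little care is that $C_P$ is a \emph{degenerate} conic, so the conic--conic inequality must be applied to a smooth representative of its class rather than to $C_P$ itself.
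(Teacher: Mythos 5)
Your proof is correct, and it follows exactly the route the paper intends: the paper states this corollary without any written proof, as an immediate consequence of the relation $R_P\in|-K_S-C_P|$ coming from the construction in Lemma~\ref{lemma:cubic} together with the facts from the conic-bundle subsection (every line is a fiber component or a section, and two conic classes meet in $0$ or $1$ points according to whether they are linearly equivalent). Your write-up simply makes these implicit computations explicit, including the correct caveat that the inequality for conics must be applied to linear equivalence classes rather than to the degenerate representative $C_P$ itself.
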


\begin{scorollary}
\label{cor:cubic2}
Let $P\in \Points$ and let $\{P,\, P_1,\, P_2\}\subset \Points$ be the corresponding equivalence class 
of $\approx$. Then the following holds:
\begin{enumerate}
\item
\label{cor:cubic2a}
$R_P\cap \Points= \{P_1,\, P_2\}$ and 
$\DLines\cap R_{P}=\{P_1,\, P_2\} \cup (R_P\cap C_P)$, where 
the intersection $R_P\cap C_P$
consists of two points lying on different components of $C_P$.
\item
\label{cor:cubic2b}
There exist exactly two components $R',\, R''\subset \DCurves$
passing through~$P$ and $R'\sim R''\sim -K_S-C_P$.

\item
\label{cor:cubic2d}
$(\DLines\cap \DCurves)\setminus \Points$ consists of at most $30$ points.

\end{enumerate}
\end{scorollary}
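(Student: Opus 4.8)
The plan is to derive all three statements from Lemma~\ref{lemma:cubic} and Corollary~\ref{cor:cubic1}, together with the description of the conic bundle $\varphi_P$ in Subsection~\ref{conic:bundles}; the only real content is a careful bookkeeping of how $R_P$ meets the ten lines.

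For~\ref{cor:cubic2a}, the equality $R_P\cap \Points=\{P_1,P_2\}$ is precisely the last assertion of Lemma~\ref{lemma:cubic}. To locate $\DLines\cap R_P$ I would use that the three degenerate fibers of $\varphi_P$ are $C_P$, $C_{P_1}$, $C_{P_2}$, so that six of the ten lines are fiber components while the remaining four are sections. By~\eqref{eq:cor:cubic1:L}, the curve $R_P$ is disjoint from the four sections and meets each of the six fiber components transversally in a single point. Since $R_P$ passes through $P_1$ and $P_2$, which are the nodes of $C_{P_1}$ and $C_{P_2}$, four of these six intersection points are absorbed at $P_1$ and $P_2$; the remaining two lie on the two components of $C_P$, and since $R_P$ does not pass through $P$ (again Lemma~\ref{lemma:cubic}) they are distinct and lie on different components. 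This yields $\DLines\cap R_P=\{P_1,P_2\}\cup(R_P\cap C_P)$ with $R_P\cap C_P$ as claimed; the numerical check $\DLines\cdot R_P=-2K_S\cdot R_P=6$ confirms that nothing has been overlooked.

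For~\ref{cor:cubic2b}, I would note that a component $R_{P'}$ passes through $P$ exactly when $P$ is one of the two partner points of $P'$, i.e. when $P'\in\{P_1,P_2\}$ (Lemma~\ref{lemma:cubic}); hence the components through $P$ are precisely $R_{P_1}$ and $R_{P_2}$, distinct by the bijection between $\Points$ and the curves $R_{P'}$ recorded above. As $C_P\sim C_{P_1}\sim C_{P_2}$ (this is the meaning of $P\approx P_1\approx P_2$) and $R_{P'}\sim -K_S-C_{P'}$, both satisfy $R_{P_1}\sim R_{P_2}\sim -K_S-C_P$. Finally, for~\ref{cor:cubic2d}, applying part~\ref{cor:cubic2a} to each of the $15$ curves $R_{P'}$ shows that $(\DLines\cap R_{P'})\setminus\Points=R_{P'}\cap C_{P'}$ consists of exactly two points; summing over $P'\in\Points$ gives $(\DLines\cap\DCurves)\setminus\Points\subseteq\bigcup_{P'\in\Points}(R_{P'}\cap C_{P'})$, a set of at most $15\cdot 2=30$ points.

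The argument is essentially formal once Lemma~\ref{lemma:cubic} and Corollary~\ref{cor:cubic1} are available, so I do not expect a genuine obstacle; the one place that requires care is the count in~\ref{cor:cubic2a}, where one must correctly see that the two fiber lines through $P_1$ and the two through $P_2$ all meet $R_P$ in a single node, and verify that the residual intersection with $C_P$ avoids the point $P$.
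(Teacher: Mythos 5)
Your proof is correct, and it rests on the same ingredients as the paper's (Lemma~\ref{lemma:cubic}, Corollary~\ref{cor:cubic1}, and the conic bundle description in~\ref{conic:bundles}); the differences are in how parts \ref{cor:cubic2b} and \ref{cor:cubic2d} are organized. For \ref{cor:cubic2a} the paper gives no details at all, and your bookkeeping with the six fiber lines and four sections of $\varphi_P$, plus the check $\DLines\cdot R_P=6$, is exactly the argument it leaves implicit. For \ref{cor:cubic2b} the paper argues intersection-theoretically: it takes an arbitrary component $R\subset\DCurves$ through $P$, notes $R\cdot C_P\ge 2$ because $P$ is the node of $C_P$ (so $R$ meets both components there), concludes $R\in|-K_S-C_P|$ from Corollary~\ref{cor:cubic1}\ref{cor:cubic1:b}, and only then pins $R$ down as $R_{P_1}$ or $R_{P_2}$; you instead read off directly from Lemma~\ref{lemma:cubic} and the bijection $P'\leftrightarrow R_{P'}$ that $R_{P'}\ni P$ if and only if $P'\in\{P_1,P_2\}$, which is slightly more economical and equally rigorous. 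For \ref{cor:cubic2d} the paper computes the global number $\DLines\cdot\DCurves=(-2K_S)\cdot(-9K_S)=90$ and subtracts the contribution $15\cdot 2\cdot 2=60$ concentrated at $\Points$ (using transversality and the fact, from \ref{cor:cubic2b}, that exactly two lines and two cubics pass through each point of $\Points$); you organize the same double count by curves rather than by points, using \ref{cor:cubic2a} to see that each $R_{P'}$ meets $\DLines$ in exactly two points outside $\Points$, namely $R_{P'}\cap C_{P'}$, giving $15\cdot 2=30$. Your version has the small advantage of identifying the residual set concretely as $\bigcup_{P'\in\Points}(R_{P'}\cap C_{P'})$, while the paper's count makes the role of the total intersection degree $90$ (used again in Corollary~\ref{cor:LR}) explicit; either way the bound of $30$ points follows.
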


\begin{proof}
The assertion 
\ref{cor:cubic2a} follows from \eqref{eq:cor:cubic1:L} and Lemma~\ref{lemma:cubic}.
For \ref{cor:cubic2b} we let $R$ be a component of $\DCurves$
passing through~$P$. 
Since $R\cdot C_P\ge 2$, we have $R\in |-K_S-C_P|$ by Corollary~\ref{cor:cubic1}.
Then either $R\ni P_1$ or $R\ni P_2$,
i.e. $R=R_{P_2}$ or $R=R_{P_1}$. 
This proves \ref{cor:cubic2b}.
Further, since $\DLines\cdot\DCurves=(-2K_S)\cdot (-9K_S)=90$
and the components of $\DLines$ and $\DCurves$ meet each other transversally (see~\eqref{eq:cor:cubic1:L}), 
the number of intersection points $\DLines\cap \DCurves$ outside $\Points$ is at most
$90-15\cdot 2\cdot 2=30$ (see Corollary~\ref{cor:cubic1}\ref{cor:cubic1:a}). This proves \ref{cor:cubic2d}.
\end{proof}

\subsection{$\Sym_5$-invariant pencil}
A brief computation with characters (see e.g. \cite{Shepherd-Barron-S5}) shows that the 
decomposition of space $H^0(S,-2K_S)$ as an $\Sym_5$-module
contains exactly one copy of $W_1$ and exactly one copy of $W_1'$.
Hence the linear system $|-2K_S|$ contains an $\Sym_5$-invariant pencil $\PPP_{\mathrm{W}}$.
The action of $\Sym_5$ on $\PPP_{\mathrm{W}}$ is non-trivial and there are two invariant members.
One of them coincides with $\DLines$ (see \eqref{eq:DLines}).
Let $Z_{\mathrm{W}}\in \PPP_{\mathrm{W}}$ be an $\Sym_5$-invariant divisor other than $\DLines$.
One can show that $Z_{\mathrm{W}}$ is not a union of conics. 
Indeed, otherwise there is an $\Sym_5$-orbit $\{C_1,\dots,C_5\}$, where the $C_i$'s are distinct conics
and then the linear system $|C_1|$ defines a $\Sym_4$-equivariant conic bundle $S\to \PP^1$
having an invariant fiber. But in this case, the $\Sym_4$-action on the base must be trivial and 
the singular points of degenerate fibers must be fixed by $\Sym_4$.
Since $\Sym_4$ does not have two-dimensional faithful representations, this is impossible.
Hence, $Z_{\mathrm{W}}$ is irreducible.
By the adjunction formula $\p(Z_{\mathrm{W}})=6$. Since $\Sym_5$ cannot faithfully act on a smooth curve of genus $\le 2$,
the number of singular points of $Z_{\mathrm{W}}$ is a most $4$. If $Z_{\mathrm{W}}$ is singular, this implies 
that $\Sym_5$ has a fixed point $P\in S$. But then we have a faithful two-dimensional representation of $\Sym_5$
on the tangent space $T_{S,P}$, a contradiction.
Thus $Z_{\mathrm{W}}$ is smooth curve of genus $6$.
If we regard $S$ as the blowup $\psi: S\to \PP^2$ of four points on a plane, then
$\psi(Z_{\mathrm{W}})$ is a curve of degree six with four nodes. It is called Wiman's sextic 
(see \cite[\S~8.5.4]{Dolgachev-ClassicalAlgGeom} and references therein).

\begin{lemma}
\label{lemma:S5}
\begin{enumerate}
\item 
\label{lemma:S5:1}
The action of $\Sym_5$ on $\Lines$ is transitive, 
the stabilizer $\St_{\Sym_5}(L)$ of an element $L\in \Lines$ is the 
group $\Sym_3\times \Sym_2\simeq \Dih_6$, which is the normalizer of a Sylow 
$3$-subgroup $\Syl_3(\Sym_5)$, and the correspondence 
\[
\Lines\longrightarrow \{\text{Sylow $3$-subgroups}\},\quad
L \longmapsto \Syl_3\big(\St_{\Sym_5}(L)\big)
\]
is bijective. 
Moreover, 
the kernel of the homomorphism $\St_{\Sym_5}(L) \longrightarrow 
\Aut(L)$ is generated by a transposition
and the correspondence 
\[
\Lines\longrightarrow \{\text{transpositions in 
$\Sym_5$}\},\quad
L \longmapsto \left[\text{generator of $\ker \big(\St_{\Sym_5}(L) \longrightarrow 
\Aut(L)\big)$}\right]
\]
is also bijective.

\item 
\label{lemma:S5:2}
The action of $\Sym_5$ on $\Points$ is transitive, the 
stabilizer $\St_{\Sym_5}(P)$ of an element $P\in \Points$ is a Sylow 
$2$-subgroup in $\Sym_5$, and the correspondence 
\[
\Points\longrightarrow \{\text{Sylow $2$-subgroups}\},\quad
P \longmapsto \St_{\Sym_5}(P)
\]
is bijective. Moreover, the correspondence 
\[
\Points\longrightarrow \{\text{double transpositions in 
$\Sym_5$}\},\quad
P \longmapsto \left[\text{generator of $\z (\St_{\Sym_5}(P))$}\right]
\]
is also bijective.
\end{enumerate}
\end{lemma}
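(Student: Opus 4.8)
The plan is to reduce both statements to elementary group theory via the combinatorics of the Petersen graph, using as the only geometric inputs the facts already recorded above. The crucial point is that the incidence graph of $\Lines$ is the Petersen graph, which is isomorphic to the Kneser graph $K(5,2)$: its vertices are the $2$-element subsets of $\{1,\dots,5\}$ and its edges join disjoint subsets. First I would check that the action of $\Aut(S)\simeq\Sym_5$ on $\Lines$ is faithful; this holds because the action on $\Pic(S)$ is faithful for a quintic del Pezzo surface and the ten lines span $\Pic(S)\otimes\QQ$ (the kernel of $\Sym_5\to\mathrm{Sym}(\Lines)$ is normal, and neither $\Alt_5$ nor $\Sym_5$ can act trivially on $\Pic(S)$). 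Hence the induced map $\Sym_5\to\Aut(\text{Petersen})\simeq\Sym_5$ is injective, so an isomorphism, and as $\Sym_5$ has no outer automorphisms we obtain a $\Sym_5$-equivariant bijection between $\Lines$ and the transpositions of $\Sym_5$ (the $2$-subsets), and between $\Points$ and the double transpositions (the edges, using that every point of $\Points$ lies on exactly two lines). Granting these identifications, the two parts become counting statements.

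For part \ref{lemma:S5:1}, transitivity on $\Lines$ is the vertex-transitivity of the Petersen graph, so $|\St_{\Sym_5}(L)|=120/10=12$. Writing $L=\{i,j\}$ and letting $\{k,l,m\}$ be the complement, the stabilizer is the setwise stabilizer $\Sym_{\{i,j\}}\times\Sym_{\{k,l,m\}}\simeq\Sym_2\times\Sym_3\simeq\Dih_6$, which is exactly $\N_{\Sym_5}\big(\langle(klm)\rangle\big)$, the normalizer of the unique Sylow $3$-subgroup it contains. Since complementation is a bijection between $2$- and $3$-subsets and a Sylow $3$-subgroup is determined by its support, $L\mapsto\Syl_3(\St_{\Sym_5}(L))$ is a bijection onto the Sylow $3$-subgroups. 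The one genuinely geometric point, and the main obstacle, is the kernel of $\St_{\Sym_5}(L)\to\Aut(L)$. Here I would use that $L$ meets exactly three other lines (the Petersen graph is $3$-regular), and since at most two lines pass through any point of $S$, these meet $L$ in three \emph{distinct} points. The factor $\Sym_{\{k,l,m\}}$ permutes these three neighbouring lines, hence the three points, faithfully as $\Sym_3$, while the transposition $(ij)$ fixes each neighbour and hence each point. As an automorphism of $L\simeq\PP^1$ fixing three distinct points is the identity, we get $\langle(ij)\rangle\subseteq\ker$; and if $(a,b)\in\ker$ then $(1,b)\in\ker$ acts trivially on the three points, forcing $b=1$, so $\ker=\langle(ij)\rangle$. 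This gives the bijection $\Lines\to\{\text{transpositions}\}$.

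For part \ref{lemma:S5:2}, the action on $\Points$ is transitive because the Petersen graph is edge-transitive, so $|\St_{\Sym_5}(P)|=120/15=8$ and $\St_{\Sym_5}(P)$ is a Sylow $2$-subgroup of $\Sym_5$. A Sylow $2$-subgroup of $\Sym_5$ is self-normalizing (there are fifteen of them, so its normalizer has index $15$ and order $8$); hence the $\Sym_5$-equivariant map $P\mapsto\St_{\Sym_5}(P)$ matches point-stabilizers on both sides and is therefore an isomorphism of transitive $\Sym_5$-sets, i.e. a bijection of $\Points$ onto the set of Sylow $2$-subgroups. Finally, a Sylow $2$-subgroup of $\Sym_5$ is dihedral of order $8$, with center of order $2$ generated by a double transposition; conversely the centralizer of a double transposition has order $120/15=8$ and thus equals that Sylow $2$-subgroup. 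Hence $\St_{\Sym_5}(P)\mapsto$ generator of $\z(\St_{\Sym_5}(P))$ is a bijection between Sylow $2$-subgroups and double transpositions, and composing the two correspondences yields the desired bijection $P\mapsto$ generator of $\z(\St_{\Sym_5}(P))$.
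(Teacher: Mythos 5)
Your argument is correct, but it takes a genuinely different route from the paper's. The paper never invokes the Kneser-graph description of the Petersen graph or the fact that its automorphism group is $\Sym_5$; instead, for each part it picks a \emph{smallest} orbit $\Omega$, so that the stabilizer $H$ of a point of $\Omega$ has order at least $12$ (resp.\ $8$), notes that $H$ acts faithfully on a two-dimensional tangent space (for $L\in\Lines$, on $T_{S',\varphi(L)}$ where $\varphi\colon S\to S'$ contracts $L$; for $P\in\Points$, on $T_{S,P}$), and then reads off from Table~\ref{table:subgroupsS5} that the only admissible subgroups of such order with a faithful two-dimensional representation are $\Sym_3\times\Sym_2$ and $\Dih_4$ respectively; this forces $\Omega=\Lines$ (resp.\ $\Omega=\Points$) and identifies the stabilizers. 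The kernel of $\St_{\Sym_5}(L)\to\Aut(L)$ is then computed as the center of $\St_{\Sym_5}(L)$: an element acting trivially on $L=\PP\bigl(T_{S',\varphi(L)}\bigr)$ acts on $T_{S',\varphi(L)}$ by a scalar. Your approach instead makes the $\Sym_5$-sets completely explicit --- $\Lines$ becomes the set of $2$-subsets of $\{1,\dots,5\}$ and $\Points$ the set of pairs of disjoint $2$-subsets --- after which every stabilizer and all four bijections are visible by inspection, and your kernel argument (the transposition fixes three distinct intersection points of $L\simeq\PP^1$, hence acts trivially on $L$) is more elementary than the paper's scalar/center argument. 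The price is two facts you assert rather than prove: that the automorphism group of the Petersen graph is $\Sym_5$ (standard), and that $\Aut(S)$ acts faithfully on $\Lines$. The second needs one more line than your parenthesis supplies (why can $\Alt_5$ not act trivially on $\Pic(S)$?): for instance, an automorphism fixing all ten lines setwise fixes the three intersection points on each of them, hence fixes every line pointwise; contracting four pairwise disjoint lines, it descends to an automorphism of $\PP^2$ fixing six lines pointwise, hence is the identity. Since this is precisely the standard fact underlying Lemma~\ref{lemma:S5AutS5}, which the paper quotes from \cite{Dolgachev-Iskovskikh}, the gap is cosmetic. Finally, note that the paper's linearization technique is reused throughout the rest of this section (in the lemmas on elements of order $2$, $3$, $4$, $5$), whereas your graph-theoretic identification serves only this lemma --- each approach is optimized for a different goal.
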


\begin{proof}
\ref{lemma:S5:1}
Let $\Omega\subset \Lines$ be the smallest 
orbit, let $L\in \Omega$, and let $H:=\St_{\Sym_5}(L)$.
Let $\varphi : S\to S'$ be the contraction of $L$ and let $P':=\varphi(L)$.
Then the group $H$ faithfully acts on the tangent space $T_{S',\varphi(L)}$,
hence $H$ has a faithful $2$-dimensional representation. On the other hand, 
$|H|\ge 12$, hence
$H$ is isomorphic to either $\Alt_5$, $\Sym_4$, $\Sym_3\times\Sym_2$ or 
$\Alt_4$. This implies that the only the last possibility occurs and 
$\Omega =\Lines$. Finally, $\ker(\St_{\Sym_5}(L)\to \Aut(L))$ consists of 
elements acting by scalar matrices on $T_{S',\varphi(L)}$, so it coincides
with the center of $\St_{\Sym_5}(L)$.

\ref{lemma:S5:2}
As above, let $\Omega\subset \Points$ be the smallest 
orbit, let $P\in \Omega$, and let $H:=\St_{\Sym_5}(P)$.
Then $|H|\ge 8$, hence the group $H$ is isomorphic to either $\Alt_5$, 
$\Sym_4$, $\mumu_5 
\rtimes \mumu_4$ or $\Dih_4$. 
On the other hand, $H$ has a faithful $2$-dimensional representation on the 
tangent 
space of $S$ at $P$.
Then $H\simeq\Dih_4$ and $\Omega 
=\Points$.
\end{proof}

\begin{scorollary}
The base locus $\Bs \PPP_{\mathrm{W}}$ of the $\Sym_5$-invariant pencil $\PPP_{\mathrm{W}}\subset |-2K_S|$
consists of $20$~distinct points. In particular, $Z_{\mathrm{W}}\cap \Xi=\varnothing$ and 
the intersection $Z_{\mathrm{W}}\cap L$, where $L$ is a line, consists of $2$~distinct points.
\end{scorollary}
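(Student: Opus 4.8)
The plan is to realize the base scheme of $\PPP_{\mathrm{W}}$ as the intersection of its two invariant members $\DLines$ and $Z_{\mathrm{W}}$, and then to prove that this length-$20$ scheme is reduced. Since $\PPP_{\mathrm{W}}\subset|-2K_S|$ is a pencil, its base locus is cut out by any two distinct members; taking $\DLines$ (see \eqref{eq:DLines}) and $Z_{\mathrm{W}}$ gives $\Bs\PPP_{\mathrm{W}}=\DLines\cap Z_{\mathrm{W}}$ as a zero-dimensional scheme of length $\DLines\cdot Z_{\mathrm{W}}=(-2K_S)^2=20$. For each line $L$ one has $Z_{\mathrm{W}}\cdot L=-2K_S\cdot L=2$, and as there are ten lines these account for the full intersection number. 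It therefore remains to verify that the twenty points are pairwise distinct, which is exactly the content of the two supplementary assertions.

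First I would show $Z_{\mathrm{W}}\cap\Points=\varnothing$ by a counting contradiction. If $Z_{\mathrm{W}}$ met $\Points$, then the $\Sym_5$-invariance of $Z_{\mathrm{W}}$ together with the transitivity of $\Sym_5$ on $\Points$ (Lemma~\ref{lemma:S5}\ref{lemma:S5:2}) would force $\Points\subset Z_{\mathrm{W}}$. But the incidence graph of the lines is the $3$-regular Petersen graph, so every line $L$ contains exactly three points of $\Points$; these three points would all lie on $Z_{\mathrm{W}}$, giving $Z_{\mathrm{W}}\cdot L\ge 3$ and contradicting $Z_{\mathrm{W}}\cdot L=2$. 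This settles the first supplementary assertion.

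Next I would prove that for each line $L$ the length-$2$ scheme $Z_{\mathrm{W}}\cap L$ is reduced. It is invariant under $\St_{\Sym_5}(L)$, and by Lemma~\ref{lemma:S5}\ref{lemma:S5:1} this group is $\Sym_3\times\Sym_2\simeq\Dih_6$, the action on $L$ having kernel the central transposition; hence its image in $\Aut(L)\simeq\PGL_2(\CC)$ is $\simeq\Sym_3\simeq\Dih_3$, acting effectively on $L\simeq\PP^1$. Were $Z_{\mathrm{W}}\cap L$ supported at a single point $Q$, then $Q$ would be fixed by this $\Dih_3$; but an effective $\Dih_3$-action on $\PP^1$ has no fixed point, since the two fixed points of its order-$3$ subgroup are interchanged by the involutions. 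Thus $Z_{\mathrm{W}}\cap L$ consists of two distinct points, and, both curves being smooth, the intersection is transversal there.

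Finally I would assemble the pieces. The base locus is the union $\bigcup_L(Z_{\mathrm{W}}\cap L)$ over the ten lines. Any two distinct lines meet only in points of $\Points$, which are disjoint from $Z_{\mathrm{W}}$ by the step above; hence no intersection point is shared by two lines, and near each such point a single component of $\DLines$ passes, so the local intersection $\DLines\cap Z_{\mathrm{W}}$ coincides with the reduced scheme $L\cap Z_{\mathrm{W}}$. The ten reduced pairs are therefore pairwise disjoint and yield exactly $20$ distinct points. I expect the only genuine subtlety to be the group-theoretic step above---identifying the image of $\St_{\Sym_5}(L)$ in $\PGL_2(\CC)$ as a dihedral group of order $6$ and noting that it has no fixed point on $\PP^1$; everything else is intersection-number bookkeeping already supplied by Lemma~\ref{lemma:S5}, the Petersen-graph description, and \eqref{eq:DLines}.
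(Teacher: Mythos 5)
Your proof is correct, but it takes a genuinely different route from the paper's in both key steps. For $Z_{\mathrm{W}}\cap\Points=\varnothing$, the paper argues locally: $Z_{\mathrm{W}}$ is a smooth curve, the stabilizer of a point on a smooth curve must be cyclic, while by Lemma~\ref{lemma:S5}\ref{lemma:S5:2} every point of $\Points$ has stabilizer $\Dih_4$, which is not cyclic; you instead use transitivity of $\Sym_5$ on $\Points$ to force $\Points\subset Z_{\mathrm{W}}$ and then the $3$-regularity of the Petersen graph to contradict $Z_{\mathrm{W}}\cdot L=2$. For the reducedness of $Z_{\mathrm{W}}\cap L$, the paper uses the transposition generating $\ker\big(\St_{\Sym_5}(L)\to\Aut(L)\big)$: it fixes $L$ pointwise, acts as a reflection on the normal direction, and invariance of the smooth curve $Z_{\mathrm{W}}$ under it rules out tangency; you instead observe that the image of $\St_{\Sym_5}(L)$ in $\Aut(L)$ is an effective $\Sym_3$, which has no fixed point on $\PP^1$, so the length-$2$ scheme $Z_{\mathrm{W}}\cap L$ cannot be concentrated at one point. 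The paper's argument is shorter and leans on the smoothness of $Z_{\mathrm{W}}$ (established just before the corollary) and on local representations at fixed points; yours is more combinatorial and slightly more robust, since it only needs that $Z_{\mathrm{W}}$ is an invariant irreducible curve in $|-2K_S|$ containing no line, not that it is smooth. Your final assembly step --- noting that distinct lines meet only in $\Points$, which is disjoint from $Z_{\mathrm{W}}$, so the twenty points are pairwise distinct and $\Bs\PPP_{\mathrm{W}}=\DLines\cap Z_{\mathrm{W}}$ is reduced of length $(-2K_S)^2=20$ --- is the same bookkeeping the paper leaves implicit.
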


\begin{proof}
Since the stabilizer of a point on a smooth curve is cyclic, $Z_{\mathrm{W}}\cap \Xi=\varnothing$
by Lemma~\ref{lemma:S5}\ref{lemma:S5:1}. Since some transposition acts on $L$ trivially,
the intersection $Z_{\mathrm{W}}\cap L$ is transversal. Hence $Z_{\mathrm{W}}\cap L$
consists of $2$~distinct points.
\end{proof}

\begin{lemma}[see e.g. {\cite[\S~6.3]{Dolgachev-Iskovskikh}}]
\label{lemma:S5AutS5}
As $\Sym_5$-module, $\Pic(S)\otimes \CC$ is isomorphic to $W_4\oplus W_1$. 
\end{lemma}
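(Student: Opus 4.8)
The plan is to peel off the canonical class and then identify the complementary four-dimensional representation by a single character value. Realize $S$ as the blow-up $\sigma\colon S\to\PP^2$ of four points in general position, so that $\Pic(S)=\ZZ H\oplus\ZZ E_1\oplus\cdots\oplus\ZZ E_4$ with $H=\sigma^*\OOO_{\PP^2}(1)$ and $K_S=-3H+\sum E_i$. Since $K_S$ is preserved by every automorphism, $\CC\cdot K_S$ is a trivial subrepresentation isomorphic to $W_1$, and everything reduces to analyzing its orthogonal complement $U:=(\CC K_S)^{\perp}$ for the intersection form, a four-dimensional $\Sym_5$-module with $\Pic(S)\otimes\CC\simeq W_1\oplus U$. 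Here $U\cong K_S^{\perp}\otimes\CC$, and $K_S^{\perp}$ is the root lattice of type $\mathsf A_4$ (with simple roots $E_1-E_2$, $E_2-E_3$, $E_3-E_4$, $H-E_1-E_2-E_3$), on which $\Aut(S)\simeq\Sym_5$ acts as the Weyl group $W(\mathsf A_4)$.

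First I would pin down that $U$ is irreducible without yet appealing to the full Weyl-group structure. The action of $\Sym_5$ on $\Pic(S)$ is faithful, and as it is trivial on $\CC K_S$ it is already faithful on $U$. Among the irreducible representations of $\Sym_5$ (of dimensions $1,1,4,4,5,5,6$) a faithful four-dimensional module must be irreducible: a reducible one would be a sum of the one-dimensional characters $W_1,W_1'$ and would factor through $\Sym_5\to\ZZ/2$, contradicting faithfulness. Hence $U$ is one of the two irreducible four-dimensional modules $W_4$ or $W_4'$.

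To decide between them I would compute the trace of a transposition on $U$, since $\chi_{W_4}$ and $\chi_{W_4'}$ take the values $+2$ and $-2$ there and coincide elsewhere. Under the identification $U\cong K_S^{\perp}\otimes\CC$ the reflections of $W(\mathsf A_4)$ are exactly the ten transpositions of $\Sym_5$: there are ten positive roots of $\mathsf A_4$, hence ten reflections, which by counting must be the ten transpositions rather than the fifteen double transpositions. A reflection acts on the four-dimensional space with a single $(-1)$-eigenvalue, so its trace is $4-2=2=\chi_{W_4}$, whence $U\cong W_4$ and $\Pic(S)\otimes\CC\simeq W_1\oplus W_4$. This matches the geometry through Lemma~\ref{lemma:S5}\ref{lemma:S5:1}: the transposition attached to a line $L$ generates the kernel of $\St_{\Sym_5}(L)\to\Aut(L)$ and therefore fixes $L$ pointwise, so the corresponding involution $\tau$ has a whole rational curve in $\Fix(\tau)$ and acts as a reflection on $\Pic(S)$; the topological Lefschetz relation $\Tr\bigl(\tau\mid\Pic(S)\otimes\CC\bigr)=\chit(\Fix(\tau))-2$ then gives trace $3$ on $\Pic(S)\otimes\CC$, i.e. $\Tr(\tau\mid U)=2$ again.

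The only delicate point is this sign: I must be sure a transposition contributes $+2$ rather than $-2$ on $U$, equivalently that it acts as a reflection (three $+1$-eigenvalues) and not as its sign-twist. The cleanest way to secure it is the reflection/transposition dictionary above; the alternative Lefschetz computation is rigorous once one checks that $\tau$ has no further fixed curves beyond the line $L$, which is the single piece of genuinely geometric input and the main thing I would need to verify carefully.
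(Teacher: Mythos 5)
The paper itself contains no proof of this lemma (it is quoted from Dolgachev--Iskovskikh, \S 6.3), so your argument has to stand on its own. Its skeleton --- split off $\CC K_S$, use faithfulness to force irreducibility of the four-dimensional complement $U$, then decide between $W_4$ and $W_4'$ by the trace of a transposition --- is the right and standard route. The genuine gap is at the deciding step. Your setup simply asserts that $\Aut(S)\simeq\Sym_5$ acts on $K_S^{\perp}$ \emph{as the Weyl group} $W(\mathsf{A}_4)$, and the reflection/transposition dictionary is deduced from that. But this assertion is equivalent to the lemma itself: the isometry group of the root lattice $K_S^{\perp}$ is $W(\mathsf{A}_4)\times\{\pm1\}$, and it contains \emph{two} subgroups abstractly isomorphic to $\Sym_5$, the Weyl copy $g\mapsto\rho(g)$ and the twisted copy $g\mapsto\operatorname{sgn}(g)\rho(g)$; both preserve the root lattice, both have ten involutions forming one conjugacy class of ``transpositions'', and their representations on $U$ are exactly $W_4$ and $W_4'$. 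Counting reflections therefore cannot distinguish them; it only shows that \emph{if} the action is the Weyl action, then transpositions act as reflections. So, as written, the crucial sign is assumed rather than proven, and your fallback Lefschetz computation is incomplete by your own admission: it needs $\Fix(\tau,S)^{(1)}=L$, and you also never derive $\chit(\Fix(\tau,S))=5$. (A small side slip: $\chi_{W_4}$ and $\chi_{W_4'}$ do not ``coincide elsewhere''; they also differ on the class of $(1,2,3)(4,5)$ --- harmless for your argument, but false as stated.)

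Both repairs are short. Geometric repair: Lemma~\ref{lemma:S5transpositions} of the paper proves exactly what you need, $\Fix(\tau,S)=L\sqcup\{3\ \text{points}\}$, and its proof (the three lines meeting $L$ are disjoint; contract them to $\PP^1\times\PP^1$, on which $\tau$ interchanges the factors, so its fixed locus is the diagonal $\varphi(L)$) uses only Lemma~\ref{lemma:S5}\ref{lemma:S5:1} and not the present lemma nor Corollary~\ref{cor:Lef}; hence you may invoke it without circularity, getting $\Tr\bigl(\tau\mid\Pic(S)\otimes\CC\bigr)=\chit(\Fix(\tau,S))-2=3$ and so $U\simeq W_4$. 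In fact, once extra fixed curves are excluded no point count is needed: $\chit(\Fix(\tau,S))=2+k\ge 2$ already rules out the value $1$ that $W_4'$ would force. Lattice-theoretic repair: prove your asserted fact directly by showing $\mathrm{O}(\Pic(S))_{K_S}=W(\mathsf{A}_4)$. An isometry fixing $K_S$ restricts to an isometry of the $\mathsf{A}_4$-lattice $K_S^{\perp}$ that must act trivially on the discriminant group $\ZZ/5$ in order to extend to the unimodular overlattice $\Pic(S)$; the Weyl group does act trivially there, while $-1$ acts by $-1$, so the twisted copy preserves $K_S^{\perp}$ but is not induced by any automorphism of the pair $(\Pic(S),K_S)$. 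Finally, the faithfulness of $\Aut(S)\to\mathrm{O}(\Pic(S))$, which you also assert, needs one line: the kernel fixes every line class, hence every line, so it is a normal subgroup of $\Sym_5$ contained in a line stabilizer of order $12$ (Lemma~\ref{lemma:S5}\ref{lemma:S5:1}), and is therefore trivial.
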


\begin{scorollary}
\label{cor:Lef}
The Lefschetz number 
\[
\Lef (\sigma,S):= \sum (-1)^k \Tr_{H^k(S,\RR)}(\sigma)
\]
of elements of $\Aut(S)$ are given by the fifth 
column of Table~\xref{tab:Lef}.
\end{scorollary}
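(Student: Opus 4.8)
The plan is to compute each graded piece of the cohomology of $S$ as an $\Sym_5$-module and then sum the traces. First I would use that $S$ is a smooth projective rational surface, so $H^1(S,\RR)=H^3(S,\RR)=0$ and the Lefschetz number collapses to
\[
\Lef(\sigma,S)=\Tr_{H^0(S,\RR)}(\sigma)+\Tr_{H^2(S,\RR)}(\sigma)+\Tr_{H^4(S,\RR)}(\sigma).
\]
Since every element of $\Aut(S)\simeq\Sym_5$ acts by a biholomorphism, it preserves the orientation and fixes both the generator of $H^0$ and the fundamental class in $H^4$; hence $H^0(S,\RR)$ and $H^4(S,\RR)$ are trivial one-dimensional $\Sym_5$-modules, each contributing $1$ to the trace. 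Thus $\Lef(\sigma,S)=2+\Tr_{H^2(S,\RR)}(\sigma)$.

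Next I would identify the middle cohomology. Because $S$ is rational (indeed Fano) we have $H^1(S,\OOO_S)=H^2(S,\OOO_S)=0$, so the exponential sequence yields an $\Aut(S)$-equivariant isomorphism $\Pic(S)\simeq H^2(S,\ZZ)$ with every class of type $(1,1)$. Consequently $H^2(S,\RR)\simeq\Pic(S)\otimes\RR$ as $\Sym_5$-modules, and Lemma~\ref{lemma:S5AutS5} gives $\Pic(S)\otimes\CC\simeq W_4\oplus W_1$. As $W_1$ is the trivial representation, we obtain $\Tr_{H^2(S,\RR)}(\sigma)=\chi_{W_4}(\sigma)+1$ and therefore
\[
\Lef(\sigma,S)=3+\chi_{W_4}(\sigma),
\]
where $\chi_{W_4}$ is the character of the standard four-dimensional representation of $\Sym_5$.

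Finally I would read off the values. Using that $\chi_{W_4}(\sigma)$ equals the number of fixed points of $\sigma$ on $\{1,\dots,5\}$ minus one, the character values on the seven conjugacy classes of $\Sym_5$ can be taken from Table~\ref{Tab:repS5}, and adding $3$ produces exactly the fifth column of Table~\ref{tab:Lef}. As a consistency check, the identity gives $\Lef(\mathrm{id},S)=3+4=7=\chit(S)$, and more generally the topological Lefschetz fixed point formula predicts $\Lef(\sigma,S)=\chit(\Fix(\sigma,S))$, which can be cross-checked against the explicit fixed loci. I do not expect a genuine obstacle here: the only care needed is to keep all the identifications $\Sym_5$-equivariant and to track the trivial summands, namely the $W_1$ inside $\Pic(S)\otimes\CC$ together with the two trivial modules $H^0$ and $H^4$. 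The essential input, the decomposition of $\Pic(S)$, is already furnished by Lemma~\ref{lemma:S5AutS5}.
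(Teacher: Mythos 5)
Your proposal is correct and follows essentially the same route as the paper: the corollary is deduced directly from Lemma~\ref{lemma:S5AutS5}, since for the rational surface $S$ the odd cohomology vanishes, $H^0$ and $H^4$ are trivial $\Sym_5$-modules, and $H^2(S,\RR)\simeq\Pic(S)\otimes\RR\simeq W_4\oplus W_1$, giving $\Lef(\sigma,S)=3+\Tr_{W_4}(\sigma)$, which is exactly the relation between the fourth and fifth columns of Table~\ref{tab:Lef}. Your additional bookkeeping (the exponential sequence and the fixed-point count for $\chi_{W_4}$) is accurate and merely makes explicit what the paper leaves implicit.
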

\begin{table}[H]
\begin{tabular}{l|ccccll}
representative $\sigma\in 
\mathcal{C}$&$\#\mathcal{C}$&$|\sigma|$&$\Tr_{W_4}(\sigma)$&$\Lef(
\sigma, S)$& $\Fix(\sigma, S)^{(1)}$& $\Fix(\sigma, S)^{(0)}$
\\\hline
(1) & 1 & 1 & 4 & 7
\\
(1,2) & 10 & 2 & 2 & 5 & line& 3 points
\\
(1,2)(3,4) & 15 & 2 & 0 &3 & twisted cubic & point
\\
(1,2,3) & 20 & 3 &1 &4& &4 points $\subset \DLines$
\\
(1,2,3)(4,5) & 20 & 6 & $-1$ & 2& &2 points (on a line)
\\
(1,2,3,4) & 30 & 4 & 0 &3& &3 points
\\
(1,2,3,4,5) & 24 & 5 & $-1$& 2&& 2 points
\end{tabular}
\caption{Action of $\Sym_5$ on $\Pic(S)$ and the set of fixed points}
\label{tab:Lef}
\end{table}

\begin{sclaim}
\label{claim:transv}
For distinct elements $\sigma_1,\, \sigma_2\in \Aut(S)$ of order $2$
the curves $\Fix(\sigma_1, S)^{(1)}$ and $\Fix(\sigma_2, S)^{(1)}$
meet each other transversally.
\end{sclaim}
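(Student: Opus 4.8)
The plan is to argue uniformly, treating transpositions and double transpositions on the same footing, by analysing the induced linear action on the tangent space at a common fixed point. Suppose $P\in\Fix(\sigma_1,S)^{(1)}\cap\Fix(\sigma_2,S)^{(1)}$; then both $\sigma_1$ and $\sigma_2$ fix $P$. Since each $\sigma_i$ has finite order, its fixed locus $\Fix(\sigma_i,S)$ is smooth, and the tangent space to $\Fix(\sigma_i,S)$ at $P$ equals $\ker\big(d\sigma_i-\mathrm{id}\big)$, i.e. the $(+1)$-eigenspace of the involution $d\sigma_i$ acting on $T_{S,P}$. As $P$ lies on a one-dimensional component and $\sigma_i\neq\mathrm{id}$, this eigenspace is a line $\ell_i\subset T_{S,P}$, so $d\sigma_i$ has eigenvalues $1,-1$ and $\det d\sigma_i=-1$. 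Transversality of the two fixed curves at $P$ is precisely the statement $\ell_1\neq\ell_2$, so it suffices to rule out $\ell_1=\ell_2$.

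So I would assume for contradiction that $\ell_1=\ell_2=:\ell$ and set $g:=\sigma_1\sigma_2\in\Sym_5$. Then $dg=d\sigma_1\circ d\sigma_2$ fixes $\ell$ pointwise, since $d\sigma_2$ acts as $+1$ on $\ell=\ell_2$ and then $d\sigma_1$ acts as $+1$ on $\ell=\ell_1$; moreover $\det dg=(-1)(-1)=1$. Choosing a basis of $T_{S,P}$ whose first vector spans $\ell$, the matrix of $dg$ is upper triangular with first diagonal entry $1$ and determinant $1$, hence unipotent. But $g$ has finite order, so $dg$ is both diagonalizable and unipotent, which forces $dg=\mathrm{id}$.

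Finally I would upgrade $dg=\mathrm{id}$ to $g=\mathrm{id}$. Since $g$ is a finite-order automorphism fixing $P$ with trivial differential there, the smoothness of $\Fix(g,S)$ together with $T_P\Fix(g,S)=\ker(dg-\mathrm{id})=T_{S,P}$ shows that $\Fix(g,S)$ is two-dimensional near $P$; as $S$ is an irreducible surface, this gives $\Fix(g,S)=S$, i.e. $g=\mathrm{id}$. Hence $\sigma_1=\sigma_2$, contradicting the hypothesis that $\sigma_1,\sigma_2$ are distinct, and the two fixed curves meet transversally.

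The main obstacle, and really the only delicate point, is the passage from the infinitesimal statement to the global one: that a finite-order automorphism with trivial differential at a fixed point is the identity. I would justify this via the linearizability of finite-order automorphisms near a fixed point (the Bochner--Cartan lemma), or equivalently through the smoothness of the fixed locus of a finite cyclic action as used above. The finiteness of the order is essential, as it is exactly what excludes a nontrivial unipotent differential; everything else is elementary linear algebra on $T_{S,P}$, and notably the argument uses no information about which order-two conjugacy class the $\sigma_i$ belong to, so it covers all pairs of order-two elements at once.
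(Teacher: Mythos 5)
Your proof is correct and is essentially the paper's own argument, written out in full: the paper's one-line proof ("a common invariant vector in $T_{S,P}$ forces $\sigma_1\comp\sigma_2^{-1}$ to be the identity") compresses exactly your chain of reasoning --- tangency gives both differentials the same $(+1)$-eigenline, so the composite's differential is unipotent, hence trivial by finite order, hence the composite itself is trivial by linearization. The details you supply (smoothness of fixed loci, diagonalizability, the Bochner--Cartan step upgrading $dg=\mathrm{id}$ to $g=\mathrm{id}$) are precisely the facts the paper leaves implicit.
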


\begin{proof}
Indeed, if $P$ is a point of tangency, then $\sigma_1$ and $\sigma_2$ have a common invariant vector 
in the tangent space $T_{S,P}$, hence $\sigma_1\comp \sigma_2^{-1}$ is the identity transformation.
\end{proof}

\begin{sclaim}
If $\sigma\in \Sym_5$ is an element of order $2$, then $\dim \Fix(\sigma,S)=1$.
\end{sclaim}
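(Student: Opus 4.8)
The plan is to combine the topological and holomorphic Lefschetz fixed point formulas. Since $\sigma$ has order $2$, its differential at any fixed point $P$ acts on the tangent space $T_{S,P}$ as an involution, hence is diagonalizable with eigenvalues in $\{+1,-1\}$. Consequently $\Fix(\sigma,S)$ is a smooth (possibly disconnected) subscheme whose components are either isolated points (tangent eigenvalues $(-1,-1)$) or smooth curves (tangent eigenvalues $(+1,-1)$); the case $(+1,+1)$ would force $\sigma=\mathrm{id}$ and is excluded. Thus $\dim\Fix(\sigma,S)\in\{0,1\}$, and the whole point is to rule out the $0$-dimensional case.

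First I would record the topological count. By the topological Lefschetz fixed point formula, for a finite order automorphism one has $\Lef(\sigma,S)=\chit(\Fix(\sigma,S))$. From Corollary~\ref{cor:Lef} (the fifth column of Table~\ref{tab:Lef}) this number equals $5$ when $\sigma$ is a transposition and $3$ when $\sigma$ is a double transposition; in particular it is nonzero, so $\Fix(\sigma,S)\neq\varnothing$.

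Then I would invoke the holomorphic Lefschetz (Atiyah--Bott) formula for the structure sheaf. Because $S$ is rational we have $H^1(S,\OOO_S)=H^2(S,\OOO_S)=0$, so the holomorphic Lefschetz number equals $\Tr\big(\sigma\mid H^0(S,\OOO_S)\big)=1$. Assume for contradiction that $\Fix(\sigma,S)$ is $0$-dimensional, i.e. consists of $k$ isolated points, each with tangent eigenvalues $(-1,-1)$. Each such point contributes $\frac{1}{\det_{\CC}\big(\mathrm{id}-(d\sigma_P)^{-1}\big)}=\frac{1}{(1-(-1))(1-(-1))}=\frac14$ to the holomorphic Lefschetz number, whence $k/4=1$, i.e. $k=4$. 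But then the topological formula gives $\chit(\Fix(\sigma,S))=k=4$, contradicting the values $5$ and $3$ found above. Hence $\Fix(\sigma,S)$ must contain a curve, i.e. $\dim\Fix(\sigma,S)=1$.

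I expect the only delicate point to be correctly quoting the Atiyah--Bott contribution at an isolated fixed point; everything else is a short numerical comparison, so there is no real computational obstacle. As a consistency check one can compare with the last two columns of Table~\ref{tab:Lef}: a transposition fixes a line plus three points, giving $\chit=2+3=5$, and a double transposition fixes a twisted cubic plus one point, giving $\chit=2+1=3$.
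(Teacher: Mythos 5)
Your proof is correct and takes essentially the same route as the paper: the paper's argument is exactly this comparison — the holomorphic Lefschetz formula forces an isolated-fixed-point locus to consist of $4$ points (each contributing $1/4$ to the holomorphic Lefschetz number $1$), while Corollary~\ref{cor:Lef} forces the count to be $3$ or $5$, a contradiction. Your write-up just makes explicit the eigenvalue analysis and the Atiyah--Bott local contributions that the paper leaves implicit.
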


\begin{proof}
Assume that $\sigma$ has only isolated fixed points. Then by the holomorphic 
Lefschetz fixed point formula the number of $\sigma$-fixed points equals $4$.
On the other hand, By Corollary~\ref{cor:Lef} this number must be equal to 
either $3$ or $5$.
\end{proof}

\begin{sclaim}
\label{claim:order3-5}
If $\sigma\in \Sym_5$ is an element of order $3$ or $5$, then $\dim 
\Fix(\sigma,S)=0$.
\end{sclaim}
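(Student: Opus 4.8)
The plan is to read off the fixed locus directly from the anticanonical embedding $S\subset\PP^5$ fixed above, where $H^0(S,\OOO_S(-K_S))\simeq W_6$. Since $\Aut(S)\simeq\Sym_5$ acts on $\PP^5$ through the representation $W_6$, the fixed locus $\Fix(\sigma,\PP^5)$ is the disjoint union of the projectivized eigenspaces of $\sigma$, and $\Fix(\sigma,S)=S\cap\Fix(\sigma,\PP^5)$. Hence $\dim\Fix(\sigma,S)\ge 1$ can occur only if one of the positive-dimensional fixed linear subspaces of $\PP^5$ actually lies on $S$; as these are lines, this amounts to $\sigma$ fixing a line of $S$ pointwise. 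The whole proof thus reduces to excluding a $\sigma$-fixed line on $S$.

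First I would pin down the eigenvalue multiplicities of $\sigma$ on $W_6$. Because every representation of $\Sym_5$ is defined over $\QQ$, the primitive $n$-th roots of unity occur among the eigenvalues with equal multiplicities, so the multiplicities are determined by $\dim W_6=6$ together with $\Tr_{W_6}(\sigma)$ (Table~\ref{Tab:repS5}). For a $5$-cycle one has $\Tr_{W_6}(\sigma)=1$, forcing the eigenvalue $1$ to have multiplicity $2$ and each primitive fifth root multiplicity $1$; thus $\Fix(\sigma,\PP^5)$ is a single line $\ell$ together with four isolated points. For a $3$-cycle one has $\Tr_{W_6}(\sigma)=0$, forcing $1$ and the two primitive cube roots each to have multiplicity $2$; thus $\Fix(\sigma,\PP^5)$ is a disjoint union of three lines. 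In either case a positive-dimensional component of $\Fix(\sigma,S)$, being an irreducible curve contained in this finite union of lines, would have to equal one of those fixed lines and lie on $S$.

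It then remains to rule out a $\sigma$-fixed line on $S$, and here Lemma~\ref{lemma:S5}\ref{lemma:S5:1} is the key input. In the order-$5$ case this is immediate: $\sigma$ permutes the ten lines of $S$ in orbits of length dividing $5$, and since $|\St_{\Sym_5}(L)|=12$ is prime to $5$, no line is even setwise $\sigma$-invariant, so $\ell\not\subset S$ and $\Fix(\sigma,S)$ is finite. The order-$3$ case is the genuine obstacle, since an element of order $3$ does belong to the line-stabilizer $\Sym_3\times\Sym_2$ and so can fix a line of $S$ as a set. I would dispose of it using the second half of Lemma~\ref{lemma:S5}\ref{lemma:S5:1}: the kernel of $\St_{\Sym_5}(L)\to\Aut(L)$ is generated by a transposition, hence has order $2$, so an order-$3$ element maps to an order-$3$ automorphism of $L\simeq\PP^1$, which fixes only two points of $L$. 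Therefore $\sigma$ fixes no line of $S$ pointwise, and again $\Fix(\sigma,S)$ is finite.

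Finally I would note why the Lefschetz bookkeeping used for the involutions does not settle this directly. Since $\sigma$ has finite order, $\Lef(\sigma,S)=\chit(\Fix(\sigma,S))$, and by Corollary~\ref{cor:Lef} this equals $4$ for an order-$3$ element and $2$ for an order-$5$ element. A single pointwise-fixed $\PP^1$, possibly together with a couple of extra fixed points, is perfectly consistent with both Euler-characteristic values, so the topological Lefschetz number alone cannot exclude a fixed curve. This is precisely why the argument runs through the eigenspace decomposition of $W_6$ and the combinatorics of the line-stabilizers rather than through the fixed-point formulas.
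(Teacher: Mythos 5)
Your proof is correct, and it follows a genuinely different route from the paper's. You linearize the problem through the equivariant anticanonical embedding $S\subset\PP(W_6)$: the rational character values $\Tr_{W_6}(\sigma)=0$ (order $3$) and $\Tr_{W_6}(\sigma)=1$ (order $5$) force eigenvalue multiplicities $(2,2,2)$, resp. $(2,1,1,1,1)$, so $\Fix(\sigma,\PP^5)$ is a union of lines and isolated points, and any fixed curve on $S$ would have to be a line of $\PP^5$ lying on $S$, i.e. one of the ten lines of $S$, fixed pointwise; Lemma~\ref{lemma:S5}\ref{lemma:S5:1} then excludes this (for order $5$ because $5\nmid|\St_{\Sym_5}(L)|=12$, so no line is even setwise invariant; for order $3$ because the kernel of $\St_{\Sym_5}(L)\to\Aut(L)$ has order $2$). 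The paper instead argues intrinsically on $S$: a hypothetical fixed curve $\Gamma$ misses $\Points$ by Lemma~\ref{lemma:S5}\ref{lemma:S5:2} and is not a line, so through each point of $\Gamma$ there is at most one line, whence every line meeting $\Gamma$ is $\sigma$-invariant; since $\Gamma$ is nef, at least four lines meet it, contradicting the fact that $\sigma$ has at most one invariant line by Lemma~\ref{lemma:S5}\ref{lemma:S5:1}. Both arguments hinge on the same structural lemma about line stabilizers, but yours trades the paper's nef/incidence counting on the del Pezzo surface for a character-table computation; it is somewhat more self-contained (no need to establish that a nef fixed curve meets at least four lines) and yields as a bonus the exact shape of the ambient fixed locus, while the paper's proof stays inside the intrinsic geometry of $S$ and uses nothing about $W_6$ beyond the existence of the embedding. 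Your closing observation is also accurate: Corollary~\ref{cor:Lef} alone cannot rule out a pointwise-fixed line, and indeed the paper invokes it only after the fixed locus is known to be finite.
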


\begin{proof}
Assume that $\Fix(\sigma,S)$ contains an irreducible curve $\Gamma$.
Then $\Gamma\cap \Points=\varnothing$ and $\Gamma$ is not a line by 
Lemma~\ref{lemma:S5}\ref{lemma:S5:2}. Hence, there is at most 
one line 
passing through any point $P\in \Gamma$ and so any line meeting $\Gamma$ must be
$\sigma$-invariant. On the other hand,
$\Gamma$ is a nef divisor and so there exists at least $4$ lines meeting 
$\Gamma$.
This contradicts~\ref{lemma:S5}\ref{lemma:S5:1}.
\end{proof}

\begin{sclaim}
\label{claim:conic}
For any non-identity element $\sigma\in \Sym_5$ the set $\Fix(\sigma,S)$ does 
not contain any conic. 
\end{sclaim}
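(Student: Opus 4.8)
The plan is to reduce everything to the case of an involution by passing to suitable powers of $\sigma$, and then to invoke the explicit description of the fixed loci of involutions recorded in Table~\ref{tab:Lef}. The guiding principle is that the positive-dimensional fixed loci that actually occur on $S$ are lines and twisted cubics, and these are numerically distinguishable from conics.

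First I would record the self-intersection numbers that separate conics from the curves that can arise as fixed loci. A conic $C\subset S$ satisfies $-K_S\cdot C=2$ and $\p(C)=0$, so adjunction gives $C^2=0$; by contrast a line has self-intersection $-1$, while a twisted cubic $R$ has $R^2=1$ (this is computed at the start of the discussion of twisted cubic curves, and the conic case is recalled in \S\ref{conic:bundles}). In particular no conic on $S$ is either a line or a twisted cubic.

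Next, suppose for contradiction that some non-identity $\sigma\in\Sym_5$ fixes a conic $C$ pointwise, so that $C\subset\Fix(\sigma,S)$; then every power $\sigma^k$ fixes $C$ pointwise as well. If $|\sigma|\in\{3,5\}$ this already contradicts Claim~\ref{claim:order3-5}, since there $\dim\Fix(\sigma,S)=0$. If $|\sigma|=6$, then $\sigma^2$ has order $3$ and fixes $C$ pointwise, again contradicting Claim~\ref{claim:order3-5}. If $|\sigma|=4$, then $\sigma^2$ is an involution fixing $C$ pointwise, so it remains only to rule out the case where $\sigma$ is itself an involution.

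Finally, for an involution $\sigma$ with $C\subset\Fix(\sigma,S)$, the curve $C$ is one-dimensional and hence $C\subset\Fix(\sigma,S)^{(1)}$. By Corollary~\ref{cor:Lef} (the sixth column of Table~\ref{tab:Lef}) the one-dimensional part of the fixed locus of an involution is a single line when $\sigma$ is a transposition and a single twisted cubic when $\sigma$ is a double transposition. Since $C$ is an irreducible curve contained in this irreducible curve, it must coincide with it, forcing $C$ to be a line or a twisted cubic and contradicting $C^2=0$. The only point that requires care is precisely the input that every positive-dimensional component of the fixed locus of an involution is a line or a twisted cubic, and never a conic; but this is exactly what Corollary~\ref{cor:Lef} already provides, the underlying Lefschetz and holomorphic fixed-point computations together with Lemma~\ref{lemma:S5} having been carried out beforehand, so I expect no further obstacle.
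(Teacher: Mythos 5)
Your reduction to the involution case (by passing to powers and invoking Claim~\ref{claim:order3-5}) is fine, but the way you dispose of involutions is circular within the paper's logical structure, and the gap is not merely bibliographic. You cite ``Corollary~\ref{cor:Lef} (the sixth column of Table~\ref{tab:Lef})'' for the statement that the one-dimensional fixed locus of an involution is a line or a twisted cubic. However, Corollary~\ref{cor:Lef} only establishes the \emph{fifth} column of that table, the Lefschetz numbers; the sixth and seventh columns are a summary of results proved \emph{after} Claim~\ref{claim:conic}, namely Lemmas~\ref{lemma:S5transpositions} and~\ref{lemma:S5doubletranspositions}. Worse, the proof of Lemma~\ref{lemma:S5doubletranspositions} explicitly invokes the present claim: it argues that the fixed curve $R$ of a double transposition is not contained in the fibers of the conic bundle $\varphi_P$ precisely because ``any component of $R$ is not a line nor a conic.'' So in the crucial case of a double transposition you are assuming what you set out to prove.

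Note also that the Lefschetz numbers by themselves genuinely cannot close this gap: a conic has $\chit=2$, so $\Fix(\sigma,S)=C\cup\{\text{one point}\}$ is numerically consistent with $\Lef(\sigma,S)=3$ for a double transposition, and $C\cup\{\text{three points}\}$ with $\Lef(\sigma,S)=5$ for a transposition. Some geometric input beyond fixed-point counts is required, and this is exactly what the paper's own proof supplies, in a way that needs no case division on the order of $\sigma$: if $\sigma$ fixes a (necessarily smooth) conic $C$ pointwise, the four lines $L_1,\dots,L_4$ meeting $C$ are disjoint, and each is $\sigma$-invariant because its intersection point with $C$ is fixed and lies on no other line (no smooth conic passes through a point of $\Points$, by~\ref{conic:bundles}). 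Contracting $L_1,\dots,L_4$ gives a $\sigma$-equivariant morphism $\varphi:S\to\PP^2$ under which $\varphi(C)$ is a conic fixed pointwise by the induced action; an automorphism of $\PP^2$ fixing a smooth conic pointwise is the identity, so $\sigma=\mathrm{id}$, a contradiction. If you want to keep your overall strategy, you must replace the appeal to the table by an argument of this kind (or reprove Lemma~\ref{lemma:S5doubletranspositions} without using Claim~\ref{claim:conic}).
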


\begin{proof}
Assume that $\sigma$ acts trivially on a conic $C$.
Then $C$ must be smooth. Recall that there are exactly four lines 
$L_1,\dots,L_4$ on $S$ meeting $C$ and these lines are disjoint (see~\ref{conic:bundles}).
Therefore, there is a contraction $\varphi: S\to \PP^2$ of 
them so that $\varphi(C)$ is a conic on $\PP^2$. Since the contraction 
$\varphi$ is $\sigma$-equivariant, the induced action on $\PP^2$ is trivial, a 
contradiction.
\end{proof}

\begin{lemma}
\label{lemma:S5transpositions}
Let $\sigma\in \Sym_5$ be a transposition. Then the following assertions hold.
\begin{enumerate}
\item
\label{lemma:S5transpositions1}
The set $\Fix(\sigma,S)^{(1)}$ is a 
line on $S$. 
\item
\label{lemma:S5transpositions2}
There are exactly four 
$\sigma$-invariant lines; these are $\Fix(\sigma,S)^{(1)}$ and three lines 
$L_1,\, L_2,\, L_3$ meeting $\Fix(\sigma,S)^{(1)}$.
\item\label{lemma:S5transpositions3}
The set $\Fix(\sigma,S)^{(0)}$ is a union of three points
$P_1\in L_1$,\, $P_2\in L_2$,\, $P_3\in L_3$, and $P_i\notin\Points$. 
\end{enumerate}
In particular, $\Fix(\sigma,S)\subset \DLines$.
\end{lemma}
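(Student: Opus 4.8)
The plan is to fix, without loss of generality, a transposition $\sigma=(4,5)\in\Sym_5\simeq\Aut(S)$ and to determine $\Fix(\sigma,S)$ completely; all three assertions then fall out. By Lemma~\ref{lemma:S5}\ref{lemma:S5:1} the bijection between lines and transpositions is $\Sym_5$-equivariant, so $\sigma$ is the generator of $\ker\bigl(\St_{\Sym_5}(L)\to\Aut(L)\bigr)$ for a unique line $L$; that is, $\sigma$ fixes $L$ pointwise, so $L\subseteq\Fix(\sigma,S)^{(1)}$, and the preceding Claim already guarantees $\dim\Fix(\sigma,S)=1$.

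The \emph{main obstacle} is to rule out any further curve component, and I would resolve it by playing the topological and holomorphic Lefschetz formulas against each other. Write $\Fix(\sigma,S)^{(1)}=\bigsqcup_i\Gamma_i$ (smooth, since the fixed locus of an involution is smooth), $d_i:=-K_S\cdot\Gamma_i$, and $k:=\#\Fix(\sigma,S)^{(0)}$. The topological Lefschetz fixed point theorem together with Corollary~\ref{cor:Lef} gives $\chit\bigl(\Fix(\sigma,S)\bigr)=\Lef(\sigma,S)=5$. On the other hand the holomorphic Lefschetz (Atiyah--Bott) formula, applied exactly as in the proof of the Claim, reads $1=\tfrac{k}{4}+\sum_i\tfrac{d_i}{4}$: each isolated fixed point is of type $(-1,-1)$ and contributes $\tfrac14$, while a short computation shows a fixed curve $\Gamma_i$ contributes $\tfrac{d_i}{4}$. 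Hence $k+\sum_i d_i=4$. As $-K_S$ is ample, every $d_i\ge1$, and $L$ is one component with $d=1$. If a second component $\Gamma$ existed it would be neither a line (otherwise $\Gamma=L$ by the bijection) nor a conic (Claim~\ref{claim:conic}), so $d_\Gamma\ge3$, forcing $k=0$ and $\Fix(\sigma,S)^{(1)}=L\cup\Gamma$ with $d_\Gamma=3$; but then $\chit=\chit(L)+\chit(\Gamma)=2+(2-2g_\Gamma)$ would have to equal the odd number $5$, impossible since $2-2g_\Gamma$ is even. This parity clash gives $\Fix(\sigma,S)^{(1)}=L$ and $k=3$, proving \ref{lemma:S5transpositions1} and the count of isolated points.

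For \ref{lemma:S5transpositions2} I would argue group-theoretically. By equivariance the $\sigma$-invariant lines correspond to the transpositions centralized by $(4,5)$, namely $(4,5),(1,2),(1,3),(2,3)$, so there are exactly four. To see that $L_{(1,2)},L_{(1,3)},L_{(2,3)}$ meet $L=L_{(4,5)}$ I would first record the incidence of lines from Lemma~\ref{lemma:S5}\ref{lemma:S5:2}: for $P\in\Points$ the group $\St_{\Sym_5}(P)\simeq\Dih_4$ has exactly two transpositions, whose product is the central double transposition $\delta_P$; since each line through $P$ is cut out pointwise by a transposition lying in $\St_{\Sym_5}(P)$, the two lines through $P$ are $L_{\tau'},L_{\tau''}$ with $\tau'\tau''=\delta_P$. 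Thus two lines $L_\tau,L_{\tau'}$ meet if and only if $\tau,\tau'$ are disjoint transpositions. Since $(4,5)$ is disjoint from each of $(1,2),(1,3),(2,3)$, all three invariant lines meet $L$, giving \ref{lemma:S5transpositions2}.

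Finally, for \ref{lemma:S5transpositions3}, on each invariant line $L_i\neq L$ the restriction $\sigma|_{L_i}$ is a nontrivial involution of $\PP^1$ (nontrivial, else $L_i=L$), with two distinct fixed points: one is the point $L_i\cap L\in\Points$, lying on the fixed curve $L$, and the other is a point $P_i\notin L$ (the two fixed points being distinct while $L_i\cap L$ is a single point), hence an isolated fixed point. Since $k=3$, the three points $P_1,P_2,P_3$ exhaust $\Fix(\sigma,S)^{(0)}$. Moreover $P_i\notin\Points$: otherwise $\sigma\in\St_{\Sym_5}(P_i)\simeq\Dih_4$ would be one of the two transpositions of that group, and then $L=L_\sigma$ would pass through $P_i$, contradicting $P_i\notin L$. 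Consequently each $P_i$ lies on the single line $L_i$, the $P_i$ are distinct, and $\Fix(\sigma,S)=L\cup\{P_1,P_2,P_3\}\subset\DLines$, completing the proof.
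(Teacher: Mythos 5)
Your proof is correct, but it reaches the key assertions by a genuinely different route than the paper's. The paper establishes \ref{lemma:S5transpositions2} first and purely geometrically: the three lines meeting $L\subset\Fix(\sigma,S)^{(1)}$ are automatically $\sigma$-invariant (each passes through a fixed point of $L$, and only two lines pass through such a point), and the bijection of Lemma~\ref{lemma:S5}\ref{lemma:S5:1} caps the number of invariant lines at four. It then contracts the three disjoint lines $L_1,L_2,L_3$ by a $\sigma$-equivariant morphism $\varphi\colon S\to\PP^1\times\PP^1$, under which $\sigma$ becomes the involution interchanging the factors, whose fixed locus is the diagonal $\varphi(L)$; pulling back gives $\Fix(\sigma,S)\subset L\cup L_1\cup L_2\cup L_3$, and smoothness of the fixed locus yields \ref{lemma:S5transpositions1} and \ref{lemma:S5transpositions3} at once, including the location of the isolated points. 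You instead pin the fixed locus down numerically: the holomorphic Lefschetz (Atiyah--Singer) formula with one-dimensional fixed components --- your identity $k+\sum_i d_i=4$, with a pointwise-fixed curve contributing $(-K_S\cdot\Gamma_i)/4$, is correct --- played against the topological Lefschetz number $\Lef(\sigma,S)=5$ and a parity obstruction. This invokes a heavier analytic tool than anything the paper uses (the Claim before the lemma applies only the isolated-fixed-point version), but it buys the exact count $k=3$ with no contraction geometry, and it would adapt mechanically to other elements of $\Sym_5$. Your group-theoretic treatment of \ref{lemma:S5transpositions2} (invariant lines correspond to transpositions commuting with $\sigma$) is essentially the paper's counting argument, run in the opposite order.

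One gap to patch, in your proof of \ref{lemma:S5transpositions2}: your analysis of $\St_{\Sym_5}(P)$ for $P\in\Points$ directly yields only the implication ``$L_\tau$ and $L_{\tau'}$ meet $\Rightarrow$ $\tau,\tau'$ are disjoint'', whereas what you actually need is the converse, in order to conclude that $L_{(1,2)},L_{(1,3)},L_{(2,3)}$ meet $L_{(4,5)}$. The converse does follow from the second bijection in Lemma~\ref{lemma:S5}\ref{lemma:S5:2} (between $\Points$ and the set of double transpositions) together with the uniqueness of the factorization of a double transposition into two disjoint transpositions, but this half has to be said explicitly. Alternatively, adopt the paper's ordering, which avoids the converse entirely: the three lines meeting $L$ are $\sigma$-invariant for free, and your centralizer count of four then identifies them as the remaining invariant lines.
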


\begin{proof}
By 
Lemma~\ref{lemma:S5}\ref{lemma:S5:1} the set $\Fix(\sigma,S)^{(1)}$ contains a 
line, say $L$. The lines 
$L_1,\, L_2,\, L_3$ meeting $L$ are $\sigma$-invariant but $L_i\not \subset 
\Fix(\sigma,S)^{(1)}$ since $\Fix(\sigma,S)^{(1)}$ must be a smooth curve.
Now Lemma~\ref{lemma:S5}\ref{lemma:S5:1} implies that $L,\, L_1,\, L_2,\, L_3$
are the only $\sigma$-invariant lines on $S$ because there are exactly four 
involutions commuting with $\sigma$. This proves \ref{lemma:S5transpositions2}.
Note that the lines $L_1,\, L_2,\, L_3$ are disjoint, hence there is a 
$\sigma$-equivariant contraction $\varphi: S\to \PP^1\times \PP^1$ of 
them so that $\varphi(L)$ is the diagonal and the induced action on 
$\PP^1\times \PP^1$ is an involution interchanging the factors. 
Then $\Fix(\sigma, \PP^1\times \PP^1)=\varphi(L)$, hence $\Fix(\sigma,S)$ is 
contained in $\varphi^{-1}(\varphi(L))= L\cup L_1\cup L_2\cup L_3$.
Since the variety $\Fix(\sigma,S)$ is smooth, we see that $L$ is the only 
one-dimensional component of $\Fix(\sigma,S)$, and $\Fix(\sigma,S)\setminus L$
consists of three isolated points lying on $L_1,\, L_2,\, L_3$. 
This proves~\ref{lemma:S5transpositions1} and ~\ref{lemma:S5transpositions3}.
\end{proof}

\begin{lemma}
\label{lemma:S5doubletranspositions}
Let $\sigma=\sigma_1\comp 
\sigma_2\in \Sym_5$, where $\sigma_1$, $\sigma_2$ are disjoint 
transpositions. Then the following assertions hold.
\begin{enumerate}
\item
\label{lemma:S5doubletranspositions1}
There are exactly two
$\sigma$-invariant lines; these are 
$\Fix(\sigma_1,S)^{(1)}$ and $\Fix(\sigma_2,S)^{(1)}$.

\item
\label{lemma:S5doubletranspositions2}
$\Fix(\sigma,S)^{(0)}=\Fix(\sigma_1,S)^{(1)}\cap 
\Fix(\sigma_2,S)^{(1)}\in \Points$.
\item
The set $\Fix(\sigma,S)^{(1)}$ is the
twisted cubic curve described in Lemma~\xref{lemma:cubic}.
\end{enumerate}
\end{lemma}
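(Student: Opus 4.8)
The plan is to prove the three assertions in order. Throughout write $\sigma_1=(1,2)$, $\sigma_2=(3,4)$, so that $\sigma=\sigma_1\sigma_2=(1,2)(3,4)$, and set $L_i:=\Fix(\sigma_i,S)^{(1)}$, which is a line by Lemma~\ref{lemma:S5transpositions}. I will lean on the two $\Sym_5$-equivariant dictionaries of Lemma~\ref{lemma:S5}: lines $\leftrightarrow$ transpositions (\ref{lemma:S5:1}) and points of $\Points$ $\leftrightarrow$ double transpositions (\ref{lemma:S5:2}), with $\Sym_5$ acting by conjugation. First, $L_i$ is the line attached to $\sigma_i$, because $\sigma_i$ generates $\ker(\St_{\Sym_5}(L_i)\to\Aut(L_i))$. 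By equivariance a line is $\sigma$-invariant precisely when its transposition commutes with $\sigma$, and a one-line check shows the only transpositions commuting with $(1,2)(3,4)$ are $(1,2)$ and $(3,4)$. This gives exactly two $\sigma$-invariant lines, $L_1$ and $L_2$, proving~\ref{lemma:S5doubletranspositions1}.

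The heart of the matter is to exhibit the fixed curve explicitly. Since $\sigma_1,\sigma_2$ are disjoint, $L_1$ and $L_2$ meet, their common point $P$ being the one attached to $\sigma$; thus $P\in\Points$. Its $\approx$-class $\{P,P_1,P_2\}$ corresponds to the three double transpositions sharing a fixed symbol, which together with the identity form an abelian Klein four-group inside a $\Sym_4\subset\Sym_5$. As $\sigma$ is one of them, it commutes with the double transpositions of $P_1,P_2$, so by equivariance $\sigma$ fixes $P_1$ and $P_2$. Let $R_P$ be the twisted cubic through $P_1,P_2$ (Lemma~\ref{lemma:cubic}); uniqueness forces $\sigma(R_P)=R_P$. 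Moreover $\sigma$ stabilises both $L_1$ and $R_P$, hence fixes the single point $x_1:=R_P\cap L_1$ furnished by Corollary~\ref{cor:cubic2}\ref{cor:cubic2a}, and $x_1\notin\{P_1,P_2\}$ because $P_1,P_2$ do not lie on $L_1$. Thus $\sigma|_{R_P}$ has three fixed points $P_1,P_2,x_1$ on $R_P\cong\PP^1$ and is the identity; that is, $R_P\subset\Fix(\sigma,S)$.

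It remains to see that $\Fix(\sigma,S)=R_P\sqcup\{P\}$, which yields both~\ref{lemma:S5doubletranspositions2} and the last assertion. Here I would invoke the topological Lefschetz formula, $\chit(\Fix(\sigma,S))=\Lef(\sigma,S)=3$ by Corollary~\ref{cor:Lef}. The curve $R_P\cong\PP^1$ contributes $2$, while $P$ is an isolated fixed point off $R_P$ (since $R_P\cap\Points=\{P_1,P_2\}$ by Corollary~\ref{cor:cubic2}\ref{cor:cubic2a}) and contributes $1$, already accounting for the whole $\chit=3$. Any further component of the (smooth) fixed locus would be a curve disjoint from $R_P$; it is not a line, as neither invariant line $L_1,L_2$ is fixed pointwise, nor a conic by Claim~\ref{claim:conic}. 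A rational such component would push $\chit$ to at least $5$, and a component of genus $\ge2$, or an extra isolated point, is likewise incompatible with the count. Hence $\Fix(\sigma,S)^{(1)}=R_P$ and $\Fix(\sigma,S)^{(0)}=\{P\}=\{L_1\cap L_2\}\subset\Points$.

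The delicate point, and the one I expect to require the most care, is excluding a $\sigma$-fixed elliptic component in the last step: such a curve has $\chit=0$ and so is invisible to the crude Euler-characteristic count. This is settled by noting that the only smooth genus-one class on $S$ is the anticanonical one, since by the Hodge index theorem any $E$ with $\p(E)=1$ and $E^2=-K_S\cdot E>0$ forces $-K_S\cdot E=5$; then $R_P\cdot E=R_P\cdot(-K_S)=3>0$, contradicting the disjointness of the components of the smooth fixed locus $\Fix(\sigma,S)$.
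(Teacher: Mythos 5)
Your proof takes a genuinely different route from the paper's. The paper finds an invariant point $P\in\Points$ by a parity argument, studies the $\sigma$-equivariant conic bundle $\varphi_P$, shows that $R:=\Fix(\sigma,S)^{(1)}$ is an irreducible double section of it, locates the isolated fixed point and proves rationality of $R$ via the Hurwitz formula, and only then obtains assertion (i) and, through a nefness argument for $-K_S-R$, the identification $R=R_P$. You instead get (i) at once from the equivariance of the bijection $\Lines\leftrightarrow\{\text{transpositions}\}$ of Lemma~\ref{lemma:S5}, and you produce the fixed cubic directly: $\sigma$ fixes $P_1,P_2$, hence preserves $R_P$ and fixes three of its points, hence is the identity on $R_P$. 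Up to this point your argument is correct and attractively short; the only soft spot is the unproved identification of the $\approx$-class of $P$ with the Klein four-group of double transpositions (true, and provable: the stabilizer of the class has order $24$ and contains $\St_{\Sym_5}(P)\simeq\Dih_4$, hence is the unique $\Sym_4$ containing it, and the Klein triple is the unique invariant $3$-element set of double transpositions of that $\Sym_4$).

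The genuine gap is in your exclusion of extra fixed components. First, a genus-$2$ component is \emph{not} ``incompatible with the count'': it has $\chit=-2$, so together with two additional isolated fixed points it gives total $\chit=2+1-2+1+1=3=\Lef(\sigma,S)$; Euler characteristics alone cannot rule this out. Second, the Hodge-index claim you use against elliptic components is false: the inequality $(K_S\cdot E)^2\ge K_S^2E^2$ only yields $-K_S\cdot E\ge 5$, not equality, and $S$ really does carry smooth elliptic curves with $-K_S\cdot E=6$ (realize $S\to\PP^2$ as the blowup of four points with exceptional classes $e_1,\dots,e_4$ and $h$ the pullback of a line; the proper transform of a smooth plane cubic through three of the points has class $3h-e_1-e_2-e_3$, with $\p(E)=1$ and $E^2=-K_S\cdot E=6$). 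So the step ``$E\sim-K_S$, hence $E\cdot R_P=3$'' does not stand. Both defects are repaired by a single observation: every component $C'$ of $\Fix(\sigma,S)$ other than $R_P$ is disjoint from $R_P$ (fixed loci of finite-order automorphisms are smooth), so $C'\cdot R_P=0$; since $R_P^2=1>0$ and the intersection form on $\Pic(S)$ has signature $(1,4)$, the orthogonal complement of $R_P$ is negative definite, so $(C')^2<0$; by adjunction a smooth irreducible curve with $(C')^2<0$ and $-K_S\cdot C'\ge 1$ must be a line, and no line is pointwise fixed by $\sigma$, since by (i) it would have to be one of the $L_i$ and $\ker\big(\St_{\Sym_5}(L_i)\to\Aut(L_i)\big)=\langle\sigma_i\rangle\not\ni\sigma$. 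Once curve components other than $R_P$ are excluded, the count $\chit(\Fix(\sigma,S))=3$ leaves room for exactly one isolated point, which must be $P$, and your proof of (ii) and (iii) is complete.
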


\begin{proof}
Since the size of $\Points$ is odd, there exists a $\sigma$-invariant point 
$P\in \Points$. Then the pencil $|C_P|$ defines a $\sigma$-equivariant conic 
bundle $\varphi_P: S\to \PP^1$. 
Put $R:=\Fix(\sigma,S)^{(1)}$.
Any component of $R$ is not a line 
nor a conic, hence $R$ is not contained in the fibers of $\varphi_P$.
This implies that the action $\sigma$ on the base of $\varphi_P$ is trivial.
The conic bundle $\varphi_P$ has three degenerate fibers 
$C_P$, $C_{P'}$, and $C_{P''}$ whose singular points 
$P$, 
$P'$, and $P''$ are fixed by $\sigma$.
The curve $R$ meets any smooth fiber at two points, hence $\varphi_P|_R: R\to 
\PP^1$
is a double cover. If $R$ is reducible, then $R$ is a disjoint union of two 
smooth rational curves. This contradicts our computation $\Lef(\sigma, S)=3$.
Hence $R$ is irreducible (and smooth). The ramification divisor of $\varphi_P|_R: 
R\to \PP^1$ is contained in the set of the intersection points $\{P, P', 
P''\}$. Since the degree of this divisor is even, up to permutations, we may 
assume that $P'$ and 
$P''$ are ramification points, and $P\notin R$. Then by the Hurwitz formula, $R$ 
is rational and so 
$\Fix(\sigma,S)^{(0)}=P$. This proves \ref{lemma:S5doubletranspositions2}. 
Since $P$ is an isolated fixed point, the action on $\sigma$ on the tangent 
space is given by the minus identity matrix.
Hence, the lines $L_1$ and $L_2$ are invariant. Similar arguments show that 
the lines $L_1'$ and $L_2'$ (resp. $L_1''$ and $L_2''$) are interchanged by 
$\sigma$. Since $\sigma_i$ commute with $\sigma$, we have $\sigma_i(P)=P$.
By Lemma~\ref{lemma:S5transpositions}\ref{lemma:S5transpositions3} we conclude 
that $\Fix(\sigma_i,S)^{(1)}$ is a line passing through $P$.
Thus, up to permutation, $\Fix(\sigma_1,S)^{(1)}=L_1$ and 
$\Fix(\sigma_2,S)^{(1)}=L_2$. 
By Lemma~\ref{lemma:S5transpositions}\ref{lemma:S5transpositions1} these are 
the only $\sigma$-invariant lines.
This proves \ref{lemma:S5doubletranspositions1}.

Now we claim that $-K_S-R$ is a nef divisor.
Assume the converse. Then $(-K_S-R)\cdot L<0$ for some line $L$.
Thus $R\cdot L\ge 2$. Take a point $Q\in R\cap L$. If $Q\in \{P,P',P''\}$, then
$L\in \{L_1,L_2,L_1',L_2',L_1'',L_2''\}$ and 
$R\cdot L=1$ by the arguments above. Thus $Q\notin \{P,P',P''\}$ and, in 
particular, $Q\notin\Points$. Hence $L$ is the only line passing through 
$Q\in R$ and so $L$ is $\sigma$-invariant. This contradicts 
\ref{lemma:S5doubletranspositions1}. Therefore, $-K_S-R$ is a nef and, 
in particular, $-K_S-R\sim F$, where $F$ is effective. On the other hand, 
$F\cdot (-K_S)=(-K_S-R)\cdot (-K_S)\le 2$ and 
$F\cdot (C_P)=0$, hence $F$ is a conic and $F\sim C_P$ by 
(see~\ref{conic:bundles}). Thus $R=R_{P',P''}$ (see Lemma~\ref{lemma:cubic}).
\end{proof}

\begin{scorollary}
\label{cor:isolated:fix-p}
Let $P\in S$ be an isolated fixed point of an element $\sigma\in \Sym_5$ of order $2$.
Then $P\in (\DLines\cap \DCurves)$. Moreover, one of the following holds:
\begin{enumerate}
\item
$P\in \Points$ and $\St_{\Sym_5}(P)$ is a Sylow 
$2$-subgroup in $\Sym_5$;
\item
$\sigma$ is a transposition, $P\in (\DLines\cap \DCurves)\setminus \Points$, 
$|\St_{\Sym_5}(P)|=4$, and $\St_{\Sym_5}(P)$
is the centralizer of a double transposition.
\end{enumerate}
\end{scorollary}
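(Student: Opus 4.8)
The plan is to split according to the conjugacy class of the order-two element $\sigma$: in $\Sym_5$ every involution is either a transposition or a product of two disjoint transpositions, and these two cases will produce exactly the alternatives (i) and (ii). If $\sigma$ is a double transposition, say $\sigma=\sigma_1\comp\sigma_2$, then by Lemma~\ref{lemma:S5doubletranspositions}\ref{lemma:S5doubletranspositions2} the isolated fixed point is forced to be $\Fix(\sigma_1,S)^{(1)}\cap\Fix(\sigma_2,S)^{(1)}\in\Points$. A point of $\Points$ lies on two lines, hence $P\in\DLines$, and by Corollary~\ref{cor:cubic2}\ref{cor:cubic2b} exactly two components of $\DCurves$ pass through it, so $P\in\DCurves$ as well. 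Lemma~\ref{lemma:S5}\ref{lemma:S5:2} then identifies $\St_{\Sym_5}(P)$ with a Sylow $2$-subgroup, giving alternative (i).

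For the transposition case I would invoke Lemma~\ref{lemma:S5transpositions}: the isolated fixed points of $\sigma$ are three points, each lying on one of the three lines $L_i$ meeting $L=\Fix(\sigma,S)^{(1)}$, and none of them lies in $\Points$. Thus $P\notin\Points$ and $P\in L_i\subset\DLines$. To see $P\in\DCurves$, let $\tau$ be the transposition with $\Fix(\tau,S)^{(1)}=L_i$ (such a $\tau$ exists by the line/transposition bijection of Lemma~\ref{lemma:S5}\ref{lemma:S5:1}); then $\tau$ fixes $L_i$ pointwise, hence fixes $P$, so the double transposition $\sigma\comp\tau$ fixes $P$. Since $\Fix(\sigma\comp\tau,S)^{(0)}=L\cap L_i\in L$ by Lemma~\ref{lemma:S5doubletranspositions}\ref{lemma:S5doubletranspositions2}, while $P\notin L$, we have $P\neq\Fix(\sigma\comp\tau,S)^{(0)}$, so $P$ lies on the one-dimensional locus $\Fix(\sigma\comp\tau,S)^{(1)}$. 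By Lemma~\ref{lemma:S5doubletranspositions} this is a twisted cubic curve of the type described in Lemma~\ref{lemma:cubic}, i.e.\ a component of $\DCurves$; therefore $P\in(\DLines\cap\DCurves)\setminus\Points$.

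It remains to compute $\St_{\Sym_5}(P)$ in the transposition case. Since $P\notin\Points$, a unique line passes through $P$, namely $L_i$, so every element fixing $P$ preserves $L_i$ and hence $\St_{\Sym_5}(P)\subseteq\St_{\Sym_5}(L_i)\simeq\Sym_3\times\Sym_2$ (Lemma~\ref{lemma:S5}\ref{lemma:S5:1}). Here the $\Sym_2$-factor is $\langle\tau\rangle$, the kernel of $\St_{\Sym_5}(L_i)\to\Aut(L_i)$, and the image of $\St_{\Sym_5}(L_i)$ in $\Aut(L_i)\simeq\PGL_2(\CC)$ is $\Sym_3$ acting faithfully on $L_i\simeq\PP^1$. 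Since $\sigma\notin\langle\tau\rangle$ but $\sigma$ fixes $P$, the image $\bar\sigma\neq 1$ is an involution fixing $P\in L_i$, so the stabilizer of $P$ inside this $\Sym_3$ contains $\bar\sigma$; it cannot be larger, for an order-$3$ element fixing $P$ would, together with $\bar\sigma$, generate all of $\Sym_3\simeq\Dih_3$, which has no fixed point on $\PP^1$. Pulling back gives $\St_{\Sym_5}(P)=\langle\sigma,\tau\rangle\simeq\mumu_2\times\mumu_2$ of order $4$; it contains the double transposition $\sigma\comp\tau$ and is precisely the centralizer of $\sigma\comp\tau$ computed inside $\St_{\Sym_5}(L_i)$, yielding alternative (ii).

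The main obstacle is this last stabilizer computation: one must exclude the a priori possible order-$8$ stabilizer. The clean way is exactly the one above --- to restrict the action to the unique line $L_i$ through $P$ and use that the dihedral image $\Sym_3\subset\PGL_2(\CC)$ fixes no point of $\PP^1$, which pins the stabilizer down to the Klein four-group $\langle\sigma,\tau\rangle$. Everything else is bookkeeping with the fixed-locus descriptions of Lemmas~\ref{lemma:S5transpositions} and~\ref{lemma:S5doubletranspositions} and the incidence data recorded in Corollary~\ref{cor:cubic2}.
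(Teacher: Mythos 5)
Your argument is correct and is in essence the paper's own proof. The paper organizes the dichotomy by whether $P\in\Points$ rather than by the conjugacy class of $\sigma$ (equivalent, by Lemmas~\ref{lemma:S5transpositions} and~\ref{lemma:S5doubletranspositions}), takes the same auxiliary transposition $\sigma_1$ (your $\tau$) generating $\ker\big(\St_{\Sym_5}(L)\to\Aut(L)\big)$ for the unique line $L$ through $P$, and obtains $P\in\DCurves$ from the double transposition $\sigma\comp\sigma_1$ exactly as you do. The one step where you genuinely diverge is the exclusion of a stabilizer larger than $\langle\sigma,\tau\rangle$: the paper notes that $\St_{\Sym_5}(P)$ preserves the unique line through $P$, hence is abelian, and then reads off from Table~\ref{table:subgroupsS5} that the only abelian subgroup of $\Sym_5$ containing two distinct commuting transpositions is the Klein four-group, whereas you compute the image of $\St_{\Sym_5}(P)$ in $\Aut(L_i)$ and use that the dihedral image $\Sym_3\subset\PGL_2(\CC)$ of $\St_{\Sym_5}(L_i)$ fixes no point of $\PP^1$; both are sound, yours being slightly more self-contained, the paper's shorter. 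Two minor remarks: you assert that $\sigma\comp\tau$ is a double transposition one paragraph before supplying the justification (namely that $\sigma\in\St_{\Sym_5}(L_i)$ and $\tau$ is central there, so the two transpositions commute and hence are disjoint), so that claim should be forward-referenced; and your careful phrase ``the centralizer of $\sigma\comp\tau$ computed inside $\St_{\Sym_5}(L_i)$'' is in fact more accurate than the corollary's literal wording, since the full centralizer of a double transposition in $\Sym_5$ has order $8$, not $4$.
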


\begin{proof}
Assume that $P\notin \Points$.
By Lemma~\ref {lemma:S5doubletranspositions}\ $\sigma$ is a transposition and 
by Lemma~\ref {lemma:S5transpositions} $P$ lies on a (unique) line $L$, where $L=\Fix(\sigma_1, 
S)^{(1)}$ for another transposition $\sigma_1$. 
The line $L$ is $\St_{\Sym_5}(P)$-invariant, hence the group $\St_{\Sym_5}(P)$ is abelian.
Then by Table~\ref{table:subgroupsS5} the group $\St_{\Sym_5}(P)$ is generated by
$\sigma$ and $\sigma_1$, and is isomorphic to $\mumu_2\times \mumu_2$.
Finally, the element $\sigma\comp \sigma_1$ is a double transposition, hence
$P\in \DCurves$ by Lemma~\ref {lemma:S5doubletranspositions}.
\end{proof}

\begin{scorollary}
\label{cor:LR}
Let $P\in (\DLines\cap \DCurves)\setminus \Points$. Then 
the following assertions hold.
\begin{enumerate}
\item 
\label{cor:LR:b}
$\St_{\Sym_5}(P)$ is isomorphic to $\mumu_2\times \mumu_2$;

\item 
\label{cor:LR:a}
$P$ is an isolated fixed point of a 
transposition $\sigma\in \Sym_5$;

\item 
\label{cor:LR:c}
$(\DLines\cap \DCurves)\setminus \Points$ is an orbit of $\Sym_5$ of length $30$;
\item
\label{cor:LR:d}
the intersection $L\cap \DCurves$ is transversal outside $\Points$.
\end{enumerate}
\end{scorollary}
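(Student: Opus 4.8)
Since $P\notin\Points$, there is a unique line $L\subset S$ through $P$, and $P$ lies on at least one component $R_{P'}$ of $\DCurves$. The first step is to locate $P$ precisely. Applying Corollary~\ref{cor:cubic2}\ref{cor:cubic2a} to $R_{P'}$ gives $\DLines\cap R_{P'}=\{P_1,P_2\}\cup(R_{P'}\cap C_{P'})$, where $\{P',P_1,P_2\}$ is the $\approx$-class of $P'$ and the two points of $R_{P'}\cap C_{P'}$ lie on the two distinct components of the degenerate conic $C_{P'}$. Since $P\in R_{P'}\cap\DLines$ but $P\neq P_1,P_2$ (because $P\notin\Points$), I conclude $P\in R_{P'}\cap C_{P'}$; in particular $P$ lies on one of the two lines forming $C_{P'}$, and this line is the unique line $L$ through $P$.

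Next I would pass to the group side. By Lemma~\ref{lemma:S5}\ref{lemma:S5:2} the point $P'$ is the isolated fixed point of a double transposition $\rho$, with $\Fix(\rho,S)^{(1)}=R_{P'}$; writing $\rho=\sigma_1\comp\sigma_2$, Lemma~\ref{lemma:S5doubletranspositions} shows that the two lines through $P'$---i.e. the components of $C_{P'}$---are exactly $\Fix(\sigma_1,S)^{(1)}$ and $\Fix(\sigma_2,S)^{(1)}$, meeting at $P'$. Relabelling if necessary, $L=\Fix(\sigma_1,S)^{(1)}$, so $\sigma_1(P)=P$; since also $\rho(P)=P$ (as $P\in R_{P'}=\Fix(\rho,S)^{(1)}$), the involution $\sigma_2=\sigma_1\comp\rho$ fixes $P$ as well. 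But $\Fix(\sigma_1,S)^{(1)}\cap\Fix(\sigma_2,S)^{(1)}=\{P'\}$ and $P\neq P'$, so $P\notin\Fix(\sigma_2,S)^{(1)}$; by Lemma~\ref{lemma:S5transpositions} this forces $P\in\Fix(\sigma_2,S)^{(0)}$, i.e. $P$ is an isolated fixed point of the transposition $\sigma_2$. This proves~\ref{cor:LR:a}. For~\ref{cor:LR:b}, Corollary~\ref{cor:isolated:fix-p} now applies to the pair $(\sigma_2,P)$ and yields $|\St_{\Sym_5}(P)|=4$; since $\langle\sigma_1,\sigma_2\rangle\simeq\mumu_2\times\mumu_2$ is an order-$4$ subgroup fixing $P$, it must be all of $\St_{\Sym_5}(P)$.

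Assertion~\ref{cor:LR:c} then follows by counting: the set $(\DLines\cap\DCurves)\setminus\Points$ is $\Sym_5$-invariant, and by~\ref{cor:LR:b} the orbit of $P$ has length $|\Sym_5|/4=30$; since Corollary~\ref{cor:cubic2}\ref{cor:cubic2d} bounds the whole set by $30$ points, it coincides with this single orbit. Finally,~\ref{cor:LR:d} is exactly the transversality already recorded in Corollary~\ref{cor:cubic1}\ref{cor:cubic1:a}, applied to the component $L\subset\DLines$. I expect the only delicate step to be the first one: deducing from Corollary~\ref{cor:cubic2}\ref{cor:cubic2a} that $P$ must sit on a \emph{component of the degenerate conic} $C_{P'}$ rather than at a generic point of $R_{P'}$, and then matching that component with the fixed line of a transposition factor of $\rho$. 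Everything after this identification is orbit--stabilizer bookkeeping.
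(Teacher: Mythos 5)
Your arguments for \ref{cor:LR:b}, \ref{cor:LR:a}, and \ref{cor:LR:c} are correct, and they take a mildly different route from the paper's. The paper works directly with the stabilizer: since $P$ lies on a unique line $L$, the group $\St_{\Sym_5}(P)$ preserves $L$ and is therefore abelian; it contains the transposition fixing $L$ pointwise and the double transposition fixing the cubic pointwise, hence is isomorphic to $\mumu_2\times\mumu_2$ by Table~\ref{table:subgroupsS5}; finally the induced representation on $T_{S,P}$ shows that the remaining involution (a transposition) acts as minus the identity, so $P$ is its isolated fixed point. You instead pin $P$ down as a point of $R_{P'}\cap C_{P'}$ via Corollary~\ref{cor:cubic2}\ref{cor:cubic2a} and identify the components of $C_{P'}$ with the fixed lines of the two transposition factors of the double transposition $\rho$; this yields \ref{cor:LR:a} first and then \ref{cor:LR:b} from Corollary~\ref{cor:isolated:fix-p}. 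Both are sound, and your derivation of isolatedness from $\Fix(\sigma_1,S)^{(1)}\cap\Fix(\sigma_2,S)^{(1)}=\{P'\}$ is a clean substitute for the tangent-space computation.

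Part \ref{cor:LR:d}, however, is not actually proved. What Corollary~\ref{cor:cubic1}\ref{cor:cubic1:a} genuinely establishes (from \eqref{eq:cor:cubic1:L}) is componentwise transversality: each single twisted cubic meets each line with multiplicity at most one. The content of \ref{cor:LR:d} --- and the way it is used later, in the proof of Corollary~\ref{cor:RR}\ref{cor:RR:b} --- is the stronger, divisor-level statement that through a point of $(L\cap\DCurves)\setminus\Points$ there passes only \emph{one} component of $\DCurves$, i.e. the divisor $\DCurves|_L$ is reduced outside $\Points$. Nothing in \eqref{eq:cor:cubic1:L} forbids two inequivalent cubics $R'$, $R''$ (which satisfy $R'\cdot R''=2$) from crossing $L$ at a common point outside $\Points$; the ``moreover'' clause of Corollary~\ref{cor:cubic1} that you invoke is exactly this reducedness assertion, which is not justified at that stage, so quoting it begs the question. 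The paper closes the gap by counting, which is why its proof cites Corollary~\ref{cor:cubic2}\ref{cor:cubic2d}: one has $\DLines\cdot\DCurves=90$, and each of the $15$ points of $\Points$ carries exactly $2\cdot 2=4$ transversal local intersections by Corollary~\ref{cor:cubic2}\ref{cor:cubic2b}, so exactly $30$ units of intersection remain outside $\Points$; by your part \ref{cor:LR:c} they are distributed over exactly $30$ points, forcing local multiplicity $1$ at each of them, i.e. one line and one cubic meeting transversally. This repair uses only what you have already proved, but it is needed: as written, your \ref{cor:LR:d} is asserted rather than established.
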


\begin{proof}
In this case $P$ lies on a unique line, say $L$. Hence the group $\St_{\Sym_5}(P)$ is abelian
and contains a transposition $\sigma_1$.
Further, $P$ lies on a twisted cubic $R$, hence $\St_{\Sym_5}(P)$ contains 
a double transposition $\sigma_2$. 
Then $\St_{\Sym_5}(P)$ is isomorphic to $\mumu_2\times \mumu_2$ by Table~\ref{table:subgroupsS5}.
This proves \ref{cor:LR:b}.
Considering the induced representation on $T_{S,P}$ we obtain that $P\in \Fix(\sigma,S)^{(0)}$
for some element $\sigma\in \St_{\Sym_5}(P)$.
This proves \ref{cor:LR:a}.
The assertions \ref{cor:LR:c} and \ref{cor:LR:d} follow from Corollary~\ref{cor:cubic2}\ref{cor:cubic2d}.
\end{proof}

\begin{scorollary}
\label{cor:RR}
Any two distinct irreducible components $R',\, R''\subset \DCurves$
meet each other transversally. Moreover, the following assertions hold.
\begin{enumerate}

\item \label{cor:RR:a}
If $R'\sim R''$, then $R'\cap R''=\{P\}$, where $P\in \Points$.

\item \label{cor:RR:b}
If $R'\not \sim R''$, then $R'\cap R''=\{P_1,\, P_2\}$, where 
$P_1,\, P_2\notin\DLines$.
\end{enumerate}
\end{scorollary}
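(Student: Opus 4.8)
The plan is to settle the two cases by intersection theory and then to pin down the location of the intersection points and their transversality by means of the fixed-point dictionary built up above.

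First I would record the structural facts that reduce everything to a numerical game. By Lemma~\ref{lemma:S5doubletranspositions} every component of $\DCurves$ has the form $R=\Fix(d,S)^{(1)}$ for a unique double transposition $d$; moreover $R\sim -K_S-C$ lies in the conic pencil $|-K_S-C|$ attached to the $\approx$-class of its index point, and two components are linearly equivalent precisely when they lie in the same class (equivalently, when the associated conics are linearly equivalent). Using $(-K_S)^2=5$, $-K_S\cdot C=2$ and $C^2=0$ for a conic $C$, I compute $R'\cdot R''=(-K_S-C')(-K_S-C'')$, which equals $1$ when $C'\sim C''$ and equals $1+C'\cdot C''=2$ when $C'\not\sim C''$; here $C'\cdot C''=1$ because any two conics on $S$ satisfy $C'\cdot C''\le 1$ with equality to $0$ forcing $C'\sim C''$ (see~\ref{conic:bundles}). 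For assertion~\ref{cor:RR:a} this is essentially the whole proof: within a single class $\{P,P_1,P_2\}\subset\Points$ the three components are $R_P,R_{P_1},R_{P_2}$, and any two of them share exactly one point of $\Points$ (for instance $R_P$ and $R_{P_1}$ both pass through $P_2$). Since $R'\cdot R''=1$, this common point is the only intersection point and the intersection there is transversal, proving~\ref{cor:RR:a}.

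For assertion~\ref{cor:RR:b} I first localize $R'\cap R''$ away from $\DLines$. It avoids $\Points$ because $R'\cap\Points$ and $R''\cap\Points$ are the two $\approx$-classes of the index points (Corollary~\ref{cor:cubic2}\ref{cor:cubic2a}), which are disjoint since $R'\not\sim R''$. It also avoids $(\DLines\cap\DCurves)\setminus\Points$: at such a point the stabilizer is $\mumu_2\times\mumu_2$ by Corollary~\ref{cor:LR}, and this group contains a single double transposition, so exactly one component of $\DCurves$ passes through the point, whence two distinct components cannot meet there. As $R'\cap R''\subset\DCurves$ and $\DLines\cap\DCurves=\Points\cup\big((\DLines\cap\DCurves)\setminus\Points\big)$, this gives $R'\cap R''\subset S\setminus\DLines$, so that the intersection points $P_1,P_2$ indeed lie off $\DLines$.

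The hard part will be transversality (and the resulting count of exactly two points). Write $R'=\Fix(d',S)^{(1)}$ and $R''=\Fix(d'',S)^{(1)}$; since the two classes differ, $d'$ and $d''$ fix different elements of $\{1,\dots,5\}$, hence do not commute, and $\rho:=d'\comp d''\neq\mathrm{id}$. Let $Q\in R'\cap R''$. By the previous paragraph $Q\notin\Points$, so $Q$ is \emph{not} the isolated fixed point of $d'$ or of $d''$ (those lie in $\Points$ by Lemma~\ref{lemma:S5doubletranspositions}\ref{lemma:S5doubletranspositions2}); consequently each of $d',d''$ fixes its curve pointwise near $Q$ and therefore acts on $T_{S,Q}$ as a reflection whose fixed axis is the tangent line to $R'$, respectively $R''$. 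If $R'$ and $R''$ were tangent at $Q$ these axes would coincide, forcing $d'|_{T_{S,Q}}=d''|_{T_{S,Q}}$ and hence $\rho|_{T_{S,Q}}=\mathrm{id}$; but $\St_{\Sym_5}(Q)\hookrightarrow\GL(T_{S,Q})$ is faithful and $\rho\neq\mathrm{id}$, a contradiction. Thus the intersection is transversal, and since $R'\cdot R''=2$ it consists of two distinct points, completing~\ref{cor:RR:b}. I expect the tangent-space argument of this last paragraph to be the only genuinely delicate step; the intersection numbers and the avoidance of $\Points$ and $\DLines$ are bookkeeping on top of Corollaries~\ref{cor:cubic2} and~\ref{cor:LR}.
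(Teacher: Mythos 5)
Your proof is correct, and its skeleton is the same as the paper's very compressed three-line argument: the intersection numbers $R'\cdot R''=1$ resp.\ $2$, the localization of the intersection points, and transversality coming from the fixed-point dictionary. Two comparisons are worth making. First, the step you call the ``only genuinely delicate'' one is precisely the paper's Claim~\ref{claim:transv}: since every component of $\DCurves$ is $\Fix(d,S)^{(1)}$ for a double transposition $d$ (Lemma~\ref{lemma:S5doubletranspositions}), transversality of two distinct components is a special case of that claim and could simply have been cited. Your re-derivation is essentially right, but note one elision: two involutions of the two-dimensional space $T_{S,Q}$ can share their $+1$-eigenline without coinciding, so ``the axes coincide, forcing $d'|_{T_{S,Q}}=d''|_{T_{S,Q}}$'' needs the supplementary remark that $\rho=d'\comp d''$ fixes a tangent vector and has determinant $(-1)\cdot(-1)=1$, hence has both eigenvalues equal to $1$ and, being of finite order, acts trivially on $T_{S,Q}$; faithfulness of the stabilizer action then gives $d'=d''$, a contradiction. (The paper's one-line proof of Claim~\ref{claim:transv} makes exactly the same elision, so this is a quibble, not a gap.) Second, to push $R'\cap R''$ off $\DLines$ in case \ref{cor:RR:b}, the paper invokes Corollary~\ref{cor:LR}\ref{cor:LR:d}, which rests on the count $\DLines\cdot\DCurves=90$; you instead use Corollary~\ref{cor:LR}\ref{cor:LR:b} together with the dictionary: at a point of $(\DLines\cap \DCurves)\setminus\Points$ the stabilizer is a Klein four-group containing a transposition, hence exactly one double transposition, so exactly one component of $\DCurves$ passes through it. Both routes are valid; yours replaces the numerical bookkeeping by group theory, which is a pleasant, slightly more self-contained variant.
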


\begin{proof}
The first assertion follows from Claim~\ref{claim:transv}. Note that 
$R'\cdot R''=1$ if $R'\sim R''$ and $R'\cdot R''=2$ if $R'\not\sim R''$.
Then \ref{cor:RR:a} and \ref{cor:RR:b} follow from Lemma~\ref{lemma:cubic}
and Corollary~\ref{cor:LR}\ref{cor:LR:d}, respectively.
\end{proof}

\begin{lemma}
\label{lemma:order5}
Let $\sigma\in \Sym_5$ be an element of order $5$. Then 
$\Fix(\sigma,S)$ consists of 
two points $P_1,\, P_2$. These points lie in $\Sing(\DCurves)\setminus \Points$ and 
$\St_{\Sym_5}(P_i)\simeq \Dih_5$.
\end{lemma}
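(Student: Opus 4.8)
The plan is to first determine $\Fix(\sigma,S)$, then compute its pointwise stabilizers, and finally place the two fixed points on $\DCurves$ by exploiting the involutions contained in those stabilizers. By Claim~\ref{claim:order3-5} the set $\Fix(\sigma,S)$ is finite. Since $\sigma$ has finite order and is holomorphic, at every isolated fixed point the differential has no eigenvalue $1$, so the local index is $+1$; hence the number of fixed points equals the topological Lefschetz number $\Lef(\sigma,S)=2$ recorded in Corollary~\ref{cor:Lef}. Thus $\Fix(\sigma,S)=\{P_1,P_2\}$.

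\emph{The stabilizers are $\Dih_5$.} Clearly $\langle\sigma\rangle\subseteq\St_{\Sym_5}(P_i)$, and $\St_{\Sym_5}(P_i)$ acts faithfully on the tangent plane $T_{S,P_i}$ (a finite-order element fixing $P_i$ and acting trivially on $T_{S,P_i}$ would be the identity on $S$). So $\St_{\Sym_5}(P_i)$ admits a faithful two-dimensional representation. Among the subgroups of $\Sym_5$ of order divisible by $5$ — namely $\mumu_5$, $\Dih_5$, $\mumu_5\rtimes\mumu_4$, $\Alt_5$, and $\Sym_5$ — only $\mumu_5$ and $\Dih_5$ have this property. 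To decide between them I would use the normalizer $N:=\N_{\Sym_5}(\langle\sigma\rangle)\simeq\mumu_5\rtimes\mumu_4$, which permutes $\Fix(\sigma,S)=\{P_1,P_2\}$. Since neither $\mumu_5$ nor $\Dih_5$ contains an element of order $4$, the order-$4$ element $\tau\in N$ cannot fix $P_i$; hence $\tau$ interchanges $P_1$ and $P_2$, and the involution $\tau^2$, which inverts $\sigma$, fixes each of them. Therefore $\langle\sigma,\tau^2\rangle\simeq\Dih_5\subseteq\St_{\Sym_5}(P_i)$, which forces $\St_{\Sym_5}(P_i)\simeq\Dih_5$. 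In particular $|\St_{\Sym_5}(P_i)|=10$, so $P_i\notin\Points$, whose stabilizers have order $8$ by Lemma~\ref{lemma:S5}\ref{lemma:S5:2}.

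\emph{The crux: $P_i\in\Sing(\DCurves)$.} The five involutions of $\St_{\Sym_5}(P_i)\simeq\Dih_5$ all invert the $5$-cycle $\sigma$, so inside $\Sym_5$ each of them fixes one symbol and reverses the cyclic order of the remaining four, i.e.\ is a double transposition. Fix such an involution $\iota$. By Lemma~\ref{lemma:S5doubletranspositions} its fixed locus is $\Fix(\iota,S)=R_\iota\sqcup\{Q_\iota\}$, where $R_\iota$ is a twisted-cubic component of $\DCurves$ (as in Lemma~\ref{lemma:cubic}) and $Q_\iota\in\Points$. Since $\iota$ fixes $P_i$ and $P_i\notin\Points$, we conclude $P_i\in R_\iota$. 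Running this over the five involutions $\iota_k=\sigma^{k}\iota\sigma^{-k}$ produces five cubics $R_{\iota_k}=\sigma^{k}R_{\iota_0}$ all passing through $P_i$; they are pairwise distinct because $\sigma$ fixes no component of $\DCurves$ (equivalently no point of $\Points$, since $\St_{\Sym_5}(P)$ is a $2$-group for $P\in\Points$). Hence five distinct components of $\DCurves$ meet at $P_i$, so $P_i\in\Sing(\DCurves)\setminus\Points$, as claimed.

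\emph{Main obstacle.} The genuinely delicate point is the last step: it is easy to see that $\langle\sigma\rangle$ sits inside a dihedral stabilizer, but not obvious that the isolated fixed points meet $\DCurves$ at all. A direct enumeration (counting cubics through $P_i$ via the intersection numbers $R_P\cdot R_{P'}$ on $\DCurves$, using $\DCurves\sim-9K_S$) shows only that this number is $0$ or $5$, and does not by itself exclude $0$. The device that resolves this is to replace $\sigma$ by the double transpositions in $\St_{\Sym_5}(P_i)$, whose one-dimensional fixed loci are precisely the twisted cubics of Lemma~\ref{lemma:cubic}; the already-established description of $\Fix$ for double transpositions (Lemma~\ref{lemma:S5doubletranspositions}) then forces $P_i$ onto five components of $\DCurves$ at once.
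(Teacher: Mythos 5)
Your proof is correct and follows essentially the same route as the paper: a Lefschetz count gives the two fixed points, the normalizer $\mumu_5\rtimes\mumu_4$ combined with the requirement of a faithful two-dimensional representation on $T_{S,P_i}$ pins the stabilizer down to $\Dih_5=\N_{\Alt_5}(\langle\sigma\rangle)$, and the five double transpositions in that stabilizer force $P_i$ onto components of $\DCurves$ via Lemma~\ref{lemma:S5doubletranspositions}. The only difference is bookkeeping: you spell out why the five cubics through $P_i$ are pairwise distinct (via $\sigma$-equivariance of $P\mapsto R_P$), a point the paper leaves implicit.
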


\begin{proof}
By Claim~\ref{claim:order3-5} the set $\Fix(\sigma,S)$ is zero-dimensional,
hence by Corollary~\ref{cor:Lef} it consists of 
two points. 
Let $\Fix(\sigma,S)=\{P_1,\, P_2\}$. By 
Lemma~\ref{lemma:S5}\ref{lemma:S5:2}
$P_i\notin \Points$ and by~\ref{lemma:S5}\ref{lemma:S5:1} $P_i$ does not lie on 
a line. Let $H$ be the group generated by $\sigma$. Then 
its normalizer $\N_{\Sym_5}(H)$ is a group of order $20$ isomorphic to 
$\mumu_5 \rtimes \mumu_4$. The set $\{P_1,\, P_2\}$ is 
$\N_{\Sym_5}(H)$-invariant. On the other hand, $\N_{\Sym_5}(H)$ has no 
two-dimensional faithful representation, so it cannot act on the tangent 
space $T_{S, P_i}$. Therefore, $\N_{\Sym_5}(H)\not \subset \St_{\Sym_5}(P_i)$
and $\St_{\Sym_5}(P_i)$ contains the group 
$\N_{\Alt_5}(H)\subset \N_{\Sym_5}(H)$ (see 
Table~\ref{table:subgroupsS5}). Again from Table~\ref{table:subgroupsS5} we see 
that $\St_{\Sym_5}(P_i)=\N_{\Alt_5}(H)$.
Hence $\St_{\Sym_5}(P_i)$ contains $5$ elements $\tau_1,\dots, \tau_5$ of order 
$2$, which are double transpositions. Therefore, $P_i\subset \Fix(\tau_i,S)$.
Since $P_i\notin \Points$, we have $P_i\in \Fix(\tau_j,S)\subset \DCurves$ for all $j=1,\dots,5$ 
(see Lemma~\ref{lemma:S5doubletranspositions}). Thus $P_i\in \Sing(\DCurves)\setminus \Points$.
\end{proof}

\begin{lemma}
Let $\Omega\subset Z_{\mathrm{W}}$ be an orbit of $\Sym_5$ of length $<120$ and let $P\in \Omega$. 
Then one of the following holds:
\begin{enumerate}
\item 
$|\Omega|=20$, $\Omega=\Bs \PPP_{\mathrm{W}}=Z_{\mathrm{W}}\cap \DLines$, and $|\St_{\Sym_5}(P)|=6$;
\item 
$|\Omega|=30$, $\Omega\subset\DCurves\cap Z_{\mathrm{W}}$, $\Omega\cap \DLines=\varnothing$, and $|\St_{\Sym_5}(P)|=4$;
\item 
$|\Omega|=60$, $\Omega\subset\DCurves\cap Z_{\mathrm{W}}$, $\Omega\cap \DLines=\varnothing$, $|\St_{\Sym_5}(P)|=2$;
\end{enumerate}
Moreover, $\Omega_{30}\cup\Omega_{60}=\DCurves\cap Z_{\mathrm{W}}$.
\end{lemma}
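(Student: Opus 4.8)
The plan is to use the smoothness of $Z_{\mathrm{W}}$ to force every point stabilizer to be cyclic, then read off the admissible stabilizers from the fixed‑point lemmas already established, and finally close the bookkeeping with Riemann–Hurwitz. First I would observe that, since $Z_{\mathrm{W}}$ is a smooth curve, the stabilizer $H:=\St_{\Sym_5}(P)$ of any $P\in Z_{\mathrm{W}}$ acts faithfully on the one–dimensional tangent space $T_{Z_{\mathrm{W}},P}$, hence embeds into $\CC^\times$ and is cyclic. Thus $|H|\in\{1,2,3,4,5,6\}$, and an orbit of length $<120$ is precisely one with $|H|>1$. I would rule out $5\mid |H|$ at once: a point fixed by an order‑$5$ element $\sigma$ lies in $\Fix(\sigma,S)=\{P_1,P_2\}$ with $\St_{\Sym_5}(P_i)\simeq\Dih_5$ by Lemma~\ref{lemma:order5}, which is not cyclic; hence $P_1,P_2\notin Z_{\mathrm{W}}$ and $5\nmid|H|$.

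Next I would sort the remaining cases by the conjugacy type of the elements of $H$, using throughout that $Z_{\mathrm{W}}\cap\Points=\varnothing$. If $H$ contains a transposition or a $3$‑cycle, then $P\in\Fix(\,\cdot\,,S)\subset\DLines$ by Lemma~\ref{lemma:S5transpositions} and Table~\ref{tab:Lef}, so $P\in\DLines\cap Z_{\mathrm{W}}=\Bs\PPP_{\mathrm{W}}$; for such a base point I would show $|H|=6$ with $H\simeq\mumu_6$ generated by an element of type $(1,2,3)(4,5)$, by making $\St_{\Sym_5}(L)\simeq\Dih_6$ (Lemma~\ref{lemma:S5}) act on the two transversal points of $Z_{\mathrm{W}}\cap L$: the induced map to $\Sym_2$ must be nontrivial (else $\Dih_6$ would fix a point of a smooth curve), so the point stabilizer is the unique cyclic index‑$2$ subgroup $\mumu_6\subset\Dih_6$. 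If instead $H$ contains a double transposition $\delta$ but no transposition, then $P\in\Fix(\delta,S)^{(1)}=R$, a twisted cubic (Lemma~\ref{lemma:S5doubletranspositions}), so $P\in\DCurves\cap Z_{\mathrm{W}}$ and $P\notin\DLines$; here $H=\langle\delta\rangle$ of order $2$, or $H=\langle\rho\rangle$ with $\rho$ a $4$‑cycle and $\rho^2=\delta$, of order $4$. This exhausts all possibilities and gives $|H|\in\{6,4,2\}$, i.e. orbit lengths $20,30,60$, together with the geometric descriptions (i)–(iii). In particular there is no orbit of length $40$: a stabilizer of order exactly $3$ is impossible, since the fixed points of a $3$‑cycle lie in $\DLines$ and therefore already carry the larger stabilizer $\mumu_6$; and an order‑$6$ stabilizer cannot account for a $\DCurves$‑point, because the unique involution of $\mumu_6$, namely $((1,2,3)(4,5))^3=(4,5)$, is a transposition and not a double transposition.

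For the counting I would apply Riemann–Hurwitz to $Z_{\mathrm{W}}\to Z_{\mathrm{W}}/\Sym_5$. Transitivity on lines together with the $\Dih_6$‑swap of $Z_{\mathrm{W}}\cap L$ shows $\Bs\PPP_{\mathrm{W}}$ is a single orbit of length $20$, so $n_{20}=1$. With $g(Z_{\mathrm{W}})=6$ and stabilizer orders $e_i\in\{6,4,2\}$, the formula
\[
10 = 120\,(2g'-2) + 120\sum (1-1/e_i)
\]
forces $g'=g(Z_{\mathrm{W}}/\Sym_5)=0$ (positive genus makes the right–hand side exceed $10$), and then reduces to $3n_{30}+2n_{60}=5$, whose only nonnegative solution is $n_{30}=n_{60}=1$. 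Hence $\Omega_{30}$ and $\Omega_{60}$ are single orbits with $|\Omega_{30}|+|\Omega_{60}|=90$. Since both lie in $\DCurves\cap Z_{\mathrm{W}}$ and consist of distinct points, while $\DCurves\cdot Z_{\mathrm{W}}=(-9K_S)\cdot(-2K_S)=90$, these $90$ distinct points already saturate the intersection number; therefore the intersection is transversal and $\DCurves\cap Z_{\mathrm{W}}=\Omega_{30}\cup\Omega_{60}$, which is the ``moreover'' assertion.

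I expect the main obstacle to be the case bookkeeping rather than any single computation: specifically, cleanly excluding the order‑$3$ (length‑$40$) stabilizer, and distinguishing the order‑$6$ stabilizer (which contains a transposition, placing its points on $\DLines$) from the double‑transposition case (which places its points on $\DCurves$). Once this separation is correctly pinned down via the fixed‑point lemmas, the Riemann–Hurwitz identity closes the argument neatly by matching $30+60$ with the intersection number $90$.
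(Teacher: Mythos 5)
Your proposal follows essentially the same strategy as the paper's proof: both arguments rest on (a) cyclicity of point stabilizers on the smooth curve $Z_{\mathrm{W}}$, (b) the classification of fixed loci of elements of $\Sym_5$ (Lemmas~\ref{lemma:S5transpositions}, \ref{lemma:S5doubletranspositions}, \ref{lemma:order3}, \ref{lemma:order4}, \ref{lemma:order5}), (c) the Hurwitz formula for $Z_{\mathrm{W}}\to Z_{\mathrm{W}}/\Sym_5$ with $\g(Z_{\mathrm{W}})=6$, and (d) the intersection number $\DCurves\cdot Z_{\mathrm{W}}=90$ to obtain the ``moreover'' statement. The only organizational difference is the order of operations: you pin down the admissible stabilizers and their location ($\mumu_6$ on $\DLines$; $\mumu_2$, $\mumu_4$ on $\DCurves$; orders $3$ and $5$ impossible) geometrically \emph{first}, and then use Hurwitz purely for counting, whereas the paper runs the Hurwitz arithmetic first (seeded by the claim $m_2>0$) and identifies the orbits afterwards. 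Your ordering is, if anything, slightly more robust, since it avoids the paper's somewhat delicate seeding step.

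However, one step of your argument needs repair. You assert, citing Table~\ref{tab:Lef}, that if the stabilizer contains a $3$-cycle then the point lies in $\DLines$, and again in your closing paragraph that ``the fixed points of a $3$-cycle lie in $\DLines$ and therefore already carry the larger stabilizer $\mumu_6$.'' This intermediate claim is false: by Lemma~\ref{lemma:order3}, a $3$-cycle has four fixed points, of which only two lie on a line with stabilizer $\mumu_6$; the other two, $Q_1,Q_2$, lie in $\DCurves\setminus\DLines$ and have stabilizer $\Sym_3$. (The entry ``4 points $\subset\DLines$'' in Table~\ref{tab:Lef} is inconsistent with Lemma~\ref{lemma:order3} and with row \xref{table:S5:20a} of Table~\ref{table:S5}; it is the lemma that is correct, so the table cannot be relied on here.) Your conclusions nevertheless survive, for a reason you should make explicit: $\Sym_3$ is not cyclic, so $Q_1,Q_2$ cannot lie on $Z_{\mathrm{W}}$ at all. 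Hence a point of $Z_{\mathrm{W}}$ whose stabilizer contains a $3$-cycle must be one of the two line points, with stabilizer $\mumu_6$; and an order-$3$ stabilizer is impossible because \emph{all four} fixed points of a $3$-cycle have stabilizer of order $6$ (either $\mumu_6$ or $\Sym_3$). With this substitution --- Lemma~\ref{lemma:order3} plus cyclicity in place of the table --- your proof is complete and correct.
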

\begin{proof}
The stabilizer of any point $P\in \Omega$ is a cyclic group. Hence its order $n_P$ can take the values
$2\le |\St_{\Sym_5}(P)|\le 6$. Let $m_n$ be the number of orbits of length $n$ and let $Z':=Z_{\mathrm{W}}/\Sym_5$. Then by the Hurwitz formula 
\[
\frac{1}{12}=\frac{2\g(Z_{\mathrm{W}}) -2}{|\Sym_5|}=2\g(Z') -2+ \sum_{n=2}^{6} m_n \left(1-\frac {1}{n}\right) 
\]
Note that $m_2>0$ because $Z_{\mathrm{W}}$ meets $\DLines$ and $\DCurves$. 
Then $\g(Z')=0$ and easy computations give us the only possibility: $m_2=m_4=m_6=1$, $m_3=m_5=0$.
Let $\Omega_{20}$, $\Omega_{30}$, and $\Omega_{60}$ be orbits of length $20$, $30$, and $6$, respectively.
Clearly, $\Omega_{20}=Z_{\mathrm{W}}\cap \DLines$ and so $\Omega_{30}\cap \DLines=\Omega_{60}\cap \DLines=\varnothing$. Hence for $P\in \Omega_{30}\cup\Omega_{60}$, the stabilizer of $P$ does not 
contain transpositions by Lemma~\ref{lemma:S5transpositions}. 
Then $\Omega_{30}\cup\Omega_{60}\subset \DCurves$ by Lemma~\ref{lemma:S5doubletranspositions}
and $\Omega_{30}\cup\Omega_{60}=\DCurves\cap Z_{\mathrm{W}}$ because $\DCurves\cap Z_{\mathrm{W}}=90$.
\end{proof}

\begin{lemma}
\label{lemma:order3}
Let $\sigma\in \Sym_5$ be an element of order $3$. Then 
$\Fix(\sigma,S)$ consists of 
four points $P_1,P_2,Q_1,Q_2$ so that
\begin{enumerate}
\item 
\label{lemma:order3a}
$P_1,P_2$ lie on a line, $P_1,P_2\notin \Points$, and 
$\St_{\Sym_5}(P_i)\simeq \mumu_6$;
\item 
\label{lemma:order3b}
$Q_1,Q_2 \in \DCurves$, $Q_1,Q_2 \notin \DLines$, and $\St_{\Sym_5}(Q_i)\simeq \Sym_3$, 
$\St_{\Sym_5}(Q_i)\subset\Alt_5$.
\end{enumerate}
\end{lemma}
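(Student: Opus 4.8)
The plan is to begin with a fixed-point count and then split the four points using the centralizer of $\sigma$. Write $\sigma=(1,2,3)$ and set $\rho_0:=(4,5)$, so that $C_{\Sym_5}(\sigma)=\langle\sigma,\rho_0\rangle\simeq\mumu_6$. By Claim~\ref{claim:order3-5} the set $\Fix(\sigma,S)$ is finite, and since each isolated fixed point of a finite-order holomorphic automorphism has local Lefschetz index $+1$, Corollary~\ref{cor:Lef} (row $(1,2,3)$ of Table~\ref{tab:Lef}) gives $\#\Fix(\sigma,S)=\Lef(\sigma,S)=4$. The element $\tau:=\sigma\rho_0$ has order $6$ and generates $\mumu_6$, so $\Fix(\tau,S)=\Fix(\sigma,S)\cap\Fix(\rho_0,S)$; the row $(1,2,3)(4,5)$ of Table~\ref{tab:Lef} shows this consists of exactly two points, which I name $P_1,P_2$. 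The remaining two fixed points $Q_1,Q_2$ of $\sigma$ are then interchanged by $\rho_0$, since $\rho_0$ preserves $\Fix(\sigma,S)$ and fixes $P_1,P_2$.

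First I would treat $P_1,P_2$. Since $3\mid|\St_{\Sym_5}(P_i)|$ while points of $\Points$ have stabilizer a Sylow $2$-subgroup (Lemma~\ref{lemma:S5}\ref{lemma:S5:2}), we get $P_i\notin\Points$. The transposition $\rho_0$ has $\Fix(\rho_0,S)^{(1)}$ equal to a line $L_0$ (Lemma~\ref{lemma:S5transpositions}), and $\sigma$ commutes with $\rho_0$, hence preserves $L_0$; it acts on $L_0$ by an order-$3$ automorphism (nontrivially, as $\Fix(\sigma,S)$ is finite), with exactly two fixed points. These are fixed by $\rho_0$ as well (which acts trivially on $L_0$), hence by $\mumu_6$, so they are precisely $P_1,P_2\in L_0$. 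As $P_i\notin\Points$, the line $L_0$ is the unique line through $P_i$ and is therefore $\St_{\Sym_5}(P_i)$-invariant; thus the tangent direction of $L_0$ is an invariant line in $T_{S,P_i}$, on which $\St_{\Sym_5}(P_i)$ acts faithfully by upper-triangular matrices. A finite group of upper-triangular complex matrices is abelian, so $\St_{\Sym_5}(P_i)$ is abelian and hence contained in $C_{\Sym_5}(\mumu_6)=\mumu_6$; therefore $\St_{\Sym_5}(P_i)=\mumu_6$, proving (i).

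For $Q_1,Q_2$ the argument is where the real work lies. As before $Q_i\notin\Points$. By the bijection between lines and Sylow $3$-subgroups (Lemma~\ref{lemma:S5}\ref{lemma:S5:1}), $L_0$ is the \emph{only} $\sigma$-invariant line; since any line through $Q_i$ would be unique (because $Q_i\notin\Points$) and hence $\sigma$-invariant, it would equal $L_0$, forcing $Q_i\in\{P_1,P_2\}$, a contradiction. Thus $Q_i\notin\DLines$. The transposition $(1,2)$ commutes with $\rho_0$, so it preserves $\Fix(\sigma,S)\cap\Fix(\rho_0,S)=\{P_1,P_2\}$ and hence $\{Q_1,Q_2\}$; since $\Fix((1,2),S)\subset\DLines$ (Lemma~\ref{lemma:S5transpositions}) and $Q_i\notin\DLines$, the transposition $(1,2)$ must interchange $Q_1,Q_2$. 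Consequently the double transposition $\rho:=(1,2)(4,5)$ fixes each $Q_i$, so $\St_{\Sym_5}(Q_i)\supseteq\langle\sigma,\rho\rangle=:\Sym_3'\simeq\Sym_3$, a subgroup of $\Alt_5$.

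To finish I would bound the stabilizer from above. The group $\St_{\Sym_5}(Q_i)$ acts faithfully on $T_{S,Q_i}\simeq\CC^2$ and does not contain $\rho_0$ (which swaps $Q_1,Q_2$). Using Table~\ref{table:subgroupsS5} one checks that the only subgroups of $\Sym_5$ containing $\Sym_3'$ are $\Sym_3'$, the group $N:=\Sym_3\times\Sym_2=\N_{\Sym_5}(\langle\sigma\rangle)$, $\Alt_5$, and $\Sym_5$; of these, $N$ and $\Sym_5$ contain $\rho_0$, while $\Alt_5$ has no faithful two-dimensional representation. Hence $\St_{\Sym_5}(Q_i)=\Sym_3'\subset\Alt_5$. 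Finally, as $\rho$ is a double transposition, $Q_i\in\Fix(\rho,S)$ lies either on $\Fix(\rho,S)^{(0)}\in\Points$ or on the twisted cubic $\Fix(\rho,S)^{(1)}\subset\DCurves$ (Lemma~\ref{lemma:S5doubletranspositions}); since $Q_i\notin\Points$, it lies on that twisted cubic, so $Q_i\in\DCurves\setminus\DLines$, proving (ii). The main obstacle is precisely this stabilizer computation for $Q_i$: deducing that $(1,2)$ swaps $Q_1,Q_2$ from $Q_i\notin\DLines$, and then eliminating the larger overgroups by invoking the faithful action on $T_{S,Q_i}$ together with $\rho_0\notin\St_{\Sym_5}(Q_i)$.
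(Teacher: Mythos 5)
Your proof is correct, and its skeleton coincides with the paper's: the Lefschetz count giving four fixed points, the splitting $\{P_1,P_2\}\cup\{Q_1,Q_2\}$ governed by the centralizer $\C_{\Sym_5}(\sigma)\simeq\mumu_6$, and the placement of the $Q_i$ on $\DCurves$ via Lemma~\ref{lemma:S5doubletranspositions}. The differences lie in the two stabilizer computations, where your tactics diverge from the paper's and are, if anything, tighter. For $P_i$: the paper rules out $\St_{\Sym_5}(P_i)=\N_{\Sym_5}(\langle\sigma\rangle)$ by observing that the three non-central transpositions would then act on $T_{S,P_i}$ by scalar matrices, whence two of them would coincide; you instead use the invariant tangent direction of the unique line through $P_i$ to force $\St_{\Sym_5}(P_i)$ to be abelian, hence contained in $\C_{\Sym_5}(\mumu_6)=\mumu_6$. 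For $Q_i$: the paper shows $(4,5)\notin\St_{\Sym_5}(Q_i)$ via the Lefschetz count for the order-$6$ class and then asserts that the stabilizer is the index-$2$ subgroup $\N_{\Alt_5}(\langle\sigma\rangle)$ of the normalizer, leaving implicit both why $\St_{\Sym_5}(Q_i)$ lies in the normalizer at all and why the relevant index-$2$ subgroup is the diagonal $\Sym_3$ rather than $\Sym_3(1,2,3)$; you close both gaps, first by using $\Fix((1,2),S)\subset\DLines$ to show that $(1,2)$ swaps $Q_1,Q_2$ (which manufactures the double transposition $(1,2)(4,5)$ in the stabilizer), and then by a complete overgroup check in which $\Alt_5$ and $\Sym_5$ are excluded by the faithful action on $T_{S,Q_i}$. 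The paper's write-up is shorter; yours is self-contained precisely where the paper is terse.
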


\begin{proof}
By Claim~\ref{claim:order3-5} the set $\Fix(\sigma,S)$ is zero-dimensional,
hence by Corollary~\ref{cor:Lef} it consists of 
four points. Clearly, there is a $\sigma$-invariant line $L$ and there are 
two fixed points $P_1,P_2$ lying on $L$. By 
Lemma~\ref{lemma:S5}\ref{lemma:S5:2}\ $P_i\notin \Points$.
Let $L_1,L_2,L_3$ be lines meeting $L$.
They are disjoint and permuted by $\sigma$. On the other hand, any 
line $L'$ different from $L$ meets exactly one of the $L_1,L_2,L_3$.
Hence, $L$ is the only invariant line and $\Fix(\sigma, S)\cap \DLines=\{P_1,P_2\}$.
Let $H$ be the group generated by $\sigma$. Then 
its normalizer $\N_{\Sym_5}(H)$ is a group $\Sym_3\times \Sym_2\simeq \Dih_6$. Its center is 
generated by a transposition $\tau$, $\N_{\Sym_5}(H)=\C_{\Sym_5}(\tau)$, and $\N_{\Sym_5}(H)$ 
contains three more transpositions $\tau_1,\tau_2,\tau_3$.
By Lemma~\ref{lemma:S5}\ref{lemma:S5:1} the line $L$ is 
$\N_{\Sym_5}(H)$-invariant and the action of $\tau$ on $L$ is trivial.
Thus $\St_{\Sym_5}(P_i)\supset \langle \sigma, \tau\rangle\simeq \mumu_6$.
Suppose that $\St_{\Sym_5}(P_i)=\N_{\Sym_5}(H)$.
Since $P_i\notin \Points$, $P_i$ is an isolated fixed point for $\tau_j$, 
$j=1,2,3$. Therefore, the induced action of $\tau_j$ on $T_{S, P_i}$ is given 
by the scalar matrix and so $\tau_1\comp \tau_2^{-1}$ is trivial, a 
contradiction.
Thus $\St_{\Sym_5}(P_i)=\langle \sigma, \tau\rangle$.
This proves \ref{lemma:order3a}.

We have two more $\sigma$-fixed points $Q_1,Q_2\in S\setminus \DLines$.
The set $\{Q_1,\, Q_2\}$ is 
$\N_{\Sym_5}(H)$-invariant. On the other hand, $Q_1,Q_2\notin \Fix(\sigma\comp 
\tau, S)$ because $P_1$ and $P_2$ are the only points fixed by $\sigma\comp 
\tau$ (see Corollary~\ref{cor:Lef}). Thus $\tau \notin \St_{\Sym_5}(Q_i)$. 
Therefore, $\St_{\Sym_5}(Q_i)$ is of index $2$ in $\N_{\Sym_5}(H)$, so $\St_{\Sym_5}(Q_i)= 
\N_{\Alt_5}(H)$. This group contains three 
double transpositions $\tau_1',\tau_2',\tau_3'$ and $Q_i\in \Fix(\tau_j', S)$, 
$j=1,2,3$. Since $Q_i\notin \Points$, we have 
$Q_i\in \Fix(\tau_j',S)^{(1)}$ (see Lemma~\ref{lemma:S5doubletranspositions}).
Hence, $Q_1,Q_2 \in \DCurves$. 
This proves~\ref{lemma:order3b}.
\end{proof}

\begin{scorollary}
\label{cor:order3}
In the above notation $\sigma$ acts in $T_{S,Q_i}$ so that $\det(\sigma)=1$ 
and it acts in $T_{S,P_i}$ so that $\det(\sigma)\neq 1$.
\end{scorollary}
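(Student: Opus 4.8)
The plan is to determine, at each of the four fixed points, the pair of eigenvalues of the differential $d\sigma$ on the tangent plane. Because $\sigma$ has order $3$ and the fixed point is isolated, both eigenvalues are nontrivial cube roots of unity $\omega,\omega^2$; neither can be $1$, since a $+1$-eigendirection would integrate to a fixed curve through the point, contrary to $\dim\Fix(\sigma,S)=0$ (Claim~\ref{claim:order3-5}). Hence at each point there are only two options: $d\sigma$ is scalar, with eigenvalues $(\omega,\omega)$ or $(\omega^2,\omega^2)$ and $\det(d\sigma)=\omega^{\pm1}\neq1$; or $d\sigma$ has eigenvalues $\{\omega,\omega^2\}$ and $\det(d\sigma)=1$. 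Thus the statement reduces to deciding, at $Q_i$ and at $P_i$, whether $d\sigma$ is scalar.

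For the points $Q_i$ I would show $d\sigma$ is \emph{not} scalar. By Lemma~\ref{lemma:order3}\ref{lemma:order3b} the stabiliser $\St_{\Sym_5}(Q_i)\simeq\Sym_3$ lies in $\Alt_5$, so its three involutions are double transpositions $\tau_1',\tau_2',\tau_3'$, and by the proof of that lemma $Q_i\in\Fix(\tau_j',S)^{(1)}$ for each $j$, where $\Fix(\tau_j',S)^{(1)}$ is a twisted cubic (Lemma~\ref{lemma:S5doubletranspositions}). Each $\tau_j'$ acts on $T_{S,Q_i}$ as $\diag(1,-1)$ with $+1$-eigenline tangent to that cubic, and by Claim~\ref{claim:transv} these three tangent lines are pairwise distinct. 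Since $\sigma$ conjugates $\tau_1'\mapsto\tau_2'\mapsto\tau_3'$ cyclically and fixes $Q_i$, the map $d\sigma$ permutes the three distinct lines cyclically, hence cannot be scalar. Therefore $d\sigma$ has eigenvalues $\{\omega,\omega^2\}$ at $Q_i$ and $\det(d\sigma)=1$.

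For the points $P_i$ the analogous trick is unavailable, since $\St_{\Sym_5}(P_i)\simeq\mumu_6$ is abelian and the only fixed curve through $P_i$ is the line $L$ (fixed by the central transposition); this is the main obstacle. Here I would play the two fixed points $P_1,P_2\in L$ against each other using three relations. First, $\sigma$ preserves $L$ (it is the unique $\sigma$-invariant line, by Lemma~\ref{lemma:order3}) and acts on $L\simeq\PP^1$ with fixed points $P_1,P_2$, so the tangent-to-$L$ eigenvalues at $P_1$ and $P_2$ are inverse to each other. Second, $L$ is a $(-1)$-curve with $-K_S\cdot L=1$, so the $\sigma$-equivariant line bundle $\det T_S|_L=\OOO_L(-K_S)\simeq\OOO_L(1)$ has degree $1$; the standard eigenvalue computation for an equivariant bundle on $\PP^1$ at its two fixed points then gives $\det(d\sigma|_{P_1})/\det(d\sigma|_{P_2})$ equal to the tangent-to-$L$ eigenvalue at $P_1$. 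Third, a non-central transposition in $\N_{\Sym_5}(\langle\sigma\rangle)\simeq\Dih_6$ conjugates $\sigma$ to $\sigma^{-1}$ and swaps $P_1\leftrightarrow P_2$, forcing $\det(d\sigma|_{P_1})=\det(d\sigma|_{P_2})^{-1}$.

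Combining these relations forces the tangent-to-$L$ eigenvalue and the normal eigenvalue of $d\sigma$ at $P_i$ to coincide, i.e. $d\sigma$ is scalar at $P_i$, so $\det(d\sigma)=\omega^{\pm1}\neq1$, as claimed. As a cross-check, the resulting data (scalars $(\omega,\omega)$ and $(\omega^2,\omega^2)$ at $P_1,P_2$ and eigenvalues $\{\omega,\omega^2\}$ at $Q_1,Q_2$) satisfies the holomorphic Lefschetz fixed point formula $\sum_P 1/\big((1-\lambda_P)(1-\mu_P)\big)=1$ for the rational surface $S$, which I would invoke to confirm that no other distribution of eigenvalues is consistent.
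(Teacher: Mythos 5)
Your proposal is correct, and for the points $P_i$ it takes a genuinely different route from the paper's. For the $Q_i$ the two arguments are essentially the same: both exploit the faithful action of $\St_{\Sym_5}(Q_i)\simeq\Sym_3$ on $T_{S,Q_i}$, which forbids the order-$3$ element from being scalar (you see this geometrically, via the cyclic permutation of the three distinct tangent lines to the fixed cubics $\Fix(\tau_j',S)^{(1)}$; the paper simply quotes the representation theory of $\Sym_3$), whence the eigenvalues are $\bzeta_3,\bzeta_3^2$ and $\det=1$. For the $P_i$, however, the paper argues globally: since $\Fix(\sigma,S)$ is finite, the quotient map $\pi\colon S\to\bar S:=S/\langle\sigma\rangle$ is \'etale in codimension one, so $K_{\bar S}^2=K_S^2/3=5/3$; the images of $Q_1,Q_2$ are Du Val of type \type{A_2}, and since $P_1,P_2$ are interchanged by $\N_{\Sym_5}(\langle\sigma\rangle)$ their images have the same type, so if they were Du Val then $\bar S$ would be Gorenstein and $K_{\bar S}^2$ an integer --- a contradiction; hence the action at $P_i$ is scalar. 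Your replacement of this step is purely local, and the algebra does close up: writing $\lambda_i$ (tangent to $L$) and $\mu_i$ for the eigenvalues at $P_i$, your three relations give $\lambda_2=\lambda_1^{-1}$, $\mu_1=\lambda_2\mu_2$ and $\mu_1=\mu_2^{-1}$, hence $\lambda_1=\mu_2^2=\mu_2^{-1}=\mu_1$, using that all eigenvalues are nontrivial cube roots of unity; note that your weight-ratio convention is the correct one (for an equivariant line bundle of degree $d$ on $\PP^1$, the ratio of fiber weights at the two fixed points equals the $d$-th power of the tangent weight at the first point --- with the opposite convention the relations would be inconsistent, so this is the one step requiring care). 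The paper's argument is shorter but leans on quotient-singularity theory (Du Val $\Rightarrow$ Gorenstein $\Rightarrow$ $K^2\in\ZZ$); yours is longer but elementary and local, produces the complete eigenvalue data (inverse scalars $\bzeta_3,\bzeta_3^2$ at $P_1,P_2$), and in fact your final cross-check is stronger than a cross-check: given that each $Q_i$ contributes $1/3$, the holomorphic Lefschetz formula $\sum_P \det(\mathrm{id}-d\sigma|_{T_{S,P}})^{-1}=1$ admits the inverse-scalar distribution at $P_1,P_2$ as its \emph{only} solution, so that computation alone could serve as an alternative proof of the $P_i$ half.
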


\begin{proof}
Let $\bar S:=S/\langle\sigma\rangle$ and let $\pi:S\to \bar S$ be the quotient morphism.
In the case \ref {lemma:order3}\ref{lemma:order3b} we have a representation $\Sym_3 \hookrightarrow \GL(T_{S,Q_i})$.
Then it is easy to see that an element $\sigma\in \Sym_3$ of order $3$
must act on $T_{S,Q_i}$ (is a suitable basis) via 
$\left(\begin{smallmatrix}
\bzeta_3&0\\0 &\bzeta_3^2
\end{smallmatrix}\right)$
Then the point $\pi(Q_i)$ is Du Val of type \type{A_2}.
In the case \ref {lemma:order3}\ref{lemma:order3a} the points $P_1$ and $P_2$
are interchanged by $\St_{\Sym_5}(L)=\N_{\Sym_5}(\langle \sigma\rangle)$.
Therefore, the actions of $\sigma$ on $T_{S,P_1}$ and $T_{S,P_2}$ have the same type.
Note that the morphism $\pi$ is \'etale in codimension one, hence $\bar S$ is a del Pezzo surface with $K_{\bar S}^2=K_S^3/3=5/3$.
Then the points $\pi(P_i)$ cannot be Du Val, and so 
act on $T_{S,P_i}$ via a scalar matrix.
\end{proof}

\begin{lemma}
\label{lemma:order4}
Let $\sigma\in \Sym_5$ be an element of order $4$. Then 
the set $\Fix(\sigma,S)$ consists of three points $P_0$, $Q_1$, $Q_2$
so that 
\begin{enumerate}
\item \label{lemma:order4a}
$P_0\in \Points$ and 
$\St_{\Sym_5}(P_0)\simeq \Dih_4$;
\item \label{lemma:order4b}
$Q_1,Q_2 \in \DCurves$, $Q_1,Q_2 \notin \DLines$, and 
$\St_{\Sym_5}(Q_i)=\langle\sigma\rangle$. 
\end{enumerate}
\end{lemma}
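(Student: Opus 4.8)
The plan is to push everything through the square $\sigma^2$, whose fixed locus is already described by Lemma~\ref{lemma:S5doubletranspositions}, and then to study the induced $\langle\sigma\rangle$-action on its one-dimensional part. Up to conjugacy I may take $\sigma=(1,2,3,4)$, so that $\sigma^2=(1,3)(2,4)$ is a double transposition. By Lemma~\ref{lemma:S5doubletranspositions} we have $\Fix(\sigma^2,S)=R\sqcup\{P_0\}$, where $P_0\in\Points$ is the point with $\z(\St_{\Sym_5}(P_0))=\langle\sigma^2\rangle$ (Lemma~\ref{lemma:S5}\ref{lemma:S5:2}) and $R=R_{P_0}$ is the twisted cubic through the two remaining points $P_0',P_0''$ of the equivalence class $\{P_0,P_0',P_0''\}$. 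Since $\sigma$ centralizes $\sigma^2$, it preserves $\Fix(\sigma^2,S)$; as $R$ is a curve it is $\sigma$-invariant and $P_0$ is $\sigma$-fixed. Because $\sigma^2$ is central in the Sylow $2$-subgroup $\St_{\Sym_5}(P_0)$, the latter is contained in, hence equal to, $\C_{\Sym_5}(\sigma^2)$, and therefore contains $\sigma$. This yields assertion~\ref{lemma:order4a}, and $P_0$ is isolated in $\Fix(\sigma,S)$ because it is already isolated in $\Fix(\sigma^2,S)$.

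Next I would analyse $\Fix(\sigma,S)\cap R$, which is where the only real subtlety lies: I must show that $\sigma$ does \emph{not} fix $R$ pointwise, i.e. $\dim\Fix(\sigma,S)=0$. For this I would observe that $R\cap\Points=\{P_0',P_0''\}$ (Corollary~\ref{cor:cubic2}\ref{cor:cubic2a}) corresponds under Lemma~\ref{lemma:S5}\ref{lemma:S5:2} to the two double transpositions $(1,2)(3,4)$ and $(1,4)(2,3)$, and conjugation by $\sigma=(1,2,3,4)$ interchanges these two. Hence $\sigma$ swaps $P_0'$ and $P_0''$, so $\sigma|_R$ is a nontrivial automorphism of $R\simeq\PP^1$; since $\sigma^2|_R=\mathrm{id}$ it is an involution and has exactly two fixed points $Q_1,Q_2$. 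Thus $\Fix(\sigma,S)=\{P_0,Q_1,Q_2\}$, three points in agreement with $\Lef(\sigma,S)=3$ from Corollary~\ref{cor:Lef}. As $R\subset\DCurves$, we get $Q_1,Q_2\in\DCurves$.

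To complete~\ref{lemma:order4b} I would show $Q_i\notin\DLines$ and then compute the stabilizer. By Corollary~\ref{cor:cubic2}\ref{cor:cubic2a} one has $R\cap\DLines=\{P_0',P_0''\}\cup(R\cap C_{P_0})$, the last set consisting of one point on each of the two lines $L_1,L_2$ comprising $C_{P_0}$. By Lemma~\ref{lemma:S5doubletranspositions} these lines are $\Fix((1,3),S)^{(1)}$ and $\Fix((2,4),S)^{(1)}$, and since $\sigma$ conjugates $(1,3)$ to $(2,4)$ it interchanges $L_1$ and $L_2$, hence also the two points of $R\cap C_{P_0}$. Therefore $\sigma$ fixes none of the four points of $R\cap\DLines$, so $Q_1,Q_2\notin\DLines$; in particular $Q_i\notin\Points$. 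Finally, as $Q_i\notin\DLines$, no transposition fixes $Q_i$, because $\Fix(\tau,S)\subset\DLines$ for any transposition $\tau$ (Lemma~\ref{lemma:S5transpositions}). Thus $\St_{\Sym_5}(Q_i)$ contains $\langle\sigma\rangle\simeq\mumu_4$, acts faithfully on $T_{S,Q_i}$, and contains no transposition; the only subgroups of $\Sym_5$ containing $\mumu_4$ and admitting a faithful two-dimensional representation are $\mumu_4$ and the Sylow $2$-subgroups $\simeq\Dih_4$ (see Table~\ref{table:subgroupsS5}), and the latter contain transpositions. Hence $\St_{\Sym_5}(Q_i)=\langle\sigma\rangle$, proving~\ref{lemma:order4b}. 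The hard part is the nontriviality of $\sigma|_R$ (equivalently $\dim\Fix(\sigma,S)=0$); once that is settled via the swap of $P_0',P_0''$, everything else is a stabilizer count built on the already established structure of $\Fix$ of involutions.
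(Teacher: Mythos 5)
Your proof is correct, and while it rests on the same pillars as the paper's argument (Lemma~\ref{lemma:S5}, Lemmas~\ref{lemma:S5transpositions} and~\ref{lemma:S5doubletranspositions}, and the subgroup list of Table~\ref{table:subgroupsS5}), it is organized along a genuinely different line and is in places more complete. The paper argues pointwise: for an arbitrary $\sigma$-fixed point $P$, either $P\in\Points$, and then $\St_{\Sym_5}(P)\simeq\Dih_4$ by Lemma~\ref{lemma:S5}\ref{lemma:S5:2}; or $P\notin\Points$, and then the absence of $\sigma$-invariant lines (Lemma~\ref{lemma:S5}\ref{lemma:S5:1}) forces $P\notin\DLines$, so $P\in\Fix(\sigma^2,S)^{(1)}\subset\DCurves$, and the absence of transpositions in $\St_{\Sym_5}(P)$ gives $\St_{\Sym_5}(P)=\langle\sigma\rangle$. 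The count of three points, i.e. the zero-dimensionality of $\Fix(\sigma,S)$, is not re-derived there but delegated to Corollary~\ref{cor:Lef}. You instead compute the $\sigma$-action on $\Fix(\sigma^2,S)=R\sqcup\{P_0\}$ directly: the equivariant swap of $P_0'$ and $P_0''$ shows that $\sigma|_R$ is a nontrivial involution of $\PP^1$, which yields the exact count $1+2$ without the Lefschetz formula, and the swap of the four points of $R\cap\DLines$ replaces the paper's invariant-line argument for $Q_i\notin\DLines$. This buys two things the paper leaves implicit: first, that $\sigma$ does not fix $R$ pointwise — a point that the Lefschetz number alone cannot settle, since $\chit\big(R\sqcup\{P_0\}\big)=3$ as well; second, the exclusion of $\mumu_5\rtimes\mumu_4$ from $\St_{\Sym_5}(Q_i)$, which your faithful two-dimensional representation argument handles explicitly and which the paper's ``no transpositions, hence $\langle\sigma\rangle$'' passes over. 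The one compression on your side is the identification of $P_0',P_0''$ with the double transpositions $(1,2)(3,4)$ and $(1,4)(2,3)$: this uses the fact that an equivalence class of $\approx$ corresponds to the three involutions of a Klein four-group, which indeed follows from the $\Sym_4$-equivariance of the conic bundles in~\ref{conic:bundles} combined with the equivariance of the bijection of Lemma~\ref{lemma:S5}\ref{lemma:S5:2}; alternatively, you could bypass it entirely by noting that if $\sigma$ fixed $P_0'$, then $\sigma\in\St_{\Sym_5}(P_0')$ would force $\sigma^2\in\z\big(\St_{\Sym_5}(P_0')\big)$, whence $P_0'=P_0$ by the bijectivity in Lemma~\ref{lemma:S5}\ref{lemma:S5:2}, a contradiction.
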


\begin{proof}
Let $P$ be a $\sigma$-fixed point.
If $P\in \Points$, then $\St_{\Sym_5}(P)\simeq 
\Dih_4$ by Lemma~\ref{lemma:S5}\ref{lemma:S5:2}. So we assume that $P\notin 
\Points$. Since by Lemma~\ref{lemma:S5}\ref{lemma:S5:1} there are no 
$\sigma$-invariant lines, $P\notin \DLines$. Hence $P\in \Fix(\sigma^2,S)^{(1)}$ 
and so $P\in \DCurves$ by Lemma~\ref{lemma:S5doubletranspositions}. By 
Lemma~\ref{lemma:S5transpositions} the group $\St_{\Sym_5}(P)$ 
does not contain any transpositions, hence 
$\St_{\Sym_5}(P)=\langle\sigma\rangle$. 
\end{proof}

\begin{proposition}
\label{prop:S_5:fixed-p}
Any non-regular orbit $\Omega\subset S$ of $\Sym_5$ is contained in $\DLines\cup \DCurves$.
Let $\Omega\subset S$ be an orbit of length $<60$ and let $P\in \Omega$. 
Then $\Omega$ is described in Table~\xref{table:S5}.
\begin{table}[h]
\renewcommand{\arraystretch}{1.1}
\setlength{\tabcolsep}{0.3em}
\begin{tabularx}{0.9\textwidth}{|l|l|X|l|l|l|}
\hline
&$|\Omega|$&{$\Omega$}&$\St_{\Sym_5}(P)$,\ $P\in\Omega$& $|\St_{\Sym_5}(P)|$ &\rm $\Alt_5$-orbits 
\\\hline
\nr
\label{table:S5:15}
&$15$& $\Points$& $\Dih_4$ &$8$&$\Omega$
\\
\nr
\label{table:S5:20}
&$20$& $\DLines\cap Z_{\mathrm{W}}=\Bs \PPP_{\mathrm{W}}$
&$\mumu_6$&$6$&$\Omega$
\\
\nr
\label{table:S5:20a}
&$20$& $\Omega\subset \Sing(\DCurves)\setminus \Points$, $\Omega\cap \DLines=\varnothing$& $\Sym_3$&$6$&$\Omega_1\cup 
\Omega_2$
\\
\nr
\label{table:S5:30}
&$30$& $(\DLines\cap \DCurves)\setminus \Points$& $\Sym_2\times\Sym_2$&$4$&$\Omega$
\\
\nr
\label{table:S5:30a}
&$30$& $\Omega\subset\DCurves\cap Z_{\mathrm{W}}$, $\Omega\cap \DLines=\varnothing$ &$\mumu_4$ &$4$&$\Omega$
\\
\nr
\label{table:S5:12}
&$12$& $\Omega\subset \Sing(\DCurves)\setminus \Points$, $\Omega\cap \DLines=\varnothing$& $\Dih_5$&$10$&$\Omega_1\cup \Omega_2$
\\\hline
\end{tabularx}
\caption{Non-regular orbits of $\Aut(S)$} 
\label{table:S5}
\end{table}
\end{proposition}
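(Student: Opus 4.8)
The plan is to reduce everything to the element-by-element fixed-locus analysis already carried out. First I would dispose of the opening assertion. A point lies in a non-regular orbit exactly when it is fixed by some non-identity $\sigma\in\Sym_5$, so it suffices to verify $\Fix(\sigma,S)\subset\DLines\cup\DCurves$ for every $\sigma\neq 1$. Running through the conjugacy classes: for a transposition this is Lemma~\ref{lemma:S5transpositions} (the whole fixed locus lies on $\DLines$); for a double transposition it is Lemma~\ref{lemma:S5doubletranspositions} (the one-dimensional part is a twisted cubic $R_P\subset\DCurves$ and the isolated point lies in $\Points\subset\DLines$); for orders $3$, $4$, $5$ it is Lemmas~\ref{lemma:order3}, \ref{lemma:order4}, \ref{lemma:order5}; and an element $\sigma$ of order $6$ is handled by passing to $\sigma^2$, since $\Fix(\sigma,S)\subset\Fix(\sigma^2,S)$ and $\sigma^2$ has order $3$. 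Thus $\bigcup_{\sigma\neq 1}\Fix(\sigma,S)\subset\DLines\cup\DCurves$, which gives the first claim.

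For the classification, fix $P\in\Omega$ with $|\Omega|<60$ and put $H:=\St_{\Sym_5}(P)$, so $|H|=120/|\Omega|>2$. The structural input is that a finite group fixing a smooth point acts faithfully on the tangent space, so $H\hookrightarrow\GL(T_{S,P})=\GL_2(\CC)$ and $H$ admits a faithful two-dimensional representation. Consulting the subgroup structure of $\Sym_5$ (Table~\ref{table:subgroupsS5}), the subgroups of order $\ge 3$ with this property are exactly $\mumu_3$, $\mumu_4$, $\mumu_5$, $\mumu_6$, $\mumu_2\times\mumu_2$, $\Sym_3$, $\Dih_4$, $\Dih_5$, and $\Dih_6$; the remaining candidates $\Alt_4$, $\Sym_4$, $\mumu_5\rtimes\mumu_4$, $\Alt_5$ carry no faithful two-dimensional representation and are excluded outright.

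Next I would treat each surviving type. For $H=\mumu_3$, $H=\mumu_5$, and $H=\Dih_6=\N_{\Sym_5}(\mumu_3)$ I pick an element of order $3$, $5$, $3$ respectively; Lemmas~\ref{lemma:order3} and~\ref{lemma:order5} show every point it fixes has a strictly larger stabilizer ($\mumu_6$ or $\Sym_3$, resp. $\Dih_5$, resp. $\mumu_6$ or $\Sym_3$), so none occurs as a full stabilizer, eliminating orbits of length $40$, $24$, and $10$. For $H=\Dih_4$ the point lies in $\Points$ by Lemma~\ref{lemma:S5}; for $H=\Dih_5$ it lies in $\Sing(\DCurves)\setminus\Points$ by Lemma~\ref{lemma:order5}; for $H=\Sym_3$ and $H=\mumu_6$ it is the point $Q_i$, resp. $P_i$, of Lemma~\ref{lemma:order3}; and for $H=\mumu_4$ it is the point $Q_i$ of Lemma~\ref{lemma:order4}. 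The delicate case is $H=\mumu_2\times\mumu_2$. When $H$ is generated by a transposition and a commuting double transposition, Corollaries~\ref{cor:isolated:fix-p} and~\ref{cor:LR} place $P$ in $(\DLines\cap\DCurves)\setminus\Points$. The other type, the Klein four-group lying in $\Alt_5$ (all three involutions double transpositions), must be ruled out: if $P\notin\Points$ then each involution acts on $T_{S,P}$ as a reflection, so the product of two of them acts as $-\mathrm{Id}$; but an involution acting as $-\mathrm{Id}$ has $P$ as an isolated fixed point, forcing $P\in\Points$ by Lemma~\ref{lemma:S5doubletranspositions}, a contradiction.

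Finally I would read off the table. The length is $120/|H|$, and the $\Sym_5$-orbit is a single $\Alt_5$-orbit precisely when $H\not\subset\Alt_5$ (that is, $H$ contains an odd permutation) and splits into two orbits $\Omega_1\cup\Omega_2$ when $H\subset\Alt_5$; this separates the $\Dih_4$, $\mumu_6$, $\mumu_2\times\mumu_2$, $\mumu_4$ rows (single $\Omega$) from the $\Sym_3$, $\Dih_5$ rows. To pin down the two orbits on the Wiman curve, I use the earlier lemma computing the $\Sym_5$-orbits on $Z_{\mathrm{W}}$: its point stabilizers, being stabilizers on a smooth curve, are cyclic, so the length-$20$ orbit $\Bs\PPP_{\mathrm{W}}=\DLines\cap Z_{\mathrm{W}}$ has $H\simeq\mumu_6$ and the length-$30$ orbit in $\DCurves\cap Z_{\mathrm{W}}$ has $H\simeq\mumu_4$, matching the $P_i$ and $Q_i$ above. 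The main obstacle is exactly the bookkeeping in the third paragraph: excluding the subgroups that possess a faithful planar representation yet stabilize no point (the cyclic $\mumu_3,\mumu_5$, the normalizer $\Dih_6$, and the type-B Klein four-group), where the $-\mathrm{Id}$ tangent-space argument is the crucial and least routine ingredient.
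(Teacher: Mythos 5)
Your proposal is correct and follows essentially the same route as the paper: both reduce to the faithful action of $H=\St_{\Sym_5}(P)$ on the tangent space $T_{S,P}$ and then invoke the fixed-locus lemmas for transpositions, double transpositions, and elements of order $3$, $4$, $5$ (together with the Wiman-curve orbit lemma and Corollary~\ref{cor:isolated:fix-p}). Your write-up is in places more explicit than the paper's terse case division --- e.g.\ the $-\mathrm{Id}$ tangent-space argument excluding the Klein four-group inside $\Alt_5$, and the criterion $H\subset\Alt_5$ governing the splitting into two $\Alt_5$-orbits --- but these are refinements of the same argument, not a different one.
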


\begin{proof}
Let $H:=\St_{\Sym_5}(P)$.
Pick a point $P\in \Omega$. 
If $|H|=2$, then $P\in \DLines\cup \DCurves$
by Lemmas~\ref{lemma:S5transpositions} and~\ref{lemma:S5doubletranspositions}.
Thus we assume that $|H|\ge 3$. We consider the possibilities for $H$
according to Table~\ref{table:subgroupsS5}.

\subsubsection*{Case: $H$ contains an element of order $3$.}
By Lemma~\ref{lemma:order3} one of the following holds:
$P\in \DLines\setminus \Points$ and 
$\St_{\Sym_5}(P)\simeq \mumu_6$ or
$P\in \DCurves\setminus \DLines$ and $\St_{\Sym_5}(P)= 
\N_{\Alt_5}(H)$. 

\subsubsection*{Case: $H\simeq \mumu_2\times \mumu_2$.}
Considering the representation of $H$ in the tangent space $T_{S,P}$ we see 
that $P$ is not an isolated fixed point for some element $\sigma\in H$.
By Corollary ~\ref{cor:isolated:fix-p}
$P\in (\DLines\cap \DCurves)\setminus \Points$
and $\St_{\Sym_5}(P)$
is the centralizer of a double transposition.

\subsubsection*{Case: $H$ contains an element of order $4$.}
By Lemma~\ref{lemma:order4} one of the following holds:
$P\in \Points$ or
$P \in \DCurves\setminus \DLines$ and 
$\St_{\Sym_5}(Q_i)=\langle\sigma\rangle$. 

\subsubsection*{Case: $H$ contains an element of order $5$.}
By Lemma~\ref{lemma:order5} $P\in \DCurves\setminus \DLines$ and 
$\St_{\Sym_5}(P_i)=\N_{\Alt_5}(H)$.
\end{proof}

\begin{proof}[Proof of Theorem~\xref{thm:DP5}]
\ref{thm:DP5a} 
Denote $\bar S:=S/\Sym_5$ and let $\pi: S\to \bar S$ be the quotient morphism.
First, we describe the singularities of $\bar S$. Pick a point $P\in S$ 
and let $\bar P=\pi(P)$. The type of the singularity 
$\bar P\in \bar S$ is determined by the induced action of the group $H:=\St_{\Sym_5}(P)$ on the 
tangent space $T_{S,P}$. We may assume that $H$ is not trivial.
Then $P\in \DLines\cup \DCurves$ and so $H$ contains an element $\sigma$
that acts as reflection on $T_{S,P}$.
If $H$ is of order $2$, then $\sigma$ generates $\St_{\Sym_5}(P)$
and $\bar P\in \bar S$ is a smooth point by the Chevalley–Shephard–Todd theorem.
Thus we may assume that $|H|>2$ and $P$ is contained in one of the orbits 
described in Proposition~\ref{prop:S_5:fixed-p}. In the cases \ref{table:S5:15}, \ref{table:S5:20a}, \ref{table:S5:30}, 
and \ref{table:S5:12} the group $H$ is generated by reflections in $\GL(T_{S,P})$.
Hence, $\bar P\in \bar S$ is a smooth again by the Chevalley–Shephard–Todd theorem.
In the case \ref{table:S5:30a} 
the group $H$ is cyclic and and $H\ni\sigma$, hence its generator acts on $T_{S,P}$ via 
$\left(\begin{smallmatrix}
\bi&0\\0 &-1
\end{smallmatrix}\right)$. Then the point $\bar P$ is Du Val of type \type{A_1}.
Finally in the case \ref{table:S5:20} by Corollary~\ref{cor:order3} the generator $H$ 
acts on $T_{S,P}$ via
$\left(\begin{smallmatrix}
\bzeta_6&0\\0 &\bzeta_6^4
\end{smallmatrix}\right)$, hence the point $\bar P$ is Du Val of type \type{A_2}.

We have shown that the singularities of $\bar S$ are Du Val of types \type{A_1} and \type{A_2}.
Furthermore, $\rk \Pic(\bar S)=\rk\Pic(S)^{\Sym_5}=1$. Since $\bar S$ is rational, 
$-K_{\bar S}$ is ample, i.e. $\bar S$ is a del Pezzo surface with $\rk \Pic(\bar S)=1$ 
two singularities of types \type{A_1} and \type{A_2}. 
Such a surface is unique up to isomorphism and isomorphic to $\PP(1,2,3)$
(see e.g. \cite{Miyanishi-Zhang:88}). 

\ref{thm:DP5b} 
Denote $S':=S/\Alt_5$. There is the decomposition
\[
\pi: S \overset{\pi'}\longrightarrow S' \overset{\bar\pi}\longrightarrow \bar S
\]
where $\pi'$ is branched over $\pi'(\DCurves)$ 
and $\bar \pi$ is a double cover branched over $\pi(\DLines)$.
By the Hurwitz formula we have $K_S=\pi'^* K_{S'}+\DCurves$ and so 
\[\textstyle
K_{S'}^2=\frac 1{60} (K_S- \DCurves)^2 =\frac 1{60}(10 K_S)^2=\frac{25}{3}.
\]
To we describe the singularities of $S'$ we look at the points with non-regular orbits.
As in the proof of Theorem~\xref{thm:DP5}, using Table~\xref{table:S5} we obtain that 
the only singularity of $S'$ is the image of the orbit described in \ref{table:S5:20}
and this singularity is of type $\frac13(1,1)$. Thus for the minimal resolution $\tilde S'$
of $S'$ we have $K_{\tilde S'}^2= 25/3-1/3=8$ and $\uprho(\tilde S')=2$. 
Therefore, $\tilde S'$ is the Hirzebruch surface $\FF_3$ and $S'\simeq \PP(1,1,3)$.

\ref{thm:DP5c} 
Let $\bar\DLines:=\pi(\DLines)$. Then $\pi^*\bar \DLines\sim 2 \DLines\sim 4(-K_S)$.
Since $\pi^* K_{\bar S}\sim 12 K_S$, we obtain $\bar \DLines \qq \frac 13 (-K_{\bar S})$.
Thus $\bar \DLines$ is a conic on $\bar S$. 
\end{proof}

\section{The quotient $\PP(V_6)/\Alt_5$}
\label{sect:V6}
In this section we consider the representation $V_6$ of $\tilde \Alt_5$.
\begin{theorem}
\label{prop:V5-V6}
The variety $\PP(V_6)/\Alt_5$ is rational.
\end{theorem}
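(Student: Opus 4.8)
The plan is to realize $\PP(V_6)/\Alt_5$ as birational to a $\PP^2$-bundle over $\PP(V_4)/\Alt_5$ and then invoke Theorem~\ref{th-SL25}. The starting point is that $V_6=\Sy^5(V_2)$, so $\PP(V_6)$ is the space of binary quintic forms; the central element $-1\in\tilde\Alt_5$ acts on $\Sy^n(V_2)$ by $(-1)^n$, hence trivially on $\PP(\Sy^5 V_2)$ and on $\PP(\Sy^3 V_2)$, so both actions genuinely descend to $\Alt_5$.

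First I would set up Sylvester's apolarity map. For a binary quintic $f$ the catalecticant contraction $\Sy^3 V_2^\vee\to \Sy^2 V_2$, $g\mapsto g\lrcorner f$, goes from a $4$-dimensional to a $3$-dimensional space, so for general $f$ its kernel is a line spanned by a unique apolar cubic $g_f$. Sending $[f]\mapsto[g_f]$ defines a rational map $\Psi\colon \PP(V_6)\dashrightarrow \PP(\Sy^3 V_2^\vee)$. Since there is a unique faithful four-dimensional irreducible and $\Sy^3 V_2^\vee\simeq V_4$ as $\tilde\Alt_5$-modules, the target is $\PP(V_4)$, and because the construction is natural, $\Psi$ is $\Alt_5$-equivariant. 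By the Apolarity Lemma the fiber of $\Psi$ over a general cubic $g$ with distinct roots $p_1,p_2,p_3\in\PP(V_2)=\PP^1$ is the plane $\PP\langle \ell_{p_1}^5,\ell_{p_2}^5,\ell_{p_3}^5\rangle\simeq\PP^2$ of quintics apolar to $g$; in particular $\Psi$ is dominant with two-dimensional fibers.

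Next I would upgrade this to a bundle statement. The subspaces $W_g:=\langle \ell_{p_1}^5,\ell_{p_2}^5,\ell_{p_3}^5\rangle\subset V_6$ form an $\Alt_5$-linearized rank-$3$ vector bundle $\mathcal W$ over the open locus $U\subset\PP(V_4)$ of cubics with distinct roots, and the incidence variety $\PP(\mathcal W)=\{(g,f):f\in\PP(W_g)\}$ maps to $\PP(V_6)$ by $(g,f)\mapsto f$. Both sides have dimension $5$, and a general $f$ has a single apolar cubic $g_f$ by Sylvester, so this map is birational; thus $\PP(V_6)$ is $\Alt_5$-equivariantly birational to the projectivization of the equivariant bundle $\mathcal W$ over $\PP(V_4)$.

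Finally I would pass to quotients. The $\Alt_5$-action on $\PP(V_4)$ is generically free, so on a suitable invariant open set $U\to U/\Alt_5$ is an \'etale torsor and the linearized bundle $\mathcal W$ descends to an honest rank-$3$ bundle $\bar{\mathcal W}$ on $U/\Alt_5$, with $\PP(\mathcal W)/\Alt_5\simeq\PP(\bar{\mathcal W})$ (this is the no-name lemma). Hence $\PP(V_6)/\Alt_5$ is birational to a $\PP^2$-bundle over $\PP(V_4)/\Alt_5$, which is rational by Theorem~\ref{th-SL25}; since the projectivization of a vector bundle is Zariski-locally trivial, a projective bundle over a rational base is rational, and the theorem follows. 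The main technical point to nail down is this descent of $\mathcal W$ to the quotient, i.e. checking that the fibration is a genuine projective bundle with no Brauer obstruction; here that is automatic because $\mathcal W$ carries a canonical equivariant structure coming from the functoriality of apolarity. The remaining ingredients are the classical facts on Sylvester's apolarity for binary quintics and the Apolarity Lemma, together with generic freeness of $\Alt_5$ on $\PP(V_4)$.
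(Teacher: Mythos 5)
Your construction is geometrically the same as the paper's: your Sylvester map $[f]\mapsto[g_f]$ is exactly the map $\varphi\comp\chi\comp\sigma^{-1}\colon\PP^5\dashrightarrow\PP^3$ of Lemma~\ref{lemma:Link:C5} (in coordinates both are given by the $3\times3$ catalecticant minors, and the fibers are the trisecant planes of the rational normal quintic, i.e. your planes spanned by $\ell_{p_1}^5,\ell_{p_2}^5,\ell_{p_3}^5$), and finishing via the rationality of $\PP(V_4)/\Alt_5$ (Theorem~\ref{th-SL25}) is also what the paper does. Replacing the Sarkisov link by the incidence correspondence $\PP(\mathcal W)$ is a harmless shortcut for a purely birational statement.

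The gap is in the descent step, and it is precisely the point the paper takes care of. The ``canonical equivariant structure coming from the functoriality of apolarity'' on $\mathcal W\subset V_6\otimes\OOO_U$ is a $\tilde\Alt_5$-linearization, \emph{not} an $\Alt_5$-linearization: as you note yourself, the central involution of $\tilde\Alt_5$ acts by $-1$ on $V_6=\Sy^5(V_2)$, hence it acts by $-1$ on every fiber of $\mathcal W$ while acting trivially on the base $U\subset\PP(V_4)$. A bundle with such a linearization does not descend along $U\to U/\Alt_5$ (no action of $\Alt_5$ on its total space is given), so the no-name lemma does not apply to $\mathcal W$ as stated, and the triviality of the Brauer obstruction for the fibration $\PP(\mathcal W)/\Alt_5\to U/\Alt_5$ is not automatic; deciding it is the crux of the whole proof, and the paper does this by showing that the anticanonical class of $\tilde\PP^5_+/\Alt_5$ is divisible by $3$, mirroring the argument of Theorem~\ref{thm:C4/A5}. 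Your argument can be repaired in at least two ways: (a) twist --- the center also acts by $-1$ on $\OOO_{\PP(V_4)}(1)$ with its natural linearization, so $\mathcal W\otimes\OOO_{\PP(V_4)}(1)\vert_U$ is an honestly $\Alt_5$-linearized bundle with the same projectivization, and descent applies to it; or (b) observe that the Brauer class of the fibration is killed by $2$ (it is induced from the $\mumu_2$-central extension $\tilde\Alt_5\to\Alt_5$) and by $3$ (the generic fiber is a two-dimensional Severi--Brauer variety), hence vanishes. With either insertion your proof is correct; as written, the key step fails.
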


To prove Theorem \ref{prop:V5-V6} we need some preparations.
\begin{notation}
Let $C=\upsilon_d(\PP^1)\subset \PP^d$ is a rational normal curve of degree $d\ge 3$
and let $Z\subset \PP^d$
be the secant variety of $C$, that is, the union of all bisecant (and tangent) lines to $C$.

It is known that $Z$ is normal and has only log terminal $\QQ$-factorial singularities.
Moreover, $\deg Z=(d-1)(d-2)/2$, the divisor $-K_Z$ is ample (i.e. $Z$ is a Fano variety), and $-K_Z\qq \frac 4{d-2} H|_Z$ (see \cite{Prokhorov-1993b}, \cite{ein_singularities_2020}). 

Let $\sigma: \tilde \PP^d\to \PP^d$ be the blowup of $C$, let $E$ be the $\sigma$-exceptional divisor, and let $H^*:=\sigma^*H$,
where $H$ is the hyperplane class on~$\PP^d$.
Let $\tilde Z\subset \tilde{\PP^d}$ be the proper transform of~$Z$.
The variety $\tilde Z$ is 
smooth and has $\PP^1$-bundle structure $\pi: \tilde Z \to\Sy^2(C)=\PP^2$ described in \cite[\S~4]{Umemura-1988}.
In fact, the fibers of $\pi$ are proper transforms of bisecant lines to $C$.
We have the following relations in the Chow ring of $\tilde \PP^d$: 
\begin{equation}
\label{eq:Chow}
(H^{*})^d=1,\quad H^*\cdot E^{d-1}=(-1)^d d,\quad E^d=(-1)^d(d-1)(d+2),
\end{equation}
and $(H^*)^{k}\cdot E^{d-k}=0$ for $2\le k<d$.
The relations follow from the projection formula, 
general properties of blowups, and the fact that 
$\deg \NNN_{C/\PP_d}=-K_{\PP^d}\cdot C-2=(d-1)(d+2)$.
\end{notation}

\begin{lemma}
\label{lemma:Link:C5}
In the above notation let $d=5$.
Then there exists 
the following Sarkisov link 
\begin{equation}
\label{eq:Link:C5}
\vcenter{
\xymatrix{
&\tilde{\PP}^5\ar[dl]_{\sigma}\ar@{-->}[rr]^{\chi}\ar[dr]^{\bar\varphi} && \tilde{\PP}^5_+\ar[dr]^{\varphi}\ar[dl]_{\bar\varphi_+}
\\
\PP^5& & \bar{Y} && \PP^3
} }
\end{equation} 
where
$\bar\varphi$ and $\bar\varphi_+$ are flopping contractions,
$\chi$ is the flop in the proper transform $\tilde Z\subset \tilde{\PP}^5$ of $Z$,
and $\varphi$ is a $\PP^2$-bundle.
\end{lemma}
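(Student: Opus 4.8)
The plan is to run the two-ray game on $\tilde{\PP}^5$. As $\sigma$ is the blowup of a smooth curve, $\tilde{\PP}^5$ is smooth with $\Pic(\tilde{\PP}^5)=\ZZ H^*\oplus\ZZ E$, so $\uprho(\tilde{\PP}^5)=2$ and the nef, movable and pseudo-effective cones are all two-dimensional; the whole birational geometry is then pinned down by locating the extremal rays, which I would do with test curves. Writing $K_{\tilde{\PP}^5}=-6H^*+3E$ and using \eqref{eq:Chow}, the two relevant curve classes are a line $\lambda$ in a fibre of $E\to C$, with $H^*\cdot\lambda=0$ and $E\cdot\lambda=-1$, and the proper transform $\tilde\ell$ of a bisecant (or tangent) line of $C$, with $H^*\cdot\tilde\ell=1$ and $E\cdot\tilde\ell=2$. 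Then $aH^*-bE$ is nef iff $b\ge0$ and $a\ge2b$, so the nef cone is spanned by $H^*$ and $2H^*-E$.

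The ray $H^*$ is $\sigma$. For the other wall I would identify $2H^*-E$ with the base-point-free linear system of quadrics through $C$ (the $2\times2$ minors of the $2\times5$ catalecticant matrix of $C$), which cut out $C$ scheme-theoretically. On a bisecant $\ell=\langle p,q\rangle$ each such quadric restricts to a binary quadratic form vanishing at $p$ and $q$, and these form a one-dimensional space; hence the morphism $\bar\varphi\colon\tilde{\PP}^5\to\bar Y$ defined by $|2H^*-E|$ is constant along every $\tilde\ell$. Thus $\Exc(\bar\varphi)$ is swept out by the $\tilde\ell$, i.e. it is the proper transform $\tilde Z$ of the secant variety, carrying the $\PP^1$-bundle structure $\tilde Z\to\Sy^2(C)=\PP^2$; since $\dim\tilde Z=3$ the map $\bar\varphi$ is small, and $K_{\tilde{\PP}^5}\cdot\tilde\ell=-6+6=0$ shows it is crepant. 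So $\bar\varphi$ is a flopping contraction collapsing $\tilde Z$ onto the surface $\Sy^2(C)\subset\bar Y$.

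Because $\tilde{\PP}^5$ is smooth and $\bar\varphi$ is a $K$-trivial small contraction, the flop $\chi\colon\tilde{\PP}^5\dashrightarrow\tilde{\PP}^5_+$ in $\tilde Z$ exists (it can also be built explicitly as a relative flop of the ruled $3$-fold $\tilde Z$ over $\PP^2$); then $\tilde{\PP}^5_+$ is $\QQ$-factorial with $\uprho=2$ and the transform of $2H^*-E$ gives the second small resolution $\bar\varphi_+\colon\tilde{\PP}^5_+\to\bar Y$. The remaining wall of the movable cone is the class $3H^*-2E$, which is movable but not big on $\tilde{\PP}^5_+$, so its contraction $\varphi$ is of fibre type. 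To identify it I would use the trisecant planes of $C$: a divisor $p_1+p_2+p_3\in\Sy^3(C)\cong\PP^3$ spans a plane $\Pi\cong\PP^2$, and by apolarity these planes are exactly the fibres $\{f:M(f)\cdot g=0\}$ of the rational map $\PP^5\dashrightarrow\PP^3$ given by the $3\times3$ minors of the $3\times4$ catalecticant $M$ (here $g$ is the cubic form vanishing at $p_1,p_2,p_3$). On the proper transform of such a $\Pi$, the degree-$6$ del Pezzo surface obtained by blowing up $p_1,p_2,p_3$, the three bisecants $\tilde\ell_{ij}$ are pairwise disjoint $(-1)$-curves contained in $\tilde Z$, so $\chi$ restricts to the standard Cremona transformation in each plane; the strict transforms thus organize into a morphism $\varphi\colon\tilde{\PP}^5_+\to\PP^3$ with all fibres isomorphic to $\PP^2$, where $\PP^3=\Sy^3(C)$.

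The hard part is to promote this fibrewise description to the assertion that $\varphi$ is a genuine $\PP^2$-bundle, equivalently to produce a rank-$3$ bundle $\EEE$ on $\PP^3$ with $\tilde{\PP}^5_+\cong\PP(\EEE)$. I expect to do this by analyzing the universal family of the Cremona-transformed trisecant planes over $\Sy^3(C)$ via apolarity, checking that $\varphi$ is equidimensional with every fibre (including those over the non-reduced divisors $2p+q$ and $3p$) a reduced $\PP^2$, and then invoking flatness to conclude that $\varphi$ is a projective bundle; the equidimensionality at these degenerate trisecant planes is the delicate point. Once $\varphi$ is known to be a $\PP^2$-bundle over $\PP^3$, the diagram \eqref{eq:Link:C5} is exactly the output of the two-ray game, which completes the proof.
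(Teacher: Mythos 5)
Your first half is sound and coincides with the paper's: the nef cone computation, the identification of $\bar\varphi=\Phi_{|2H^*-E|}$ as a small crepant contraction of $\tilde Z$ (the paper instead checks smallness by computing $(2H^*-E)^5=10>0$ and verifying birationality in coordinates), and the existence of the flop $\chi$. Your trisecant-plane/Cremona picture of the second fibration is also correct as a generic description --- it is precisely what the paper records in the remark following its proof. The genuine gap is that the actual content of the lemma --- that the second contraction exists as a \emph{morphism} onto $\PP^3$ and is a $\PP^2$-bundle --- is exactly what you defer (``I expect to do this\dots the equidimensionality at these degenerate trisecant planes is the delicate point''). Concretely, three things are missing. (i) The claim that the second wall of the movable cone is $3H^*-2E$, and that this class is not big on $\tilde{\PP}^5_+$, is asserted without computation; note that intersection numbers on $\tilde{\PP}^5_+$ are \emph{not} given by \eqref{eq:Chow}, since top self-intersections change under the flop, so for instance the value $(3H^*-2E)^5=-61$ computed on $\tilde{\PP}^5$ proves nothing about $\tilde{\PP}^5_+$. (ii) You never prove that $3H^*_+-2E_+$ is nef, i.e.\ that your fiberwise description glues into a morphism rather than a rational map; a priori this class could be negative on curves in the flopped locus. (iii) Equidimensionality and flatness over the non-reduced divisors $2p+q$ and $3p$ in $\Sy^3(C)$ is the whole $\PP^2$-bundle assertion, and it is left as an expectation.

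The paper fills each of these holes with tools absent from your outline, and they are considerably cheaper than an apolarity analysis of degenerate osculating planes. First, it slices by two general quadrics through $C$: the slice $\tilde X$ is the blowup along $C$ of a smooth quartic del Pezzo threefold and sits in a known three-dimensional Sarkisov link ending in $\PP^3$ (\cite[Table~3, No.~103]{Cutrone-Marshburn}); smoothness of the flopped slice $\tilde X_+$, together with $\SL_2(\CC)$-equivariance and the transitivity of $\SL_2(\CC)$ on the flopped surface $F=\bar\varphi(\tilde Z)$, yields smoothness of $\tilde{\PP}^5_+$ everywhere. Smoothness makes Wi\'sniewski's bound $\dim\Exc(\varphi)+\dim\Lambda\ge 7$ available \cite{Wisniewski1991a}; combined with $\Bs|3H^*_+-2E_+|\subset\tilde Z_+$ (because $Z$ is cut out by cubics singular along $C$), this forces $3H^*_+-2E_+$ to be nef --- your point (ii) --- and it also rules out $\varphi$ birational, since that case would produce a terminal Fano fivefold of index $12$, which is impossible. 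Finally, your point (iii) is settled by equivariance rather than apolarity: the base is $\PP(V)$ for a faithful four-dimensional representation of $\SL_2(\CC)$, hence carries no $\SL_2(\CC)$-fixed point, whereas any jumping fiber of $\varphi$ (there are finitely many, as $\uprho(\tilde{\PP}^5_+/Y_+)=1$) would have to lie over a fixed point; so $\varphi$ is equidimensional, hence flat, and $(2H^*_+-E_+)^2\cdot\Lambda=1$ identifies every fiber with $\PP^2$. If you wish to keep your route, it is exactly (i)--(iii) you must supply, and the $\SL_2(\CC)$-symmetry of the whole diagram is the most economical way to do it.
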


\begin{proof}[Sketch of the proof]
Since $C$ is an intersection of quadrics, the linear system $|2H^*-E|$ is base point free
and defines a morphism $\bar\varphi=\Phi_{|2H^*-E|}:\tilde{\PP}^5\to \bar Y\subset \PP^{9}$, where $\bar Y:= \bar\varphi(\tilde{\PP}^5)$.
It follows from \eqref{eq:Chow} that $(2H^*-E)^5=10$, hence $\dim \bar Y=5$, i.e. the morphism $\bar\varphi$ is generically finite.
Moreover, $\bar\varphi$ contracts $\tilde Z$ to a surface so that the fibers are
proper transforms of $2$-secant lines of~$C$.

In coordinates, the curve 
$C\subset \PP^5$ is given by vanishing of the minors of the matrix
\[
\begin{pmatrix}
x_{0}&x_{1}&x_{2}&x_{3}&x_{4}\\
x_{1}&x_{2}&x_{3}&x_{4}&x_{5}
\end{pmatrix}
\]
and the map $\bar\varphi\comp \sigma^{-1}:\PP^5 \dashrightarrow \PP^9$ 
is given by the same minors. The equations of the image are
\[
\begin{array}{l}
y_{5}y_{7}-y_{4}y_{8}+y_{2}y_{9}= y_{5}y_{6}-y_{3}y_{8}+y_{1}y_{9}
= y_{2}y_{3}-y_{1}y_{4}+y_{0}y_{5}=
\\
y_{4}y_{6}-y_{3}y_{7}+y_{0}y_{9}= y_{2}y_{6}-y_{1}y_{7}+y_{0}y_{8}=
y_{4}^{2}-y_{2}y_{5}-y_{3}y_{5}-y_{2}y_{7}+y_{1}y_{8}-y_{0}y_{9}=0.
\end{array}
\]
One can check that $\bar Y=\bar\varphi(\tilde{\PP}^5)$ is a variety of degree 
$10$,
the morphism $\tilde{\PP}^5\to \PP^{9}$ is birational onto its image, and $\bar 
Y$ is singular along $F:=\bar\varphi(\tilde Z)$ which is the Veronese surface $\upsilon_3(\PP^2)\subset \PP^5$.

Since $-K_{\tilde \PP^5}=3(2H^*-E)$,
the morphism $\bar\varphi$ is crepant, hence it is a flopping contraction.
Then there exists a flop $\tilde \PP^5 \dashrightarrow \tilde \PP^5_+$ and $K$-negative
extremal Mori contraction $\varphi: \tilde \PP^5_+\to Y_+$.
Let $H^*_+:=\chi_* H^*$ and $E_+:=\chi_* E$.
Since $\uprho(\tilde{\PP}^5_+)=2$ and $-K_{\tilde{\PP}^5_+}$ is nef, there exist a 
Mori contraction $\varphi:\tilde{\PP}^5_+\to Y_+$. 
Now take two general quadrics $Q_1,\, Q_2\subset \PP^5$ containing $C$ and let $X:=Q_1\cap Q_2$.
Then $X$ is a smooth del Pezzo threefold. Let $\tilde X\subset \tilde \PP^5$
and $\tilde X_+\subset \tilde \PP^5_+$ be its proper transforms. 
Note that $\tilde X$ is the intersection of two general members of the base point free linear system $|2H^*-E|$.
By Bertini's theorem $\tilde X$ is smooth. The same arguments show that $\tilde X_+$ is smooth either.
Then the induced morphism $\sigma_X: \tilde X\to X$ is the blowup of $C$
and the induced map $\chi_X: \tilde X \dashrightarrow \tilde X_+$ is a flop. 
Hence, there exist a 
Mori contraction $\varphi':\tilde{X}_+\to X'$. 
We obtain the following diagram,
where $X_+:=\varphi(\tilde X_+)$:
\begin{equation*}
\vcenter{
\xymatrix{
&\tilde \PP^5 
\ar[dr]^{\bar\varphi}
\ar[dl]_{\sigma}
\ar@{-->}[rr]^{\chi}
&&\tilde \PP^5_+
\ar[dl]_{\bar\varphi_+}
\ar[dr]^{\varphi}
\\
\PP^5
&
\tilde X
\ar[dl]_{\sigma_X}
\ar@{-->}@/_1.1em/[rr]_{\chi_{X}} 
\ar@{} !<-0pt,0pt>;[u]!<-0pt,0pt> |-*[@]{\subset}
&
\bar Y
&
\tilde X_+
\ar[d]_{\varphi'}
\ar[dr]^{\varphi_X} 
\ar@{} !<-0pt,0pt>;[u]!<-0pt,0pt> |-*[@]{\subset}
&Y_+
\\
X 
\ar@{} !<-0pt,0pt>;[u]!<-0pt,0pt> |-*[@]{\subset} 
&&&X' &X_{+}
\ar@{} !<-0pt,0pt>;[u]!<-0pt,0pt> |-*[@]{\subset}
}}
\end{equation*}
Then the sequence $X, \tilde X, \tilde X_+, X'$ forms a three-dimensional Sarkisov link.
Its structure is well-known (see
\cite[Table~3, No.~103]{Cutrone-Marshburn}). Here $X'\simeq \PP^3$ and 
$\varphi'$ is the blowup of a curve of genus $2$ and degree $7$.
Moreover, the morphism $\varphi'$ is given by the linear system $\left|(3H^*_+- 2E_+)\vert_{\tilde X_+} \right|$
and its exceptional divisor is the (unique) member of $\left|(10H^*_+- 7E_+)\vert_{\tilde X_+} \right|$.

Let $\tilde Z_+\subset \tilde \PP^5_+$ be the flopped locus of $\chi$, i.e. the exceptional locus of $\bar \varphi_+$.
Then $\tilde Z_+$ dominates the surface $F$, hence $\tilde Z_+$ is irreducible and $\dim \tilde Z_+=3$.
The flopped locus of $\chi_X$ coincides with $\tilde Z_+\cap \tilde X_+$. Since the variety $\tilde X_+$ is smooth, we conclude that 
$\tilde \PP^5_+$ is smooth along some fibers of $\tilde Z_+\to F$. On the other hand, 
our diagram is equivariant with respect to the action of $\SL_2(\CC)$
and the group $\SL_2(\CC)$ acts on $F$ transitively. This implies that $\tilde \PP^5_+$ is smooth everywhere.
Then by 
\cite{Wisniewski1991a} for any fiber $\Lambda$ of $\varphi$ we have
\begin{equation}
\label{eq:wis}
\dim \Exc(\varphi)+\dim \Lambda\ge 7.
\end{equation}
Note that the variety $Z$ is an intersection of cubics in $\PP^5$
and these cubics are singular along $C$, hence $\Bs |3H^* -2E|=\tilde Z$
and so $\Bs |3H^*_+ -2E_+|\subset \tilde Z_+$. If the divisor $3H^*_+ -2E_+$ is not nef, then 
it must be negative on the fibers of $\varphi$. In this case, $\varphi$ must be a small birational contraction
whose exceptional locus in contained in $\tilde Z_+$. 
The contradiction shows that $3H^*_+ -2E_+$ is nef and so $3H^*_+ -2E_+\sim \varphi^* A$, where $A$ is an ample divisor on
$Y_+$.

Assume that the morphism $\varphi$ is birational. Then by \eqref{eq:wis} it contracts a divisor, say $D$, and
the fibers of $\varphi_D: D\to \varphi(D)$ have dimension $\ge 3$.
Therefore, $D\cap \tilde X_+$ is the exceptional divisor of $\varphi'$.
We can write $K_{\tilde \PP^5_+}=\varphi ^* K_{Y_+}+ a D$ and $-K_{Y_+}\sim bA$ for some $a, b>0$. 
Taking the description of the three-dimensional link into account, we obtain
$D\sim 10H^*_+- 7E_+$ and so 
\[
-(6 H^*_+-3E_+) = -b(3H^*_+ -2E_+)+ a (10H^*_+- 7E_+).
\]
The solution is $(a,b)=(3,12)$, hence $Y_+$ is a Fano fivefolds with terminal singularities whose 
anticanonical class has the form $-K_{Y_+}\sim 12A$, where $A$ is a Cartier divisor.
This is impossible (see e.g. \cite[Corollary~2.1.13]{IP99}).

Therefore, $\varphi$ is a fibration and, again by \eqref{eq:wis}, its fibers have dimension $\ge 2$.
Then $Y_+=X_+\simeq \PP^3$.
The general fiber $\Lambda$ is a smooth del Pezzo surface with 
$-K_{\Lambda}=-K_{\tilde \PP^5_+}\vert _\Lambda=3(2H^*_+-E_+)\vert _\Lambda$, hence $\Lambda\simeq \PP^2$.
As an $\SL_2(\CC)$-variety, the base $Y_+=\PP^3$ has the form $\PP(V)$, where $V$ is a \textit{faithful} four-dimensional 
representation of $\SL_2(\CC)$. From this, one can deduce that there are no $\SL_2(\CC)$-fixed points on $Y_+$.
Now we claim that the morphism $\varphi$ is flat. Indeed, since $\uprho(\tilde \PP^5_+/Y_+)=1$, there are 
at most a finite number of fibers $\Lambda_k$ of dimension $>2$. But then the points $\varphi(\Lambda_k)$ 
must be fixed by the action of $\SL_2(\CC)$, a contradiction. 
Thus \textit{any} fiber $\Lambda$ of $\varphi$ is two-dimensional and $(2H^*_+-E_+)^2\cdot \Lambda=1$.
This implies that $\Lambda\simeq \PP^2$. 
\end{proof}
The following observation was pointed out to us by A.~Kuznetsov.
\begin{sremark}
In coordinates, the variety $Z$ is given by four cubic equations which are the $3\times 3$-minors of the matrix
\[
\begin{pmatrix}
x_0 & x_1 & x_2 & x_3 \\
x_1 & x_2 & x_3 & x_4 \\
x_2 & x_3 & x_4 & x_5
\end{pmatrix}
\]
and these cubics are singular along $C$. 
The map $\varphi\comp \chi \comp \sigma^{-1}:\PP^5 \dashrightarrow \PP^3$ 
is given by the these minors. The fibers of this map are $3$-secant planes of $C\subset \PP^5$.
The restriction of $\chi \comp \sigma^{-1}$ to a general $3$-secant plane is the quadratic Cremona 
transformation.
\end{sremark}

\begin{proof}[Proof of Theorem \xref{prop:V5-V6}]
We have $V_6=\Sy^5(V_2)$, hence the action of $\Alt_5$ on $\PP(V_6)$ is induced
by an action on a Veronese curve $C=C_5\subset \PP(V_6)$. Then we can apply 
the link \eqref{eq:Link:C5} with center $C$ which must be $\Alt_5$-equivariant.
Thus $\PP(V_6)/ \Alt_5$ is birationally equivalent to $\tilde \PP^5_+/\Alt_5$.
Then we can proceed as in the proof of Theorem~\ref{thm:C4/A5} since the anticanonical class of
$\tilde \PP^5_+/\Alt_5$ is divisible by $3$.
\end{proof}

\section{Proof of Theorem~\xref{thm:main}}
\label{sect:V5}
It remains to discuss the rationality of quotient $\PP(V_5)/\Alt_5$.
In this case we can prove only a weaker result:
\begin{proposition}
\label{prop:V5}
The variety $\PP(V_5)/\Alt_5\times \PP^1$ is rational. 
In particular, $V_5/\Alt_5$ is rational as well.
\end{proposition}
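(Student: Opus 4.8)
The plan is to exploit, exactly as in the treatment of $V_4=\Sy^3(V_2)$ and $V_6=\Sy^5(V_2)$, the isomorphism $V_5\cong\Sy^4(V_2)$ of $\Alt_5$-modules. This produces an $\Alt_5$-invariant rational normal quartic $C=\upsilon_4(\PP^1)\subset\PP(V_5)=\PP^4$, and the induced action of $\Alt_5$ on $C\cong\PP^1$ is the icosahedral one coming from $\Alt_5\hookrightarrow\PGL_2(\CC)$; in particular $C/\Alt_5\cong\PP^1$. I would first dispose of the clause ``in particular'': writing $V_5\oplus V_1$ with $V_1$ the trivial line, the point $[V_1]\in\PP(V_5\oplus V_1)$ is $\Alt_5$-fixed, and linear projection from it exhibits $V_5$ (the chart $\{w\ne0\}$) as the total space of an $\Alt_5$-equivariant line bundle over $\PP(V_5)$. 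Since $\Alt_5$ acts generically freely on $\PP(V_5)$, the no-name lemma gives $V_5/\Alt_5\bir\big(\PP(V_5)/\Alt_5\big)\times\Aff^1\bir\big(\PP(V_5)/\Alt_5\big)\times\PP^1$; thus the $\PP^1$ in the statement is the trivial factor and the two assertions are equivalent.

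The positive content I would prove through the \emph{twisted} model. Consider $T:=\big(\PP(V_5)\times C\big)/\Alt_5$ for the diagonal action. Projection to the second factor descends to a morphism $T\to C/\Alt_5\cong\PP^1$ whose generic fibre is a Severi--Brauer form of $\PP(V_5)=\PP^4$ over the field $\CC(\PP^1)=\CC(t)$. By Tsen's theorem $\Br\CC(t)=0$, so this form is trivial, the generic fibre is $\PP^4_{\CC(t)}$, and hence $T\bir\PP^1\times\PP^4$ is rational. Equivalently, this is the no-name lemma applied to the equivariant bundle $V_5\otimes\OOO_C$ over the base $C$, on which $\Alt_5$ acts generically freely.

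It then remains to identify $T$ with the honest product $Y\times\PP^1$, where $Y:=\PP(V_5)/\Alt_5$, and this I expect to be the main obstacle. Both $T\to Y$ and $Y\times\PP^1\to Y$ are Severi--Brauer $\PP^1$-bundles over $Y$; they are birational over $Y$ precisely when their difference $\beta\in\Br\CC(Y)$ vanishes, where $\beta$ is the $2$-torsion class pulled back from the Schur multiplier $H^2(\Alt_5,\CC^*)\cong\ZZ/2$ of the extension $\tilde\Alt_5\to\Alt_5$ along the (generically free) quotient $\PP(V_5)\dashrightarrow Y$. The hard part is to show $\beta=0$. I would argue that $\beta$ is unramified: along any divisor the inertia is cyclic, and the Schur multiplier of a cyclic group is trivial, so every residue of $\beta$ vanishes. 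An unramified class on $\CC(Y)$ lies in the Bogomolov multiplier of $\Alt_5$, which is zero because $\Alt_5$ is simple; hence $\beta=0$, $T\bir Y\times\PP^1$, and $Y\times\PP^1$ is rational. A more hands-on alternative would be to construct directly an $\Alt_5$-equivariant rational section $\PP(V_5)\dashrightarrow C\cong\PP^1$, i.e. a projective $V_2$-covariant inside $\CC(\PP^4)$, which trivializes the bundle and is the concrete incarnation of the identity $\beta=0$. This delicacy over the fourfold base $Y$ (as opposed to the curve base, where Tsen settles everything unconditionally) is exactly why one obtains only the stable statement rather than the rationality of $\PP(V_5)/\Alt_5$ itself.
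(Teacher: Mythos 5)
Your reduction of the ``in particular'' clause is correct (and matches the paper), and your proof that $T:=(\PP(V_5)\times C)/\Alt_5$ is rational via Tsen's theorem over $C/\Alt_5\simeq\PP^1$ is also correct. The proof nevertheless collapses at its crucial step: the class $\beta$ does \emph{not} vanish, so the identification of $T$ with $Y\times\PP^1$ over $Y:=\PP(V_5)/\Alt_5$ is impossible. Indeed, $\beta=0$ is equivalent to the existence of a rational section of $T\to Y$, i.e.\ of an $\Alt_5$-equivariant rational map $\phi\colon\PP(V_5)\dashrightarrow\PP(V_2)$ --- exactly the ``projective $V_2$-covariant'' of your last sentence. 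No such map exists: write $\phi=(f_0:f_1)$ with $f_0,f_1$ coprime homogeneous of the same degree $d$; equivariance forces the plane $W=\langle f_0,f_1\rangle\subset\Sy^d(V_5^\vee)$ to be $\Alt_5$-stable, with the induced action on $\PP(W)\simeq\PP^1$ projectively equivalent to the faithful icosahedral representation $\Alt_5\hookrightarrow\PGL_2(\CC)$. But $\Sy^d(V_5^\vee)$ is an honest linear $\Alt_5$-module (the center of $\tilde\Alt_5$ acts trivially on $V_5$), so $W$ is a two-dimensional linear representation of $\Alt_5$, hence trivial, since the irreducible degrees of $\Alt_5$ are $1,3,3,4,5$; then the action on $\PP(W)$ is trivial, a contradiction. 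The error in your cohomological argument is the residue computation: unramifiedness of a class in the image of $H^2(\Alt_5,\CC^*)\to\Br(\CC(Y))$ is not detected by cyclic inertia alone. By Bogomolov's criterion such a class is unramified if and only if it restricts to zero on every abelian (bicyclic) subgroup, and the extension class of $\tilde\Alt_5$ restricts \emph{nontrivially} to every Klein four-subgroup of $\Alt_5$: its preimage in $\tilde\Alt_5$ is the quaternion group of order $8$, equivalently $\mumu_2\times\mumu_2\subset\PGL_2(\CC)$ does not lift to $\GL_2(\CC)$. So $\beta$ is ramified --- along divisors lying over the images of the fixed loci of the Klein subgroups, which are invisible on $Y$ itself --- and it is nonzero. (Your appeal to the vanishing of the Bogomolov multiplier of $\Alt_5$ is fine; it follows for instance from Maeda's theorem \cite{Maeda-T-1989}. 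It is the unramifiedness that fails.)

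This obstruction is precisely what the paper's proof is built to avoid. Instead of your $T$, the paper takes the quotient of the Shepherd--Barron link $\widetilde{\PP(V_3\oplus V_3')}\to\PP(V_5)$ \cite{Shepherd-Barron-S5}, the blowup $\sigma$ of the quintic del Pezzo surface, which is a \emph{different} conic bundle over $Y$. Its total space is rational by the no-name projection $\PP(V_3\oplus V_3')/\Alt_5\bir\PP(V_3)/\Alt_5\times\PP(V_3')$ together with $\PP(V_3)/\Alt_5\simeq\PP(1,3,5)$ (Proposition~\ref{prop:V3}); and its Brauer class does vanish, because $K_{\widetilde{\PP(V_3\oplus V_3')}}=\sigma^*K_{\PP^5}+2E=2(E-3\sigma^*H)$ is divisible by $2$ and the $\Alt_5$-action is free in codimension one, so half the anticanonical class of the quotient is a Weil divisor class (modulo torsion) restricting to a degree-one class on the generic fiber, which forces a rational point, i.e.\ a birational section. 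Your $T$ admits no such class --- $K_{\PP(V_5)\times\PP(V_2)}$ has bidegree $(-5,-2)$ --- consistently with $\beta\neq0$. Note that your twisted-product mechanism can be salvaged by replacing $C=\PP(V_2)$ with $\PP(V_3)$: then both projections have trivial Brauer obstruction ($V_3$ and $V_5$ are genuine linear representations, so Hilbert~90 applies in place of Tsen), and one obtains the rationality of $Y\times\PP^2$; but this is one $\PP^1$-factor weaker than the proposition, and closing exactly that gap is what the Sarkisov link is for.
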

\begin{proof}
There exists the following $\Alt_5$-equivariant Sarkisov link (see e.g. \cite[Proposition~9]{Shepherd-Barron-S5}:
\[
\vcenter{
\xymatrix{
& \widetilde{\PP(V_3\oplus V_3')}\ar[dl]_{\sigma}\ar[dr]^{\varphi}& 
\\
\PP(V_3\oplus V_3')\ar@{-->}[rr] && \PP(V_5)
}}
\]
where $\sigma$ is the blowup of the quintic del Pezzo surface $S\subset \PP(V_3\oplus V_3')$ (see Remark~\ref{rem:S5-grA5}) and $\varphi$ is a generically $\PP^1$-bundle.
Now we see that $\widetilde{\PP(V_3\oplus V_3')}/\Alt_5$
is birational to $\PP(V_5)/\Alt_5\times \PP^1$ (cf. \cite[Proof of Proposition~9]{Shepherd-Barron-S5}). 
Indeed, the canonical class of the variety $\widetilde{\PP(V_3\oplus V_3')}$ is divisible by $2$ and 
the action of $\Alt_5$ on it is free in codimension one. Hence the anticanonical class of the quotient 
$\widetilde{\PP(V_3\oplus V_3')}/\Alt_5$ is divisible by $2$ as well (at least modulo torsion).
This defines a birational section of the rational curve fibration $\widetilde{\PP(V_3\oplus V_3')}/\Alt_5 \to \PP(V_5)/\Alt_5$.
Then this fibration must be locally trivial in Zariski topology.
On the other hand, the projection 
$\PP(V_3\oplus V_3') \dashrightarrow \PP(V_3)$ gives us a birational equivalence 
$\PP(V_3\oplus V_3')/\Alt_5$ and $\PP(V_3)/\Alt_5\times \PP(V_3')$
(see \cite[Corollary 3.12]{colliotthelene-Sansuc-2005}). Hence $\PP(V_3\oplus V_3')/\Alt_5$ is rational
and so is $\widetilde{\PP(V_3\oplus V_3')}/\Alt_5$.
\end{proof}

\begin{proof}[Proof of Theorem~\xref{thm:main}]
Denote by $G$ the image of $\tilde \Alt_5$ in $\GL(V)$. Thus
the action of $G$ on $V$ is faithful and $G$ is isomorphic either to $\tilde \Alt_5$ or $\Alt_5$.
Let $\tilde V\to V$ be the blowup of the origin.
Then $\tilde V$ has a structure of $G$-equivariant $\Aff^1$-bundle $\tilde V\to \PP(V)$
admitting an invariant section. Therefore, 
$V/G$ is birational to $\PP(V)/G\times \PP^1$.
Now we prove the statement by induction on the dimension of~$V$.
If $V$ is irreducible, then $\PP(V)/G \times \PP^1$ is rational
by Theorem~\ref{th-SL25}, \ref{thm:C4/A5}, \ref{prop:V5-V6} and Proposition~\ref{prop:V5}.
Thus we may assume that there is a nontrivial decomposition $V=V'\oplus V''$, where the action of $G$ on $V''$ is faithful. 
Then $V/G$ is birational to $V''/G\times V'$ and $V''/G$ is rational by the inductive hypothesis.
\end{proof}

\begin{sremark}
The above proof can be modified to show that the variety $\PP(V)/\tilde \Alt_5$ is rational with 
one possible exception $V\simeq V_5$. 
\end{sremark}

In conclusion, we consider one more interesting variety related to icosahedron.
\subsection{Mukai--Unemura threefold}
Recall that the Poincar\'e homology 
$3$-sphere can be constructed as the quotient $\SO_3(\RR)/\Alt_5$.
One can consider the complex counterpart of it: the quotient $\SL_2(\CC)/\tilde \Alt_5$,
where the action is via the left multiplication. Thus variety is not proper but it 
has a nice smooth compactification called \textit{Mukai--Unemura threefold} \cite{Mukai-Umemura-1983}:
\[
X^{\mathrm{mu}}:=\overline{\SL_2(\CC) \cdot h_{12}}\subset \PP(M_{12}),
\]
where $M_{12}$ is the space of binary forms of degree $12$ and $h_{12}=h_{12}(x_1,x_2)\in M_{12}$
is the invariant of $\tilde\Alt_5$ which has the form 
\[
h_{12}=x_1x_2(x_1^{10} +11 x_1^4x_2^5+ x_2^{10}).
\]
The variety $X^{\mathrm{mu}}$ is a distinguished Fano threefold of Picard rank $1$ and degree $22$.
It satisfies many remarkable properties (see \cite{Mukai-Umemura-1983}, \cite{P:90aut:en}, \cite{Prokhorov-1990b}, \cite{Furushima1992b},
\cite{Donaldson:V22}). 
By the construction, $X^{\mathrm{mu}}$ is quasi-homogeneous, i.e. it admits an $\SL_2(\CC)$-action 
with the open orbit $\SL_2(\CC) \cdot h_{12}$ and the complement $X^{\mathrm{mu}}\setminus \SL_2(\CC) \cdot h_{12}$
is an irreducible divisor. This implies immediately the rationality of $X^{\mathrm{mu}}$ (see e.g. \cite[Lemma~1.15]{Mukai-Umemura-1983}).
Moreover, we have the following.

\begin{stheorem}[\cite{Kolpakov-Prokhorov-1992}, {\cite[Theorem~2.3]{P:Inv10}}]
Let $G\subset \SL_2(\CC)$ be a finite subgroup. If $G$ has a fixed point on $X^{\mathrm{mu}}$, then $X^{\mathrm{mu}}/G$
is rational.
\end{stheorem}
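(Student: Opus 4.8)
The plan is to exploit the quasi-homogeneity of $X^{\mathrm{mu}}$ to turn the quotient into a double coset, and then feed it into the surface machinery of Section~\ref{Sect:P1P1}. Since the open orbit is $\SL_2(\CC)\cdot h_{12}\simeq \SL_2(\CC)/\tilde\Alt_5$ and $G\subset\SL_2(\CC)$ acts by left translations, the quotient $X^{\mathrm{mu}}/G$ is birational to the double coset space $G\backslash\SL_2(\CC)/\tilde\Alt_5$. I would realize $\SL_2(\CC)$ as the smooth affine quadric $\{\det=1\}$ inside the space of $2\times 2$ matrices; as a representation of $G\times\tilde\Alt_5$ (acting by $(\gamma,a)\colon x\mapsto \gamma x a^{-1}$) this space is $U\otimes V_2$, where $U$ is the standard two-dimensional representation of $G\subset\SL_2(\CC)$ and $V_2$ the two-dimensional representation of $\tilde\Alt_5$. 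Thus $X^{\mathrm{mu}}/G$ is birational to $\{\det=1\}/(G\times\tilde\Alt_5)$, the quotient of a quadric threefold by a finite subgroup of $\SO_4$ preserving the two rulings of the quadric surface $\{\det=0\}=\PP(U)\times\PP(V_2)$ ``at infinity,'' where it acts as $\bar G\times\Alt_5$.

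Next I would use the hypothesis. The $\SL_2(\CC)$-orbits on $X^{\mathrm{mu}}$ are the open one $\SL_2(\CC)/\tilde\Alt_5$, a two-dimensional orbit $\SL_2(\CC)/T$, and a closed orbit $\SL_2(\CC)/B$ (with $T$ a maximal torus and $B$ a Borel). A $G$-fixed point lies in one of them, so $G$ is contained in a conjugate of $\tilde\Alt_5$, of $T$, or of $B$; in the last two cases $G$ is cyclic. Hence $G$ is either cyclic or conjugate into $\tilde\Alt_5$, i.e.\ exactly the finite subgroups of $\SL_2(\CC)$ that can have a fixed point. When $U$ is monomial -- that is, $G$ cyclic or binary dihedral ($Q_8$, $\tilde\Dih_3$, $\tilde\Dih_5$) -- the tensor decomposes, $U\otimes V_2\simeq V_2\oplus V_2$, with the two summands individually invariant (cyclic $G$) or interchanged by the reflection part of $G$ (binary dihedral $G$). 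I would then blow up the two disjoint lines $\PP(V_2\oplus 0)$ and $\PP(0\oplus V_2)$, exactly as in the proof of Theorem~\ref{thm:imp}, obtaining a $(G\times\tilde\Alt_5)$-equivariant $\PP^1$-fibration over $\PP(V_2)\times\PP(V_2)$ on which $\tilde\Alt_5$ acts diagonally (and the reflection part of $G$, if present, interchanges the factors). After passing to the quotient this becomes a rational curve fibration over the rational surface $\PP(1,3,5)$ or $\PP^2$ obtained as the corresponding quotient of $\PP^1\times\PP^1$ in Theorem~\ref{thm:P1P1:diag}, and rationality then follows from Lemma~\ref{lemma:P(1,3,5)} or Lemma~\ref{lemma:P2}. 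The one bookkeeping point is that I am working with the affine quadric rather than with $\PP^3=\PP(U\otimes V_2)$; I would absorb the extra $\Aff^1$ using divisibility of the canonical class of the $\PP^1$-fibration to produce a rational section, exactly as in the proofs of Theorems~\ref{th-SL25} and~\ref{thm:C4/A5}.

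The main obstacle is the two \emph{primitive} cases $G\simeq\tilde\Alt_4$ and $G\simeq\tilde\Alt_5$. Here $U$ is not monomial, $U\otimes V_2$ is an irreducible $(G\times\tilde\Alt_5)$-module, and $\PP^3$ carries no invariant linear subspace, so the skew-line construction is unavailable; moreover there is genuinely no $(G\times\tilde\Alt_5)$-equivariant map from the quadric threefold to $\PP^1\times\PP^1$, since the diagonal copy of $\SL_2(\CC)$ is not contained in $B\times B$. To finish these cases I would look for a $(G\times\tilde\Alt_5)$-invariant pencil of (hyperplane or quadric) sections giving an equivariant conic or del Pezzo fibration on the quadric threefold itself over a rational quotient surface, and apply the conic-bundle rationality criterion; alternatively, one can try to run the double projection of $X^{\mathrm{mu}}$ (a Fano threefold of degree $22$) from the $G$-fixed point to reach a linear model $\PP(V)/G$ already known to be rational by Theorem~\ref{thm:C4/A5} or Theorem~\ref{thm:main}. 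Identifying that model together with the class of its discriminant divisor is the crux of the argument.
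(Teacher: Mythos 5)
The first thing to note is that the paper contains no proof of this statement at all: it is quoted with attributions to \cite{Kolpakov-Prokhorov-1992} and \cite[Theorem~2.3]{P:Inv10}, so there is no internal argument to compare yours against; your proposal must stand on its own. It does not. Your opening moves are correct --- the reduction of $X^{\mathrm{mu}}/G$ to the double coset space $G\backslash\SL_2(\CC)/\tilde\Alt_5=\{\det=1\}/(G\times\tilde\Alt_5)$, and the observation that the hypothesis forces $G$ to be cyclic or conjugate into $\tilde\Alt_5$ --- but the cases $G\simeq\tilde\Alt_4$ and $G\simeq\tilde\Alt_5$ lie squarely inside the hypothesis (both are subgroups of $\tilde\Alt_5=\St_{\SL_2(\CC)}([h_{12}])$, hence fix the point $[h_{12}]$ of the open orbit), and for exactly these cases you concede that the skew-lines construction is unavailable and offer only two untested directions, yourself calling their completion ``the crux of the argument.'' Neither is carried out: no invariant pencil is exhibited, and the target of the proposed double projection and its discriminant are never identified. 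These cases also cannot be discharged by quoting other results of the paper: reversing the double coset, $X^{\mathrm{mu}}/\tilde\Alt_4$ is birational to $\tilde\Alt_5\backslash\SL_2(\CC)/\tilde\Alt_4$, i.e.\ to $Q/\Alt_5$ where $Q\subset\PP(V_5)$ is the invariant quadric threefold (the closure of the orbit of the tetrahedral quartic), a quotient the paper never treats; and $X^{\mathrm{mu}}/\tilde\Alt_5$ is the symmetric double coset, for which no linear model is available. (Contrast $G\simeq\tilde\Dih_3$, where the same reversal gives $\PP(V_4)/\tilde\Alt_5$ and Theorem~\ref{th-SL25} applies.) So what you have is a proof for cyclic and binary dihedral $G$ only --- a genuinely weaker theorem. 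The cited sources close all cases uniformly by the mechanism you only gesture at, namely equivariant Sarkisov links (double projection) centered at the fixed point itself, descending through the quasi-homogeneous $\SL_2(\CC)$-threefolds to quotients of linear actions.

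Even the cases you do treat are not quite right as written. For cyclic $G$ both characters of $U$ die upon projectivization, so $G$ acts trivially on $\PP(V_2)\times\PP(V_2)$: the base of your quotient fibration is then $(\PP^1\times\PP^1)/\Alt_5$, which by Theorem~\ref{thm:P1P1:diag} is the double cover $z_{15}^2=\Phi(z_1^2,z_6,z_{10})$ and not $\PP(1,3,5)$ (and it is never $\PP^2$), so neither Lemma~\ref{lemma:P(1,3,5)} nor Lemma~\ref{lemma:P2} applies as cited; this case is salvageable, but by a different argument (the fibration is a $\GG_m$-torsor on which $G$ acts fiberwise, hence birationally trivial over the base by Hilbert~90, and the base is rational by Castelnuovo). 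For binary dihedral $G$ the base is indeed $\PP(1,3,5)$, but your fibration also fails to be smooth over the image of the diagonal, the curve $\{z_1=0\}$, which is a branch curve of $\PP^1\times\PP^1\to\PP(1,3,5)$ in addition to $D$; before invoking Lemma~\ref{lemma:P(1,3,5)} one must check (by a local computation, or in the spirit of Proposition~\ref{prop:cb}) that this curve contributes nothing to the discriminant of the standard model. These defects are repairable; the missing primitive cases are not, and they constitute a genuine gap.
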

It is easy to see that a subgroup $G\subset \SL_2(\CC)$ has no fixed points only if $G$
is a binary octahedral group, i.e. $G\simeq \tilde \Sym_4$. 
In this case we have a weaker result:
\begin{stheorem}[cf. {\cite[Theorem~2.3]{P:Inv10}}]
The variety $ X^{\mathrm{mu}}/\Sym_4\times \PP^2$ is rational.
\end{stheorem}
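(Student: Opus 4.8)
The plan is to reduce the statement, through the no-name lemma, to a rationality question about a quotient by a \emph{pair of commuting linear actions}, and then to a rational curve fibration over a rational fourfold.

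First I would observe that, since $X^{\mathrm{mu}}\subset \PP(M_{12})$ lives in the projectivization of binary forms of the \emph{even} degree $12$, the central element $-1\in \SL_2(\CC)$ acts trivially; hence the binary octahedral group $\tilde\Sym_4\subset \SL_2(\CC)$ acts on $X^{\mathrm{mu}}$ through its quotient $\Sym_4=\tilde\Sym_4/\{\pm1\}$, and this action is generically free, because the generic $\SL_2(\CC)$-stabilizer on $X^{\mathrm{mu}}$ is a conjugate of $\tilde\Alt_5$ and meets $\tilde\Sym_4$ only in $\{\pm1\}$. I then take $W:=\Sy^2 U$, where $U$ is the standard two-dimensional representation of $\tilde\Sym_4\subset\SL_2(\CC)$; since $-1$ acts trivially on $W$, the plane $\PP(W)=\PP^2$ carries a $\Sym_4$-action. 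By the no-name lemma \cite[Corollary~3.12]{colliotthelene-Sansuc-2005}, applied to the generically free action of $\Sym_4$ on $X^{\mathrm{mu}}$, one has $(X^{\mathrm{mu}}/\Sym_4)\times\PP^2\bir (X^{\mathrm{mu}}\times\PP(\Sy^2 U))/\tilde\Sym_4$ with $\tilde\Sym_4$ acting diagonally, so it suffices to prove that this fivefold is rational.

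Second, I would use the quasi-homogeneity of $X^{\mathrm{mu}}$ to replace it birationally by its open orbit $\SL_2(\CC)/\tilde\Alt_5$, writing $\SL_2(\CC)=\{g\}$ as the determinant-one locus in $U_L\otimes U_R$ (so $U=U_L$ becomes the ``left'' tensor factor). The fivefold then becomes $\big(\SL_2(\CC)\times\PP(\Sy^2 U_L)\big)/(\tilde\Sym_4\times\tilde\Alt_5)$, where $\tilde\Sym_4$ acts by left multiplication (through $U_L$) and on $\PP(\Sy^2 U_L)$ through $\Sy^2$, while $\tilde\Alt_5$ acts by right multiplication (through $U_R$) and trivially on $\PP(\Sy^2 U_L)$. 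This is the key simplification: the two groups now act by \emph{commuting linear} transformations. Exploiting this, I would use two equivariant invariants: the coordinate $[v]\in\PP(\Sy^2 U_L)$, which is $\tilde\Alt_5$-invariant and $\tilde\Sym_4$-equivariant, and the assignment $(g,[v])\mapsto [\Sy^2(g)^{-1}v]\in\PP(\Sy^2 U_R)$, which is $\tilde\Sym_4$-invariant and $\tilde\Alt_5$-equivariant. Together they descend to a morphism
\[
\bar\Xi\colon \big(\SL_2(\CC)\times\PP(\Sy^2 U_L)\big)/(\tilde\Sym_4\times\tilde\Alt_5)\longrightarrow \big(\PP(\Sy^2 U_L)/\Sym_4\big)\times\big(\PP(\Sy^2 U_R)/\Alt_5\big).
\]
The base is a product of two rational surfaces: the second factor is $\PP(V_3)/\Alt_5\simeq\PP(1,3,5)$ by Proposition~\xref{prop:V3} (as $\Sy^2 U_R\simeq V_3$), and $\PP(\Sy^2 U_L)/\Sym_4=\PP^2/\Sym_4$ is rational. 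A fibre of $\bar\Xi$ consists of the $g$ carrying the root pair of one binary quadratic to that of the other, hence is one-dimensional, so $\bar\Xi$ is a rational curve fibration over a rational fourfold.

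The main obstacle is the final step: proving that this rational curve fibration is rational. Generically each fibre is a coset of the one-dimensional torus stabilizing an unordered pair of points of $\PP^1$, so $\bar\Xi$ is birationally a torsor under a rank-one torus over a rational base, and by Hilbert's Theorem~90 it becomes rational as soon as it admits a rational section. Producing such a section amounts to choosing, rationally over the base, a matrix $g$ matching the two root pairs, and the only obstruction is the ``which root to which root'' monodromy, governed by the product of the two discriminant double covers attached to $[v]$ and $[\Sy^2(g)^{-1}v]$. I expect the heart of the proof to be the verification that this two-torsion class splits --- equivalently, the explicit construction of the section --- and this is precisely where the extra factor $\PP^2$ introduced at the first step provides the room that is needed, in the same spirit in which divisibility of the anticanonical class yielded birational sections in the proofs of Theorems~\xref{th-SL25}, \xref{thm:C4/A5} and~\xref{prop:V5-V6}.
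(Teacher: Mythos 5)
Your opening reduction coincides with the paper's own first step: the no-name lemma \cite[Corollary 3.12]{colliotthelene-Sansuc-2005} applied to the generically free action of $\Sym_4=\tilde\Sym_4/\{\pm 1\}$ on $X^{\mathrm{mu}}$ and to the three-dimensional representation $\Sy^2 U$ identifies $(X^{\mathrm{mu}}/\Sym_4)\times\PP^2$ with $(X^{\mathrm{mu}}\times\PP(\Sy^2 U))/\Sym_4$, and your untwisting to $\big(\SL_2(\CC)\times\PP(\Sy^2 U_L)\big)/(\tilde\Sym_4\times\tilde\Alt_5)$ together with the map $\bar\Xi$ is correctly set up. The trouble is that your argument stops exactly where the content of the theorem begins. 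Everything you actually verify is a formal consequence of the no-name lemma and of quasi-homogeneity of $X^{\mathrm{mu}}$; the theorem is equivalent to the rationality of the total space of your fibration $\bar\Xi$, and for that you offer only an expectation (``I expect the heart of the proof to be the verification that this two-torsion class splits''). No splitting is verified, no section is constructed, and no divisor class producing one is exhibited; this is a plan, not a proof.

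Moreover, the way the missing step is framed understates it. The stabilizer in $\PGL_2(\CC)$ of an unordered pair of points of $\PP^1$ is not a torus but the normalizer $\GG_m\rtimes\mumu_2$ of a torus, so the geometric generic fibre of $\bar\Xi$ is a coset of a \emph{disconnected} group: it has two components, $\bar\Xi$ factors through a degree-$2$ cover of your fourfold base, and it is not a rational curve fibration in the sense the paper uses that term (e.g.\ in Theorem~\xref{thm:Sar}). Over the function field of that cover the identity component is a norm-one, generally non-split, rank-one torus; Hilbert's Theorem~90 gives vanishing of $H^1$ only for the split torus $\GG_m$, while torsors under a non-split rank-one torus are classified by $K^*$ modulo norms from the quadratic extension and can be nontrivial, so the existence of a rational section is genuinely the hard point. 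The analogy you invoke with Theorems~\xref{th-SL25}, \xref{thm:C4/A5} and~\xref{prop:V5-V6} does not transfer either: there the generic fibres are Severi--Brauer varieties, and divisibility of the canonical class kills a Brauer obstruction of matching index, whereas here the obstruction mixes a two-torsion covering class with a torus-torsor class and no analogous mechanism is provided. Note that the paper does not attempt your route at all: it records the statement as known (cf.\ \cite[Theorem~2.3]{P:Inv10}), the indicated argument being the same no-name identification followed by the analysis of the \emph{other} projection $(X^{\mathrm{mu}}\times\PP(\Sy^2 U))/\Sym_4\to\PP(\Sy^2 U)/\Sym_4$, a fibration over a rational \emph{surface} whose geometric generic fibre is $X^{\mathrm{mu}}$ itself, rather than a curve fibration over a fourfold.
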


\appendix 
\section{Tables}

\renewcommand{\arraystretch}{1.2}
\setlength{\tabcolsep}{0.8em}
\begin{longtable}{ll|l|p{0.2\textwidth}|l|l| l|l}
\caption{Non-trivial proper subgroups of $\Sym_5$} 
\label{table:subgroupsS5}
\\
\\\hlineB{2.5}
&$|H|$ & $H\simeq $ &generators &description&$\subset \Alt_5$?&\# & $\N(H)$ 
\\\hlineB{2.5}
\endhead
\hlineB{2.5}
\endfoot
\hlineB{2.5}
\endlastfoot
\caption{Non-trivial proper subgroups of $\Sym_5$}\\\hlineB{2.5}
&$|H|$ & $H\simeq $ &generators &description&$\subset \Alt_5$?&\# & $\N(H)$
\endfirsthead\hlineB{2.5}

\nr\label{sub:2:tr}&2 & $\mumu_2$ &$(1,2)$ &$\Sym_2(1,2)$ &$-$ &10 & \xref{sub:12:dih}
\\
\nr\label{sub:2:dt}&2 & $\mumu_2$ &$(1,2)(3,4)$ & &$+$ &15 & \xref{sub:8}
\\
\nr\label{sub:3}&3 & $\mumu_3$&$(1,2,3)$ &$\Alt_3(1,2,3)$ &$+$ &10 & \xref{sub:12:dih}
\\
\nr\label{sub:4:K}&4 & $\mumu_2 \times 
\mumu_2$ & \ref{sub:2:dt}, $(1,3)(2,4)$ &Klein $4$-group&$+$ &5 &\xref{sub:24}
\\
\nr\label{sub:4:K1}&4 & $\mumu_2 \times \mumu_2$ &$(1,2),\, (3,4)$ &$\Sym_2(1,2)\times 
\Sym_2(3,4)$ & $-$ &15 &\xref{sub:8}
\\
\nr\label{sub:4:c}&4 & $\mumu_4$ &$(1,3,2,4)$ && $-$ &15 & \xref{sub:8}
\\
\nr\label{sub:5}&5 & $\mumu_5$ &$(1,2,3,5,4)$ && $+$ &6 & \ref{sub:20}
\\
\nr\label{sub:6:Sym}&6 & $\Sym_3$ & \ref{sub:3}, $(1,2)$ &$\Sym_3(1,2,3)$ &$-$ &10 
&\xref{sub:12:dih}
\\
\nr\label{sub:6:Sym1}&6 & $\Sym_3$ & \ref{sub:3}, $(1,2)(4,5)$ && 
$+$ &10 & \xref{sub:12:dih}
\\
\nr\label{sub:6:c}&6 & $\mumu_6$ &$(1,2,3)(4,5)$ && $-$ &10 &\ref{sub:12:dih}
\\
\nr\label{sub:8}&8 & $\Dih_4$ &\ref{sub:4:K}, \ref{sub:4:c} &&$-$ &15 &$H$
\\
\nr\label{sub:10}&10 & $\Dih_{5}$ &\ref{sub:5}, $(1,2)(3,4)$ && $+$ &6 
&\ref{sub:20}
\\
\nr\label{sub:12:Alt}&12 & $\Alt_4$ & \ref{sub:3}, \ref{sub:4:K}&$\Alt_4(1,2,3,4)$ &$+$ 
&5 &\ref{sub:24}
\\
\nr\label{sub:12:dih}&12 & $\Dih_{6}$ &\ref{sub:3}, \ref{sub:6:c}&$\Sym_3\times \Sym_2$&$-$ &10 & 
$H$
\\
\nr\label{sub:20}&20 & $\mumu_5\rtimes \mumu_4$ &\ref{sub:10}, $(1,3,2,4)$ && $-$ &6 
& $H$
\\
\nr\label{sub:24}&24 & $\Sym_4$ & \ref{sub:3}, \ref{sub:8}&$\Sym_4(1,2,3,4)$ &$-$ &5 & $H$ 
\\
\nr\label{sub:60}&60 & $\Alt_5$ &\ref{sub:3}, \ref{sub:4:K}, \ref{sub:5}&$\Alt_5(1,2,3,4,5)$ & $+$ &1 & $\Sym_5$ 
\\

\end{longtable}\begin{enumerate}
\item 

\end{enumerate}

\begin{table}[H]
\def\sizee{2.1em}
\caption{Representations of $\Sym_5$}
\label{Tab:repS5}
\begin{tabularx}{1\textwidth}{XV{2.0} 
>{\RaggedLeft}p{\sizee}>{\RaggedLeft} 
p{\sizee}>{\RaggedLeft}p{\sizee}>{\RaggedLeft}p{\sizee}>{\RaggedLeft}
p{\sizee}>{\RaggedLeft}p{\sizee}>{\RaggedLeft}p{\sizee}}
\hlineB{2.5}
&$\mathcal{C}_{1}^{}$&$\mathcal{C}_{2}^{-}$&$\mathcal{C}_{2}^{+}$&$\mathcal{C}_{3}^{}$&$\mathcal{C}_{6}^{}$&$\mathcal{C}_{4}^{}$&$\mathcal{C}_{5}^{}$
\\\hlineB{2.5}
$W_1$&$1$&$1$&$1$&$1$&$1$&$1$&$1$
\\
$W_1'$&$1$&$-1$&$1$&$1$&$-1$&$-1$&$1$
\\
$W_4$&$4$&$2$&$0$&$1$&$-1$&$0$&$-1$
\\
$W_4'=W_4\otimes W_1'$&$4$&$-2$&$0$&$1$&$1$&$0$&$-1$
\\
$W_5$&$5$&$1$&$1$&$-1$&$1$&$-1$&$0$
\\
$W_5'=W_5\otimes W_1'$&$5$&$-1$&$1$&$-1$&$-1$&$1$&$0$
\\
$W_6=\wedge^2 W_4$&$6$&$0$&$-2$&$0$&$0$&$0$&$1$
\\\hlineB{2.5}
\end{tabularx}
\end{table}

\renewcommand{\arraystretch}{1.5}
\renewcommand{\tabcolsep}{0.7em}
\begin{longtable}{lV{2}ccccccccc}
\caption{Representations of $\tilde\Alt_5=\SL_2(\bF_5)$}
\label{tab:repSL25}
\\\hlineB{2.5}
&$\mathcal{C}_{1,1}^{1}$&$\mathcal{C}_{2,1}^{1}$&$\mathcal{C}_{4,2}^{30}$&
$\mathcal{C}_{5,5}^{12}$&$\mathcal{C}_{5,5}^{12}$&$\mathcal{C}_{10,5}^{12}$&$\mathcal{C}_{10,5}^{12}
$&$\mathcal{C}_{6,3}^{20}$&$\mathcal{C}_{3,3}^{20}$
\\\hlineB{2.5}
\endhead
\hlineB{2.5}
\endfoot
\hlineB{2.5}
\endlastfoot
$V_1$&$1$&$1$&$1$&$1$&$1$&$1$&$1$&$1$&$1$
\\
$V_2$&$2$&$-2$&$0$&$\frac{-1+\sqrt 5}2$&$\frac{-1-\sqrt 5}2$&$\frac{1-\sqrt5}2$&$\frac{1+\sqrt 5}2$&$1$&$-1$
\\
$V_2'$&$2$&$-2$&$0$&$\frac{-1-\sqrt 5}2$&$\frac{-1+\sqrt 5}2$&$\frac{1+\sqrt5}2$&$\frac{1-\sqrt 5}2$&$1$&$-1$
\\
$V_3=\Sy^2(V_2)$&$3$&$3$&$ -1$&$\frac{1-\sqrt 5}2$&$\frac{1+\sqrt 5}2$&$\frac{1-\sqrt5}2$&$\frac{1+\sqrt 5}2$&$
0$&$0$
\\
$V_3'=\Sy^2(V_2')$&$3$&$3$&$ -1$&$\frac{1+\sqrt 5}2$&$\frac{1-\sqrt 5}2$&$\frac{1+\sqrt5}2$&$\frac{1-\sqrt 5}2$&$
0$&$0$
\\
$V_4'=V_2\otimes V_2'$&$4$&$4$&$ 0$&$-1$&$-1$&$-1$&$-1$&$1$&$1$
\\
$V_4=\Sy^3(V_2)$&$4$&$-4$&$ 0$&$-1$&$-1$&$1$&$1$&$-1$&$1$
\\
$V_5=\Sy^4(V_2)$&$5$&$5$&$1$&$0$&$0$&$0$&$0$&$-1$&$-1$
\\
$V_6=\Sy^5(V_2)$&$6$&$-6$&$0$&$1$&$1$&$-1$&$-1$&$0$&$0$
\\
\end{longtable}

\end{document}